\newcommandx{\todocomment}[2][1=]{\todo[linecolor=Plum,backgroundcolor=Plum!25,bordercolor=Plum,#1]{{\bf TODO (comment):} #2}}
\newcommandx{\todourgent}[2][1=]{\todo[linecolor=red,backgroundcolor=red!25,bordercolor=red,#1]{{\bf TODO (urgent):} #2}}
\newcommandx{\todohidden}[2][1=]{}
\newcommand{\supp}{\operatorname{supp}}
\newcommand{\ass}{\operatorname{ass}}
\newcommand{\spn}{\operatorname{span}}
\newcommand{\Hom}{\operatorname{Hom}}
\newcommand{\End}{\operatorname{End}}
\newcommand{\id}{{\operatorname{id}}}
\newcommand{\rad}{\operatorname{rad}}
\newcommand{\Char}{\operatorname{char}}
\newcommand{\cat}[1]{\underline{#1}}
\newcommand{\cmod}[1]{\cat{\text{mod}\,#1}}
\newcommand{\Z}{\mathbb{Z}}
\newcommand{\N}{\mathbb{N}}
\newcommand{\C}{\mathbb{C}}
\newcommand{\Q}{\mathbb{Q}}
\newcommand{\mathscr}{\mathcal}
\definecolor{rdylgn60}{HTML}{D73027}
\definecolor{rdylgn61}{HTML}{FC8D59}
\definecolor{rdylgn62}{HTML}{FEE08B}
\definecolor{rdylgn63}{HTML}{D9EF8B}
\definecolor{rdylgn64}{HTML}{91CF60}
\definecolor{rdylgn65}{HTML}{1A9850}
\definecolor{rdylgn66}{HTML}{006837}
\definecolor{ca1}{HTML}{72497F}
\definecolor{ca2}{HTML}{5C7B78}
\definecolor{cb1}{HTML}{0862A0}
\definecolor{cb2}{HTML}{CD9B3E}
\definecolor{cc1}{HTML}{C55824}
\definecolor{cc2}{HTML}{288338}
\definecolor{cd}{HTML}{CF838A}
\definecolor{ce}{HTML}{FFAF3A}
\definecolor{cf}{HTML}{AFBF5A}
\definecolor{cg}{HTML}{7ACF99}
\definecolor{ch}{HTML}{8AB1CF}
\definecolor{ci}{HTML}{3B58CF}
\definecolor{cj}{HTML}{736163}
\definecolor{lightgray}{HTML}{DDDDDD}
\theoremstyle{plain}
\newtheorem{theorem}{Theorem}[section]
\newtheorem{proposition}[theorem]{Proposition}
\newtheorem{definition}[theorem]{Definition}
\newtheorem{example}[theorem]{Example}
\newtheorem{lemma}[theorem]{Lemma}
\newtheorem{question}[theorem]{Question}
\newtheorem{corollary}[theorem]{Corollary}
\theoremstyle{remark}
\newtheorem{remark}{Remark}[section]
\numberwithin{equation}{section}
\tikzset{%
  spot/.style={color=black, thin, dashed},
  rline/.style={color=green, line width=2pt},
  sline/.style={color=blue, line width=2pt},
  tline/.style={color=red, line width=2pt},
  uline/.style={color=green, line width=2pt},
  line/.style={color=#1, line width=2pt},
  line/.default=blue,
  rdot/.style={color=green, thin, fill},
  sdot/.style={color=blue, thin, fill},
  tdot/.style={color=red, thin, fill},
  udot/.style={color=green, thin, fill},
  dot/.style={color=#1, thin, fill},
  dot/.default=blue
}
\newcommand{\TL}{{\rm TL}}
\newcommand{\JW}{{\rm JW}}
\newcommand{\TLcat}{{\mathscr{TL}}}
\newcommand{\gaussianquant}{\genfrac{[}{]}{0pt}{}}
\newcommand{\ind}{\mathord\uparrow}
\newcommand{\mother}[2][]{{{\mathrm m}^{#1}_{[#2]}}}
\newcommand{\generation}[1]{{{\mathrm g}_{[#1]}}}
\let\emph\relax 
\DeclareTextFontCommand{\emph}{\bfseries\em}
\newcommand{\citecomment}[2][]{\citen{#2}#1\citevar}
\newcommand{\citeone}[1]{\citecomment{#1}}
\newcommand{\citetwo}[2][]{\citecomment[,~#1]{#2}}
\newcommand{\citevar}{\@ifnextchar\bgroup{;~\citeone}{\@ifnextchar[{;~\citetwo}{]}}}
\newcommand{\citefirst}{\@ifnextchar\bgroup{\citeone}{\@ifnextchar[{\citetwo}{]}}}
\newcommand{\cites}{[\citefirst}
\author[R. A. Spencer]{Robert A. Spencer}
\address{Department of Pure Mathematics and Mathematical Statistics\\ University of Cambridge \\Cambridge\\ CB3 0WA \\ UK}
\email{ras228@cam.ac.uk}
\date{}
\title{Modular Valenced Temperley-Lieb Algebras}
\newcommand{\bmu}{{\boldsymbol{\mu}}}
\newcommand{\bro}{{\boldsymbol{\rho}}}
\newcommand{\bet}{{\boldsymbol{\eta}}}
\newcommand{\pldigs}[1]{{[#1]_{p,\ell}}}
\newcommand{\smallyng}[2]{{\Yboxdim{#1}\Yvcentermath1\yng(#2)}}
\begin{document}

\begin{abstract}
  We investigate the representation theory of the valenced Temperley-Lieb algebras in mixed characteristic.
  These algebras, as described in characteristic zero by Flores and Peltola, arise naturally in statistical physics and conformal field theory and are a natural deformation of normal Temperley-Lieb algebras.
  In general characteristic, they encode the fusion rules for the category of $U_q(\mathfrak{sl}_2)$ tilting modules.

  We use the cellular properties of the Temperley-Lieb algebras to determine those of the valenced Temperley-Lieb algebras.
  Our approach is, at heart, entirely diagrammatic and we calculate cell indices, module dimensions and indecomposable modules for a wide class of valenced Temperley-Lieb algebras.
  We present a general framework for finding bases of cell modules and a formula for their dimensions.
\end{abstract}

\maketitle
\section*{Introduction}\label{sec:introduction}
Given a well-understood algebra, there are various ways to derive new algebras related to the original but with subtle differences.
The simplest is taking quotients, but more exotic examples, such as idempotent truncation, crossed products or quantum deformations exist.
In this paper we will be interested in truncating an algebra by an idempotent.

The representation theory of the Temperley-Lieb algebras over positive characteristic has recently received some attention.
These algebras are cellular in the sense of Graham and Lehrer~\cite{graham_lehrer_1996} and this gives us a good handle on their structure and representation theory.
Indeed, we are able to give the complete decomposition matrix for these algebras~\cite{spencer_2020} as well as formulae for the idempotents of the projective cover of the trivial modules~\cite{martin_spencer_2021} (so-called generalised Jones-Wenzl elements).

We may ask what we can say about the truncations of these algebras at certain idempotents.
In this paper we will investigate a particular class of idempotents, formed out of generalised Jones-Wenzl elements and the algebras they induce.
These algebras inherit some of their cellular structure from the parent Temperley-Lieb algebra, but ``loose'' some of the complexity in interesting ways.
We would like to answer a few principal questions about the representation theory of these algebras:
\begin{enumerate}
    \item What are their cell modules?  
    \item What are the cell module dimensions?
    \item Can we construct a basis of the cell module?
    \item How many (isomorphism classes of) irreducible modules does the algebra have?
    \item What are the dimensions of the irreducible modules?
\end{enumerate}
Knowledge of these questions gives us answers to many others.
For example, knowing (1) and (2) gives us the dimension of the algebra and (3) gives a basis.
The question of identifying the irreducible modules for an algebra is classical and of its own importance.
Their dimensions have applications to higher representation theory as the ranks of intersection forms.
If the algebra happens to only have one-dimensional cell modules, then it is commutative.

In this paper, we answer these questions for certain classes of idempotent-truncated Temperley-Lieb algebras over mixed characteristic and thus derive a large tranche of their structure.
The particular truncation studied is of its own interest, but is motivated by both physical systems and Soergel bimodule theory.
We are able to give recursive formula to answer points (1) and (2) above and give an algorithm for constructing the basis of (3).
In particular cases, we can identify the irreducible modules explicitly and give their dimensions, answering (4) and (5).

It should be noted that the positive and mixed characteristic theory is very much more intricate and complicated than the characteristic zero theory, even specialised at a root of unity.
However, our results are strict generalisations of these cases and the concerned reader may, if they wish, consider all algebras as occurring over $\C$ (the mantra to hold in mind is that $p = \infty$).

\vspace{1.5em}\noindent
The Temperley-Lieb algebras arises from the monoidal Temperley-Lieb category, $\TLcat$, as the endomorphism spaces of the objects $\{\underline{n} \;:\; n \in \N_0\}$.
To be exact, these spaces consist of linear Temperley-Lieb diagrams on $n$ points.
This category is equipped with a natural tensor product which acts as addition on the objects and vertical concatenation of the morphisms:
\begin{equation*}
  \vcenter{\hbox{
  \begin{tikzpicture}
    \node at (0.5, 0.4) {$f_1$};
    \draw (0,.0325) rectangle (1,0.75);
    \foreach \i in {0,...,2} {
      \draw[very thick] (-.4, \i/4+0.125) -- (0,\i/4+0.125);
      \draw[very thick] (1.4, \i/4+0.125) -- (1,\i/4+0.125);
    }
  \end{tikzpicture}
  }}
  \otimes
  \vcenter{\hbox{
  \begin{tikzpicture}
    \node at (0.5, -0.7) {$f_2$};
    \draw (0,-.0325) rectangle (1,-1.25);
    \foreach \i in {-5,...,-1} {
      \draw[very thick] (-.4, \i/4+0.125) -- (0,\i/4+0.125);
      \draw[very thick] (1.4, \i/4+0.125) -- (1,\i/4+0.125);
    }
  \end{tikzpicture}
  }}
  =
  \vcenter{\hbox{
  \begin{tikzpicture}
    \node at (0.5, 0.4) {$f_1$};
    \node at (0.5, -0.7) {$f_2$};
    \draw (0,-.0325) rectangle (1,-1.25);
    \draw (0,.0325) rectangle (1,0.75);
    \foreach \i in {-5,...,2} {
      \draw[very thick] (-.4, \i/4+0.125) -- (0,\i/4+0.125);
      \draw[very thick] (1.4, \i/4+0.125) -- (1,\i/4+0.125);
    }
  \end{tikzpicture}
  }}
\end{equation*}
If $f_1$ and $f_2$ are each idempotents within their respective Temperley-Lieb algebras, then their tensor product, $e$ is too.
We can then construct the algebra $e\cdot\TL_n\cdot e$.
It is unital and associative algebra, but not a sub-algebra of $\TL_n$ since the identities differ.
Crucially for what follows, it is cellular as defined by Graham and Lehrer~\cite{graham_lehrer_1996}.

In more generality, $(f_i)_{i=1}^r$ is a family of idempotents with $f_i \in \TL_{n_i}$ with $\sum_i n_i = n$, then so too is $e = \bigotimes f_i$ in $\TL_n$.
Of particular importance will be when each of the $f_i$ are idempotents describing the projective covers of the trivial $\TL_{n_i}$ module --- the various types of Jones-Wenzl idempotents.
In this case, we are particularly interested in the representation theory of $e \cdot \TL_n \cdot e$.
The tuple $(n_1,\ldots, n_r)$ will be called the composition of the idempotent.
It is ``Eve'' if each projective cover is itself trivial.

The question in the Temperley-Lieb category in particular
is motivated in~\cite[1.3(5)]{burrull_libedinsky_sentinelli_2019} by the study of indecomposable objects in Hecke categories for Universal Coxeter systems~\cite{elias_libedinsky_2017}.
The systems considered there are restricted to Eve compositions over
realisations with Cartan matrix elements $\pm 2$.
Burrull, Libedinsky and Sentinelli, by constructing $p$-Jones-Wenzl idempotents, open the question to non-Eve compositions.
By using the extension in \cite{spencer_2020} we can study more general Cartan matrices using $(\ell, p)$-Jones-Wenzl idempotents.
It is this more general characteristic that this paper works.

The question also arises as a natural positive characteristic analogy of mathematics important to physics.
Originally conceived of as operators for statistical mechanics~\cite{temperley_lieb_1971}, the Temperley-Lieb algebras' close links to knot theory bestow strong links to physics.
Recently, the representation theory of the boundary seam algebras have been studied~\cite{langlois_remillard_saint_aubin_2020} after finding application to conformal field theory~\cite{morin_duchesne_rasmussen_david_2015}.
These are both special cases of valenced Temperley-Lieb algebras~\cite{flores_peltola_2018a, flores_peltola_2018b} .
The representation theory of these algebras is well understood over characteristic zero, where we have explicit forms for the Gram matrices, simple modules and semi-simplicity criteria.

However, as is to be expected, the results over characteristic zero do not extend simply to positive characteristic.
In the case of Temperley-Lieb theory, the most characteristic considered is \emph{mixed} characteristic $(\ell, p)$.
Here, even the representation theory of the Temperley-Lieb algebra is rich and fractal-like~\cite{spencer_2020}.

In this paper, we explore the representation theory of valenced Temperley-Lieb algebras over mixed characteristic.
We use knowledge of the cellular structure of the algebras, and explicit calculations of certain bilinear forms to enumerate the simple modules for some specific cases.
Key to our analysis is a particular factoring of morphisms in the cell modules of valenced Temperley-Lieb algebras, and the explicit forms of the ($\ell, p)$-Jones-Wenzl idempotents.

In concurrent work, Sutton, Tubbenhauer, Wedrich and Zhu~\cite{sutton_tubbenhauer_wedrich_zhu_2021} tackled an essentially identical question, building on earlier work in computing the $p$-Jones-Wenzl idempotents in~\cite{tubbenhauer_wedrich_2019}.
Through their working, the representation theory of $U_q(\mathfrak{sl}_2)$ is critical in providing the module structures and Clebsch-Gordan rules.
The idempotents $\otimes f_i$ are expressed as sums of mutually orthogonal idempotents explicitly.

Our work differs in that it builds on~\cite{spencer_2020} and~\cite{martin_spencer_2021} to work solely within the Temperley-Lieb category.
We don't split the idempotents explicitly, but focus on the cellular nature of $\TL_n$ and its valenced cousins and determine the cell data for these algebras.
Further, we consider some additional cases not covered by~\cite{sutton_tubbenhauer_wedrich_zhu_2021}.

\vspace{2em}

The remainder of this paper is arranged as follows.
In \cref{sec:notation} we introduce the notation necessary for describing the representation theory of $\TL_n$.  
The reader is directed to~\cite{ridout_saint_aubin_2014} for an overview of the characteristic zero theory,~\cite{spencer_2020} for the mixed characteristic and~\cite{tubbenhauer_wedrich_2019, martin_spencer_2021} for the theory on general Jones-Wenzl elements.

\Cref{sec:cellular} revises Graham and Lehrer's \cite{graham_lehrer_1996} cellular algebras and makes explicit K\"onig and Xie's observation that Hecke algebras (those of the form $eAe$ for idempotent $e$) are cellular~\cite[Proposition 4.3]{konig_xi_1998}.
We determine precisely how the representation theory of $eAe$ is related to that of $A$ and consider the critical example $\JW_n \cdot \TL_n \cdot \JW_n$ (where $\JW_n$ is the $(\ell, p)$-Jones-Wenzl idempotent of \cite{martin_spencer_2021}.

We then introduce the objects of study explicitly in \cref{sec:valenced} and consider some known examples.
In \cref{sec:valenced_cell_data} we restrict the idempotents to be ``Eve''.  That is, all Jones-Wenzl elements appearing are ``characteristic zero'' as opposed to $(\ell, p)$-Jones-Wenzl elements.
This substantially simplifies the analysis, and we construct diagram bases for the cell modules, and so find their dimension.
We consider the Gram matrix for these modules in \cref{sec:gram}.

Having laid the ground theory for the study of the valenced algebras, we turn to certain compositions.
The simplest of these, considered in \cref{sec:endomorphism}, is the composition $(n)$ in which case we study $\JW_n \cdot \TL_n \cdot \JW_n$ for some $(\ell, p)$-Jones-Wenzl idempotent $\JW_n$.
This is (for positive characteristic) is an object of study in~\cite{tubbenhauer_wedrich_2019} and this section is a recasting of those results into our notation and framework.
This is critical for studying more complicated algebras.

The seam algebras of~\cite{morin_duchesne_rasmussen_david_2015} (with $\beta_2 = 0$ and $\beta_1 = 1$) are studied over characteristic zero in~\cite{langlois_remillard_saint_aubin_2020} and \cref{sec:seam} can be viewed as the natural extension of that paper to mixed characteristic.
We compute the cell indices and cell-module dimensions in the general case as well as the irreducible modules when the composition is Eve.
Finally, we observe that in the special case of a two part seam partition, we are able to determine all the cell data explicitly (even for non-Eve compositions) and relate this to action of inducing modules from $\TL_{n}$ to $\TL_{n+1}$.

\Cref{sec:two-part} considers two-part, Eve, compositions and \cref{sec:small_tensor} considers two-part non-Eve compositions where the second part is sufficiently small.
In both cases, we are able to characterise the cell indices, bases and dimensions of cell modules (and hence the algebra) and which cell modules have non-degenerate bilinear form.

Finally, we collate those parts of the preceding analysis that are common into a general result in \cref{sec:cell_arbitrary} and muse on further directions and questions in \cref{sec:further}.

\section*{Acknowledgements}
The author would like to thank Louise Sutton, Daniel Tubbenhauer, Paul Wedrich and Jieru Zhu for a preview of their work.
The author is also grateful to Stuart Martin and Daniel Tubbenhauer for their comments on a draft of this paper.

\section{Notation for Temperley--Lieb Theory}\label{sec:notation}
We define some notation for dealing with the modular theory of the Temperley-Lieb algebras.
We assume the reader is familiar with the basics of the Temperley-Lieb theory, at least over characteristic zero (see~\cite{ridout_saint_aubin_2014}).
When discussing the Temperley-Lieb category, $\TLcat$ we use underlines to denote the object set $\{\underline{n}\;:\; n\in \N_0\}$.

\subsection{Ring Parameters and Natural Numbers}
Throughout, implicitly, we will be fixing a ring $R$ which has $(\ell, p)$-torsion.
That is to say, $R$ has a distinguished element $\delta = [2]$ such that $[\ell]$ is the first quantum number after $[1]$ to vanish and $1\in R$ has additive order $p$.
Here, the quantum numbers $[n]$ are defined as polynomials in $\delta$ by $[n+1] + [n-1] = [2] [n]$.
Our Temperley-Lieb algebras will always be defined over $R$, or sometimes $\Q(\delta)$.
The ring in question will always be an integral domain.

This fixes a prime $p$ and integer $\ell > 1$.
Either $p$ or $\ell$ may be taken to be $+\infty$.
The case of $p = +\infty$ is the characteristic zero case, and $\ell = +\infty$ gives the semi-simple Temperley-Lieb algebra (even when $p < +\infty$).

\begin{definition}
  We write $p^{(i)} = \ell p^{i-1}$ with the understanding that $p^{(0)} = 1$.
  If $n = \sum_{i=0}^r n_i p^{(i)}$ where $0\le n_0 < \ell$ and $0\le n_i < p$ for $i>0$, we will write $n = \pldigs{n_r, n_{r-1},\ldots, n_0}$.  These are the $(\ell,p)$-digits of $n$.
  We extend this notation so that $\pldigs{n_r, \ldots, n_0} = \sum n_i p^{(i)}$ even when $n_i$ are possibly negative.
\end{definition}

We would like to emphasise that the sequel of the paper applies to the characteristic zero case and the semi-simple case equally.
That is, if the base ring is $\C$ (or similar) and $\delta$ (or $q$ when $\delta = q + q^{-1}$) is an indeterminate, setting $p,\ell = \infty$ gives that the digits of any $n$ are simply $n = \pldigs{n}$.
Similarly, if $q$ is specialised at a root of unity (so $\delta$ satisfies some quantum polynomial), the digits are $n = \pldigs{n_1,n_0}$ where $n = n_1 \ell + n_0$.
In these cases we recover the known theory of the Temperley-Lieb algebras (such as is described in~\cite{ridout_saint_aubin_2014} exactly.

\begin{definition}
  If $n + 1 = \pldigs{n_r, \ldots, n_s,0,\ldots, 0}$ with $n_s \neq 0$, then the \emph{mother} of $n$, denoted $\mother{n}$ is such that $\mother{n} + 1 = \pldigs{n_r, \ldots, n_{s+1},0,\ldots, 0}$ i.e. the mother of $n$ is found by adding one, setting its least significant non-zero digit to zero, and subtracting one.
  If $n+1$ has a single non-zero digit (i.e. $n = n_r p^{(r)}-1$) then we term $n$ \emph{Eve}.  It has no mother.

  The \emph{generation} of $n$ is one less than the number of non-zero digits in $n+1$.
  We write it $\generation{n}$ and note that $n$ is Eve iff $\generation{n}=0$.

  The set $A(n) = \{\mother{n},\mother[2]{n},\ldots\}$ are known as the \emph{ancestors} of $n$.
  Here, $\mother[t]{n} = \mother[t-1]{\mother{n}}$.
  Note that $|A(n)| = \generation{n}$.
  While in the literature, ``support'' is the generally used term for the next definition, really ``cousins'' is a more accurate description.
  The \emph{support} of $n = \pldigs{n_r, \ldots, n_0} - 1$ is
  \begin{equation*}
    \supp(n) = \{\pldigs{n_r, \pm n_{r-1}, \ldots, \pm n_0} - 1\}.
  \end{equation*}
  Clearly $n\in \supp(n)$ and $|\supp(n)| = 2^{\generation{n}}$.
\end{definition}
Usually, for set $S$, we will write $S + x$ to mean $\{s+x \;:\; s \in S\}$.  Hence we could have written
\begin{equation*}
  \supp(n) = \{\pldigs{n_r, \pm n_{r-1}, \ldots, \pm n_0}\} - 1.
\end{equation*}
More generally, for sets $S, T$, the set $S + T$ will mean $\{s + t\;:\; s \in S \text{ and }t \in T\}$.

We will let $L_n(m)$, $\Delta_n(m)$ and $P_n(m)$ be the trivial module, cell module and projective indecomposable module for $\TL_n$ labelled by $m$ respectively.
Recall that $L_n(m)$ is a composition factor of $\Delta_n(m')$ iff $m' \in \supp m$~\cite[Theorem 8.4]{spencer_2020} iff $\Delta_n(m')$ appears in a $\Delta$-filtration of $P_n(m)$ (for $m \in \Lambda_0$)~\cite[Theorems 2.4, 2.7]{xi_2006}.

The representation theory of $\TL_n$ depends on the parity of $n$.  We thus define $\Z_2^n$ to be the set of all integers of the same parity as $n$.

Another important set is
\begin{equation}\label{eq:two_part_e}
  E_{r,s} = \{|r-s|, |r-s| + 2, \ldots, r + s\}
\end{equation}
Note that $t \in E_{r,s} = E_{s,r}$ iff $r \in E_{t,s}$.

\subsection{Boxes and Pictures for Temperley-Lieb Morphisms}
When drawing morphisms in $\TLcat$ we may replace submorphisms by boxes where the meaning is clear.
The boxes will often have the morphism name written in the middle.

Where multiple strands run concurrently (a so-called ``ribbon'') we may denote this with a thicker line annotated by its width.
As such, the following three morphisms are all identical:
\begin{equation}
    \vcenter{\hbox{
      \begin{tikzpicture}[]
        \draw (-.4,-.4) rectangle (.4,.4);
        \node at (0,0) {$\JW_2$};
        \draw[very thick] (-.6,-.2) -- (-.4,-.2);
        \draw[very thick] (-.6,.2) -- (-.4,.2);

        \draw[very thick] (.4,-.2) arc (90:-90:.2) -- (-.6,-.6);
        \draw[very thick] (.4,.2) arc (90:-90:.6) -- (-.6,-1);
      \end{tikzpicture}
    }} =
    \vcenter{\hbox{
      \begin{tikzpicture}[]
        \draw (-.4,-.4) rectangle (.4,.4);
        \node at (0,0) {$\JW_2$};
        \draw[line width=2pt] (-.6,0) -- (-.4,0);
        \draw[line width=2pt] (.4,0) arc (90:-90:.3) -- (-.6,-.6);
        \node at (.8,-.6) {\tiny$2$};
      \end{tikzpicture}
    }} =
    \vcenter{\hbox{
      \begin{tikzpicture}[]
        \draw[very thick] (.4,-.2) arc (90:-90:.2);
        \draw[very thick] (.4,.2) arc (90:-90:.6);
      \end{tikzpicture}
  }} - \frac 1{[2]}
    \vcenter{\hbox{
      \begin{tikzpicture}[]
        \draw[very thick] (.4,.2) arc (90:-90:.2);
        \draw[very thick] (.4,-.6) arc (90:-90:.2);
      \end{tikzpicture}
  }}
\end{equation}

\subsection{Down- and Up-morphisms}
Throughout, a critical role will be played by certain morphisms known as up and down morphisms.
The mnemonic is that down morphisms decrease the number of strands and up morphisms increase them.
For $n + 1 = \pldigs{n_r, \ldots, n_0}$, and $S \subseteq \{0, \ldots, r-1\}$ write $n[S] = n - 2\sum_{i \in S} n_ip^{(i)}$.

\begin{definition}\cite[Definition 2.14]{tubbenhauer_wedrich_2019}
  Suppose $n+1 = [n_j, n_{j-1},\ldots, n_0]_{p, \ell}$.  Then for each $0\le i\le j$ consider $w = [n_j, \ldots, n_{i+1}, -n_i,0,\ldots, 0]_{p, \ell}-1$ and $x = [n_{i-1}, \ldots, n_0]_{p, \ell}$.
  Note that $w + x = n - 2n_ip^{(i)} = n[\{i\}]$.
  Then define diagram ${\rm d}_i : \underline{n} \to \underline{x+w}$ by 
  \begin{equation}
    {\rm d}_i = 
    \vcenter{\hbox{
      \begin{tikzpicture}[scale=0.4]
        \draw (0,-.3) -- (0,3.3);
        \draw[very thick] (0,3) -- (1.3,3);
        \draw[very thick] (0,.8) arc (-90:90:0.7);
        \draw[very thick] (0,0) -- (1.3,0);
        \node at (1.9,3) {\small $w$};
        \node at (1.9,1.5) {\small $n_i p^{(i)}$};
        \node at (1.9,0) {\small $x$};
      \end{tikzpicture}
    }}
  \end{equation}
  Now if $S = \{s_k> \ldots > s_0\}$ is down-admissible for $n$ set
  \begin{equation}\label{eq:chain downs}
    {\rm d}_{n}^{n[S]} = {\rm d}_{s_k} \ldots {\rm d}_{s_1}{\rm d}_{s_0}.
  \end{equation}
  Note that this is a morphism from $\underline n$ to $\underline{n[S]}$.
  Finally, we denote ${\rm u}_S = \iota {\rm d}_S$ where $\iota$ is the natural involution on the Temperley-Lieb category.
\end{definition}
These morphisms are a special case of a more general ``ladder construction''~\cite{elias_2015}.
\begin{definition}\label{def:ladder}
  Select morphisms $g_n : \underline{n} \to \underline{n}$ for each $n \in \N_0$.
  Then for $S$ a  down-admissible set for $n$ where $n + 1 = \pldigs{n_j,\ldots, n_0}$ construct morphisms ${}_g\widetilde{\rm d}_i$ for each $0\le i \le j$ as follows.
  Firstly,
  \begin{equation}
    {}_g\widetilde{\rm d}_j = 
    \vcenter{\hbox{
      \begin{tikzpicture}[scale=0.4]
        \draw[very thick] (0,0) -- (.6,0);
        \draw[very thick] (0.6,-.7) rectangle (1.9,0.7);
        \draw[very thick] (1.9,0) -- (2.5,0);
        \node at (4.5,0) {\small $n_j p^{(j)} - 1$};
        \node at (1.25,0) {\small $g$};
      \end{tikzpicture}
    }}
  \end{equation}
  and then
  \begin{equation}\label{eq:ladder_step}
    {}_g\widetilde{\rm d}_i = 
    \begin{cases}
    \vcenter{\hbox{
      \begin{tikzpicture}[scale=0.4]
        \draw[white] (0,-3) -- (0,1);
        \draw[very thick] (0,0) -- (.6,0);
        \draw[very thick] (0.6,-.8) rectangle (2.9,0.8);
        \draw[very thick] (2.9,0) -- (3.5,0);
        \draw[very thick] (3.5,-2.4) rectangle (4.9,0.8);
        \draw[very thick] (0,-2.0) -- (3.5,-2.0);
        \draw[very thick] (4.9,-0.8) -- (5.5,-0.8);
        \node at (4.2,-0.9) {\small $g$};
        \node at (1.75,-1.5) {\tiny $n_i p^{(i)}$};
        \node at (1.75,0) {\small ${}_g\widetilde{\rm d}_{i+1}$};
      \end{tikzpicture}
  }} & i \not\in S\\
    \vcenter{\hbox{
      \begin{tikzpicture}[scale=0.4]
        \draw[white] (0,-3) -- (0,1);
        \draw[very thick] (0,0) -- (.6,0);
        \draw[very thick] (0.6,-.8) rectangle (2.9,0.8);
        \draw[very thick] (2.9,0.4) -- (4.0,0.4);
        \draw[very thick] (4.0,-.8) rectangle (5.4,0.8);
        \draw[very thick] (0,-2.0) -- (2.9,-2.0);
        \draw[very thick] (2.9, -.4) arc (90:-90:0.8);
        \draw[very thick] (5.4,0) -- (6.0,0);
        \node at (4.7,0) {\small $g$};
        \node at (1.75,-1.5) {\tiny $n_i p^{(i)}$};
        \node at (1.75,0) {\small ${}_g\widetilde{\rm d}_{i+1}$};
      \end{tikzpicture}
  }} & i \in S\\
  \end{cases}.
  \end{equation}
  Then the ladder morphism $\underline{n} \to \underline{n[S]}$ with respect to $g$ is the morphism ${}_g{\rm d}_n^{n[S]}  = {}_g{\widetilde{\rm d}}_0$.
\end{definition}
In almost all examples we will need, the family $\{g_n\}_n$ are ``self absorbing'' in that $g_m\otimes\id_{n-m} \cdot g_{n} = g_n$ for $n \ge m$.  Examples of such families are the identify morphisms, the Jones-Wenzl idempotents and the $(\ell, p)$-Jones-Wenzl idempotents.
\begin{example}
  Suppose $\ell =4$ and $p = 3$.  Let $n = 278$ so that $n + 1 = [2,1,2,0,3]_{3,4}$.
  Then $n$ is third generation and has eight cousins: $\supp n = \{278,272,230,224,206,200,158,152\}$.
  We tabulate the down morphisms from $\underline{n}$ as well as the general form of the ladder morphism with respect to a self absorbing family.

  \begin{center}
    \begin{tabular}{llll}
      \toprule
      $S$ & $n[S]$ & ${\rm d}_n^{n[S]}$& ${}_g{\rm d}_n^{n[S]}$\\
      \midrule
      $\emptyset$ & 278 &
      $\vcenter{\hbox{
      \begin{tikzpicture}[scale=0.4]
        \draw[very thick] (0,0) -- (1.3,0);
        \node at (1.9,0) {\tiny $278$};
      \end{tikzpicture}}}$&
      $\vcenter{\hbox{
      \begin{tikzpicture}[scale=0.4]
        \draw[very thick] (0,0) -- (1.3,0);
        \draw[very thick, fill=white] (.3,-.2) rectangle (1,.2);
      \end{tikzpicture}}}$
      \\
      $\{0\}$     & 272 &
      $\vcenter{\hbox{
      \begin{tikzpicture}[scale=0.4]
        \draw[very thick] (0,0) arc (-90:90:0.7);
        \draw[very thick] (0,1.7) -- (1.3,1.7);
        \node at (1.2,0.7) {\tiny $3$};
        \node at (1.9,1.7) {\tiny $272$};
      \end{tikzpicture}}}$&
      $\vcenter{\hbox{
      \begin{tikzpicture}[scale=0.4]
        \draw[very thick] (0,.5) -- (0.8,.5) arc (-90:90:0.3);
        \draw[very thick] (0.8,1.7) -- (1.3,1.7);
        \draw[very thick] (0,1.35) -- (0.8,1.35);
        \draw[very thick, fill=white] (.3,.8) rectangle (.8,1.9);
      \end{tikzpicture}}}$
      \\
      $\{2\}$     & 230 &
      $\vcenter{\hbox{
      \begin{tikzpicture}[scale=0.4]
        \draw[very thick] (0,2.2) -- (1.3,2.2);
        \draw[very thick] (0,.4) arc (-90:90:0.7);
        \draw[very thick] (0,0) -- (1.3,0);
        \node at (1.9,2.2) {\tiny $227$};
        \node at (1.2,1.1) {\tiny $24$};
        \node at (1.9,0) {\tiny $3$};
      \end{tikzpicture}}}$&
      $\vcenter{\hbox{
      \begin{tikzpicture}[scale=0.4]
        \draw[very thick] (0,.5) -- (0.8,.5) arc (-90:90:0.3);
        \draw[very thick] (0.8,1.7) -- (1.3,1.7);
        \draw[very thick] (0,1.35) -- (0.8,1.35);
        \draw[very thick] (0,0.1) -- (1.3,0.1);
        \draw[very thick, fill=white] (.3,.8) rectangle (.8,1.9);
        \draw[very thick, fill=white] (1.3,-.1) rectangle (1.8,1.9);
        \draw[very thick] (1.8,0.9) -- (2.1,0.9);
      \end{tikzpicture}}}$
      \\
      $\{0,2\}$   & 224 &
      $\vcenter{\hbox{
      \begin{tikzpicture}[scale=0.4]
        \draw[very thick] (0,0) arc (-90:90:0.7);
        \draw[very thick] (0,1.7) -- (1.3,1.7);
        \node at (1.2,0.7) {\tiny $27$};
        \node at (1.9,1.7) {\tiny $224$};
        \end{tikzpicture}}}$&
      $\vcenter{\hbox{
      \begin{tikzpicture}[scale=0.4]
        \draw[very thick] (0,.5) -- (0.8,.5) arc (-90:90:0.3);
        \draw[very thick] (0.8,1.7) -- (1.3,1.7);
        \draw[very thick] (0,1.35) -- (0.8,1.35);
        \draw[very thick, fill=white] (.3,.8) rectangle (.8,1.9);
      \end{tikzpicture}}}$\\
      $\{3\}$ & 206 &
      $\vcenter{\hbox{
      \begin{tikzpicture}[scale=0.4]
        \draw[very thick] (0,2.2) -- (1.3,2.2);
        \draw[very thick] (0,.4) arc (-90:90:0.7);
        \draw[very thick] (0,0) -- (1.3,0);
        \node at (1.9,2.2) {\tiny $179$};
        \node at (1.2,1.1) {\tiny $36$};
        \node at (1.9,0) {\tiny $27$};
      \end{tikzpicture}}}$&
      $\vcenter{\hbox{
      \begin{tikzpicture}[scale=0.4]
        \draw[very thick] (0,.5) -- (0.8,.5) arc (-90:90:0.3);
        \draw[very thick] (0.8,1.7) -- (1.3,1.7);
        \draw[very thick] (0,1.35) -- (0.8,1.35);
        \draw[very thick] (0,0.1) -- (1.3,0.1);
        \draw[very thick, fill=white] (.3,.8) rectangle (.8,1.9);
        \draw[very thick, fill=white] (1.3,-.1) rectangle (1.8,1.9);
        \draw[very thick] (1.8,0.9) -- (2.1,0.9);
      \end{tikzpicture}}}$
      \\
      \bottomrule
    \end{tabular}
    \hspace{3em}
    \begin{tabular}{llll}
      \toprule
      $S$ & $n[S]$ & ${\rm d}_n^{n[S]}$\\
      \midrule
      $\{0,3\}$     & 200 &
      $\vcenter{\hbox{
      \begin{tikzpicture}[scale=0.4]
        \draw[very thick] (0,4) -- (1.3,4);
        \draw[very thick] (0,2.2) arc (-90:90:0.7);
        \draw[very thick] (0,1.8) -- (1.3,1.8);
        \draw[very thick] (0,0) arc (-90:90:0.7);
        \node at (1.9,4) {\tiny $179$};
        \node at (1.2,2.9) {\tiny $36$};
        \node at (1.9,1.8) {\tiny $21$};
        \node at (1.2,0.7) {\tiny $3$};
      \end{tikzpicture}}}$&
      $\vcenter{\hbox{
      \begin{tikzpicture}[scale=0.4]
        \draw[very thick] (0,.5) -- (0.8,.5) arc (-90:90:0.3);
        \draw[very thick] (0.8,1.7) -- (1.3,1.7);
        \draw[very thick] (0,1.35) -- (0.8,1.35);
        \draw[very thick] (0,0.1) -- (1.3,0.1);
        \draw[very thick, fill=white] (.3,.8) rectangle (.8,1.9);
        \draw[very thick, fill=white] (1.3,-.1) rectangle (1.8,1.9);
        \draw[very thick] (1.8,1.0) -- (2.3,1.0);
        \draw[very thick] (0,-.4) -- (1.8,-.4) arc (-90:90:0.3);
      \end{tikzpicture}}}$
      \\
      $\{2,3\}$     & 158 &
      $\vcenter{\hbox{
      \begin{tikzpicture}[scale=0.4]
        \draw[very thick] (0,2.2) -- (1.3,2.2);
        \draw[very thick] (0,.4) arc (-90:90:0.7);
        \draw[very thick] (0,0) -- (1.3,0);
        \node at (1.9,2.2) {\tiny $155$};
        \node at (1.2,1.1) {\tiny $60$};
        \node at (1.9,0) {\tiny $3$};
      \end{tikzpicture}}}$&
      $\vcenter{\hbox{
      \begin{tikzpicture}[scale=0.4]
        \draw[very thick] (0,.5) -- (0.8,.5) arc (-90:90:0.3);
        \draw[very thick] (0.8,1.7) -- (1.3,1.7);
        \draw[very thick] (0,1.35) -- (0.8,1.35);
        \draw[very thick] (0,0.1) -- (1.3,0.1);
        \draw[very thick, fill=white] (.3,.8) rectangle (.8,1.9);
        \draw[very thick, fill=white] (1.3,-.1) rectangle (1.8,1.9);
        \draw[very thick] (1.8,0.9) -- (2.1,0.9);
      \end{tikzpicture}}}$
      \\
      $\{0,2,3\}$   & 152 &
      $\vcenter{\hbox{
      \begin{tikzpicture}[scale=0.4]
        \draw[very thick] (0,0) arc (-90:90:0.7);
        \draw[very thick] (0,1.7) -- (1.3,1.7);
        \node at (1.2,0.7) {\tiny $63$};
        \node at (1.9,1.7) {\tiny $152$};
      \end{tikzpicture}}}$&
      $\vcenter{\hbox{
      \begin{tikzpicture}[scale=0.4]
        \draw[very thick] (0,.5) -- (0.8,.5) arc (-90:90:0.3);
        \draw[very thick] (0.8,1.7) -- (1.3,1.7);
        \draw[very thick] (0,1.35) -- (0.8,1.35);
        \draw[very thick, fill=white] (.3,.8) rectangle (.8,1.9);
      \end{tikzpicture}}}$\\
      \bottomrule
    \end{tabular}
  \end{center}
  Note that the caps of ${\rm d}_n^{n[S]}$ are all ``ancestor centred.''  That is, their centres lie $x$ strands from the top where $x\in A(n) =  \{275, 251, 215\}$.
\end{example}

In particular, if we pick $g_n = \JW_n$ to be the usual Jones-Wenzl idempotents over characteristic zero, we define $\overline{\rm d}_n^m$ and $\overline{\rm u}_n^m$ in $\TL_n$ over $\Q(\delta)$.
These are also indexed by $m \in \supp n$ and in general do not descend to our pointed ring $(R,\delta)$.

For each $m$ in $\supp n$, we define
\begin{equation}
  \overline{\rm L}_n^m =
  \overline{\rm d}_n^m \cdot \JW_m \cdot \iota\overline{\rm d}_n^m =
  \overline{\rm d}_n^m\cdot \overline{\rm u}_n^m.
\end{equation}
The specifics can be found in~\cite{martin_spencer_2021} where the variable $U_n^m$ is used in place of $\overline{\rm L}_n^m$ and $p_n^m$ replaces $\overline{\rm d}_n^m$.

Now, let $\lambda_n^m \in \Q(\delta)$ be defined as $\overline{\rm u}_n^m\cdot\overline{\rm d}_n^m = (\lambda_n^m )^{-1}\JW_m$.
The remarkable fact is that $\sum_{m\in \supp n} \lambda_n^m \overline{\rm L}_n^m$ is a sum of orthogonal idempotents which descends to mixed characteristic $(\ell, p)$ (the constants $\lambda_n^m$ as well as the set $\supp n$ are dependent on $\ell$ and $p$) and gives the idempotent describing the projective cover of the trivial $\TL_n$-module.
This element is termed the \emph{$(\ell, p)$-Jones-Wenzl idempotent}.

\subsection{Boxes for Distinguished Morphisms}
Certain morphisms will be represented by the following diagrams.
\begin{equation}
  \JW_n =
  \begin{cases}
    \vcenter{\hbox{
      \begin{tikzpicture}[]
        \draw[white] (0,0) -- (0,-.3);
        \draw[thick,fill=purple] (-.2,0) rectangle (0.2,1.6);
        \foreach \i in {0,...,6} {
          \draw[very thick] (-.2,.2+ \i/5) -- (-.4,.2 + \i/5);
          \draw[very thick] (.2,.2+ \i/5) -- (.4,.2 + \i/5);
        }
      \end{tikzpicture}
    }}&\text{if $n$ is Eve}\\
    \vcenter{\hbox{
      \begin{tikzpicture}[]
        \draw[thick,fill=purple] (-.2,0) rectangle (0.2,1.6);
        \draw[thick,fill=black] (-.2,1.5) rectangle (0.2,1.6);
        \foreach \i in {0,...,6} {
          \draw[very thick] (-.2,.2+ \i/5) -- (-.4,.2 + \i/5);
          \draw[very thick] (.2,.2+ \i/5) -- (.4,.2 + \i/5);
        }
      \end{tikzpicture}
    }}&\text{otherwise}
  \end{cases}
\end{equation}
The black box in the non-Eve $(\ell, p)$-Jones-Wenzl idempotents indicates the construction from the ``top down'' in \cref{def:ladder}.

\begin{equation}
  {\rm d}_n^m =
    \vcenter{\hbox{
  \begin{tikzpicture}
    \draw[thick] (1.6,0.2) -- (0.95,0) -- (0.95,1.8) -- (1.6,1.6) -- cycle;
    \foreach \i in {1,...,8} {
      \draw[very thick] (0.75,\i/5) -- (0.95,\i/5);
    }
    \foreach \i in {2,...,7} {
      \draw[very thick] (1.6,\i/5) -- (1.8,\i/5);
    }
    \draw [decorate,decoration={brace,amplitude=5pt}] (0.6,0.1) -- (.6,1.7) node [black,midway,xshift=-10pt] {\footnotesize $n$};
    \draw [decorate,decoration={brace,amplitude=5pt}] (1.9,1.5) -- (1.9,0.3) node [black,midway,xshift=10pt] {\footnotesize $m$};
  \end{tikzpicture}
}}\quad\quad\quad\quad\quad\quad
  \overline{\rm d}_n^m =
    \vcenter{\hbox{
  \begin{tikzpicture}
    \draw[thick] (1.6,0.2) -- (0.95,0) -- (0.95,1.8) -- (1.6,1.6) -- cycle;
    \fill[thick] (1.6,1.45) -- (1.6,1.6) -- (0.95,1.8) -- (0.95,1.65) -- cycle;
    \foreach \i in {1,...,8} {
      \draw[very thick] (0.75,\i/5) -- (0.95,\i/5);
    }
    \foreach \i in {2,...,7} {
      \draw[very thick] (1.6,\i/5) -- (1.8,\i/5);
    }
    \draw [decorate,decoration={brace,amplitude=5pt}] (0.6,0.1) -- (.6,1.7) node [black,midway,xshift=-10pt] {\footnotesize $n$};
    \draw [decorate,decoration={brace,amplitude=5pt}] (1.9,1.5) -- (1.9,0.3) node [black,midway,xshift=10pt] {\footnotesize $m$};
  \end{tikzpicture}
}}\quad\quad\quad
\end{equation}
To avoid confusion, we may write the name of the morphism in the box where necessary.

\section{Idempotents and Cellular Algebras}\label{sec:cellular}
\subsection{Definitions}
We review the definition of, and basic results from cellular algebra theory.

\begin{definition}\cite[1.1]{graham_lehrer_1996}
  An $k$-algebra $A$ is \emph{cellular} with \emph{cell data} $(\Lambda, M, C, \iota)$ iff
  \begin{enumerate}[(C1)]
    \item The finite set $\Lambda$ is partially ordered and to each $\lambda \in \Lambda$ we have a non-empty finite set of ``$\lambda$-tableaux'' or ``$\lambda$-diagrams'', $M(\lambda)$.
    \item The set $\{C^\lambda_{S,T} \;:\; \lambda \in \Lambda;\; S,T\in M(\lambda)\}\subseteq A$ forms a $k$-basis of $A$.
    \item The map $\iota$ is an $k$-linear anti-automorphism of $A$ with $\iota^2 = \id$ and $\iota C^{\lambda}_{S,T} = C^\lambda_{T,S}$.
    \item For each $\lambda \in \Lambda$, $S,T \in M(\lambda)$, and $a \in A$, there is a map $r_a : M(\lambda)\times M(\lambda) \to k$ such that
      \begin{equation*}
        aC^\lambda_{S,T} = \sum_{U \in M (\lambda)} r_a(S,U)C^\lambda_{U,T} \mod A^{<\lambda}
      \end{equation*}
      where $A^{<\lambda}$ is the linear span of all $C^\mu_{V,W}$ with $\mu < \lambda$.
  \end{enumerate}
\end{definition}
We will write $A^{\le \lambda}$ to denote the linear span of all $C^{\mu}_{V,W}$ with $\mu \le \lambda$ in the natural way.
We will refer to $k$-linear anti-automorphisms that square to the identity as involutions.
\begin{proposition}\cite[1.5]{graham_lehrer_1996}\label{prop:a_lambda_ideal}
  The linear spaces $A^{\le\lambda}$ and $A^{<\lambda}$ are two sided ideals of $A$ fixed by $\iota$.
\end{proposition}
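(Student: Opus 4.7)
The plan is to establish the three properties in the order $\iota$-invariance, left ideal, right ideal, since each makes the next cleaner.

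First, I would dispatch $\iota$-invariance directly from axiom (C3): the involution sends the basis element $C^\lambda_{S,T}$ to $C^\lambda_{T,S}$, leaving the upper index $\lambda$ unchanged. Hence every spanning vector of $A^{\le\lambda}$ (respectively $A^{<\lambda}$) is mapped to another spanning vector of the same subspace, and linearity finishes the job.

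For the left ideal property, the essential input is axiom (C4). I would fix $a \in A$ and a basis element $C^\mu_{S,T}$ with $\mu \le \lambda$. Axiom (C4) expresses $aC^\mu_{S,T}$ as a $k$-linear combination of the elements $\{C^\mu_{U,T}\}_{U\in M(\mu)}$ modulo $A^{<\mu}$. The explicit part already lies in $A^{\le\lambda}$ because each term carries upper index $\mu \le \lambda$; for the $A^{<\mu}$ error, I would record the elementary order-bookkeeping observation that $A^{<\mu} \subseteq A^{\le\lambda}$ whenever $\mu \le \lambda$, and, for the strict case, $A^{\le\mu} \subseteq A^{<\lambda}$ whenever $\mu < \lambda$. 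Both follow from transitivity of the partial order applied to the superscripts of the spanning basis vectors. Combining these yields $aC^\mu_{S,T} \in A^{\le\lambda}$, and the parallel argument (now using the second inclusion throughout) handles $A^{<\lambda}$. Extending linearly over $A$ and over the cellular basis of $A^{\le\lambda}$ (respectively $A^{<\lambda}$) gives the left ideal property.

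The right ideal property then follows formally from the previous two steps via the anti-automorphism $\iota$. For $x \in A^{\le\lambda}$ and $a \in A$ we have $xa = \iota\bigl(\iota(a)\,\iota(x)\bigr)$; by $\iota$-invariance, $\iota(x) \in A^{\le\lambda}$, by the left ideal property $\iota(a)\,\iota(x) \in A^{\le\lambda}$, and applying $\iota$ once more keeps us inside $A^{\le\lambda}$. The same argument applies verbatim with $<\lambda$ in place of $\le\lambda$. The whole argument is essentially formal; the only minor obstacle is the careful partial-order bookkeeping that ensures the ``modulo $A^{<\mu}$'' error term in (C4) lands in the correct ambient ideal — no Temperley–Lieb-specific input is needed at this stage.
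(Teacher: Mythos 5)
Your proof is correct and is essentially the standard argument from Graham--Lehrer~\cite[1.5]{graham_lehrer_1996}, which the paper cites rather than reproves: $\iota$-invariance from (C3), the left ideal property from (C4) together with the order bookkeeping $A^{<\mu}\subseteq A^{\le\lambda}$ (resp.\ $A^{\le\mu}\subseteq A^{<\lambda}$), and the right ideal property by transporting through the anti-automorphism $\iota$. No gaps.
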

From this, and the definitions, it is clear that $A / A^{\le \lambda}$ is cellular with identical cell data to $A$, except that $\Lambda$ is restricted to elements not less than or equal to $\lambda$.

\begin{proposition}\cite[2.2]{graham_lehrer_1996}\label{prop:a_lambda_decomp}
  As a left module, $A^{\le \lambda}/A^{<\lambda}$ is the direct sum of copies of a module denoted $\Delta(\lambda)$ which has basis $C^\lambda_{S,T}$ as $S$ varies and $T$ is fixed.
\end{proposition}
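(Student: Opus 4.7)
The plan is to exploit axioms (C2) and (C4) to partition a natural basis of $A^{\le\lambda}/A^{<\lambda}$ by the second index. Concretely, for each fixed $T \in M(\lambda)$, I would let $\Delta(\lambda)_T$ denote the $k$-span in the quotient of the images $\{C^\lambda_{S,T} + A^{<\lambda} : S \in M(\lambda)\}$. The vector-space decomposition $A^{\le\lambda}/A^{<\lambda} = \bigoplus_{T \in M(\lambda)} \Delta(\lambda)_T$ follows immediately from (C2), since the images of the $C^\lambda_{S,T}$ (with $\lambda$ fixed and $(S,T)$ varying over $M(\lambda) \times M(\lambda)$) form a basis of the quotient by \cref{prop:a_lambda_ideal}, and I am merely grouping these basis vectors by $T$.

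Next I would verify that each $\Delta(\lambda)_T$ is a left $A$-submodule. This is a direct application of (C4): for any $a \in A$ and any $S \in M(\lambda)$, the product $aC^\lambda_{S,T}$ reduces modulo $A^{<\lambda}$ to $\sum_{U \in M(\lambda)} r_a(S,U)\, C^\lambda_{U,T}$, which lies in $\Delta(\lambda)_T$ because the second index $T$ is not altered.

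The crucial observation, and the reason the direct summands are mutually isomorphic, is that the structure constants $r_a(S,U)$ appearing in (C4) depend only on $a$ and $S$, not on $T$. Consequently, for any $T, T' \in M(\lambda)$, the $k$-linear map $\Delta(\lambda)_T \to \Delta(\lambda)_{T'}$ sending $C^\lambda_{S,T} + A^{<\lambda} \mapsto C^\lambda_{S,T'} + A^{<\lambda}$ intertwines the $A$-action, giving an isomorphism of left $A$-modules. One then defines $\Delta(\lambda)$ abstractly as the free $k$-module on symbols $\{C_S : S \in M(\lambda)\}$ equipped with the action $aC_S = \sum_{U} r_a(S,U)\, C_U$; associativity of this action follows from associativity in the quotient algebra $A/A^{<\lambda}$ (valid by \cref{prop:a_lambda_ideal}), applied to $(ab)C^\lambda_{S,T} = a(bC^\lambda_{S,T})$ modulo $A^{<\lambda}$.

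The main obstacle is purely conceptual rather than computational: one must notice that (C4) privileges no particular $T$, and this is exactly what allows all the $\Delta(\lambda)_T$ to be identified with a single abstract module. The only sanity check required is the well-definedness of the formal action on $\Delta(\lambda)$, which boils down to the identity $r_{ab}(S,V) = \sum_{U} r_a(S,U)\, r_b(U,V)$, itself an immediate consequence of associativity in $A/A^{<\lambda}$.
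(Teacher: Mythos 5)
Your argument is correct and is essentially the paper's own approach: the paper does not reprove this (it cites Graham--Lehrer), but the discussion immediately following the proposition describes exactly the decomposition $A^{\le\lambda}/A^{<\lambda} \simeq \bigoplus_{T\in M(\lambda)} \Delta(\lambda)$ with $\Delta(\lambda)$ the $k$-span of $M(\lambda)$ and action $a\cdot S = \sum_U r_a(S,U)\,U$, which is precisely what you construct. One bookkeeping nit: with the convention of (C4) as stated, applying $b$ first gives $r_{ab}(S,V)=\sum_U r_b(S,U)\,r_a(U,V)$ rather than the order you wrote, but this is immaterial to the validity of the argument.
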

We may write this last condition as
\begin{equation}
  A^{\le\lambda}/A^{<\lambda} \simeq \bigoplus_{T \in M(\lambda)} \Delta(\lambda)
\end{equation}
where $\Delta(\lambda)$ is simply the $k$-span of $M(\lambda)$ and the action of $a\in A$ on $S \in M(\lambda)$ is given (from (C4)) by
\begin{equation}
  a \cdot S = \sum_{U \in M(\lambda)} r_a(S,U) U.
\end{equation}
These modules, $\Delta(\lambda)$ are known as cell modules or standard modules and are crucial in what follows.

An alternative definition is in terms of cell ideals.
\begin{definition}\cite[Definition 3.2]{konig_xi_1998}
  Let $A$ be an $k$-algebra for $k$ a commutative Noetherian domain equipped with involution $\iota$.
  A two sided ideal $J\subseteq A$ is called a \emph{cell ideal} iff it is fixed by $\iota$ and there is a left ideal $\Delta\subseteq J$ such that $\Delta$ is finitely generated and free over $k$ and $J\simeq \Delta\otimes_k \iota\Delta$ such that the action of $\iota$ sends $x\otimes y \mapsto \iota y \otimes \iota x$.
\end{definition}
The definition of a cell ideal is motivated as follows.
Let $\lambda\in\Lambda$ be minimal so that $A^\lambda = A^{\le \lambda}$ is isomorphic to $\Delta(\lambda)\otimes \iota\Delta(\lambda)$.
Then $A^\lambda$ is certainly fixed by $\iota$ and the involution acts as we expect it to if the isomorphism is given by $S\otimes \iota T \mapsto C^\lambda_{S,T}$.
Hence $A^\lambda$ is a cell ideal.

\begin{definition}\cite[Definition 3.2]{konig_xi_1998}
  The algebra $A$ is called \emph{cellular} iff there is an $k$-module decomposition $A = J_1'\oplus\cdots\oplus J_n'$ with each $J_i'$ fixed by $\iota$ such that if $J_j = \oplus_{\ell = 1}^j J_\ell'$, we have a chain of two sided ideals
  \begin{equation*}
    0 = J_0 \subset J_1 \subset\cdots \subset J_n = A
  \end{equation*}
  such that $J_j / J_{j-1}$ is a cell ideal of $A/J_{i-1}$.
\end{definition}

The arguments succeeding \cref{prop:a_lambda_ideal,prop:a_lambda_decomp} show that the first definition of a cell ideal implies the second definition.  It is not too hard to show the converse.

\subsection{Cell modules}
Let us return to the important modules $\Delta(\lambda)$.
We know from (C4) that (modulo $A^{<\lambda}$),
\begin{equation}
  C^\lambda_{V,W}aC^\lambda_{S,T} = \sum_{U \in M(\lambda)} r_{ C^\lambda_{V,W}a}(S,U) C^\lambda_{U,T}.
\end{equation}
Similarly
\begin{equation}
  C^\lambda_{T,S}\iota aC^\lambda_{W,V} = \sum_{U \in M(\lambda)} r_{ C^\lambda_{T,S}\iota a}(W,U) C^\lambda_{U,V}.
\end{equation}
But these equations are simply involutions of each-other so in particular, $U=V$ is the only term to survive in the first equation and $U=T$ the only in the second.
We may rewrite this as asserting that there is a map $\phi_a : M(\lambda)\times M(\lambda) \to k$ such that
\begin{equation}\label{eq:cac_multiply}
  C^\lambda_{V,W}aC^\lambda_{S,T} = \phi_a(W,S) C^\lambda_{V,T} \mod A^{<\lambda}.
\end{equation}

A crucial r\^ole is then played by the bilinear form $\langle -,- \rangle_\lambda:\Delta(\lambda)\times\Delta(\lambda) \to k$ defined to be the linear extension of $\phi_\id$.
When $\lambda$ is understood, it will be dropped from the notation.

Let $R(\lambda)\subseteq \Delta(\lambda)$ be the radical of the bilinear form on $\Delta(\lambda)$ and $L(\lambda) = \Delta(\lambda) / R(\lambda)$.
We will denote by $\Lambda_0\subseteq\Lambda$ the set of all $\lambda$ such that $L(\lambda) \neq 0$.

\begin{lemma}\cite[3.4]{graham_lehrer_1996}
  If $k$ is a field, then the set $\{L(\lambda) \;:\; \lambda\in \Lambda_0\}$ is a complete set of pairwise non-isomorphic completely irreducible modules for $A$.
\end{lemma}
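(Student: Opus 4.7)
The plan has three parts: irreducibility of each $L(\lambda)$, completeness of the list, and pairwise non-isomorphism.

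For irreducibility, I first show that $R(\lambda)$ is an $A$-submodule of $\Delta(\lambda)$. The essential input is the contravariance $\langle ax, y\rangle = \langle x, \iota(a) y\rangle$, which I would derive by applying $\iota$ to \eqref{eq:cac_multiply}: since $\iota$ is an anti-automorphism and $\iota(\mathrm{id}) = \mathrm{id}$, comparison of the two sides yields the symmetry $\phi_{\mathrm{id}}(S,W) = \phi_{\mathrm{id}}(W,S)$ and the identity $\phi_{\iota(a)}(S,W) = \phi_{a}(W,S)$, from which contravariance drops out. Hence if $x\in R(\lambda)$ and $a\in A$, then $\langle ax,y\rangle = \langle x,\iota(a)y\rangle = 0$ for all $y$, so $ax\in R(\lambda)$. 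For the other half, take any $x\in \Delta(\lambda)\setminus R(\lambda)$ and choose $W\in M(\lambda)$ with $\langle W,x\rangle\neq 0$; then \eqref{eq:cac_multiply} with $a=\mathrm{id}$ gives $C^\lambda_{V,W}\cdot x = \langle W,x\rangle\,V$, so varying $V$ produces every basis vector and $Ax = \Delta(\lambda)$. These two facts identify $R(\lambda)$ as the unique maximal submodule, and $L(\lambda)$ is simple whenever $\lambda \in \Lambda_0$.

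For completeness, I would refine the partial order on $\Lambda$ to a total order and invoke \cref{prop:a_lambda_ideal,prop:a_lambda_decomp}: the regular module $A$ then admits a filtration by two-sided ideals whose successive quotients are direct sums of the cell modules $\Delta(\lambda)$. Every simple $A$-module therefore occurs as a composition factor of some $\Delta(\lambda)$. The first part shows that $L(\lambda)$ is the unique simple quotient of $\Delta(\lambda)$ (for $\lambda\in\Lambda_0$), and inducting on the length of $R(\lambda)$ shows all composition factors of $\Delta(\lambda)$ lie in $\{L(\mu):\mu\in\Lambda_0\}$. Consequently every simple $A$-module is isomorphic to some $L(\mu)$.

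The main obstacle is the non-isomorphism step, for which I would establish the following action bound and then apply it: if $\mu\not\leq\lambda$, then $C^\lambda_{S,T}$ annihilates $\Delta(\mu)$ for every $S,T\in M(\lambda)$. To prove the bound, consider $C^\lambda_{S,T}\cdot C^\mu_{v,T_0}$; by \cref{prop:a_lambda_ideal} this product lies in $A^{\leq\lambda}$, hence its cell-basis expansion involves only $C^{\lambda'}_{?,?}$ with $\lambda'\leq\lambda$. Since $\mu\not\leq\lambda$, the coefficient of $C^\mu_{U,T_0}$ is zero, and matching this against the defining formula of (C4) forces $r_{C^\lambda_{S,T}}(v,U) = 0$ for all $U$. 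Granted the bound, fix $\lambda \in \Lambda_0$ and pick $S\in M(\lambda)$ with $S\notin R(\lambda)$, together with $T\in M(\lambda)$ satisfying $\langle S,T\rangle\neq 0$; by \eqref{eq:cac_multiply} the action $C^\lambda_{S,S}\cdot T = \langle S,T\rangle\, S$ has nonzero image in $L(\lambda)$, so $C^\lambda_{S,S}$ acts nontrivially on $L(\lambda)$ but annihilates $L(\mu)$ whenever $\mu\not\leq\lambda$. If $L(\lambda)\cong L(\mu)$, this forces $\mu\leq\lambda$; swapping the roles of $\lambda$ and $\mu$ yields $\lambda\leq\mu$, and hence $\lambda = \mu$.
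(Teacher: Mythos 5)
The paper gives no proof of this lemma---it is quoted directly from Graham--Lehrer---so the comparison is against the standard argument. Your first and third parts are correct and are exactly that argument: the contravariance $\langle ax,y\rangle=\langle x,\iota(a)y\rangle$ derived from \eqref{eq:cac_multiply} does show $R(\lambda)$ is a submodule, the computation $C^\lambda_{V,W}\cdot x=\langle W,x\rangle V$ does show $Ax=\Delta(\lambda)$ for $x\notin R(\lambda)$ so that $R(\lambda)$ is the unique maximal submodule, and the annihilation bound ($C^\lambda_{S,T}$ kills $\Delta(\mu)$ for $\mu\not\le\lambda$, proved correctly from \cref{prop:a_lambda_ideal} together with (C2) and (C4)) combined with the two-sided comparison $\mu\le\lambda\le\mu$ correctly separates the $L(\lambda)$.

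The gap is in the completeness step. You correctly reduce to the statement that every simple module is a composition factor of some $\Delta(\lambda)$, but the sentence ``inducting on the length of $R(\lambda)$ shows all composition factors of $\Delta(\lambda)$ lie in $\{L(\mu):\mu\in\Lambda_0\}$'' is circular: the composition factors of $R(\lambda)$ are a priori arbitrary simple modules, and the only information you have about them is that they too occur in some $\Delta(\mu)$---which is precisely the claim under proof---so the induction never reaches a smaller case. The standard repair upgrades ``composition factor'' to ``quotient.'' Refine $\le$ to a total order and let $0=J_0\subset J_1\subset\cdots\subset J_n=A$ be the resulting chain of two-sided ideals with $J_i/J_{i-1}\cong\bigoplus\Delta(\lambda_i)$. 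For a simple $S$, choose $i$ minimal with $J_iS\neq 0$; then $J_iS=S$, $J_{i-1}S=0$, and $S$ is a quotient of $J_i/J_{i-1}$, hence there is a surjection $\theta:\Delta(\lambda_i)\onto S$. Since $C^{\lambda_i}_{V,W}\theta(x)=\langle W,x\rangle\,\theta(V)$ and some such element acts nontrivially on $S$, the form on $\Delta(\lambda_i)$ is nonzero, so $\lambda_i\in\Lambda_0$; moreover $J_i$ annihilates $\theta(R(\lambda_i))$, which therefore lies in the proper submodule $\{s\in S: J_is=0\}=0$, giving $\ker\theta=R(\lambda_i)$ and $S\cong L(\lambda_i)$. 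Two smaller points: you should justify that every simple occurs in the regular module (immediate here, as (C1)--(C2) make $A$ finite dimensional over the field $k$), and the lemma asserts the $L(\lambda)$ are \emph{completely} (absolutely) irreducible, which needs the one extra line that any $A$-endomorphism of $L(\lambda)$ is scalar, read off from $\langle W,x\rangle\,\theta(V)=\langle W,\theta(x)\rangle\,V$.
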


Fix $T \in M(\lambda)$.
Then $\spn \{C^\lambda_{S,T} \;:\; S\in M(\lambda)\} = \Delta(\lambda) \otimes \iota T$ is a left ideal of $A^{\le \lambda}/A^{<\lambda}$.
It lifts to the left ideal generated by $\{C^\lambda_{S,T} \;:\; S\in M(\lambda)\}$ of $A$, which we denote $D(\lambda)$.
Note that it is not always true that $D(\lambda) \supseteq A^{<\lambda}$.

\begin{lemma}\label{lem:linear_comm}
  The algebra $A$ is commutative if $\dim \Delta(\lambda) = 1$ for all $\lambda \in \Lambda$.
\end{lemma}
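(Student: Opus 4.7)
The plan is to exploit the involution $\iota$ together with the cellular basis. First I would observe that the hypothesis $\dim \Delta(\lambda) = 1$ forces $|M(\lambda)| = 1$ for every $\lambda \in \Lambda$, since by \cref{prop:a_lambda_decomp} the cell module $\Delta(\lambda)$ has basis indexed by $M(\lambda)$. Let $*_\lambda$ denote the unique element of $M(\lambda)$, and write $C^\lambda \eqdef C^\lambda_{*_\lambda, *_\lambda}$. Then by (C2) the set $\{C^\lambda : \lambda \in \Lambda\}$ is a $k$-basis of $A$.

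Next I would apply property (C3): $\iota C^\lambda_{S,T} = C^\lambda_{T,S}$. Since $S = T = *_\lambda$ is forced, we have $\iota C^\lambda = C^\lambda$ for every $\lambda$. By $k$-linearity, $\iota$ acts as the identity on all of $A$.

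Finally, using that $\iota$ is an anti-automorphism that is simultaneously the identity map: for any $a, b \in A$,
\begin{equation*}
  ab = \iota(ab) = \iota(b)\,\iota(a) = ba,
\end{equation*}
so $A$ is commutative.

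There is no genuine obstacle here; the only thing to watch is that the argument uses nothing beyond the cellular axioms (C1)--(C4) and does not need $k$ to be a field or $A$ to be finite-dimensional beyond the finiteness already packaged into the cell datum. In particular, the conclusion is independent of whether the bilinear forms $\langle -,- \rangle_\lambda$ are degenerate, so the set $\Lambda_0$ plays no role.
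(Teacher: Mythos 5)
Your proof is correct and is essentially identical to the paper's: both arguments note that the cellular basis consists of diagonal elements $C^\lambda_{x,x}$ fixed by $\iota$, conclude $\iota = \id$, and then use the anti-automorphism property to get $ab = \iota(ab) = \iota(b)\iota(a) = ba$. Your write-up just spells out the intermediate steps more explicitly.
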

\begin{proof}
  The algebra $A$ has an involution-fixed $k$-basis since $C^\lambda_{x,x} = \iota C^\lambda_{x,x}$ and so $\iota = \id$.
  But then $ab = \iota(ab) = (\iota b)(\iota a) = b a$.
\end{proof}

\begin{example}\label{eg:cellular_ks3}
  If $k$ is any field and $\mathfrak{S}_3$ is generated by the transpositions $\sigma_1 = (12)$ and $\sigma_2 = (23)$, then $k\mathfrak{S}_3$ is cellular with involution given by the linear extension of the inverse operator on $\mathfrak{S}_3$.  The set $\Lambda$ is the set of three partitions:
  \begin{equation}
    \Yboxdim{5pt}
    \Yvcentermath1
    \Lambda =
    \left\{
      \yng(3)\;>\;
      \yng(2,1)\;>\;
      \yng(1^3)
    \right\}
  \end{equation}
  and the sets $M(\lambda)$ consist of standard tableaux:
  \begin{equation}
    \Yboxdim{5pt}
    \Yvcentermath1
    M\left(\yng(3)\right)
    \Yboxdim{9pt}
    = \left\{
      \young(123)
    \right\}\quad\quad
    \Yboxdim{5pt}
    M\left(\yng(2,1)\right)
    \Yboxdim{9pt}
    = \left\{
      \young(12,3),
      \young(13,2)
    \right\}\quad\quad
    \Yboxdim{5pt}
    M\left(\yng(1^3)\right)
    \Yboxdim{9pt}
    = \left\{
      \young(1,2,3)
    \right\}
  \end{equation}
  We will denote these by $\mathbf{123}$, $\mathbf{12}$, $\mathbf{13}$ and $\mathbf{1}$ respectively representing their first rows.

  A possible cellular basis (there are multiple) is given by
  \begin{align*}
    \Yboxdim{3pt}
    \Yvcentermath1
    C^{\yng(1^3)}_{\mathbf{1},\mathbf{1}} &=
    \sigma_1\sigma_2\sigma_1 +
    \sigma_1\sigma_2 +
    \sigma_2\sigma_1 +
    \sigma_1 + \sigma_2 + \id &
    \Yboxdim{3pt}
    \Yvcentermath1
    C^{\yng(3)}_{\mathbf{123},\mathbf{123}} &= \id
    \\
    \Yboxdim{3pt}
    \Yvcentermath1
    C^{\yng(2,1)}_{\mathbf{12},\mathbf{12}} &=
    \sigma_2 + \id &
    \Yboxdim{3pt}
    \Yvcentermath1
    C^{\yng(2,1)}_{\mathbf{12},\mathbf{13}} &=
    \sigma_2\sigma_1 + \sigma_2 + \sigma_1 + \id\\
    \Yboxdim{3pt}
    \Yvcentermath1
    C^{\yng(2,1)}_{\mathbf{13},\mathbf{13}} &=
    \sigma_1 + \id &
    \Yboxdim{3pt}
    \Yvcentermath1
    C^{\yng(2,1)}_{\mathbf{13},\mathbf{12}} &=
    \sigma_1\sigma_2 + \sigma_1 + \sigma_2 + \id,
  \end{align*}
  and this satisfies the involutive property.

  From this we can calculate $\Yboxdim{3pt}\Yvcentermath1
  C^{\yng(1^3)}_{\mathbf{1},\mathbf{1}}\cdot C^{\yng(1^3)}_{\mathbf{1},\mathbf{1}} =
  6 C^{\yng(1^3)}_{\mathbf{1},\mathbf{1}}$
  from which $\langle \mathbf{1},\mathbf{1}\rangle = 6$ and similarly $\Yboxdim{3pt}\Yvcentermath1
  C^{\yng(2,1)}_{\mathbf{12},\mathbf{12}}\cdot C^{\yng(2,1)}_{\mathbf{13},\mathbf{12}} =
  C^{\yng(1^3)}_{\mathbf{1},\mathbf{1}} + C^{\yng(2,1)}_{\mathbf{12},\mathbf{12}}$ so $\langle\mathbf{12},\mathbf{13}\rangle =1$.
  Using these calculations we find that in the given basis,
  \begin{equation}\Yboxdim{3pt}
    \langle-,-\rangle_{\yng(1^3)} = \begin{pmatrix}6\end{pmatrix}\quad\quad\quad
    \langle-,-\rangle_{\yng(2,1)} = \begin{pmatrix}2&1\\1&2\end{pmatrix}\quad\quad\quad
    \langle-,-\rangle_{\yng(3)} = \begin{pmatrix}1\end{pmatrix}.
  \end{equation}
  Thus if $\Char k\not\in\{2,3\}$ then $\Lambda_0 = \Lambda$ and otherwise $\Yboxdim{3pt}\Yvcentermath1\Lambda_0 = \Lambda\setminus\left\{\yng(1^3)\right\}$.

  In all cases $\Delta(\smallyng{5pt}{3})$ is irreducible as $R(\smallyng{5pt}{3}) = 0$.  It is the linear, sign representation.
  Similarly $\Delta(\smallyng{5pt}{1^3})$ is the trivial representation.

  If $\Char k = 2$ then $\Lambda_0 = \{\smallyng{5pt}{2,1},\smallyng{5pt}{3}\}$ and $\Delta(\smallyng{5pt}{2,1})$ is the two dimensional irreducible.  Notice then that the sign representation $\Delta(\smallyng{5pt}{3})$ is actually the trivial and is isomorphic to $\Delta(\smallyng{5pt}{1^3})$.

  On the other hand if $\Char k = 3$ then $\Lambda_0$ is also $ \{\smallyng{5pt}{2,1},\smallyng{5pt}{3}\}$ but now the trivial module arises as the indecomposable head of $\Delta(\smallyng{5pt}{2,1})$.  There is in fact a short exact sequence
  \begin{equation}
    0\to\Delta(\smallyng{5pt}{3}) \to \Delta(\smallyng{5pt}{2,1}) \to \Delta(\smallyng{5pt}{1^3})\to 0.
  \end{equation}
\end{example}

\subsection{The Cellular Data of a Hecke Algebra}\label{sec:cell_hecke}
Let $A$ be a cellular algebra with cell data $(\Lambda, M, C, \iota)$ and let $e \in A$ be idempotent fixed by $\iota$.
We will show that $eAe = \widetilde A$ is cellular and construct cellular data $(\widetilde \Lambda, \widetilde M, \widetilde C, \iota)$.
We will have $\widetilde\Lambda \subseteq \Lambda$ and $\widetilde M(\lambda) \subseteq M(\lambda)$.
If $\widetilde M(\lambda) = \emptyset$ we will exclude $\lambda$ from $\Lambda$, thus satisfying (C1).

As a first step, let $\widetilde C^\lambda_{S,T} = eC^\lambda_{S,T}e$.
Then, $\iota$ is an involution of $eAe$ such that $\iota \widetilde C^\lambda_{S,T} = \widetilde C^\lambda_{T,S}$.
Hence (C3) holds.
It is clear that as $C^\lambda_{S,T}$ span $A$, the $\widetilde C^\lambda_{S,T}$ span $\widetilde A$.
Further, $\widetilde A^{\le \lambda} \subseteq \widetilde A \cap A ^{\le\lambda}$ and if $x \in \widetilde A \cap A^{\le\lambda}$ is written in terms of $C^\mu_{S,T}$, then since it is fixed by left and right multiplication by $e$, we see the reverse inclusion holds too.

Now note that, for any $eae \in \widetilde A$,
\begin{align}
  eae\, \widetilde C^\lambda_{S,T} &= \sum_{U\in M(\lambda)} e\,r_{ae}(U,S)\,C^\lambda_{U,T}e \mod A^{<\lambda}\nonumber\\
                             &= \sum_{U\in M(\lambda)} r_{ae}(U,S)\widetilde C^\lambda_{U,T}\mod A^{<\lambda}
\end{align}
and so (C4) is satisfied too, if we set $\widetilde r_{eae}(-,-)$ to be $r_{ae}(-,-)$.

The final constraint to show is that the $\widetilde C^{\lambda}_{S,T}$ form an $k$-basis of $\widetilde A$.
Here is where we remove elements of $M(\lambda)$ to form $\widetilde M(\lambda)$ and, should the result be empty, $\lambda$ from $\Lambda$.
Clearly by induction it will suffice to show that $\widetilde C^\lambda_{S,T}$ is a basis of $\widetilde A^{\le \lambda}$ for minimal $\lambda\in\widetilde\Lambda$.
On the other hand, since $\widetilde A^{\le\lambda} = \widetilde A \cap A^{\le\lambda}$, we will require that $\widetilde C^\lambda_{S,T}$ is a basis of $\widetilde A^{\le \lambda}$ for minimal $\lambda$.
Thus this is a necessary and sufficient condition.

Equivalently, we require that $\{e\cdot S \;:\; S \in \widetilde M(\lambda)\}$ is a linearly independent set in $\Delta(\lambda)$ for each $\lambda$ (it is already spanning).
In fact, the preceding remark makes clear that as we can pick any subset of $M(\lambda)$ for $\widetilde M(\lambda)$ as long as such a set is a basis.

Let $N^\Delta_e(\lambda) = \{ S \in M(\lambda) \;:\; e \cdot S = 0\}$ be those tableaux killed by $e$ in $\Delta(\lambda)$ and $N^D_e(\lambda) = \{ S\in M(\lambda) \;:\; e \cdot C^\lambda_{S,T} = 0\}$ in $D(\lambda)$.
Clearly $\widetilde M(\lambda) \subseteq M(\lambda) \setminus N_e(\lambda)$ and $N^{D}_e(\lambda) \subseteq N^{\Delta}_e(\lambda)$.
\begin{definition}\label{def:generous}
  We will say that the idempotent $e\in A$ is \emph{generous} if $M(\lambda) \setminus N^\Delta_e(\lambda)$ forms a basis for $\Delta(\lambda)$ for each $\lambda \in \Lambda$.
  It will be termed \emph{lavish} if $N^D_e(\lambda) = N^\Delta_e(\lambda)$ for each $\lambda$.
\end{definition}
Notice that even a generous idempotent does not necessarily have $\widetilde \Lambda = \Lambda$.

The above analysis makes it clear that $\widetilde\Delta(\lambda) = e\Delta(\lambda)$.
Further, by making the substitution $a \to e$ and multiplying \cref{eq:cac_multiply} by $e$ on both sides, we see that the bilinear form on $\widetilde\Delta(\lambda)$ is exactly the restriction of the bilinear form on $\Delta(\lambda)$.

Now
\begin{align}\nonumber
  ey \in \rad \widetilde \Delta(\lambda) &\iff \langle ey, ez \rangle = 0 \quad\forall ez \in \widetilde \Delta(\lambda)\\\nonumber
  &\iff \langle ey, z \rangle = 0 \quad\forall z \in \Delta(\lambda)\\
  &\iff ey \in \rad \Delta(\lambda)
\end{align}
Hence $\rad \widetilde\Delta(\lambda) = e\Delta(\lambda)\cap \rad \Delta(\lambda)$.  However, since $e$ acts as identity on $\rad \widetilde\Delta(\lambda)$ it must be that
\begin{equation}\label{eq:e_rad_is_rad}
  \rad \widetilde\Delta(\lambda) = e \rad \Delta(\lambda).
\end{equation}
Recall that the functor ${\rm res}_e : \cmod{A} \to \cmod{\widetilde A}$ sending $M \mapsto eM$ and restricting morphisms is exact.
Hence we have the exact sequence
\begin{equation}\label{eq:exact_restrict_delta}
  0 \to
  \rad \widetilde\Delta(\lambda) = e\rad \Delta(\lambda) \to
  \widetilde\Delta(\lambda) \to
  \widetilde L(\lambda) = eL(\lambda) \to
  0
\end{equation}
giving that $\{e L(\lambda) : \lambda \in \widetilde \Lambda_0\}$ is a complete set of irreducible modules for $\widetilde A$ (the only thing to prove here was that $e L(\lambda) = \widetilde L(\lambda)$).

\begin{corollary}\label{cor:semisimple}
  If $A$ is semi-simple, then $\tilde A$ is semi-simple.
\end{corollary}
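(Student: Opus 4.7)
The plan is to reduce semi-simplicity of $\tilde A = eAe$ to the vanishing of the radicals of the cell modules of $\tilde A$, and then invoke the identity $\rad \tilde\Delta(\lambda) = e\,\rad \Delta(\lambda)$ already established in equation~(\ref{eq:e_rad_is_rad}). Recall (Graham--Lehrer) that a cellular algebra over a field is semi-simple precisely when the bilinear form on every cell module is non-degenerate, equivalently when $\Delta(\mu) = L(\mu)$ for every $\mu \in \Lambda$, i.e. $\rad \Delta(\mu) = 0$.

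First I would note that the hypothesis that $A$ is semi-simple gives, via this criterion, $\rad \Delta(\mu) = 0$ for every $\mu \in \Lambda$. Next, we established earlier in this section that $\tilde A$ is itself a cellular algebra, with cell modules $\tilde\Delta(\lambda) = e\Delta(\lambda)$ (for $\lambda \in \tilde\Lambda \subseteq \Lambda$) and with bilinear form obtained by restriction from $\Delta(\lambda)$. Applying equation~(\ref{eq:e_rad_is_rad}) to each $\lambda \in \tilde\Lambda$ gives
\[
  \rad \tilde\Delta(\lambda) \;=\; e\,\rad \Delta(\lambda) \;=\; e \cdot 0 \;=\; 0.
\]

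Thus every cell module of $\tilde A$ has non-degenerate bilinear form, so applying the Graham--Lehrer criterion in the reverse direction to the cellular algebra $\tilde A$ we conclude that $\tilde A$ is semi-simple. There is no real obstacle here: all the heavy lifting has already been done in building the cellular structure on $\tilde A$ and in identifying its radicals with $e\,\rad \Delta(\lambda)$. The corollary is a one-line consequence once those two ingredients are in hand, and the only thing to be mildly careful about is restricting attention to $\lambda \in \tilde\Lambda$ rather than all of $\Lambda$, which is harmless because the excluded labels contribute no cell modules to $\tilde A$ at all.
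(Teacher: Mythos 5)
Your proof is correct and follows essentially the same route as the paper: both arguments invoke the Graham--Lehrer criterion that a cellular algebra over a field is semi-simple iff every cell module has vanishing radical, and then apply the identity $\rad \widetilde\Delta(\lambda) = e\,\rad\Delta(\lambda)$ from \cref{eq:e_rad_is_rad}. Your write-up just spells out the steps that the paper's one-line proof leaves implicit.
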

\begin{proof}
  The algebra $A$ (resp. $\widetilde A$) is semi-simple iff $\Delta(m)$ (resp. $\widetilde\Delta(m)$) is simple for all $m$.
  That is to say, $\rad\Delta(m)$ (resp. $\rad\widetilde\Delta(m)$) is zero.
\end{proof}

\begin{example}
  Recall \cref{eg:cellular_ks3} and suppose $k$ has characteristic 3.
  Let $e$ be the idempotent $-\sigma_1 - \id$.
  Then $e\cdot \mathbf{1} = \mathbf{1}$ as $e\cdot C^{\smallyng{3pt}{1^3}}_{\mathbf{1},\mathbf{1}} = C^{\smallyng{3pt}{1^3}}_{\mathbf{1},\mathbf{1}}$.
  On the other hand, in the module $\Delta(\smallyng{5pt}{3})$, we have $e\cdot \mathbf{123} = 0$ as $e\cdot C^{\smallyng{3pt}{3}}_{\mathbf{123},\mathbf{123}} = e = -C^{\smallyng{3pt}{2,1}}_{\mathbf{13},\mathbf{13}}$.

  Similar calculations show that
  \begin{align*}
    e \cdot C^{\smallyng{3pt}{2,1}}_{\mathbf{12},\mathbf{12}} &= -C^{\smallyng{3pt}{2,1}}_{\mathbf{13},\mathbf{12}}&
    e \cdot C^{\smallyng{3pt}{2,1}}_{\mathbf{13},\mathbf{13}} &= C^{\smallyng{3pt}{2,1}}_{\mathbf{13},\mathbf{13}}\\
    e \cdot C^{\smallyng{3pt}{2,1}}_{\mathbf{12},\mathbf{13}} &=
    -C^{\smallyng{3pt}{1^3}}_{\mathbf{1},\mathbf{1}}  +
    -C^{\smallyng{3pt}{2,1}}_{\mathbf{13},\mathbf{13}}&
    e \cdot C^{\smallyng{3pt}{2,1}}_{\mathbf{13},\mathbf{12}} &= C^{\smallyng{3pt}{2,1}}_{\mathbf{13},\mathbf{12}}
  \end{align*}
  from which we deduce that $e\cdot\mathbf{13} = \mathbf{13}$ and $e\cdot\mathbf{12} = -\mathbf{13}$ in $\Delta(\smallyng{5pt}{2,1})$.
  Its clear then that an appropriate choice of $\widetilde M(\smallyng{5pt}{2,1})$ is $\{\mathbf{13}\}$.

  However, this idempotent is not generous since $N^\Delta_e(\smallyng{5pt}{2,1}) = \emptyset$.
  It is also not lavish since $N^\Delta_e(\smallyng{5pt}{3}) = \{\mathbf{123}\}$ but $N^D_e(\smallyng{5pt}{3}) = \emptyset$.
\end{example}

\subsection{The case of generalised Jones-Wenzl idempotents}\label{sec:lp_hecke}
Let us now turn to a central example.
Let $A = \TL_n$ over pointed ring $(k,\delta)$ with $(\ell, p)$-torsion.
It is well known that $A$ is cellular with $\Lambda = \{ 0 \le m \le n \;:\; m \equiv_2 n\}$,
the sets $M(m)$ being monic $(n,m)$-diagrams, the map $\iota$ the natural duality on the Temperley-Lieb category and $C^m_{S,T}$ being the natural image of $\Delta(m)\otimes\iota\Delta(m)$.

Let $e$ be the idempotent describing the projective cover of the trivial module.
The trivial module is a composition factor of $\Delta(m)$ iff $m \in \supp n$.  If so, it does so with multiplicity 1.

Notice now that
\begin{equation}
  \Hom_{A}(Ae, \Delta(m)) \cong e\Delta(m) = \widetilde\Delta(m) \subseteq \Delta(m),
\end{equation}
where a morphism is identified with the image of $e$.
Suppose that $m \in \supp n$, and let $S \subset \Delta(m)$ be the largest (by inclusion) submodule of $\Delta(m)$ which does not have the trivial module as a composition factor.

Given non-zero $\phi_1, \phi_2 \in \Hom_A(Ae,\Delta(m))$, the images of $e$ under the maps, $v_1 = \phi_1(e)$ and $v_2 = \phi_2(e)$ are non-zero in $\Delta(m)/ S$.
Both lie in the single trivial submodule of $\Delta(m)/S$ and thus there is a linear combination, $\alpha_1v_1 + \alpha_2v_2$ that vanishes modulo $S$.
But then we have that the image of $\alpha_1\phi_1 + \alpha_2\phi_2$ lies in $S$, which has no trivial factors.
Thus indeed $\alpha_1\phi_1 + \alpha_2\phi_2 = 0$ so the two were collinear to begin with.

Hence we see that
\begin{equation}\label{eq:dim_hom_edelta}
  \dim \Hom_{A}(Ae, \Delta(m)) = \dim \widetilde\Delta(m) =
  \begin{cases}
    1 & m \in \supp n\\
    0 & m \not \in \supp n
  \end{cases}
\end{equation}

\begin{remark}\label{rem:lavish_not_eve}
Readers may be surprised that this is the case.
Surely if $\Delta(m-2)$ has a non-zero element indexed by diagram $S$\footnote{If $e \Delta(m) \neq 0$, then the non-zero element can be chosen to be $e\cdot S$, by the discussion preceding \cref{def:generous}},
then simply ``popping'' one of the outermost links of the diagram $S$ should give a diagram $S' \in \Delta(m)$ and since $0\neq e\cdot  S = e\cdot S' \cdot u$, where $u$ is a simple cup diagram, we must have that $e \cdot S' \neq 0$ and so $e\widetilde\Delta(m)\neq 0$?
The key here lies in the fact that these diagram manipulations are taken modulo different conditions.
While indeed $0\neq e\cdot S' \cdot u$ as morphisms in $\TLcat$, the map $e\cdot S'$ factors through $m-2$ and thus vanishes in $\widetilde\Delta(m)$.
Equivalently, $e$ is not lavish.

Now, it is clear that when $m\not\in \supp n$, that $N^\Delta_e(m) = M(n)$.
On the other hand, when $m\in \supp n$, there may be multiple diagrams in $\Delta(m)$ not killed by $e$.
Indeed, the diagram ${\rm d}_n^m$ is one such (canonical) example, but other ``ancestor centred'' options exist (see~\cite{tubbenhauer_wedrich_2019}).
Hence $e$ is not even generous.
\end{remark}

We can conclude
\begin{corollary}
  Let $\supp n = \{s_1 < s_2 < \cdots < s_{2^\generation{n}} = n\}$ and morphism
  $x_i : \underline{n} \to \underline{s_i}$ such that $e\cdot x_i$ has through degree $s_i$.
  Then for any morphism $u : \underline{s_i} \to \underline{m}$ with through degree less than $s_i$, the morphism $e\cdot x_i \cdot u$ has through degree at most $s_{i-1}$.
\end{corollary}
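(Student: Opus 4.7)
The plan is to trace the through degree of $y = e \cdot x_i \cdot u$ through the cell-filtration on hom-spaces in $\TLcat$ and then apply \cref{eq:dim_hom_edelta} to rule out every through degree not lying in $\supp n$.

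Let $t$ denote the through degree of $y$; we may assume $y \neq 0$, since otherwise the conclusion is vacuous. Because through degree is sub-multiplicative under composition and $u$ has through degree strictly less than $s_i$, we immediately obtain $t < s_i$. It therefore suffices to prove the stronger claim that $t \in \supp n$, since the enumeration $s_1 < s_2 < \cdots < s_{2^{\generation{n}}} = n$ of $\supp n$ then forces $t \le s_{i-1}$.

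To establish this, I would exploit the through-degree filtration on $\Hom(\underline n, \underline m)$. Let $\Hom^{\le k}(\underline n, \underline m)$ denote the span of diagrams from $\underline n$ to $\underline m$ with at most $k$ through strands. This is a $(\TL_n, \TL_m)$-sub-bimodule, and the cellular structure of $\TL_n$ extends to identify the subquotient $\Hom^{\le k}/\Hom^{<k}$ with $\Delta(k) \otimes \iota\Delta_m(k)$ as a left $\TL_n$-module, i.e.\ a direct sum of $\dim\Delta_m(k)$ copies of the cell module $\Delta(k)$. By definition of $t$, the image $\bar y$ of $y$ in $\Hom^{\le t}/\Hom^{<t}$ is nonzero. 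Since $e^2 = e$ and $y = e \cdot y$, we have $\bar y = e \cdot \bar y$, so $\bar y$ lies in $e \cdot (\Hom^{\le t}/\Hom^{<t})$, which is a direct sum of copies of $e\Delta(t) = \widetilde\Delta(t)$.

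By \cref{eq:dim_hom_edelta}, $\widetilde\Delta(t)$ vanishes whenever $t \not\in \supp n$; in that case $\bar y$ would be zero, contradicting that $t$ is the exact through degree of $y$. Thus $t \in \supp n$, which together with $t < s_i$ completes the argument. The only subtlety I foresee is extending the cellular filtration from the endomorphism algebra $\TL_n$ to the hom-space $\Hom(\underline n, \underline m)$, but this is routine: a diagram with exact through degree $k$ factors as an $(n,k)$-half-diagram composed with a $(k,m)$-half-diagram, which identifies the subquotient with $\Delta(k) \otimes \iota\Delta_m(k)$ and puts the left $\TL_n$-action on the first factor, exactly as in the standard cellular picture for $\TL_n$.
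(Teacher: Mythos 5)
Your proposal is correct and follows exactly the route the paper intends: the corollary is stated as an immediate consequence of \cref{eq:dim_hom_edelta}, and your argument simply makes explicit the standard through-degree filtration on $\Hom(\underline{n},\underline{m})$ whose subquotients are sums of copies of $\Delta(t)$, so that a nonzero monic part at level $t$ would force $e\Delta(t)\neq 0$ and hence $t\in\supp n$. Combined with the observation that composition cannot increase through degree, this gives $t<s_i$ and $t\in\supp n$, hence $t\le s_{i-1}$, which is precisely the deduction the paper leaves implicit.
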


Further, a direct application of \cref{cor:semisimple} shows that if $\ell = +\infty$ then $\widetilde A$ is semi-simple.

\section{Valenced Temperley-Lieb or Jones-Wenzl Algebras}\label{sec:valenced}
A composition of $n \in \N$ into $r$ parts, written $\bmu\vdash n$ is a tuple $(\mu_1, \mu_2, \ldots, \mu_r)$ such that $0\le \mu_i$ and $\sum_{i=1}^r \mu_i = n$.
The sub-tuple $(\mu_1, \mu_2, \ldots, \mu_{r-1})$ will be written as $\hat\bmu \vdash n-\mu_r$.
We will denote the distinguished composition $(1,1,\ldots, 1) \vdash n$ as $\boldsymbol{n}$.

Throughout, fix a pointed ring $(k,\delta)$ of $(\ell,p)$-torsion.
Recall the definition of $\TLcat$ and let $e^{n}$ be the $(\ell,p)$-Jones-Wenzl idempotent on $n$ strands over $k$.
Note that $e^{n}$ is not ``the'' Jones-Wenzl idempotent, $\JW_n$, unless $n < \ell$ or $n = a p^{(r)}-1$ for $1\le a \le \ell$ if $r=0$ and $1\le a \le p$ else.
Such $n$ will be called Eve.

For any $\bmu \vdash n$, let $e^\bmu$ be the idempotent on $n$ strands given by
\begin{equation}
  e^{\bmu} = e^{\mu_1} \otimes \cdots \otimes e^{\mu_r}.
\end{equation}
An idempotent $e^{\bmu}$ will be termed Eve if all $\mu_i$ are Eve.

Our interest lies in $\TL_\bmu = e^\bmu\cdot \TL_n \cdot e^\bmu$.
This is a $k$-subspace of $\TL_n$, and an algebra with unit $e^\bmu$.
That makes it a (non-unital) subalgebra of $\TL_n$ but it is not a unital subalgebra (since the units differ).
It is isomorphic to the endomorphism ring $\End_{\TL_n}(\TL_n \cdot e^\bmu)$.
\begin{remark}
  In~\cite{flores_peltola_2018a, flores_peltola_2018b}, the algebra $\TL_\bmu$ is called the ``Jones-Wenzl algebra''.
  The Valenced Temperley-Lieb algebra defined in~\cite{flores_peltola_2018b} is not an associative algebra in general, and in fact, multiplication is only defined when $\bmu$ is Eve.
  However, when $\bmu$ is Eve, the concepts coincide, which is why we consider the terms to be interchangeable whenever the word ``algebra'' is involved.
\end{remark}

\begin{example}\label{eg:rings_tl_mu}
  We present some of the rings $\TL_\bmu$ for simple $\bmu$ as well as some of their representation theory.
  \begin{enumerate}[(i)]
    \item If $\bmu = \boldsymbol{n}$ then $e ^\bmu = \id$ so $\TL_{\boldsymbol n} = \TL_n$.

    \item On the other hand, if $n$ is Eve and $\bmu = (n)$ then $\TL_\bmu \simeq k$.
      The case when $n$ is not Eve is a subject of~\cite{tubbenhauer_wedrich_2019} and \cref{sec:lp_hecke} for $\ell = p$.  It is shown that
      \begin{equation}\label{eq:two_part_eve_simeq}
        \TL_{(n)} \simeq k[X_1,\ldots, X_r]/(X_1^2,\ldots, X_r^2).
      \end{equation}
      This algebra has a single simple module, which is linear, on which the images of the $X_i$ act as zero.
    \item Suppose that $\bmu = (\mu_1, \mu_2)\vdash n$ is such that $\mu_i < \ell$.  In~\cite[Lemma 3.5]{flores_peltola_2018a} it is shown that this is a quotient of a polynomial algebra with one generator.
      The proof supplied there determines the form of this algebra but does not state the result, so we show it below.

      The element $U_1$, defined as $e^\bmu\cdot u_{\mu_1} \cdot e^\bmu$ generates the algebra, and if $U_k$ is the element $e^\bmu\cdot u^{(k)}_{\mu_1}\cdot e^\bmu$, then $\{U_k \;:\; 0\le k \le \min\{\mu_1,\mu_2\}\}$ gives a $k$-basis.
      Here $u^{(k)}_i$ is the (possibly not simple) cap of $k$ strands centred just after $i$.
      It is shown that\todohidden{This isn't quite what is written there, but is what the computer says is true. Check the working in FP18}
      \begin{equation}\label{eq:two_part_recurse}
        U_1 \cdot U_k = \frac{[k][n - k + 1]}{[\mu_1][\mu_2]} U_k + \frac{[\mu_1-k][\mu_2-k]}{[\mu_1][\mu_2]} U_{k+1}.
      \end{equation}
      Since $\mu_i < \ell -1$, we see that $[\mu_i]\neq 0$ in our ring $k$ and so we may consider the alternative normalisation $\widetilde U_k = U_k [\mu_1][\mu_2]$ which descends to $k$ so then 
      \begin{equation}
        \widetilde U_{k+1} =\frac{\widetilde U_1  - [k][n-k+1]}{[\mu_1-k][\mu_2-k]}\cdot \widetilde U_k
      \end{equation}
      It is clear that $\widetilde U_{k}$ is non-zero for all $0\le k < \min\{\bmu\}$ but that if we extend this definition, $\widetilde U_{k+1} = 0$.
      Hence a polynomial satisfied by $\widetilde U_1$ is
      \begin{equation}
        f(X) = \prod_{r = 0}^{\min\{\bmu\}} \left(X - [r][n-r+1]\right).
      \end{equation}
      Thus $\TL_\bmu \simeq k[X]/(f(X))$.
      There are $\min\{\bmu\}$ simple, linear modules on which $X$ acts as the scalar $[r][n-r+1]$.
      
      We consider this example in more detail in \cref{sec:two-part}.

    \item Let $\bmu = (k,1,\ldots, 1) \vdash n$ and set $b = n - k$.  When $\bmu$ is Eve, this is the ``seam algebra'' studied in~\cite{langlois_remillard_saint_aubin_2020}.
  \end{enumerate}
\end{example}

As in \cref{sec:cell_hecke}, we inherit cellular data for $\TL_\bmu$ from $\TL_n$.
Following \cref{sec:notation}, we let $L_\bmu(m)$, $\Delta_\bmu(m)$ and $P_\bmu(m)$ be the simple, cell and projective modules indexed by $m$ respectively.

\section{Cell Data for Jones-Wenzl Algebras over Eve Composition}\label{sec:valenced_cell_data}
In general,
since $\TL_n$ is cellular, we may use the results of \cref{sec:cellular} to study $\TL_\bmu$.
However, this may prove to be complicated, depending on the composition $\bmu$.

Here we determine the cellular data $(\Lambda_\bmu, M_\bmu(\lambda), C_\bmu, \iota)$ for the algebra $\TL_\bmu$ when $\bmu$ is Eve.
The task to be done is to determine the appropriate restrictions of $M(\lambda)$ and $\Lambda$, since we will show momentarily that $e^\bmu$ is both generous and lavish.

\subsection{The Sets of Indices}
We determine which $m\in \Lambda$ we will need to drop -- i.e. which $\Delta_\bmu(m) = 0$.
\begin{proposition}\label{prop:eve_mu_generous_lavish}
  If $\bmu$ is Eve, then $e^\bmu$ is both lavish and generous in the sense of \cref{def:generous}.
\end{proposition}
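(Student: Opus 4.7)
The plan is to establish both properties simultaneously using the absorbing property of classical Jones-Wenzl idempotents. Call a monic $(n, m)$-diagram $S$ \emph{arc-free} if no two of its top points lying in a common $\mu_i$-block are joined to each other within $S$. Since $\bmu$ is Eve, each $e^{\mu_i} = \JW_{\mu_i}$ is the classical Jones-Wenzl idempotent of $\TL_{\mu_i}$, which annihilates every $\TL_{\mu_i}$-diagram of through degree strictly less than $\mu_i$. The first step is to identify $N^\Delta_{e^\bmu}(m)$ as precisely the set of non-arc-free diagrams. If $S$ has a within-block arc in the $\mu_i$-block, then $\JW_{\mu_i}$ sits directly above this arc and the composition vanishes literally as a morphism in $\TLcat$, so $e^\bmu \cdot S = 0$ in $\Delta_n(m)$. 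For the converse, expand $\JW_{\mu_i} = \id + X_i$ in the diagram basis of $\TL_{\mu_i}$; every diagram in $X_i$ has through degree less than $\mu_i$ and hence possesses at least one top arc inside the $\mu_i$-block. Expanding $e^\bmu = \id_n + R$ accordingly, any non-identity summand $w$ of $R$ carries a top arc in some $\mu_i$-block, and this top arc survives unchanged to the very top of the composition $w \cdot S$. Hence $w \cdot S$ is either of strictly lower through degree (zero in $\Delta_n(m)$) or a monic $(n, m)$-diagram possessing a within-block arc. Expanding $e^\bmu \cdot S$ in the diagram basis of $\Delta_n(m)$ thus yields $S$ itself with coefficient one, plus a combination of non-arc-free basis elements; in particular $e^\bmu \cdot S \neq 0$.

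Generosity follows immediately: if $S \neq S'$ are distinct arc-free diagrams, then $e^\bmu \cdot S$ and $e^\bmu \cdot S'$ carry distinct arc-free components with coefficient one and remaining components supported off the arc-free sub-basis. Any linear relation among the $\{e^\bmu \cdot S\}$ would project onto a relation among distinct arc-free basis vectors of $\Delta_n(m)$, hence is trivial, so $\{e^\bmu \cdot S : S \text{ arc-free}\}$ is a basis of $e^\bmu \Delta_n(m)$. For lavishness, notice that the dichotomy established above is stronger than needed: in the non-arc-free case we showed $e^\bmu \cdot S = 0$ literally (not merely modulo lower through degree), so $e^\bmu \cdot C^m_{S, T} = (e^\bmu \cdot S) \cdot \iota T = 0$ literally for every $T$ whenever $S \in N^\Delta_{e^\bmu}(m)$, placing $S$ in $N^D_{e^\bmu}(m)$. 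The reverse inclusion $N^D_{e^\bmu}(m) \subseteq N^\Delta_{e^\bmu}(m)$ is automatic, giving equality.

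The key technical observation is that every non-identity $\TL_k$-diagram (for Eve $k$) has a top arc inside the block it spans, which forces non-identity contributions to $e^\bmu \cdot S$ to either drop through degree or land on non-arc-free diagrams. This is where the Eve hypothesis is essential: for non-Eve $\mu_i$, the $(\ell, p)$-Jones-Wenzl idempotent $e^{\mu_i}$ acquires further correction terms whose leading behaviour can interfere with the arc-free dichotomy, necessitating the more delicate analysis of later sections.
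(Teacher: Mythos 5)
Your proof is correct and follows essentially the same route as the paper: you identify $N^\Delta_{e^\bmu}(m)$ with the span of diagrams having a within-block arc (equivalent, by planarity, to the paper's ``simple cap in the boundary''), use the fact that every non-identity term of a classical Jones--Wenzl idempotent carries a cap on each side to get the unitriangular expansion $e^\bmu\cdot S = S + (\text{non-arc-free terms})$ for generosity, and observe that the vanishing on arc-carrying diagrams is literal in $\TLcat$ to get lavishness. No gaps.
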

\begin{proof}

  Fix some $m \in \Lambda$.
  Let the ``boundary'' be the set of sites 
  \begin{equation}
    \{ 1,2,\ldots,n-1\} \setminus \{\mu_1, \mu_1+\mu_2,\ldots,\mu_1 + \mu_2 +\cdots+\mu_{r-1}\}
  \end{equation}
  and
  let $B(m) \subseteq \Delta(m)$ be the linear span of all diagrams in $M(n)$ which have a simple left cap in the boundary.
  It is clear that $B(m)$ is killed by the action of $e$ as $\bmu$ is Eve and so $N^\Delta_e(m) \supseteq B(m)$.
  Moreover, since every non-identity diagram in the support of $e^\bmu$ has a simple cap on the boundary, $e^\bmu$ acts as the identity on the vector space $\Delta(m) / B(m)$.
  Hence $N^\Delta_e(m)= B(m)$.

  Now suppose that
  \begin{equation}
    \sum_{S \in M(m)\setminus N^\Delta_e(m)} \alpha_{S} \;e \cdot S = 0.
  \end{equation}
  Then certainly $\sum_{S \in M(m)\setminus N^{\Delta}_e(m)}\alpha_{S} S \in N^\Delta_e(m) = B(m)$, which is a contradiction since we are summing over diagrams not living in $B(m)$.
  Hence $\{ e\cdot S \;:\; S \in M(m) \setminus N^\Delta_e(m)\}$ is a linearly independent set and $e^\bmu$ is generous.

  If $S \in N^\Delta_e(\lambda)$ then $S$ has a left boundary cap.
  Clearly then $e \cdot C^m_{S,T} = 0$ for any $T$ and so $S \in N^{D}_e(m)$.
  Hence $e^\bmu$ is lavish.
\end{proof}
If $\bmu$ is not Eve, then $e^\bmu$ is no longer generous or lavish, as shown in \cref{rem:lavish_not_eve}.

When $\bmu$ is Eve, we have shown that $\Delta_\bmu(m)$ has a basis consisting of all diagrams without a simple cap on the boundary.
This allows us to enumerate which $m$ lie in $\Lambda_\bmu$.

\begin{example}\cite[eq. 3.8]{langlois_remillard_saint_aubin_2020}\label{eg:when_boundary_sites}
  If $\bmu = (k,1,1,\ldots,1)\vdash n$ is Eve, then
  \begin{equation}
    \Lambda_\bmu = \{m \in \N_0 \;:\; 2k-n \le m \le n, \;\text{and}\; m \equiv_2 n \;\}.
  \end{equation}
  Indeed, in this case, the boundary consists of the sites $\{1, \ldots, k-1\}$.
  It is clear that a diagram with $r$ loops (so with $m = n-2r$) is possible for each $0\le r \le n-k$.
  Indeed if $r < k$, simply connect sites $\{k-r,\ldots, k\}$ to $\{k+1,\ldots, k+r\}$.
  Otherwise, connect $\{1,\ldots,k\}$ to $\{k+1,\ldots,2k\}$ and then pair up remaining sites until $r$ loops are made.

  On the other hand, if $n-k < r$, by pigeon-hole principle, there is a boundary site connected to another, and so a simple loop on the boundary.  Hence such a diagram is killed by $e^\bmu$.
  \todohidden[inline]{Make diagram?}
\end{example}

\begin{example}
  If $\bmu = (\mu_1, \mu_2)$ is Eve, then $\Lambda_\bmu = E_{\mu_1,\mu_2}$, as defined in \cref{eq:two_part_e}.
\end{example}

Suppose that diagram $x \in \Delta(m)$ is not killed by $e^\bmu$.
If $m < n$ then $x = x' u$ where $u$ is a simple cap diagram $\underline{m+2} \to \underline{m}$.
It is then the case that $e^\bmu x'$ is not zero in $D(m+2)$ and, because $e^\bmu$ is lavish, this means that it is not zero in $\Delta(m+2)$.
As such, for each Eve composition $\bmu\vdash n$, there is a $s_\bmu$ such that the set $\Lambda_\bmu$ is of the form
\begin{equation}\label{eq:Lambda_bmu}
  \Lambda_\bmu = \{s_\bmu, s_\bmu+2,\ldots, n\}.
\end{equation}

\begin{lemma}\cite[Lemma 2.3]{flores_peltola_2018b}\label{lem:eve_composition_valids}
  For Eve composition $\bmu =(\mu_1,\ldots,\mu_r)\vdash n$,
  \begin{equation}\label{eq:recurse_smin}
    s_{(n)} = n\quad\quad ;
    \quad\quad
    s_{\bmu} = \begin{cases}
      s_{\hat\bmu} - \mu_r, & \mu_r \le s_{\hat\bmu}\\
      s_{\hat\bmu} - \mu_r\mod 2, & s_{\hat\bmu} < \mu_r < |\hat\bmu|\\
      \mu_r - |\hat\bmu|, & |\hat\bmu| \le \mu_r\\
    \end{cases}
  \end{equation}
\end{lemma}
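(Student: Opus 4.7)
My plan is to proceed by induction on $r$. The base case $r = 1$ is immediate: when $\bmu = (n)$, every gap lies in the $\bmu$-boundary, so any non-identity diagram has an innermost arc that is a simple cap at an interior gap and is killed by $e^{\bmu}$. Hence $\Delta_{(n)}(m) = 0$ for $m < n$ and $s_{(n)} = n$.

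For the inductive step I will cut a diagram along the vertical line separating the last block (of $\mu_r$ sites) from the first $|\hat\bmu|$ sites, and relate $s_\bmu$ to $s_{\hat\bmu}$. The essential preliminary observation, obtained by applying the innermost-arc argument of the base case within a single block, is that no arc in a valid $\bmu$-diagram has both endpoints in the same block. Let $k$ denote the number of arcs crossing the cut and $a$ the number of through-strands starting in the first $|\hat\bmu|$ sites. The last block then has no internal arcs and contributes $\mu_r - k$ through-strands. Treating the $k$ crossing arcs as through-strands of the first sub-diagram makes the first $|\hat\bmu|$ sites a valid $\hat\bmu$-diagram of through degree $a + k$, and conversely planarity allows me to glue any such first-block diagram back to a valid $\bmu$-diagram (nesting the crossing arcs between the rightmost $k$ through-strands of the first sub-diagram and the leftmost $k$ sites of the last block). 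Therefore $m = a + \mu_r - k$, subject to $0 \le k \le \mu_r$ and $a + k \in \Lambda_{\hat\bmu} = \{s_{\hat\bmu}, s_{\hat\bmu} + 2, \ldots, |\hat\bmu|\}$ (using \cref{eq:Lambda_bmu} applied to $\hat\bmu$).

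It remains to minimise $m$. Writing $t = a + k$ so that $m = (t + \mu_r) - 2k$, the choice $k = \min(t, \mu_r)$ is optimal for each $t$, and I split into cases according to how $\mu_r$ compares with $s_{\hat\bmu}$ and $|\hat\bmu|$. In Case 1 ($\mu_r \le s_{\hat\bmu}$) only $t \ge s_{\hat\bmu} \ge \mu_r$ is feasible, so $k = \mu_r$ and the minimum is at $t = s_{\hat\bmu}$, giving $m = s_{\hat\bmu} - \mu_r$. In Case 3 ($|\hat\bmu| \le \mu_r$) only $t \le |\hat\bmu| \le \mu_r$ is feasible, so $k = t$ and the minimum is at $t = |\hat\bmu|$, giving $m = \mu_r - |\hat\bmu|$. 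In Case 2 ($s_{\hat\bmu} < \mu_r < |\hat\bmu|$) both regimes are accessible: taking either $t = \mu_r$ (or $t = \mu_r - 1$) in the first, or $t = \mu_r + 1$ (or $t = \mu_r + 2$) in the second produces the smallest value compatible with the parity constraint $m \equiv n \equiv s_{\hat\bmu} + \mu_r \pmod 2$, which is $m = (s_{\hat\bmu} - \mu_r) \bmod 2$.

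The main subtlety I expect lies in tracking parity throughout: the constraint $a + k \equiv s_{\hat\bmu} \pmod 2$ coming from $\Lambda_{\hat\bmu}$ must translate into $m \equiv n \pmod 2$, and in Case 2 it is precisely this constraint that prevents $m = 0$ when $s_{\hat\bmu} \not\equiv \mu_r \pmod 2$ and forces the mod-$2$ formulation. The gluing direction, confirming that each extremal pair $(a, k)$ actually yields a diagram, is routine once the no-intra-block-arc principle is in hand.
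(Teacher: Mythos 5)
Your proof is correct. Note that the paper offers no argument of its own for this lemma --- it is imported verbatim from Flores and Peltola --- so there is no in-text proof to compare against; but your argument is precisely the one the surrounding machinery invites. The cut along the last block is the diagrammatic counterpart of the recursion in \cref{prop:count_Cmun}: with $g=\mu_r-k$ the length of the final up-run, your parameter set $\{t+\mu_r-2k \;:\; t\in\Lambda_{\hat\bmu},\ 0\le k\le\min(t,\mu_r)\}$ is exactly the set of $m$ for which some summand $C^{\hat\bmu}_{m+\mu_r-2g}$ is non-zero, and minimising $|t-\mu_r|$ over the step-two progression $\Lambda_{\hat\bmu}=\{s_{\hat\bmu},s_{\hat\bmu}+2,\ldots,|\hat\bmu|\}$ yields the three cases as stated; the strict inequalities in Case~2 do guarantee that one of $\mu_r$, $\mu_r\pm1$ lies in $\Lambda_{\hat\bmu}$, so the parity analysis is sound. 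Two inputs you rely on are both legitimately available: the no-intra-block-arc principle follows from the innermost-cap argument exactly as in \cref{prop:eve_mu_generous_lavish}, and the fact that $\Lambda_{\hat\bmu}$ is a full progression up to $|\hat\bmu|$ is \cref{eq:Lambda_bmu}, established independently by the arc-popping argument preceding the lemma, so your induction need not carry it. The only detail I would make explicit is the observation (implicit in your gluing step) that planarity forces the $k$ crossing arcs to attach to the \emph{rightmost} $k$ defects of the $\hat\bmu$-subdiagram, which is what makes the correspondence between pairs $(\text{subdiagram},k)$ and valid $\bmu$-diagrams a genuine bijection rather than merely a surjection.
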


\begin{example}
  Suppose that $\bmu = (5,3,4,7,22)\vdash 41$.  Then we can calculate, using the above
  \begin{equation}
    s_{(5)} = 5\quad\quad
    s_{(5,3)} = 2\quad\quad
    s_{(5,3,4)} = 0\quad\quad
    s_{(5,3,4,7)} = 1\quad\quad
    s_{(5,3,4,2,22)} = 3
  \end{equation}
  On the other hand, if $\bmu = (22,7,4,3,5)\vdash 41$, then
  \begin{equation}
    s_{(22)} = 22\quad\quad
    s_{(22,7)} = 15\quad\quad
    s_{(22,7,4)} = 11\quad\quad
    s_{(22,7,4,3)} = 8\quad\quad
    s_{(22,7,4,3,5)} = 3.
  \end{equation}
  This illustrates that \cref{lem:eve_composition_valids} gives the same result for any composition and its reverse --- a fact that does not follow easily from the definition.
\end{example}

This set will also be critical for evaluating non-Eve compositions.
Thus if $\bmu$ is any composition of $n$ (Eve or otherwise), let $E_\bmu$ be the set given by \cref{eq:Lambda_bmu} with $s_\bmu$ as in \cref{eq:recurse_smin}.
Notice that if $\bmu$ is two-part, this coincides with the definition of $E_{r,s}$ given in \cref{eq:two_part_e}.

\subsection{The Sets of Tableaux}
Let $\bmu = (\mu_1, \ldots,\mu_r)\vdash n$ be Eve.
We now turn to enumerating the tableaux in $M_\bmu(m)$ for $m \in \Lambda_\bmu$.
Recall that diagrams from $\underline{n} \to \underline{m}$ are in bijection with standard $(\frac{n+m}{2}, \frac{n-m}{2})$-Young tableaux.
The diagrams in $\Delta_\bmu(m)$, are then such diagrams without a simple link in the boundary.

Let the ``$i$-th bucket'' be the subset of sites,
\begin{equation}
  B_i = \{\mu_1+\cdots\mu_{i-1}+1,\mu_1+\cdots\mu_{i-1}+1,\cdots,\mu_1+\cdots\mu_{i}\}.
\end{equation}
Then we seek all diagrams without a link between two points in the same bucket.
Given such a diagram
it is clear then that for each $i$ there is a $t_i$ such that the first $t_i$ sites in bucket $B_i$ are connected to sites of smaller index by the diagram.
The remaining sites are either free (defects) or are connected to sites in later buckets.

Thus an alternative characterisation of this diagram is thus by a tuple $\mathbf{t} = (t_1, t_2, \ldots, t_r)$ with $0 \le t_i \le \mu_i$ for each $i$ and $n - \sum_{i=1}^n t_i = m$ such that
\begin{equation}
  \sum_{i=1}^{r-1} \mu_i - t_i \ge t_r \quad\quad\text{and}\quad\quad t_1 = 0.
\end{equation}
We also construct the tuple $\bro = \bmu - \mathbf{t} = (\mu_1 - t_1, \ldots, \mu_r - t_r)$.

This can also be envisioned as a walk on a planar lattice.
The walk begins at $(0,0)$ and ends at $(n, m)$.
For each $0 \le i \le n$, it moves down and right one unit if $i$ is a site connected to a smaller index and up and right otherwise.
At no point may the walk cross the $x$-axis and each ``bucket'' consists of some number of steps down followed by some number of steps up.

\begin{example}\label{eg:walk1}
  Let $\bmu = (3,2,4,4,2) \vdash 15 = n$ and $m = 2$.
  Then the tuple $\mathbf{t}=(0,1,2,1,2)$ corresponds to diagram
  \begin{center}
    \begin{tikzpicture}[scale=0.7]
      \draw (0.5,0) -- (15.5,0);
      \foreach \i in {1,...,15} {
        \fill (\i,0) circle (0.1);
      }
      \draw[very thick] (1,0) -- (1,2);
      \draw[very thick] (2,0) edge[out=90,in=90] (7,0);
      \draw[very thick] (3,0) edge[out=90,in=90] (4,0);
      \draw[very thick] (5,0) edge[out=90,in=90] (6,0);
      \draw[very thick] (8,0) -- (8,2);
      \draw[very thick] (9,0) edge[out=90,in=90] (10,0);
      \draw[very thick] (11,0) -- (11,2);
      \draw[very thick] (12,0) edge[out=90,in=90] (15,0);
      \draw[very thick] (13,0) edge[out=90,in=90] (14,0);

      \draw[dashed,red] (3.5,-0.15) -- (3.5,1.5);
      \draw[dashed,red] (5.5,-0.15) -- (5.5,1.5);
      \draw[dashed,red] (9.5,-0.15) -- (9.5,1.5);
      \draw[dashed,red] (13.5,-0.15) -- (13.5,1.5);

      \draw [decorate,decoration={brace,amplitude=5pt}] (3.2,-.2) -- (.7,-.2) node [black,midway,yshift=-10pt] {\footnotesize $B_1$};
      \draw [decorate,decoration={brace,amplitude=5pt}] (5.2,-.2) -- (3.7,-.2) node [black,midway,yshift=-10pt] {\footnotesize $B_2$};
      \draw [decorate,decoration={brace,amplitude=5pt}] (9.2,-.2) -- (5.7,-.2) node [black,midway,yshift=-10pt] {\footnotesize $B_3$};
      \draw [decorate,decoration={brace,amplitude=5pt}] (13.2,-.2) -- (9.7,-.2) node [black,midway,yshift=-10pt] {\footnotesize $B_4$};
      \draw [decorate,decoration={brace,amplitude=5pt}] (15.2,-.2) -- (13.7,-.2) node [black,midway,yshift=-10pt] {\footnotesize $B_5$};
    \end{tikzpicture}
  \end{center}
  has $\bro = (3,1,2,3,0)$ and walk
  \begin{center}
    \begin{tikzpicture}[scale=0.7]
      \draw (-0.5,0) -- (16.5,0);
      \fill (1,1) circle (0.1);
      \fill (2,2) circle (0.1);
      \fill (3,3) circle (0.1);
      \fill (4,2) circle (0.1);
      \fill (5,3) circle (0.1);
      \fill (6,2) circle (0.1);
      \fill (7,1) circle (0.1);
      \fill (8,2) circle (0.1);
      \fill (9,3) circle (0.1);
      \fill (10,2) circle (0.1);
      \fill (11,3) circle (0.1);
      \fill (12,4) circle (0.1);
      \fill (13,5) circle (0.1);
      \fill (14,4) circle (0.1);
      \fill (15,3) circle (0.1);
      \draw[thick] (0,0) -- (3,3) -- (4,2) -- (5,3) -- (7,1) -- (9,3) -- (10,2) -- (13,5) -- (15,3);
      \foreach \i in {0,2,4,6} {
        \draw[dotted, thin] (-.2,\i+.2) -- (\i+.2,-.2);
        \draw[dotted, thin] (16+.2,\i+.2) -- (16-\i-.2,-.2);
      }
      \foreach \i in {8,10,12,14,16} {
        \draw[dotted, thin] (\i-6-.2,6+.2) -- (\i+.2,-.2);
        \draw[dotted, thin] (16-\i+6+.2,6+.2) -- (16-\i-.2,-.2);
      }
      \foreach \i in {18,20,22} {
        \draw[dotted, thin] (\i-6-.2,6+.2) -- (16+.2,\i-16-.2);
        \draw[dotted, thin] (16-\i+6+.2,6+.2) -- (-.2,\i-16-.2);
      }
      \draw[dashed,red] (3.5,-0.2) -- (3.5,6.2);
      \draw[dashed,red] (5.5,-0.2) -- (5.5,6.2);
      \draw[dashed,red] (9.5,-0.2) -- (9.5,6.2);
      \draw[dashed,red] (13.5,-0.2) -- (13.5,6.2);

      \draw[very thick, green!60!black, ->] (0,0.2) -- (3,3.2);

      \draw[very thick, orange, ->] (3.6,2.6) -- (4,2.2);
      \draw[very thick, green!60!black, ->] (4,2.2) -- (5,3.2);

      \draw[very thick, orange, ->] (5.6,2.6) -- (7,1.2);
      \draw[very thick, green!60!black, ->] (7,1.2) -- (9,3.2);

      \draw[very thick, orange, ->] (9.6,2.6) -- (10,2.2);
      \draw[very thick, green!60!black, ->] (10,2.2) -- (13,5.2);

      \draw[very thick, orange, ->] (13.6,4.6) -- (15,3.2);
    \end{tikzpicture}
  \end{center}
  Note that the rises in the walk (indicated in green) are of lengths given by $\bro$ and the falls (in orange are given by $\textbf{t}$.
\end{example}

We now wish to count the number of such walks.  This will give us the dimension of the cell module $\Delta_\bmu(m)$.

For a composition (of any number), $\bet$, let $C^\bet_m$ be the number of walks from $(0,0)$ to $(|\bet|, m)$ that do not cross the origin and which obey the ``down-then-up'' rule within each of the buckets described by $\bet$.
Such a walk is termed a ``walk over $\bet$''.
\begin{proposition}\cite[Lemma 2.8]{flores_peltola_2018b}\label{prop:count_Cmun}
  If $\bmu = (n)\vdash n$,
  \begin{equation}\label{eq:count_Cmun_1}
    C^{(n)}_m = \begin{cases}
      1 & n =  m\\
      0 & \text{else}
    \end{cases}
  \end{equation}
  and if $\bmu = (\mu_1,\ldots, \mu_r)\vdash n$ for $r > 1$,
  \begin{equation}
    C^\bmu_n = \sum _{g = 0}^{\min\{\mu_r, m\}} C^{\hat\bmu}_{m + \mu_r- 2g }.
  \end{equation}
\end{proposition}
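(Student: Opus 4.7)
The plan is to proceed by induction on $r$, the number of parts of $\bmu$, by establishing a bijective decomposition of walks over $\bmu$ into walks over $\hat\bmu$ together with the data of the final bucket.

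For the base case $\bmu = (n)$, a walk over $(n)$ consists of a single bucket: $t_1$ down steps followed by $\rho_1 = n - t_1$ up steps. Since the walk starts at height $0$ and must not cross below the $x$-axis, $t_1 = 0$, hence the walk is forced to be $n$ consecutive up steps, ending at height $n$. This gives \cref{eq:count_Cmun_1}.

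For the inductive step, let $w$ be a walk over $\bmu = (\mu_1,\ldots,\mu_r)$ ending at height $m$. Split $w$ at position $n - \mu_r$: the initial segment $w'$ is a walk over $\hat\bmu$ ending at some height $h \ge 0$, and the final segment traverses the last bucket, consisting of $t_r$ down steps followed by $g := \mu_r - t_r$ up steps. The final height equation $h - t_r + g = m$ forces $h = m + \mu_r - 2g$. I would then verify that the valid range of $g$ is precisely $0 \le g \le \min\{\mu_r, m\}$: non-negativity of $t_r$ and $g$ gives $0 \le g \le \mu_r$; the bucket's non-crossing constraint $h - t_r \ge 0$ rearranges to $g \le m$; and $h \ge 0$ (needed for $C^{\hat\bmu}_h$ to be nonzero) follows automatically from $g \le \min\{\mu_r, m\}$. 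Summing $C^{\hat\bmu}_{m + \mu_r - 2g}$ over this range gives the recursion.

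The main thing to be careful about is the bookkeeping of the constraints (particularly the interaction between $h \ge t_r$ within the last bucket and $h \ge 0$ for the initial segment) and verifying that extending the summation range does not introduce extraneous terms, since $C^{\hat\bmu}_h = 0$ whenever $h < 0$ or $h \not\equiv_2 |\hat\bmu|$. Once the decomposition is set up, the recursion is immediate and the map $w \mapsto (w', g)$ is manifestly a bijection.
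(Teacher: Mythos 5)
Your proof is correct and follows essentially the same route as the paper: the paper's proof also peels off the final bucket, parametrises it by the length $g$ of its rising part, observes that staying above the $x$-axis forces $g \le m$, and sums $C^{\hat\bmu}_{m+\mu_r-2g}$ over $0 \le g \le \min\{\mu_r, m\}$. Your version just spells out the bookkeeping (the constraint $h - t_r \ge 0$ being equivalent to $g \le m$) that the paper leaves implicit.
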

\begin{proof}
  \Cref{eq:count_Cmun_1} is clear.
  Consider then some valid walk ending at $(|\bmu|,m)$.
  The final bucket of the walk must consist of a fall, followed by a rise of length $0\le g \le \mu_r$.  The $x$ coordinate of the walk increases by $\mu_r$ and the $y$ coordinate by first decreases by $\mu_r - g$ and then increases by $g$ for a  net change of $2g - \mu_r$.
  However, to prevent the walk from sinking below the $x$-axis, we require that $g \le m$.
  The formula follows.
\end{proof}
Note that this is still an inherently recursive formula - $\hat\bmu$ is just the composition with the last element removed for which we will need to know the values of $C^{\hat\bmu}_n$ to calculate $C^\bmu_n$.

However, let us reiterate the importance of this number:
\begin{equation}
  \dim \Delta_\bmu(m) = C^{\bmu}_m,
\end{equation}
and this is non-zero exactly when $m$ lies in the set described by \cref{eq:Lambda_bmu}.
\begin{example}
  Setting $\bmu = \boldsymbol{n}$, we recover the well known recurrence for the Catalan triangle (which computes the Catalan numbers on the diagonal)
  \begin{equation}
    C^{\boldsymbol{n}}_m = \begin{cases}
      0&m > n\\
      1&m = n\\
      C^{\boldsymbol{n-1}}_1&m = 0\\
      C^{\boldsymbol{n-1}}_{m-1} + C^{\boldsymbol{n-1}}_{m+1} &\text{else}\\
    \end{cases}
  \end{equation}
  The solution to this recursion is that $C^{\boldsymbol{n}}_m$ vanishes if $n$ and $m$ are of different parity, and if they are of the same parity,
  \begin{equation}
    C^{\boldsymbol{n}}_m = 
      \binom{n}{(n-m)/2} - \binom{n}{(n-m)/2-1}.
  \end{equation}
  We have recovered, naturally, the dimension of the cell module $\Delta(m)$.
\end{example}

With this, we conclude our analysis of the cellular data of $\TL_\bmu$ when $\bmu$ is Eve.
Our choice of diagrams are those corresponding to walks over $\bmu$, which are counted by \cref{prop:count_Cmun} and this leaves us with indices described by \cref{eq:Lambda_bmu}.

\section{Gram Matrices for Cell Modules}\label{sec:gram}
Recall the notation of $\Lambda_0$ for all cell indices with non-degenerate cell module forms from \cref{sec:cellular}.
We recall and expand a useful lemma.
\begin{lemma}\cites[Proposition 3.4]{flores_peltola_2018b}[Lemma 7.1]{spencer_2020}\label{lem:folklore_1}
  Suppose $k$ is a field and fix cell indices $m \in (\Lambda_\bmu)_0$ and $m' \in \Lambda_\bmu$.
  Then for any submodules $M \subseteq \Delta_\bmu(m)$ and $M'\subseteq \Delta_\bmu(m')$, let $\theta : \Delta_\bmu(m)/M \to \Delta_\bmu(m')/M'$  be a $\TL_\bmu$-morphism.
  Then
  \begin{enumerate}[(i)]
    \item If $m < m'$ then $\theta = 0$.
    \item If $m = m'$ then $\theta(z+M') = \lambda_\theta z + M$ for some scalar $\lambda_\theta \in k$.
    \item If $m \ge m'$ then there is a morphism $v_\theta : \underline{m'} \to \underline{m}$ such that $\theta(z + M') = z \cdot v_\theta + M'$ and $v_\theta$ is in the linear span of monic $\underline m' \to \underline m$ diagrams.
  \end{enumerate}
  Further, the image is cyclic.
\end{lemma}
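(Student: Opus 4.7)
The plan is to exploit two structural properties of the cell modules $\Delta_\bmu(m)$: their cyclicity and the diagrammatic interpretation of the cellular basis inherited from $\TL_n$. Let $v_m \in \Delta_\bmu(m)$ denote the class of the canonical half-diagram obtained by placing the idempotent $e^\bmu$ on top of the down-diagram ${\rm d}_n^m$. Every basis element of $\Delta_\bmu(m)$ is of the form $a\cdot v_m$ for some $a \in \TL_\bmu$, so $\Delta_\bmu(m)$, and hence its quotient $\Delta_\bmu(m)/M$, is cyclic as a $\TL_\bmu$-module. The morphism $\theta$ is therefore determined by the single element $\bar w := \theta(\bar v_m) \in \Delta_\bmu(m')/M'$, where $\bar v_m$ is the image of $v_m$. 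This observation also immediately yields the final \emph{image is cyclic} assertion, regardless of which of (i)--(iii) holds.

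The core of the proof is a diagrammatic factorisation of $\bar w$. Lift $\bar w$ to $w \in \Delta_\bmu(m')$. For every $a \in \TL_\bmu$ that annihilates $\bar v_m$ in $\Delta_\bmu(m)/M$, equivariance of $\theta$ forces $a \cdot w \in M'$. The annihilator of $\bar v_m$ is large: it contains every morphism whose summands strictly reduce through-degree below $m$ when composed with $v_m$. Exploiting this, I would argue that modulo $M'$ the element $w$ must factor through $v_m$, i.e.\ there exists a Temperley--Lieb morphism $v_\theta$ of the appropriate source and target such that $\bar w = \bar v_m \cdot v_\theta$. This factorisation is where I expect the main obstacle to lie. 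The underlying mechanism parallels the Graham--Lehrer decomposition $A^{\le\lambda}/A^{<\lambda} \simeq \Delta(\lambda)\otimes \iota\Delta(\lambda)$, but carried out inside a cell module rather than inside the algebra; concretely it rests on reading off from $w$ a ``top half'' which is rigidly forced to be $v_m$ and a ``bottom half'' that encodes the flexibility $v_\theta$.

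Granted this factorisation, the three cases follow by bookkeeping the through-degree of $v_\theta$. If $m < m'$, any Temperley--Lieb morphism between $m$ and $m'$ strands has through-degree at most $m < m'$, so $v_m \cdot v_\theta$ has through-degree strictly below $m'$ and therefore already vanishes in $\Delta_\bmu(m')$; this gives $\theta = 0$. If $m = m'$, the morphism $v_\theta$ must have through-degree $m$, so it is forced to be a scalar multiple of the identity $\lambda_\theta \cdot \id_m$, and cyclicity propagates $\theta(\bar v_m) = \lambda_\theta \bar v_m$ to $\theta(\bar z) = \lambda_\theta \bar z$ for every $z$. If $m \ge m'$, then $v_\theta$ can be any morphism of through-degree $m'$, that is, an arbitrary linear combination of monic $\underline{m'} \to \underline m$ diagrams (after applying $\iota$ to fit the convention of the statement), which is exactly the form asserted by the lemma.
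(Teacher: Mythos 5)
Your argument has the right overall shape---once a factorisation $\theta(\bar z)=\bar z\cdot v_\theta$ through a single morphism $\underline{m}\to\underline{m'}$ is in hand, cases (i)--(iii) do follow by the through-degree bookkeeping you describe---but the factorisation is the entire content of the lemma, and you have explicitly left it unproved (``where I expect the main obstacle to lie''). The annihilator argument you sketch does not close this gap: knowing that every $a\in\TL_\bmu$ killing $\bar v_m$ must send $w$ into $M'$ constrains $w$, but extracting a ``top half rigidly forced to be $v_m$'' is precisely the assertion to be proved, and it cannot be done without the hypothesis $m\in(\Lambda_\bmu)_0$, which your proposal never invokes. The paper uses that hypothesis in an essential way: since the bilinear form on $\Delta_\bmu(m)$ is not identically zero and $k$ is a field, one chooses $x\notin\rad\Delta_\bmu(m)$ and $y$ with $\langle x,y\rangle=1$; the cellular element $|z\rangle\langle x|$ then acts on $y$ as $z$, so $\theta(z)=\theta(|z\rangle\langle x|\cdot y)=|z\rangle\langle x|\cdot\theta(y)$. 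This produces the required morphism $v_\theta=\langle x|\theta(|y\rangle)$ for free, gives case (i) because $|z\rangle\langle x|\in\TL_\bmu^{\le m}$ kills every quotient of $\Delta_\bmu(m')$ when $m<m'$ (\cref{prop:a_lambda_ideal}), and gives (ii), (iii) after discarding the non-monic diagrams in $v_\theta$, which factor through $m''<m'$.

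A second, related gap is your opening claim that $\Delta_\bmu(m)$ is cyclic, generated by $e^\bmu\cdot{\rm d}_n^m$, on the grounds that ``every basis element is of the form $a\cdot v_m$.'' This is unsubstantiated: even when $\Delta_{\mathbf n}(m)$ is cyclic over $\TL_n$, the truncation $e^\bmu\Delta(m)$ need not be cyclic over $e^\bmu\TL_ne^\bmu$ via the restriction of a generator. Both the determinacy of $\theta$ by one value and the cyclicity of its image---which is all the lemma actually asserts---come out of the pairing trick above, i.e.\ again out of $m\in(\Lambda_\bmu)_0$, rather than out of any intrinsic cyclicity of the cell module.
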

\begin{proof}
  Since $m \in (\Lambda_\bmu)_0$, there is an  $0 \neq x \in \Delta_\bmu(m) \setminus \rad\Delta_\bmu(m)$.  Then $x \neq 0$ in $\Delta_\bmu(m) / M$ and $x$ generates $\Delta_\bmu(m)$.
  Since $k$ is a field, and $x \not \in \rad\Delta_\bmu(m)$, there is a $y \in \Delta_\bmu(m)$ such that $\langle x, y \rangle = 1$.
  We may lift $\theta$ to a $\TL_\bmu$-map $\Delta_\bmu(m) \to \Delta(m')/M'$ along the natural projection $\Delta_\bmu(m) \to \Delta_\bmu(m)/M$.

  But now for any $z \in \Delta_\bmu(m)$,
  \begin{equation}
   \phi (|z\rangle) =
    \phi(|z\rangle\langle x|y\rangle) =
    |z\rangle\langle x|\theta(|y\rangle).
  \end{equation}
  This makes it clear that the image of $\theta$ is generated by $\theta(|y\rangle)$ and is thus cyclic.

  Now, suppose $m < m'$.
  Then the algebra element $|z\rangle\langle x|$ lies in  $\TL_\bmu^{\le m}$  and so kills all quotients of $\Delta_\bmu(m')$.  Hence the map $\theta$ is zero.

  If $m \ge m'$ then note that part (ii) is a special case of part (iii).
  Then $\langle x|\theta(|y \rangle)$ is a morphism from $m\to m'$.
  Any diagram in $\langle x|\theta(|y \rangle)$ that is not monic factors through $m'' < m'$ and thus does not contribute to the resultant element of $\Delta_\bmu(m')/M'$.
  Thus such diagrams can be removed to obtain the morphism $v_\theta$.
\end{proof}

\begin{proposition}
  If $k$ is a characteristic zero field (so that $p= \infty$), $\bmu$ is Eve and $m \in (\Lambda_\bmu)_0$, then the radical of $\Delta_\bmu(m)$ is either zero or a simple module.
\end{proposition}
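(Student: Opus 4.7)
The plan is to reduce the assertion to the corresponding characteristic zero fact for the ambient algebra $\TL_n$ itself. Since $p = \infty$, the $(\ell, p)$-digit expansion of $m+1$ has at most two non-zero digits, so $\generation{m} \le 1$ and $|\supp m| \le 2$. By the composition multiplicity formula recorded in \cref{sec:notation}, the cell module $\Delta_n(m)$ therefore has at most two composition factors: its head $L_n(m)$ and, when $m$ is not Eve, the module $L_n(\mother{m})$. In particular, $\rad \Delta_n(m)$ is either zero or equal to the simple module $L_n(\mother{m})$; this is the classical characteristic zero structure of Temperley--Lieb cell modules (see~\cite{ridout_saint_aubin_2014}).

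Next, I would transfer this information to $\TL_\bmu = e^\bmu \cdot \TL_n \cdot e^\bmu$ using the general machinery built in \cref{sec:cell_hecke}. The identification \cref{eq:e_rad_is_rad}, together with exactness of the restriction functor ${\rm res}_{e^\bmu}$, yields
\begin{equation*}
\rad \Delta_\bmu(m) = e^\bmu \rad \Delta_n(m).
\end{equation*}
If the right-hand side vanishes the claim is immediate; otherwise it remains to analyse $e^\bmu L_n(\mother{m})$.

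For this last step I would invoke the classification of simple $\TL_\bmu$-modules derived in \cref{sec:cell_hecke}: for any index $\mu \in \Lambda$, the module $e^\bmu L_n(\mu)$ is either zero (precisely when $\mu \notin (\Lambda_\bmu)_0$) or the simple module $L_\bmu(\mu)$. Applying this with $\mu = \mother{m}$ gives the stated dichotomy for $\rad \Delta_\bmu(m)$. The only substantive obstacle is the appeal to the characteristic zero structure of $\rad \Delta_n(m)$ in the first step; everything thereafter is a direct unpacking of the idempotent-truncation theory already set up earlier in the paper. A small sanity check I would include is that, because $\bmu$ is Eve, \cref{prop:eve_mu_generous_lavish} ensures that the passage between $\Delta_\bmu(m)$ and $e^\bmu \Delta_n(m)$ is as transparent as possible, with no hidden collapse beyond what is forced by $m' \notin (\Lambda_\bmu)_0$.
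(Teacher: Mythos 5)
Your argument is correct and follows essentially the same route as the paper: identify $\rad\Delta_\bmu(m)$ with $e^\bmu\rad\Delta_{\mathbf n}(m)$ via \cref{eq:e_rad_is_rad}, invoke the characteristic-zero fact that $\rad\Delta_{\mathbf n}(m)$ is zero or simple, and conclude using \cref{eq:exact_restrict_delta} that applying $e^\bmu$ to a simple module yields zero or a simple module. One small correction: the non-trivial composition factor of $\Delta_{\mathbf n}(m)$ is not $L_{\mathbf n}(\mother{m})$ but $L_{\mathbf n}(m')$ for the \emph{upward} reflection $m' = m + 2(\ell - m_0) > m$ (writing $m+1 = \pldigs{m_1,m_0}$); indeed $\mother{m} = m - m_0$ generally has the wrong parity to index a cell module of $\TL_n$ at all — but since your argument only uses that the radical is simple and that $e^\bmu$ preserves the zero-or-simple dichotomy, this mislabelling does not affect the proof.
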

\begin{proof}
  Recall \cref{eq:e_rad_is_rad} where it was shown that $\rad \Delta_\bmu(m) = e^\bmu\cdot\rad_\mathbf{n}\Delta(m)$.
  In the characteristic zero case, if $\rad\Delta_\mathbf{n}(m)$ is non-zero it is simple and isomorphic to $L_\mathbf{n}(m')$ for some $m' > m$~\cite[Corollary 7.3]{ridout_saint_aubin_2014}.
  This means that $m' \in (\Lambda_\bmu)_0$ and so $e^\bmu\cdot L_\mathbf{n}(m') = L_\bmu(m')$.
  However, \cref{eq:exact_restrict_delta} shows that multiplication by $e^\bmu$ preserves simplicity, showing the result.
\end{proof}

\subsection{Trivalent Link States}
We define open and closed trivalent link states and expound upon some of their properties.

\begin{definition}\label{def:trivalent}
  Let $r,s,t \in \N$ such that $r \in E_{s,t}$ and let
  \begin{equation*}
    i = \frac{r + s - t}{2}
    \quad\quad\quad
    j = \frac{r - s + t}{2}
    \quad\quad\quad
    k = \frac{-r + s + t}{2}
  \end{equation*}
  The open trivalent link state is a shorthand for a diagram of the following form
    \begin{equation}
    \vcenter{\hbox{
    \begin{tikzpicture}
      \draw[very thick] (0,0) circle (0.2);
      \foreach \ang/\label in {0/r,120/s,240/t} {
        \begin{scope}[rotate=\ang]
          \draw[very thick] (0,0.2) -- (0,1.2);
          \node at (0,1.4) {$\label$};
        \end{scope}
      }
    \end{tikzpicture}}}
    \quad=\quad
    \vcenter{\hbox{
    \begin{tikzpicture}
      \draw[very thick] (1.03+0.05, -0.6+0.086) to[out=150,in=-90] (0.1,1.2);
      \draw[very thick] (-1.03-0.05, -0.6+0.086) to[out=30,in=-90] (-.1,1.2);
      \draw[very thick] (-1.03+0.05, -0.6-0.086) to[out=30,in=150] (1.03-0.05,-0.6-0.086);
      \node at (0,1.4) {$r$};
      \node at (1.2,-.7) {$t$};
      \node at (-1.2,-.7) {$s$};
      \node at (0.606,0.35) {$j$};
      \node at (-0.606,0.35) {$i$};
      \node at (0,-0.7) {$k$};
    \end{tikzpicture}}}.
    \end{equation}
    If $r$, $s$ and $t$ are all Eve, the closed trivalent link state is defined to be
    \begin{equation}
    \vcenter{\hbox{
    \begin{tikzpicture}
      \draw[very thick, fill=purple] (0,0) circle (0.2);
      \foreach \ang/\label in {0/r,120/s,240/t} {
        \begin{scope}[rotate=\ang]
          \draw[very thick] (0,0.2) -- (0,1.2);
          \node at (0,1.4) {$\label$};
        \end{scope}
      }
    \end{tikzpicture}}}
    \quad=\quad
    \vcenter{\hbox{
    \begin{tikzpicture}
      \draw[very thick] (0,0) circle (0.2);
      \foreach \ang/\label in {0/r,120/s,240/t} {
        \begin{scope}[rotate=\ang]
          \draw[very thick] (0,0.2) -- (0,0.5);
          \draw[very thick] (0,0.8) -- (0,1.2);
          \draw[very thick, fill=purple] (-.3,0.5) rectangle (0.3,0.8);
          \node at (0,1.4) {$\label$};
        \end{scope}
      }
    \end{tikzpicture}}}.
    \end{equation}
    where the three boxes represent the classical Jones-Wenzl idempotents $\JW_r$, $\JW_s$ and $\JW_t$ (all defined, since $r$, $s$ and $t$ are Eve).
\end{definition}

An important morphism is the so-called ``theta network'' of Kauffman and Lins~\cite{kauffman_lins_94}.
This is a map $\underline 0 \to \underline 0$ and thus can be identified with a scalar $\Theta(r,s,t)$.
It is given by evaluating the morphism
\begin{equation}
    \vcenter{\hbox{
    \begin{tikzpicture}[scale=.5]
      \draw[very thick] (0,0) circle (2);
      \draw[very thick] (-2,0) to (2,0);
      \draw[very thick, fill=purple] (2,0) circle (0.3);
      \draw[very thick, fill=purple] (-2,0) circle (0.3);
      \node at (0,2.3) {$r$};
      \node at (0,0.3) {$s$};
      \node at (0,-1.7) {$t$};
    \end{tikzpicture}}}
\end{equation}
For this to be well defined, we require that $r \in E_{s,t}$.
If $i,j,k$ are as in \cref{def:trivalent}, then
\begin{equation}
  \Theta(r,s,t) = \frac{(-1)^{i+j+k}[i+j+k+1]![i]![j]![k]!}{[i+j]![j+k]![k+i]!}.
\end{equation}
Note that unlike the quantum binomials, this is in general not a polynomial in $\delta$.
For example $\Theta(2,2,2) = -\frac{[4][3]}{[2]^2} = -(\delta^4 - 3\delta^2 + 1)/\delta$.  Further examples are:
\begin{align*}
  \Theta(r, s, r + s) &= (-1)^{r+s}[r+s+1]\\
  \Theta(2r', 2r', 2r') &= \frac{(-1)^{r'}[3r'+1]!([r']!)^3}{([2r']!)^3}
\end{align*}
A useful normalisation of the theta value is the following:
\begin{align}\label{eq:useful_gauss}
  \Theta(r,s,t)
          & = \frac{(-1)^{i+j+k}[i+j+k+1]![i]![j]![k]!}{[r]![s]![t]!}\\\nonumber
          & = \frac{(-1)^{i+j+k}[i+j+k+1]!}{[k]![i+j]!}\Big/ \gaussianquant{s}{k}\gaussianquant{t}{k}\\\nonumber
          & = (-1)^{i+j+k}[i+j+k+1]\gaussianquant{i+j+k}{k} \Big/ \gaussianquant{s}{k}\gaussianquant{t}{k}
\end{align}

With the language of trivalent nodes, the basis of $\Delta_\bmu(m)$ given by defect $m$ walks over $\bmu$ has a new description.

\begin{example}
  The tableau in \cref{eg:walk1} can further be described by the network
  \begin{center}
    \begin{tikzpicture}
      \draw (-0.5,0) -- (8.5,0);
      \draw[very thick] (0,0) to (1-0.07, 1-0.07);
      \node at (0.15,0.5) {3};
      \node at (1.35,1.7) {3};
      \node at (2.35,2.7) {3};
      \node at (3.35,3.7) {5};
      \draw[very thick] (2,0) to (1+0.07, 1-0.07);
      \node at (1.9,0.5) {2};
      \draw[very thick] (1,1) circle (0.1);
      \draw[very thick] (1+0.07,1+0.07) to (2-0.07, 2-0.07);
      \draw[very thick] (4,0) to (2+0.07, 2-0.07);
      \node at (3.9,0.5) {4};
      \draw[very thick] (2,2) circle (0.1);
      \draw[very thick] (2+0.07,2+0.07) to (3-0.07, 3-0.07);
      \draw[very thick] (6,0) to (3+0.07, 3-0.07);
      \node at (5.9,0.5) {4};
      \draw[very thick] (3,3) circle (0.1);
      \draw[very thick] (3+0.07,3+0.07) to (4-0.07, 4-0.07);
      \draw[very thick] (8,0) to (4+0.07, 4-0.07);
      \node at (7.9,0.5) {2};
      \draw[very thick] (4,4) circle (0.1);

      \draw[very thick] (4.,4.07) to (4,5);
      \node at (4.3,4.5) {3};

      \draw[fill=white] (-0.1,0.2) rectangle (0.5,0);
      \draw[fill=white] (1.5,0.2) rectangle (2.1,0);
      \draw[fill=white] (3.5,0.2) rectangle (4.1,0);
      \draw[fill=white] (5.5,0.2) rectangle (6.1,0);
      \draw[fill=white] (7.5,0.2) rectangle (8.1,0);
    \end{tikzpicture}
  \end{center}
  We will call this the \emph{ladder} form of the tableaux.
  This could, should we wish, be taken to the extreme:
  \begin{center}
    \begin{tikzpicture}[scale=0.7]
      \draw[thick] (1,1) -- (2,2);
      \draw[thick] (1,0) -- (1,1);
      \draw[thick] (2,0) -- (2,2);
      \draw[thick] (3,0) -- (3,3);
      \draw[thick] (4,0) -- (4,2);
      \draw[thick] (5,0) -- (5,3);
      \draw[thick] (6,0) -- (6,2);
      \draw[thick] (7,0) -- (7,1);
      \draw[thick] (8,0) -- (8,2);
      \draw[thick] (9,0) -- (9,3);
      \draw[thick] (10,0) -- (10,2);
      \draw[thick] (11,0) -- (11,3);
      \draw[thick] (12,0) -- (12,4);
      \draw[thick] (13,0) -- (13,5);
      \draw[thick] (14,0) -- (14,4);
      \draw[thick] (15,0) -- (15,3);
      \draw[very thick] (2,2) --(3,3) -- (4,2) -- (5,3) -- (7,1) -- (9,3) -- (10,2) -- (13,5) -- (15,3) -- (16,3);
      \draw (-0.5,0) -- (16.5,0);
      \draw[very thick, fill=white] (1,1) circle (0.1);
      \draw[very thick, fill=white] (2,2) circle (0.1);
      \draw[very thick, fill=white] (3,3) circle (0.1);
      \draw[very thick, fill=white] (4,2) circle (0.1);
      \draw[very thick, fill=white] (5,3) circle (0.1);
      \draw[very thick, fill=white] (6,2) circle (0.1);
      \draw[very thick, fill=white] (7,1) circle (0.1);
      \draw[very thick, fill=white] (8,2) circle (0.1);
      \draw[very thick, fill=white] (9,3) circle (0.1);
      \draw[very thick, fill=white] (10,2) circle (0.1);
      \draw[very thick, fill=white] (11,3) circle (0.1);
      \draw[very thick, fill=white] (12,4) circle (0.1);
      \draw[very thick, fill=white] (13,5) circle (0.1);
      \draw[very thick, fill=white] (14,4) circle (0.1);
      \draw[very thick, fill=white] (15,3) circle (0.1);

      \draw[fill=white] (0.7,0.2) rectangle (3.3,0);
      \draw[fill=white] (3.7,0.2) rectangle (5.3,0);
      \draw[fill=white] (5.7,0.2) rectangle (9.3,0);
      \draw[fill=white] (9.7,0.2) rectangle (13.3,0);
      \draw[fill=white] (13.7,0.2) rectangle (15.3,0);

      \node at (1.4,1.8) {1};
      \node at (2.4,2.8) {2};
      \node at (3.6,2.8) {3};
      \node at (4.4,2.8) {2};
      \node at (5.6,2.8) {3};
      \node at (6.6,1.8) {2};
      \node at (7.4,1.8) {1};
      \node at (8.4,2.8) {2};
      \node at (9.6,2.8) {3};
      \node at (10.4,2.8) {2};
      \node at (11.4,3.8) {3};
      \node at (12.4,4.8) {4};
      \node at (13.6,4.8) {5};
      \node at (14.6,3.8) {4};
      \node at (15.6,3.3) {3};
    \end{tikzpicture}
  \end{center}
  Here the link with a ``walk'' in the traditional sense is made explicit.

  Note in both these examples, we have drawn links in $\Delta_\mathbf{15}(3)$.
  The corresponding elements of $\Delta_{(3,2,4,4,2)}(3)$ would have $\JW$ projectors at the places marked with boxes.
\end{example}

Let us now work over $\Q(\delta)$.
The key operation we will be undertaking is to replace open nodes in ladder forms with closed ones.
This change of basis introduces a number of Jones-Wenzl idempotents, which may not be defined over $k$ (even if $\bmu$ is Eve).
However, the key result of this section is the form of the determinant of the Gram matrix for $\Delta_\bmu(m)$ and that is independent of the underlying field.

To be clear, this takes a ``diagram'' element of $\Delta_\bmu(m)$ (which we will identify with the tableaux $\mathbf{t}$ and the walk $\bro$) and replaces it by a morphism in $\Hom_{\TLcat}(\underline{n}, \underline{m})$.
We will quotient this by any morphisms factoring through objects less than $\underline m$ to get our resulting element of $\Delta_\bmu(m)$.

\begin{proposition}\cite[Lemma 4.6]{flores_peltola_2018b}
  The set of all ladders with filled in trivalent nodes forms a basis for $\Delta_\bmu(m)$.
\end{proposition}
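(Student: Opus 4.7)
The plan is to exhibit a unitriangular change of basis (computed over $\Q(\delta)$) from the open ladder basis of $\Delta_\bmu(m)$ established above to the proposed family of closed ladders. I would first equip the set of walks over $\bmu$ ending at height $m$ with the partial order induced by the lexicographic order on the tuple of intermediate heights $(h_1, \ldots, h_{r-1})$, with component-wise smaller tuples being lower. For a walk $w$, write $\bar w$ for the corresponding closed ladder obtained from the open ladder $w$ by inserting $\JW_r$, $\JW_s$, $\JW_t$ on the three legs of every open trivalent node (using the labels of Definition~\ref{def:trivalent}), each of which is defined over $\Q(\delta)$.

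Next, I would expand each inserted idempotent as $\JW_n = \id_n + X_n$, where $X_n$ collects the non-identity diagrams in $\TL_n$ (each containing at least one cap-cup $U_i$), and distribute the resulting product over all legs of all nodes. The identity choice on every leg recovers the open ladder $w$, giving $\bar w = w + R$, where $R$ sums over terms in which at least one leg uses a non-identity summand of its $\JW$. The core claim, to be verified by diagrammatic manipulation, is that modulo $\TL_\bmu^{<m}$ each term of $R$ is a scalar multiple of an open ladder $w'$ strictly smaller than $w$ in the chosen order. Any non-identity summand of $\JW_n$ introduces a cap on the spine side of a leg; by the absorption property of the classical Jones-Wenzl idempotents over $\Q(\delta)$ and standard trivalent-network moves, this cap is either absorbed into the adjacent open trivalent node, contracting one of its edges and lowering the corresponding intermediate height by two, or it propagates further along the spine. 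In the former case the result is an open ladder with a strictly smaller height tuple; in the latter the through-degree is eventually forced below $m$, so by Proposition~\ref{prop:a_lambda_ideal} the term lies in $\TL_\bmu^{<m}$ and is zero in $\Delta_\bmu(m)$.

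Combining these observations yields $\bar w \equiv w + \sum_{w' < w} c_{w'} w' \pmod{\TL_\bmu^{<m}}$ inside $\Delta_\bmu(m)$, so the transition matrix from the open basis to the closed ladders is unitriangular over $\Q(\delta)$ and hence invertible. Since the open ladders form a basis of $\Delta_\bmu(m)$ of cardinality $C^\bmu_m$, so do the closed ladders. The main obstacle is the diagrammatic bookkeeping: one must carefully track each cap introduced by an expanded $\JW$ as it interacts with neighbouring trivalent nodes and other Jones-Wenzl projectors along the spine, both to identify the resulting diagram as a scalar multiple of another open ladder and to verify that this ladder is strictly below $w$ in the lexicographic order. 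The key technical tools will be absorption of the Jones-Wenzl projectors and the trivalent-network identities underlying the theta evaluation~\eqref{eq:useful_gauss}.
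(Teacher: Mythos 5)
Your proposal follows essentially the same route as the paper, which itself only sketches the argument: order the tuples $\bro$ lexicographically and show that inserting the Jones--Wenzl idempotents along the diagonal produces only strictly smaller terms, so the change of basis is unitriangular. Your write-up simply fills in the details of that sketch (expanding $\JW = \id + X$, tracking the caps via absorption, and discarding terms of lower through-degree), and the reasoning is sound.
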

\begin{proof}
  A sketch of the proof is to introduce a partial order on the tuples $\{\bro\}$ (lexicographically) and then to show that introduction of Jones-Wenzl idempotents along the diagonal give terms that are smaller.
\end{proof}

\begin{proposition}\cite[Proposition 4.7]{flores_peltola_2018b}
  The determinant of the Gram matrix has form
  \begin{equation}
    \det G^m_\bmu = \prod_{\bro}\prod_{i=1}^{r-1}
    \frac{\Theta(\rho_i, \rho_{i+1}, \mu_{i+1})}{[\rho_{i+1}+1]}
  \end{equation}
  where the product is over all walks $\bro$ over $\bmu$.
\end{proposition}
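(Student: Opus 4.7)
The plan is to show that in the trivalent ladder basis the Gram matrix is diagonal, and then evaluate each diagonal entry by iterated bubble removal. The inner product $\langle \bro, \bro'\rangle_m$ is computed by stacking the trivalent ladder for $\bro$ on top of the $\iota$-reflection of the ladder for $\bro'$, and reading off the coefficient of the identity on $\underline m$ modulo morphisms factoring through $\underline{m'}$ for $m' < m$. Each trivalent vertex in a ladder carries a Jones-Wenzl projector $\JW_{\rho_i}$ at the intermediate level $i$; these projectors enforce a fixed through-degree at each stratum of the diagram.

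First I would prove the orthogonality statement: if $\bro \neq \bro'$ then $\langle \bro, \bro'\rangle_m = 0$. Let $i$ be the smallest index at which $\rho_i \neq \rho'_i$; without loss of generality assume $\rho_i > \rho'_i$. The stacked network then contains, at level $i$, a $\JW_{\rho_i}$ receiving only $\rho'_i < \rho_i$ through-strands from the other half, forcing at least one cap to meet the Jones-Wenzl projector. Since $\JW_{\rho_i}$ absorbs caps, the entire composition factors through $\underline{\rho'_i + \mu_{i+1} + \cdots + \mu_r - (\text{something strictly positive})}$, and hence through a module of through-degree less than $m$. This vanishes in the Gram form.

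Next I would compute the diagonal entries $\langle \bro, \bro\rangle_m$. Here the stacked diagram is a closed planar network consisting of a chain of theta-configurations: at level $i+1$, three regions of width $\rho_i$, $\mu_{i+1}$, $\rho_{i+1}$ meet through a Jones-Wenzl $\JW_{\rho_{i+1}}$, and the $\rho_i$ and $\mu_{i+1}$ strands close off into bubbles. Applying the standard bubble-removal identity
\begin{equation*}
  \vcenter{\hbox{\begin{tikzpicture}[scale=0.5]
    \draw[very thick] (0,-1.3) -- (0,-0.5);
    \draw[very thick] (0,0.5) -- (0,1.3);
    \draw[very thick] (0,0.5) to[out=150,in=-150] (0,-0.5);
    \draw[very thick] (0,0.5) to[out=30,in=-30] (0,-0.5);
    \draw[very thick, fill=purple] (-0.35,-0.7) rectangle (0.35,-0.4);
  \end{tikzpicture}}}
  \;=\; \frac{\Theta(a,b,c)}{[a+1]}
  \vcenter{\hbox{\begin{tikzpicture}[scale=0.5]
    \draw[very thick] (0,-1.3) -- (0,1.3);
  \end{tikzpicture}}}
\end{equation*}
iteratively from the bottom of the stack to the top, the $i$-th contraction produces a factor $\Theta(\rho_i, \rho_{i+1}, \mu_{i+1})/[\rho_{i+1}+1]$ and reduces the network to the ladder one level higher. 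After $r-1$ contractions we obtain the identity on $\underline m$ with the desired scalar. Multiplying the diagonal entries over all walks $\bro$ yields the asserted determinant.

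The main obstacle is making the off-diagonal vanishing argument fully rigorous: a bare mismatch in $\rho_i$ does not immediately produce a cap on a Jones-Wenzl, since the diagram has to be routed first. The cleanest route is probably induction on $r$, peeling off the final bucket and reducing to an inner product in $\Delta_{\hat\bmu}(m')$ for some $m'$ that is forced by the $\mu_r$-th trivalent vertex; the orthogonality property of the two-part trivalent junction (essentially the Clebsch-Gordan decomposition for $U_q(\mathfrak{sl}_2)$ tilting modules) then matches the inductive hypothesis. The bubble-removal step is comparatively routine once one trusts the Kauffman-Lins identities, though care is needed to verify that every intermediate Jones-Wenzl actually evaluates over $\Q(\delta)$, which is automatic since we have temporarily enlarged to the field of fractions for this calculation.
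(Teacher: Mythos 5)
The paper does not actually supply a proof of this proposition: it is quoted from Flores--Peltola, so there is no in-paper argument to compare against. Your proposal is essentially the standard (and, as far as I can tell, the cited) argument, and it is sound: diagonalise in the closed trivalent ladder basis, then evaluate each diagonal entry by iterated bubble removal, each contraction contributing $\Theta(\rho_i,\rho_{i+1},\mu_{i+1})/[\rho_{i+1}+1]$. Your identification of the off-diagonal vanishing as the delicate step is correct, and your proposed repair is the right one: induct on $r$ and use the fact that a bubble with inputs $\rho_i,\mu_{i+1}$ sandwiched between $\JW_{\rho_{i+1}}$ and $\JW_{\rho'_{i+1}}$ with $\rho_{i+1}\neq\rho'_{i+1}$ vanishes, since any morphism $\underline{\rho_{i+1}}\to\underline{\rho'_{i+1}}$ has through-degree strictly less than $\max\{\rho_{i+1},\rho'_{i+1}\}$ and is therefore killed by the larger projector.

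One step should be made explicit rather than left implicit. The Gram determinant $\det G^m_\bmu$ is defined with respect to the diagram (open-node) basis of $\Delta_\bmu(m)$; computing it in the closed-node ladder basis changes it by $\det(P)^2$ where $P$ is the change of basis. You need $\det P = \pm 1$, which holds because the change of basis is uni-triangular with respect to the lexicographic order on walks (each Jones--Wenzl has identity coefficient $1$, and all other terms are strictly smaller) --- this is precisely the content of the proposition immediately preceding this one in the paper, so you should invoke it. With that, and your closing observation that the determinant is a polynomial in the entries of the integral Gram matrix and hence may be computed over $\Q(\delta)$ and then specialised, the argument is complete.
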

Unfortunately, in almost all cases, this form of the determinant is unwieldy.
\begin{example}~
  \begin{enumerate}[(i)]
    \item Recall that the form of the determinant for the cell modules of $\TL_n = \TL_{\mathbf{n}}$ is~\cite{ridout_saint_aubin_2014}
      \begin{equation}
        \det G_n^m = \prod_{j = 1}^{\frac{n-m}{2}} \left(
          \frac{[m + j + 1]}{[j]}
        \right) ^ {\dim \Delta_n(m + 2j)}.
      \end{equation}
      Now, notice that if $\rho_{i+1} = \rho_i + 1$ that $\Theta(\rho_i, \rho_{i+1}, 1)/[\rho_{i+1}+1] = 1$ and if $\rho_{i+1} = \rho_i - 1$ that $\Theta(\rho_i, \rho_{i+1}, 1)/[\rho_{i+1}+1] = [\rho_i+1]/[\rho_i]$.
      Let $R^+$ be the set of walks over $\mathbf{n-1}$ that end at $m-1$ and $R^-$ be those that end at $m+1$ (note that either of these may be empty).
      This partitions the set of all walks ending at $m$, by those that finish $\rho_n = \rho_{n-1}+1$ and $\rho_n = \rho_{n-1}-1$.
      Using inducting on $n$,
      \begin{align*}
        \det G_{\mathbf{n}}^m &=
        \left(
          \prod_{\bro \in R^+} \prod_{i = 1}^{r-2}
          \frac{\Theta(\rho_i, \rho_{i+1}, \mu_{i+1})}{[\rho_{i+1}+1]}
        \right)
        \left(
          \prod_{\bro \in R^-} \prod_{i = 1}^{r-2}
          \frac{\Theta(\rho_i, \rho_{i+1}, \mu_{i+1})}{[\rho_{i+1}+1]}
          \frac{[m+2]}{[m+1]}
        \right)\\
                              &=
        \left(
          \det G_{n-1}^{m-1}
        \right)
        \left(
          \det G_{n-1}^{m+1}
        \right)
        \left(
          \frac{[m+2]}{[m+1]}
        \right)^{\dim \Delta_{n-1}(m+1)}
      \end{align*}
      which we recognise as the recursion relation for the Gram determinants of $\TL_n$ as shown in~\cite[equation 4.19a]{ridout_saint_aubin_2014}.
    \item If $\bmu = (n)$ then the only path $\bro$ over $\bmu$ is $\bro = (n)$ and the product is empty.
      Note that the only valid $m$ in this situation is $n$ itself.
    \item If $\bmu = (\mu_1, \mu_2) \vdash n$, where without loss, $\mu_1 > \mu_2$, then again there is a unique $\bro$ for any $m$ given by $\bro = (\mu_1, m)$.
      The determinant is thus $\Theta(\mu_1, m, \mu_2)/[m+1]$ as should be expected by evaluating the following morphism modulo diagrams of non-maximal through degree.
      \begin{center}
        \begin{tikzpicture}
          \draw[very thick] (0.5,0.2) arc (0:180:0.5);
          \draw[very thick] (0.5,-0.2) arc (0:-180:0.5);
          \draw[very thick] (0.75,1) -- (0.75,-1);
          \draw[very thick] (-0.75,1) -- (-0.75,-1);
          \draw[fill=white] (-1,0.2) rectangle (-.3, -.2);
          \draw[fill=white] (1,0.2) rectangle (.3, -.2);
          \node at (0,0.4) {$i$};
          \node at (0,-0.4) {$i$};
          \node at (-.65,0) {$\mu_1$};
          \node at (.65,0) {$\mu_2$};
        \end{tikzpicture}
      \end{center}
      This leads to some interesting behaviour.
      For example, if $\bmu = (3,3)\vdash 6 = n$, then $[5] \mid \det G_\bmu^2$ but $[5]\nmid \det G_\bmu^m$ for any other $m\in \{0,2,4,6\}$.
      We can evaluate \cref{eq:two_part_eve_simeq}
      \begin{equation}
        \TL_{(3,3)}\simeq k[X]/(X, X-[6], X-[2][5], X-[3][4]).
      \end{equation}
      If we specialise to a pointed ring where $\ell = 5$, then this simplifies to $k[X]/(X, X + [4], X - [2])$ which has three simple modules, in line with the three values of $m$ for which the Gram determinant does not vanish.

      This highlights a key feature.
    Though $\Lambda_\bmu$ may be ``easy'' to calculate and may contain a contiguous stretch of weights, $(\Lambda_\bmu)_0$ will not be so well behaved.
    \item If $\bmu = (k, 1,\ldots, 1) \vdash n$ and $b = n-k$,we recover a recursion formula almost identical to that in part (i) and, after calculating the base case of $b = 0$, find that
      \begin{equation}
        \det G_{(k,1^b)}^m = \prod_{j = 1}^{\lfloor k/2\rfloor} \left(
          \frac{[j]}{[k-j+1]}
        \right)^{\dim \Delta_{(k-2j,1^{k+2j})}(m)}
        \prod_{j = 1}^{(n-m)/2}\left(
          \frac{[m + j + 1]}{[j]}
        \right)^{\dim \Delta_{(k,1^b)}(m+2j)}
      \end{equation}
  \end{enumerate}
\end{example}

The observation in \cref{eq:exact_restrict_delta} has an interesting consequence.
\begin{lemma}
  If
  \begin{equation*}
    \prod_{j = 1}^{\frac{n-m}{2}} \left(
      \frac{[m + j + 1]}{[j + 1]}
    \right) ^ {\dim \Delta_n(m + 2j)}\neq 0
  \end{equation*}
  in $(k, \delta)$, then
  \begin{equation*}
    \prod_{\bro}\prod_{i=1}^{r-1}
    \frac{\Theta(\rho_i, \rho_{i+1}, \mu_{i+1})}{[\rho_{i+1}+1]}
    \neq 0
  \end{equation*}
  for all Eve $\bmu$ such that $m \in (\Lambda_\bmu)_0$.
\end{lemma}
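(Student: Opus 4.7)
The plan is to deduce the lemma directly from the radical-compatibility identity \cref{eq:e_rad_is_rad}, which reads $\rad \Delta_\bmu(m) = e^\bmu \rad \Delta_n(m)$ in the Eve setting of \cref{sec:valenced_cell_data}. First I would interpret the hypothesis: the displayed product is, up to a cofactor that is automatically a unit whenever the hypothesis is well posed, precisely the Gram determinant $\det G_n^m$ of the $\TL_n$-cell module $\Delta_n(m)$ recalled earlier in this section. Its non-vanishing in $(k,\delta)$ is thus equivalent to saying that the bilinear form on $\Delta_n(m)$ is non-degenerate, i.e.\ $\rad \Delta_n(m) = 0$, equivalently $\Delta_n(m) = L_n(m)$ is irreducible.

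Next I would invoke \cref{eq:e_rad_is_rad}: since the right-hand side $e^\bmu \rad \Delta_n(m)$ vanishes, so does $\rad \Delta_\bmu(m)$. Because $m \in (\Lambda_\bmu)_0 \subseteq \Lambda_\bmu$, the cell module $\Delta_\bmu(m)$ is non-zero, and a non-degenerate symmetric bilinear form on a non-zero finite-rank module has non-zero Gram determinant. By the Flores--Peltola formula stated in the proposition preceding this lemma, this Gram determinant is exactly the product
\[
\prod_{\bro}\prod_{i=1}^{r-1}\frac{\Theta(\rho_i,\rho_{i+1},\mu_{i+1})}{[\rho_{i+1}+1]}
\]
appearing in the conclusion, which is therefore non-zero.

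The argument is essentially formal, and the only genuine bookkeeping is in the first step: one must check that the version of the hypothesis with $[j+1]$ in the denominator (rather than $[j]$, as in the Gram determinant formula for $\Delta_n(m)$) still forces $\rad \Delta_n(m) = 0$. This is immediate since the numerators of the two products coincide and the quantum numbers in both denominators must be units under the stated hypothesis (otherwise the displayed product would either vanish or be undefined). The Eve assumption on $\bmu$ is used implicitly, as it is precisely what makes $e^\bmu$ both generous and lavish by \cref{prop:eve_mu_generous_lavish}, and hence ensures that $\Delta_\bmu(m) = e^\bmu \Delta_n(m)$ as required to apply \cref{eq:e_rad_is_rad} in the form stated. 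There is no substantive obstacle; the lemma is, in spirit, a compact packaging of the radical identity together with the Gram determinant formula.
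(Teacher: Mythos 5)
Your proof is correct and is essentially the argument the paper intends: the lemma is presented as an immediate consequence of \cref{eq:exact_restrict_delta} (equivalently \cref{eq:e_rad_is_rad}) with no further proof supplied, and your packaging of that radical identity with the two Gram-determinant formulae is exactly that route. The one soft spot is your handling of the $[j+1]$-versus-$[j]$ denominator: the assertion that the denominators ``must be units'' for the hypothesis to make sense is not forced, since quotients of vanishing quantum numbers can survive specialisation (cf.\ \cref{lem:unreasonable_cancellation}); the cleanest reading is that the stated hypothesis is a typo for the Gram determinant $\det G_n^m$ as written in \cref{sec:gram}, under which reading your argument is complete.
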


\section{The Projective Endomorphism Algebra}\label{sec:endomorphism}

Let $\bmu = (n)$, a single-part, not necessarily Eve composition.
Then elements of the algebra $\TL_{(n)}$ are of the form
\begin{center}
  \begin{tikzpicture}
    \foreach \x in {0, 1.5} {
      \begin{scope}[shift={(\x,0)}]
        \foreach \i in {6,...,14} {
          \draw[very thick] (-.2,.2+ \i/5) -- (.5,.2 + \i/5);
        }
        \draw[thick,fill=purple] (0,1.3) rectangle (0.3,3.1);
        \draw[thick,fill=black] (0,3.0) rectangle (0.3,3.1);
      \end{scope}
    }
    \draw[thick] (.5,1.3) rectangle (1.3, 3.1);
    \draw [decorate,decoration={brace,amplitude=5pt},xshift=-1pt,yshift=0pt] (-0.3,1.3) -- (-0.3,3.1) node [black,midway,xshift=-10pt] {\footnotesize $n$};
  \end{tikzpicture}
\end{center}
where the purple boxes are $(\ell,p)$-Jones-Wenzl idempotents and the white box is any element of $\TL_n$.

Recall from \cref{eg:rings_tl_mu} (ii) that
\begin{equation}\label{eq:TL_JW_is_poly}
  \TL_\bmu \simeq k[X_1, \ldots, X_{\generation{n}}]/(X_1^2, \ldots, X_{\generation{n}}^2)
\end{equation}
where $n + 1 = \sum_{i = 1}^{\generation{n}+1} s_i p^{(d_i)}$ are the digits of the $(\ell, p)$-adic expansion of $n+1$.
In fact, the case studied in~\cite{tubbenhauer_wedrich_2019} is only for $\delta = -2$ and so one must extend their argument to mixed (rather than positive) characteristic.
However, the result as stated above still holds.

If $S \subseteq \{1, \ldots, \generation{n}\}$, write $X_S$ for $\prod_{i \in S} X_i$.
These $S$ can be compared to the ``down-admissible sets'' from~\cite{tubbenhauer_wedrich_2019}.
They are in bijection with numbers $m_S = n - 2\sum_{i \in S} s_i p^{(d_i)}$ which form $\supp(n)$.
Further the set $\supp(n)$ is exactly the indices of those cell modules of $\TL_{n}$ which have a trivial composition factor.

\begin{lemma}
  The set $\supp(n) = \{m_S \;:\; S\subseteq\{1,\ldots, \generation{n}\}\} \subseteq \Lambda$ indexes the cell modules of $\TL_\bmu$, all of which are one dimensional.
  Further, $(\Lambda_\bmu)_0 = \{n\}$.
\end{lemma}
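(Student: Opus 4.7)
The plan is to establish the three assertions in the order stated: the identification of the indexing set, the one-dimensionality of all cell modules, and finally the location of $(\Lambda_\bmu)_0$.

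First, the identification $\supp(n) = \{m_S : S \subseteq \{1,\ldots,\generation{n}\}\}$ is simply an unpacking of notation. Writing $n+1 = \sum_{i=1}^{\generation{n}+1} s_i p^{(d_i)}$ with $s_i \neq 0$, the definition of $\supp$ in \cref{sec:notation} instructs us to flip the signs of any subset of the $\generation{n}$ non-leading nontrivial digits; the subset $S$ encodes exactly which digits are flipped, and the resulting cousin differs from $n$ by precisely $2\sum_{i \in S} s_i p^{(d_i)}$, giving $m_S$. The map $S \mapsto m_S$ is therefore the asserted bijection.

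For the cell-module claim, observe that $e^\bmu = e^n$ is by definition the $(\ell,p)$-Jones-Wenzl idempotent, which (from \cref{sec:lp_hecke}) describes the projective cover of the trivial $\TL_n$-module. Consequently \cref{eq:dim_hom_edelta} applies verbatim and yields
\[
  \dim \widetilde\Delta(m) \;=\; \begin{cases} 1, & m \in \supp(n),\\ 0, & m \notin \supp(n). \end{cases}
\]
By the construction of cellular data for $\widetilde A = eAe$ in \cref{sec:cell_hecke}, the set $\Lambda_\bmu$ consists precisely of those $m$ with $\widetilde\Delta(m) \neq 0$, so $\Lambda_\bmu = \supp(n)$, and every cell module $\Delta_\bmu(m) = \widetilde\Delta(m)$ is one-dimensional.

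For the final assertion I would appeal to the isomorphism \cref{eq:TL_JW_is_poly} recalled at the start of this section, which exhibits
\[
  \TL_\bmu \;\simeq\; k[X_1,\ldots,X_{\generation{n}}]/(X_1^2,\ldots,X_{\generation{n}}^2)
\]
as a finite-dimensional local $k$-algebra with unique maximal ideal $(X_1,\ldots,X_{\generation{n}})$. Such an algebra has exactly one isomorphism class of simple module, and since simple $\TL_\bmu$-modules are in bijection with $(\Lambda_\bmu)_0$ (\cref{sec:cellular}), we deduce $|(\Lambda_\bmu)_0| = 1$. To identify the surviving index, note that $\Delta_n(n)$ is spanned by $\id_n$ and that $e^\bmu = \id_n + (\text{strictly lower through-degree})$, so $e^\bmu$ acts as the identity on $\Delta_n(n)$; hence $\widetilde\Delta(n) = \Delta_n(n)$ and the restricted bilinear form satisfies $\langle \id_n, \id_n\rangle = 1 \neq 0$, placing $n$ in $(\Lambda_\bmu)_0$. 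Combined with the cardinality bound this forces $(\Lambda_\bmu)_0 = \{n\}$. The entire statement is essentially a bookkeeping consequence of \cref{eq:dim_hom_edelta} together with the polynomial-algebra description \cref{eq:TL_JW_is_poly}; the only mild subtlety is the non-vanishing of the top-weight form, which is immediate since the restriction of an idempotent to the top cell is either $0$ or the identity.
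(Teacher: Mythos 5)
Your proof is correct and covers all three assertions, but it reaches the one-dimensionality of the cell modules by a different route than the paper. You invoke \cref{eq:dim_hom_edelta} directly, i.e.\ the collinearity argument of \cref{sec:lp_hecke} showing $\dim e\Delta(m) = \dim\Hom_{\TL_n}(\TL_n e, \Delta(m)) \le 1$ with equality precisely for $m \in \supp(n)$; this settles both the indexing set and the dimensions in one stroke. The paper instead re-derives the upper bound by a global count: \cref{eq:TL_JW_is_poly} gives $\dim \TL_{(n)} = 2^{\generation{n}}$, the $2^{\generation{n}}$ indices in $\supp(n)$ are shown to carry nonzero cell modules (again via $e\Delta(m)\simeq\Hom(\TL_n e,\Delta(m))$ and the trivial module being a factor of $\Delta(m)$), and the identity $\dim A = \sum_\lambda (\dim\Delta(\lambda))^2$ then forces each cell module to be one-dimensional. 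Your version is shorter and uses \cref{eq:TL_JW_is_poly} only for the count of simple modules in the last step; the paper's counting argument has the side benefit of producing the explicit spanning vectors $e^{\bmu}{\rm d}_n^{m_S}$ (via the elements $X_S$), which the section relies on afterwards, e.g.\ for the corollary about the scalars $\lambda_n^m$. Your final step --- locality of $k[X_1,\ldots,X_{\generation{n}}]/(X_1^2,\ldots,X_{\generation{n}}^2)$ forces $|(\Lambda_\bmu)_0|=1$, and $n$ survives because $e^\bmu$ is the identity plus terms of strictly lower through-degree, so the form on the top cell restricts to $\langle \id_n,\id_n\rangle = 1$ --- coincides with the paper's, with the non-degeneracy at $m=n$ spelled out rather than asserted.
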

\begin{proof}
  The explicit construction in~\cite{tubbenhauer_wedrich_2019} constructs $X_S = e^\bmu {\rm d}^m_{n} {\rm u}^m_{n}e^\bmu$.
  Recall the morphism ${\rm d}^m_{n} : \underline {n} \to \underline {m}$ is actually a diagram (and not a linear combination of diagrams).

  Now recall that $e M \simeq \Hom(Ae, M)$ for any idempotent $e$.
  In this case $\TL_{n}\cdot e^\bmu$ is the projective cover of the trivial module and by the last comment above, the trivial module of $\TL_{n}$ appears in each of the cell modules $\{\Delta(m_S) \;:\; S \subseteq \{1,\ldots, \generation{n}\}\}$.
  Hence there is a morphism from the projective cover of the trivial into each of those, so $e^\bmu\cdot\Delta(m_S)\neq 0$.

  The algebra $\TL_\bmu$ has $k$-dimension $2^{\generation{n}}$ since
  a $k$-basis is given by all the $\{X_S : S \subseteq \{1, \ldots, \generation{n}\}\}$.
  Recall that the dimension of a cellular algebra is the sum of the squares of the dimensions of its cell modules (see \cref{prop:a_lambda_decomp}):
  \begin{equation}
    \dim A = \sum_{\lambda\in \Lambda} \big(\dim \Delta(\lambda)\big)^2.
  \end{equation}

  However, we have already constructed $2^{g_r}$ cell modules $\Delta_\bmu(m_S) = e^\bmu\cdot\Delta(m_S)$.
  Hence we deduce that these are all one dimensional.
  An induction proof shows that $\Delta_\bmu(m_S) = \operatorname{span}\{e^\bmu\cdot {\rm d}^{m_S}_{n}\}$.
  Note that an inductive proof is necessary as, without knowing that the cell modules are not zero, we could not rule out the possibility that some $e^\bmu{\rm d}^{m_S}_n$ was not of through degree less than $m_S$.

  However, the algebra in \cref{eq:TL_JW_is_poly} has a single simple module on which all the indeterminates act as zero.
Thus we expect $(\Lambda_\bmu)_0 \subseteq \Lambda_\bmu = \supp n$ to be a singleton.
It is clear that $n \in (\Lambda_\bmu)_0$ and so we deduce that the inner product is zero on all cell modules apart from the trivial one.
\end{proof}

Note that the final part of this proof indicates that ${\rm u}_n^m e^{(n)} {\rm d}_n^m$ has through degree less than $m$ whenever $m<n$ over positive characteristic.

Finally, we note that the decomposition numbers of this algebra are trivial: all the cell modules are isomorphic and one-dimensional.

This can be demonstrated by the below image.
Here each row corresponds with a value of $n$ and the columns are values of $m$.
A gray dot indicates that $e^{(n)}\Delta_n(m) = 0$ and an orange dot denotes those $\Delta_{(n)}(m)$ which are one dimensional, i.e. those $m$ lying in $\Lambda_{(n)}$
The dot is circled if that value of $m$ lies in $(\Lambda_{(n)})_0$.

\begin{center}
  \includegraphics[width=0.5\textwidth]{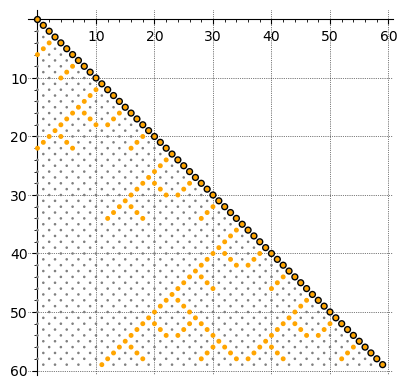}
\end{center}

This diagram (or diagrams similar to it) appear in~\cite[figure 3]{spencer_2020}, ~\cite[figure 1]{sutton_tubbenhauer_wedrich_zhu_2021} and~\cite[figure 1]{jensen_williamson_2015}.

Further enlightenment can be obtained by a subtle change of basis.
While not strictly necessary for analysing $\TL_\bmu$ for one-part $\bmu$, this will be very useful in analysing other algebras.

Throughout, we have been tacitly imagining $\Delta(\lambda)$ as a quotient of $D(\lambda)$.
In terms of our morphisms, we have been considering $\Delta(m)$ as the set of all \emph{monic} morphisms - that is morphisms in $D(m) \simeq \Hom_{\TLcat}(\underline{n}, \underline{m})$ with propagation number equal to $m$.
Thus, when we consider ``the'' element $e^\bmu {\rm d}_n^m$ as a basis of $\Delta_\bmu(m)$ we are really considering its image.

While it is sometimes convenient to use this element (as it is a idempotent multiplied by diagram), an alternative is available.
Consider the morphism $\overline{\rm d}_n^m$ over characteristic zero.
This is a non-zero element of $\Hom(n,m)$, but might not descend to mixed characteristic due to the presence of the various Jones-Wenzl idempotents.
The important fact about this element is neatly encapsulated by the following lemma.
\begin{lemma}\cite[Proposition 3.2]{burrull_libedinsky_sentinelli_2019}\label{lem:mutual_orthogonal}
  Suppose $m, m' \in \supp n$.
  Then
  \begin{equation}
    \lambda_n^{m'} \; \overline{\rm u}_n^{m'}\cdot
    \overline{\rm d}_n^m 
     = \delta_{m,m'}\JW_m.
  \end{equation}
\end{lemma}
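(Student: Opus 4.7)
The plan is to split into the cases $m = m'$ and $m \neq m'$. The diagonal case is immediate: the equation $\lambda_n^m\,\overline{\rm u}_n^m\cdot \overline{\rm d}_n^m = \JW_m$ is precisely how the scalar $\lambda_n^m$ was defined just above the lemma. All the work therefore lies in showing that $\overline{\rm u}_n^{m'}\cdot \overline{\rm d}_n^m = 0$ in $\Hom_{\TLcat}(\underline{m'},\underline{m})$ whenever $m \neq m'$.

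For the setup, write $n+1 = \pldigs{n_r,\ldots,n_0}$ and encode $m = n[S]$, $m' = n[S']$ via the unique down-admissible subsets $S, S' \subseteq \{0,\ldots,r-1\}$ with $n_i\neq 0$ on $S\cup S'$. Let $i^\star = \max(S\triangle S')$ and, after possibly swapping the roles of $m$ and $m'$, assume $i^\star \in S\setminus S'$. By \cref{def:ladder}, both $\overline{\rm d}_n^m$ and $\overline{\rm d}_n^{m'}$ are the ladder morphisms with respect to the self-absorbing family $g = \JW$, constructed top-down, and they agree stratum-for-stratum at every level strictly above $i^\star$ because $S$ and $S'$ coincide on those indices.

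The heart of the argument is as follows. In the composition $\overline{\rm u}_n^{m'}\cdot \overline{\rm d}_n^m$, the two copies of $\JW_n$ meeting at the interface absorb into a single $\JW_n$ by idempotency, and an iterated application of the self-absorbing identity $g_k\otimes \id_{n-k}\cdot g_n = g_n$ collapses the identical layers from both sides covering indices above $i^\star$ into a single central Jones--Wenzl block of width $\pldigs{n_r,\ldots,n_{i^\star+1},0,\ldots,0} - 1$. Immediately below this block sits the level-$i^\star$ cap of $\overline{\rm d}_n^m$ (present because $i^\star \in S$), consisting of $n_{i^\star}p^{(i^\star)}$ nested arcs attached directly to the collapsed Jones--Wenzl; there is no mirror cap on the $\overline{\rm u}_n^{m'}$ side since $i^\star \notin S'$. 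The classical absorption identity, that $\JW_k$ composed with any elementary cup or cap in its range vanishes, now forces the entire composition to be zero.

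The principal technical obstacle is the cascading nature of this collapse: the outer parts of both ladders still contain their own smaller $\JW$ blocks at layers closer to the boundary, so making the ``single central $\JW$ block'' picture rigorous requires an induction on $r - i^\star$, peeling off matching top layers one at a time and repeatedly invoking self-absorption of the Jones--Wenzls of successively smaller width. The base case $r - i^\star = 0$ (the top-most digit) reduces to the cleanest instance of the absorption fact, in which the central block is literally $\JW_{n_r p^{(r)}-1}$ and an elementary nested cap lands on it. This is in essence the argument given in~\cite[Proposition 3.2]{burrull_libedinsky_sentinelli_2019}, to which we may appeal directly after matching notations.
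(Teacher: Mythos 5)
The paper does not actually prove this lemma --- it is imported wholesale as \cite[Proposition 3.2]{burrull_libedinsky_sentinelli_2019} --- so there is no in-paper argument to compare against and your sketch must stand on its own. Your diagonal case is fine, and your overall plan for the off-diagonal case (look at the top level $i^\star$ of $S\mathbin{\triangle}S'$ and kill the composite with a cap landing on a Jones--Wenzl) is the right one, but two of your steps are wrong as written. First, there are no ``copies of $\JW_n$ meeting at the interface'': the ladder of \cref{def:ladder} is built top-down, so the only box adjacent to the source $\underline{n}$ is $\JW_{n_rp^{(r)}-1}$ on the top digit-block, and the lower blocks of strands cross the interface untouched. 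Worse, if a genuine $\JW_n$ did sit there the lemma would be false, since for any $m<n$ the morphism $\overline{\rm d}_n^m$ contains a cap whose feet would then both land on that $\JW_n$, forcing the \emph{diagonal} products $\overline{\rm u}_n^{m}\cdot\overline{\rm d}_n^m=(\lambda_n^m)^{-1}\JW_m$ to vanish as well. Second, and more seriously, the level-$i^\star$ cap of $\overline{\rm d}_n^m$ does \emph{not} have both feet on your collapsed central block: each of its arcs joins a bottom output of that block to one of the new level-$i^\star$ strands, and those new strands are not in the range of a $\JW$ of width $\pldigs{n_r,\ldots,n_{i^\star+1},0,\ldots,0}-1$. (That width is also not correct when some $k>i^\star$ lies in $S\cap S'$; such levels contribute partial traces $\tau^{n_kp^{(k)}}$, which shrink the block and introduce scalars rather than being plain self-absorptions.) A cap with one foot on a Jones--Wenzl and one foot free is a partial trace and is generically nonzero, so the ``absorption identity'' you invoke does not apply. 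The missing step is to use $i^\star\notin S'$ positively: the up-ladder's level-$i^\star$ stage is then a \emph{larger} Jones--Wenzl which absorbs the central block and also covers the new level-$i^\star$ strands, so both feet of the cap land on it and that idempotent dies. Without this the conclusion does not follow.

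You can also bypass the ladder surgery entirely. The composite $X=\overline{\rm u}_n^{m'}\cdot\overline{\rm d}_n^m$ is a linear combination of $(m',m)$-diagrams, hence has through-degree at most $\min\{m,m'\}$, and it satisfies $\JW_{m'}\cdot X=X=X\cdot\JW_m$ because each ladder terminates in a $g$-box equal to the Jones--Wenzl on its small end. If $m\neq m'$, say $m>m'$, then every diagram occurring in $X$ has at least $(m-m')/2\ge 1$ arcs joining two target points, and an innermost such arc is an adjacent cap on the input side of $\JW_m$; hence $X=X\cdot\JW_m=0$, with the case $m<m'$ following by applying $\iota$.
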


This tells us several things.
Firstly, as an equation over characteristic zero,
\begin{equation}\label{eq:idemoptent_not_kill_nice_basis}
  e^\bmu\cdot\overline{\rm d}_n^m
  = \sum_{m'\in \supp n}
  \lambda_n^{m'}
  \overline{\rm d}_n^{m'} \overline{\rm u}_n^{m'}
    \left(\overline{\rm d}_n^m \right)
    =
    \overline{\rm d}_n^m
\end{equation}
\begin{lemma}\label{lem:suitable_multiples_of_basis}
  The monic submorphism of $\overline{\rm d}_n^m$ descends to $e^{(n)}{\rm d}_n^m$ over characteristic $(\ell, p)$.
\end{lemma}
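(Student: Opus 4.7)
The plan is to prove, over $\mathbb{Q}(\delta)$, that the difference $\overline{\rm d}_n^m - e^{(n)}\cdot{\rm d}_n^m$ factors through objects $\underline{n'}$ with $n' < m$. Since $e^{(n)}\cdot{\rm d}_n^m$ is manifestly defined over the pointed ring $(k,\delta)$ of characteristic $(\ell,p)$—both the idempotent and the diagram make sense there—this will exhibit $e^{(n)}\cdot{\rm d}_n^m$ as the descent of the monic submorphism of $\overline{\rm d}_n^m$ (i.e., as representing the same class in $\Delta_n(m)$).

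First I would establish the key diagrammatic claim that $N \coloneqq \overline{\rm d}_n^m - {\rm d}_n^m$ is a linear combination of morphisms of through-degree strictly less than $m$. By \cref{def:ladder}, $\overline{\rm d}_n^m$ is built from ${\rm d}_n^m$ by decorating the ribbons of the ladder with classical Jones--Wenzl idempotents $\JW_k$. Writing each such $\JW_k = \id_k + (\text{correction})$, where every correction summand factors through $\underline{k-2}$ via a cap--cup pair internal to its ribbon, the expansion $\overline{\rm d}_n^m = {\rm d}_n^m + N$ has the identity-only term recovering exactly ${\rm d}_n^m$. Any other term in the expansion contains at least one cap--cup insertion in a ribbon; tracing through the ladder, this obstruction severs two through-strands of the ribbon from participating in the overall $\underline n \to \underline m$ propagation, so the corresponding diagram factors through $\underline{n'}$ with $n' < m$.

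Next I would invoke \cref{eq:idemoptent_not_kill_nice_basis} (equation (5.4)): over $\mathbb{Q}(\delta)$, $e^{(n)}\cdot\overline{\rm d}_n^m = \overline{\rm d}_n^m$, which relies on the mutual orthogonality of \cref{lem:mutual_orthogonal}. Substituting $\overline{\rm d}_n^m = {\rm d}_n^m + N$ yields
\begin{equation*}
  \overline{\rm d}_n^m \;=\; e^{(n)}\cdot{\rm d}_n^m \;+\; e^{(n)}\cdot N.
\end{equation*}
If $N$ factors through $\underline{n'}$ with $n' < m$, so does $e^{(n)}\cdot N$ (precomposing with $e^{(n)} \in \End(\underline n)$ does not affect the factorisation below $\underline n$), so $e^{(n)}\cdot N$ is non-monic. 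Hence $\overline{\rm d}_n^m$ and $e^{(n)}\cdot{\rm d}_n^m$ agree modulo morphisms of through-degree $< m$—i.e., they represent the same class in $\Delta_n(m)$ over $\mathbb{Q}(\delta)$. Since $e^{(n)}\cdot{\rm d}_n^m$ has all its coefficients in $k$, the coefficients of the monic part of $\overline{\rm d}_n^m$ also lie in $k$, establishing the descent.

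The main obstacle is the diagrammatic claim in the first step: verifying that every non-identity term in the Jones--Wenzl expansions genuinely decreases the overall through-degree. The bookkeeping requires tracking how a cap--cup inserted inside a ribbon of width $k$ interacts with the surrounding structure of the ladder; this can be done ribbon-by-ribbon using the observation that each ribbon's through-strand count is an upper bound on its contribution to the overall through-degree of the composite diagram, together with the self-absorbing property of the $\JW_k$ family.
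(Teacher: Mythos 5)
Your overall architecture is sound and in fact matches the paper's second proof (the one via the evaluation principle): show that $\overline{\rm d}_n^m$ and $e^{(n)}{\rm d}_n^m$ agree modulo morphisms of through-degree $<m$ over $\Q(\delta)$, then observe that the right-hand side is defined over the mixed-characteristic ring, so the monic part of $\overline{\rm d}_n^m$ descends to it. Your last two steps are fine. The gap is your first step: the claim that $N=\overline{\rm d}_n^m-{\rm d}_n^m$ consists entirely of morphisms of through-degree strictly less than $m$ is false. A cup produced by a correction term of an intermediate $\JW_k$ in the ladder of \cref{def:ladder} need not lower the overall through-degree, because at a later cap-step one or both of the two cupped strands can itself be capped against a fresh input strand; the cup is then straightened out into a connection between global source points and the composite can still be monic, giving a monic diagram different from ${\rm d}_n^m$. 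Concretely, take $\ell=3$ and $n=4$, so $n+1=[1,2]_{p,3}$ and $m=0\in\supp 4$: here $\overline{\rm d}_4^0=(\JW_2\otimes\id_2)\cdot c$ with $c$ the nested double cap, and expanding $\JW_2=\id-\tfrac1{[2]}e_1$ gives $\overline{\rm d}_4^0={\rm d}_4^0-\tfrac1{[2]}c'$ with $c'$ the other planar pairing of the four source points; both terms are monic morphisms $\underline 4\to\underline 0$, so the monic part is genuinely a two-term combination. This is consistent with \cref{rem:lavish_not_eve}: $e^{(n)}{\rm d}_n^m$ is in general \emph{not} proportional to the single diagram ${\rm d}_n^m$ in $\Delta(m)$ (other ``ancestor centred'' diagrams survive), so if your step~1 were true, the congruence you derive from it would be contradictory. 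The paper's own Step~1 claims only that the coefficient of ${\rm d}_n^m$ inside $\overline{\rm d}_n^m$ is $1$ (uni-triangularity), not that the remaining monic diagrams are absent.

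The repair is to establish $\overline{\rm d}_n^m\equiv e^{(n)}{\rm d}_n^m$ modulo through-degree $<m$ without ever comparing $\overline{\rm d}_n^m$ to the bare diagram ${\rm d}_n^m$: expand $e^{(n)}=\sum_{m'\in\supp n}\lambda_n^{m'}\overline{\rm d}_n^{m'}\overline{\rm u}_n^{m'}$ and apply it to ${\rm d}_n^m$. The terms with $m'>m$ vanish identically, since $\overline{\rm u}_n^{m'}$ absorbs $\JW_{m'}$ on its source side while $\overline{\rm u}_n^{m'}{\rm d}_n^m$ has through-degree at most $m<m'$; the terms with $m'<m$ have through-degree $<m$ and may be discarded; and the $m'=m$ term equals $\lambda_n^m\,\overline{\rm d}_n^m\,\overline{\rm u}_n^m{\rm d}_n^m=\overline{\rm d}_n^m$ by \cref{lem:mutual_orthogonal}. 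With this substitution your remaining steps go through unchanged.
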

\begin{proof}
  {\bf Step 1} {\it The coefficient of the diagram ${\rm d}_n^m$ in $\overline{\rm d}_n^m$ is exactly 1.}
  We can prove this by induction on the generation of $n$.
  Clearly it is true for all $m$ in the support of an Eve $n$.
  Then, if it is so for $n$ of generation $g-1$, consider the ladder construction from \cref{def:ladder}.
  The only terms that contribute to the coefficient of ${\rm d}_n^m$ are those that have the form ${\rm d}_{\mother[m']{n}}$ for some $m'$, followed by an optional Jones-Wenzl idempotent.
  The diagram ${\rm d}_n^m$ then only occurs from the identity coefficient in $\JW_i$ and so has unit coefficient as desired.
  This is effectively half the proof that the change of basis of $\Delta_{\mathbf{n}}(m)$ from $\{{\rm d}_n^m\}_m$ to $\{\overline{\rm d}_n^m\}$ is upper uni-triangular.

  {\bf Step 2} {\it The coefficient of the diagram ${\rm d}_n^m$ in $e^{(n)}{\rm d}_n^m$ is exactly 1.}
  We can consider the morphism $e^{(n)}{\rm d}_n^m$ modulo morphisms of through degree $<m$.  In this case, we can write
  \begin{equation}\label{eq:down_morphism_expansion}
    e^{(n)} {\rm d}_n^m = \sum_{m\le m' \in \supp n}\lambda_n^{m'}\overline{\rm d}_n^{m'}\overline{\rm u}_n^m{\rm d}_n^m
  \end{equation}
  However, if $m' > m$ then $\overline{\rm u}_n^{m'}{\rm  d}_n^{m'}$ is of the form of $\JW_{m'}$ multiplied by a morphism of through degree at most $m$ and hence vanishes unless $m'=m$.
  Thus \cref{eq:down_morphism_expansion} expands to $\lambda_n^m \overline{\rm d}_n^m\overline{\rm u}_n^m{\rm d}_n^m$.
  Finally, note that by construction $\overline{\rm u}_n^m{\rm d}_n^m = \overline{\rm u}_n^m\overline{\rm d}_n^m = (\lambda_n^m)^{-1}$.
  Then step 1 finishes the sub-proof.

  {\bf Step 3} {\it Some multiple of the monic part of $\overline {\rm d}_n^m$ descends to mixed characteristic:}
  We consider the construction of $\overline{\rm d}_n^m$ from \cref{def:ladder}.
  If $0\in S$, then all idempotents occurring in the definition are of the form $\JW_i$ where $i \equiv_\ell -1$.
  The general theory of Jones-Wenzl idempotents (see \cite{spencer_2020, ridout_saint_aubin_2014}) then tells us that these descend to the ring $\Z[X]_{\mathfrak m}$ where $\mathfrak m$ is the ideal generated by the minimal polynomial of $\delta$.
  As such, the entire morphism descends to that ring.

  On the other hand, if $0\not\in S$ we must be more careful.
  Here, we note that all but the last $\JW_i$ in the definition of $\overline{\rm d}_n^m$ exist over $\ell$.
  However, if we consider only monic diagrams, all terms but the identity of the final idempotent disappear.
  Hence, if we consider the monic image of $\overline{\rm d}_n^m$, this is defined over $\Z[X]_{\mathfrak m}$.

  Either way, the monic image of $\overline{\rm d}_n^m$ is defined over $\Z[X]_\mathfrak{m} \subseteq \Q(X)$.
  If we quotient out by the ideal $\mathfrak{m}\Z[X]_\mathfrak{m}$ we are left with coefficients  with denominators in $\Z[X]/\mathfrak{m}$.
  In this ring, the ideal $(p)$ is principle and maximal.
  Hence we can multiply by a suitable power of $p$ to obtain an element that descends to our characteristic $(\ell, p)$.

  {\bf Step 4} {\it This multiple of the monic part of $\overline {\rm d}_n^m$ is fixed (modulo $< m$) by the action of $e^{(n)}$, and hence must be a multiple of $e^{(n)}{\rm d}_n^m$ over mixed characteristic.}
  This follows, since $\overline{\rm d}_n^m$ is fixed by $e^{(k)}$ over characteristic zero so its multiple is too.
  If we quotient out by morphisms of through degree less than $m$ we still get an morphism fixed by $e^{(n)}$.  Since both this morphism and $e^{(n)}$ exist over the mixed characteristic ring it is still fixed.
  But the space of fixed morphisms is one dimensional so it must be a scalar multiple of $e^{(n)}{\rm d}_n^m$.

  {\bf Step 5} {\it By steps 1 and 2 above, the multiple must be exactly 1.}
\end{proof}

We also present an alternative, more direct proof by calculation.
This proof relies on the evaluation principle~\cite{goodman_wenzl_1993}.
\begin{proof}
  We consider calculating $e^{(n)}{\rm d}_n^m$ in characteristic zero.
  Now, if $m' > m$ then clearly $\overline {\rm u}_n^{m'}{\rm d}_n^m$ vanishes since $\JW_{m'}\overline {\rm u}_n^{m'} = \overline {\rm u}_n^{m'}$.
  Thus we may write
  \begin{equation}\label{eq:what_is_down_baby_dont_hurt}
    e^{(n)}{\rm d}_n^m = \sum_{m'\in\supp n}\lambda_n^{m'}\overline{\rm d}_n^{m'}\overline{\rm u}_n^{m'}{\rm d}_n^m
    = \sum_{m \ge m'\in\supp n}\lambda_n^{m'}\overline{\rm d}_n^{m'}\overline{\rm u}_n^{m'}{\rm d}_n^m
  \end{equation}
  However, if we quotient out by morphisms of through degree less than $m$ we get that \cref{eq:what_is_down_baby_dont_hurt} evaluates to $\overline{\rm d}_n^m$.

  Now we carefully apply the evaluation principle.  Since both $e^{(n)}$ and ${\rm d}_n^m$ are defined over mixed characteristic, so too must their product be.
  Since their product, up to through degree less than $m$ is $\overline{\rm d}_n^m$, this must hold over positive characteristic.
\end{proof}

From this we can obtain information on the scalars $\lambda_n^m$ in the definition of the $(\ell, p)$-Jones-Wenzl idempotents from \cref{sec:notation}.
Recall that they were defined such that $\overline{\rm u}_n^m\overline{\rm d}_n^m = (\lambda_n^m)^{-1}\JW_m$.
But by considering this modulo morphisms of through degree $<m$, we see that in fact $(\lambda_n^m)^{-1}$ is simply the inner product on the cell module $\Delta_{(n)}(m)$.
This gives a possibly surprising corollary.

\begin{corollary}
For each $m \in \supp(n)$, $(\lambda_n^m)^{-1}$ is defined over mixed characteristic.
Further, over this characteristic, it is zero if $m\neq n$ and one if $m=n$.
\end{corollary}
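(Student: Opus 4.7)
The plan is to realise $(\lambda_n^m)^{-1}$ as the value of the cellular bilinear form on $\Delta_{(n)}(m)$ evaluated on its distinguished basis element, and then to read off the conclusions from facts about $(\Lambda_{(n)})_0$ already established.

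First I would recall that for $m \in \supp n$ the cell module $\Delta_{(n)}(m)$ is one-dimensional with basis $e^{(n)}\cdot{\rm d}_n^m$, as shown in the lemma preceding the corollary. The cellular bilinear form on $\Delta_\mathbf{n}(m)$ is characterised by
\begin{equation*}
  \iota f \cdot g \equiv \langle f, g\rangle_m\,\JW_m \pmod{\text{morphisms of through degree}<m},
\end{equation*}
for $f,g\colon\underline n\to\underline m$, and its restriction to $e^{(n)}\Delta_\mathbf{n}(m)=\Delta_{(n)}(m)$ is the bilinear form on the cell module of $\TL_{(n)}$ (see \cref{sec:cell_hecke}).

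Next I would apply \cref{lem:suitable_multiples_of_basis}, giving the identification $e^{(n)}\cdot{\rm d}_n^m\equiv \overline{\rm d}_n^m$ modulo lower through degree; applying $\iota$ yields the analogous identification $\iota({\rm d}_n^m)\cdot e^{(n)}\equiv \overline{\rm u}_n^m$. Multiplying these together in $\Hom(\underline m,\underline m)$ and using the definition $\overline{\rm u}_n^m\cdot\overline{\rm d}_n^m=(\lambda_n^m)^{-1}\JW_m$ forces
\begin{equation*}
  \langle e^{(n)}{\rm d}_n^m,\, e^{(n)}{\rm d}_n^m\rangle_m = (\lambda_n^m)^{-1}.
\end{equation*}
Since the left hand side is computed entirely in $\TL_{(n)}$ over the mixed characteristic ring $(k,\delta)$, the right hand side must descend to $k$; this gives the first assertion.

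For the second assertion I would invoke the lemma that $(\Lambda_{(n)})_0=\{n\}$. When $m\neq n$, the cell module $\Delta_{(n)}(m)$ does not lie in $(\Lambda_{(n)})_0$, so its bilinear form is identically zero and $(\lambda_n^m)^{-1}=0$. When $m=n$, the basis element $e^{(n)}\cdot{\rm d}_n^n=e^{(n)}$ equals $\id_{\underline n}$ modulo morphisms of lower through degree, so $\iota(e^{(n)})\cdot e^{(n)}=e^{(n)}\equiv\JW_n$ mod lower, giving $(\lambda_n^n)^{-1}=1$. The only delicate step is the modular reduction in the opening identification; once \cref{lem:suitable_multiples_of_basis} is in hand the rest is bookkeeping.
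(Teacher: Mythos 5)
Your proof is correct and follows essentially the same route as the paper: identify $(\lambda_n^m)^{-1}$ with the value of the cellular form on the one-dimensional module $\Delta_{(n)}(m)$ via \cref{lem:suitable_multiples_of_basis} and the relation $\overline{\rm u}_n^m\overline{\rm d}_n^m = (\lambda_n^m)^{-1}\JW_m$ taken modulo through degree $<m$, then conclude from $(\Lambda_{(n)})_0=\{n\}$. You merely spell out the details the paper leaves implicit; writing $\JW_m$ rather than $\id_m$ in the characterisation of the form is harmless since they agree in the quotient by lower through degree.
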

\section{The Seam Algebra}\label{sec:seam}

Here we examine in more detail one of the algebras that has been appearing in our examples.
We will change notation slightly in order to maintain parity with the characteristic zero literature, particularly~\cite{langlois_remillard_saint_aubin_2020}.
\begin{definition}
  The \emph{boundary seam algebra} is $\TL_\bmu$ where $\bmu = (k, 1^n)$.
\end{definition}
This algebra can be imagined diagrammatically as follows.
\begin{center}
  \begin{tikzpicture}
    \foreach \x in {0, 1.5} {
      \begin{scope}[shift={(\x,0)}]
        \foreach \i in {0,...,14} {
          \draw[very thick] (-.2,.2+ \i/5) -- (.5,.2 + \i/5);
        }
        \draw[thick,fill=purple] (0,1.3) rectangle (0.3,3.1);
        \draw[thick,fill=black] (0,3.0) rectangle (0.3,3.1);
      \end{scope}
    }
    \draw[thick] (.5,0.1) rectangle (1.3, 3.1);
    \draw [decorate,decoration={brace,amplitude=5pt},xshift=-1pt,yshift=0pt] (-0.3,1.3) -- (-0.3,3.1) node [black,midway,xshift=-10pt] {\footnotesize $k$};
    \draw [decorate,decoration={brace,amplitude=5pt},xshift=-1pt,yshift=0pt] (-0.3,0.1) -- (-0.3,1.3) node [black,midway,xshift=-10pt] {\footnotesize $n$};
  \end{tikzpicture}
\end{center}
Here the purple block is $e^k$ and the white rectangle can be any morphism in $\TL_{k+n}$.

Let us calculate the cell data for this algebra.
We have already done much of this in examples from the first few chapters when $\bmu$ is Eve.
We will now relax that assumption.
We begin by formalising \cref{eg:when_boundary_sites} into a lemma for later use.
\begin{lemma}\label{lem:count_dissapearing_morphisms}
  There is a monic diagram $\underline{a+n} \to \underline{m}$ such that none of the first $a$ sites are connected to each other iff $a-n \le m \le a+n$ and $m \equiv_2 a+n$.
  If so, there are
  \begin{equation}
    \binom{n}{(n + a - m)/2} - \binom{n}{(n - a - m - 2)/2}
  \end{equation}
  such diagrams.
\end{lemma}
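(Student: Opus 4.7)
The plan is to identify the diagrams with lattice paths of $\pm 1$ steps and then apply Andr\'e's reflection principle. Recall the standard bijection between monic diagrams $\underline{N} \to \underline{m}$ and paths $w: \{0,1,\ldots,N\} \to \Z_{\ge 0}$ with $w(0)=0$, $w(N)=m$, and unit steps: the $i$-th step of $w$ is $+1$ if the $i$-th bottom site is either a defect or the left endpoint of a link, and $-1$ if it is the right endpoint of a link. Non-crossing is equivalent to $w$ staying non-negative.

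First I would translate the boundary condition through this bijection. Saying that none of the first $a$ sites is linked to another site among the first $a$ is equivalent to saying that no right endpoint of a link lies in the first $a$ positions (since both endpoints of a link are consecutive sites in the link-ordering, if any link has its right endpoint in the first $a$ sites then so does its left endpoint). In walk language this is the condition that the first $a$ steps of $w$ are all $+1$. After these forced up-steps, $w$ has reached height $a$, and what remains is an arbitrary path of length $n$ from height $a$ to height $m$ that stays at non-negative heights.

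Second, I would read off the existence conditions. A (not-necessarily-positivity-constrained) walk of length $n$ from $a$ to $m$ exists iff $m \equiv_2 a+n$ and $|m-a| \le n$, which unpacks as $a-n \le m \le a+n$. Since the start height $a$ is itself non-negative, whenever such a walk exists one can exhibit one that stays $\ge 0$ (for instance by performing all required up-steps before any down-step), so the positivity constraint adds nothing to the existence criterion. This establishes the ``iff'' part of the statement.

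Finally, the enumeration proceeds by the reflection principle. Unrestricted walks of length $n$ from height $a$ to height $m$ are counted by choosing the positions of the $(n+a-m)/2$ down-steps, giving $\binom{n}{(n+a-m)/2}$. Among these, a walk is \emph{bad} iff it touches height $-1$; reflecting the initial segment up to the first such touch through the line $y = -1$ gives a bijection with all (unrestricted) walks of length $n$ from $-a-2$ to $m$, of which there are $\binom{n}{(n-a-m-2)/2}$. Subtracting yields the claimed formula. The binomials automatically vanish outside the existence window, so the difference is non-negative and strictly positive exactly when the existence conditions hold. The only genuinely delicate step is the equivalence in the first paragraph between ``no mutual link among the first $a$ sites'' and ``first $a$ steps are all up-steps''; the remainder is a routine application of the reflection principle.
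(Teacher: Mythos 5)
Your argument is correct and matches the paper's (implicit) approach: the paper states the lemma without proof and immediately remarks that the count is that of walks from $(a,a)$ to $(a+n,m)$ staying weakly above the $x$-axis, which is exactly the reduction you make, with the forced initial ramp of $a$ up-steps coming from the ``no mutual link among the first $a$ sites'' condition. Your explicit reflection-principle computation and the check that the two binomials encode unrestricted versus bad walks simply supply the standard details the paper leaves to the reader.
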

Note that this is equivalent to counting the number of defect-$m$ walks over $\bmu = (a,1^n)$, i.e. the number of walks from $(a,a)$ to $(a+n, m)$ that do not cross the $x$-axis.

Now, consider an arbitrary monic diagram $\mathbf{t}:\underline{k + n} \to \underline{m}$.
Let $a_{\mathbf{t}}$ be the number of the first $k$ sites not connected to other sites in the first $k$.
In terms of the associated walk, $\bro$, this is exactly $a_{\mathbf{t}} = \rho_k$.
There is now a unique factorisation into monic diagrams $\mathbf{t} = (\mathbf{t}_0 \otimes \id_{n}) \circ \mathbf{t}_1$ where $\mathbf{t}_0 : \underline{k} \to \underline{a_{\mathbf{t}}}$ and $\mathbf{t}_1:\underline{a_{\mathbf{t}}+n}\to\underline{m}$ is a diagram which does not join any of the first $a$ sites.

Further, any $a \in \{k, k-2,\ldots\}$ and monic diagrams $\mathbf{t}_0 : \underline{k} \to \underline a$ and $\mathbf{t}_1: \underline{a + n} \to \underline{m}$ with the condition above uniquely describe a diagram from $\underline{k+n}$ to $\underline{m}$ such that exactly $a$ of the first $k$ sites are not connected to each other.

\begin{center}
  \begin{tikzpicture}[scale = 0.5];
    \draw (0,-.3) -- (20,-0.3);
    \draw[fill=yellow!50!white] (.5,-.3) rectangle (10.5,1.7);
    \draw[fill=blue!30!white] (.5,1.7) rectangle (19.5,3);
    \foreach \i in {1,...,19} {
      \draw[very thick] (\i,-.3) -- (\i,0);
    }
    \draw[very thick] (6,0) arc (0:180:1.5);
    \draw[very thick] (5,0) arc (0:180:.5);
    \draw[very thick] (9,0) arc (0:180:.5);
    \draw[very thick] (11,1.7) arc (0:180:.5);
    \draw[very thick] (13,1.7) arc (0:180:.5);
    \draw[very thick] (18,1.7) arc (0:180:.5);
    \foreach \i in {1,2,15,16,19} {
      \draw[very thick] (\i,0) -- (\i,3.3);
    }
    \foreach \i in {7,10,11,12,13,14,17,18} {
      \draw[very thick] (\i,0) -- (\i,1.7);
    }
    \draw[very thick] (7+.8,1.7+.8) arc (90:180:.8);
    \draw[very thick] (14,1.7) arc (0:90:.8);
    \draw[very thick] (14-.8,1.7+.8) -- (7+.8, 1.7+.8);
  \end{tikzpicture}
\end{center}
In the above figure, we have factorised a $\underline{19} \to \underline{5}$ diagram with $k = 10$.  This diagram has $a_{\mathbf{t}} = 4$ and $\mathbf{t}_0$ is highlighted in yellow.  The diagram $\mathbf{t}_1 : \underline{13} \to \underline{5}$ is shown in blue.
In terms of walks, we can represent this below:
\begin{center}
  \begin{tikzpicture}[scale=0.5]
    \foreach \i in {0,2,4,6} {
      \draw[dotted, thin] (-.2,\i+.2) -- (\i+.2,-.2);
      \draw[dotted, thin] (20+.2,\i+.2) -- (20-\i-.2,-.2);
    }
    \foreach \i in {8,10,12,14,16,18} {
      \draw[dotted, thin] (\i-6-.2,6+.2) -- (\i+.2,-.2);
      \draw[dotted, thin] (20-\i+6+.2,6+.2) -- (20-\i-.2,-.2);
    }
    \foreach \i in {20,22,24} {
      \draw[dotted, thin] (\i-6-.2,6+.2) -- (20+.2,\i-20-.2);
      \draw[dotted, thin] (20-\i+6+.2,6+.2) -- (-.2,\i-20-.2);
    }
    \draw (-0.5,0) -- (20.5,0);
    \draw[very thick,blue!80!black] (6,0)--(10,4)-- (11,3) -- (12,4) -- (14,2) -- (17,5) -- (18,4)--(19,5) ;
    \draw[very thick,yellow!90!black] (0,0) -- (4,4) -- (6,2) -- (8,4) -- (9,3);
    \draw[very thick,yellow!90!black,dashed] (9,3)  -- (10,4);
    \fill (1,1) circle (0.1);
    \fill (2,2) circle (0.1);
    \fill (3,3) circle (0.1);
    \fill (4,4) circle (0.1);
    \fill (5,3) circle (0.1);
    \fill (6,2) circle (0.1);
    \fill (7,3) circle (0.1);
    \fill (8,4) circle (0.1);
    \fill (9,3) circle (0.1);
    \fill (10,4) circle (0.1);
    \fill (11,3) circle (0.1);
    \fill (12,4) circle (0.1);
    \fill (13,3) circle (0.1);
    \fill (14,2) circle (0.1);
    \fill (15,3) circle (0.1);
    \fill (16,4) circle (0.1);
    \fill (17,5) circle (0.1);
    \fill (18,4) circle (0.1);
    \fill (19,5) circle (0.1);

  \end{tikzpicture}
\end{center}
The yellow walk corresponds to $\mathbf{t}_0$.
The blue walk corresponds to $\mathbf{t}_1$.
To factorise any walk into $\mathbf{t}_0$ and $\mathbf{t}_1$, we mark the point $(k, \rho_k)$ and colour the walk to the left yellow.
This is $\mathbf{t}_0$.
We then add a ramp from $(k-\rho_k, 0)$ to $(k,\rho_k)$ and add this to the remainder of the walk.  This is $\mathbf{t}_1$.
Note in the diagram above that the yellow and blue paths overlap briefly.

Fix $k$ and $n$ and consider the algebra $\TL_\bmu$.
We will now determine a valid set $M_\bmu(m)$.
Recall that a strategy for this is to sort the tableau in $M(m)$ and progressively take tableaux, skipping those that would result in a linearly dependent set in $\Delta_\bmu(m)$.

Sort the tableaux in $M(m)$ by $\mathbf{t} \prec \mathbf{u}$ if $a_{\mathbf t} < a_{\mathbf u}$.
We will consider the tableau in this order and select some and discard others.
Now, suppose that $\mathbf{t}$ factors into $\mathbf{t}_0$ and $\mathbf{t}_1$.
Suppose $a'$ is the through degree of $e^k \cdot \mathbf{t}_0$.
Then certainly $e^\bmu \cdot \mathbf{t}$ is in the span of $\{ e^\bmu \cdot \mathbf u \;:\; a_{\mathbf u} \le a'\}$.
If $a' < a_{\mathbf{t}}$ then $e^\bmu \cdot \mathbf{t}$ is in the span of basis elements we have chosen before.
Otherwise $a' = a_{\mathbf{t}}$ and so $e^k \cdot \mathbf{t}_0$ does not vanish in the cell module of $\TL_{(k)}$ indexed by $a$.
We know that this cell module is one-dimensional and so there is a ``canonical choice'' (which we will take to be ${\rm d}_{k}^a$) for the diagram $\mathbf{t}_0$.
We can use \cref{lem:count_dissapearing_morphisms} to count the number of diagrams we can choose for $\mathbf{t}_1$.
We deduce

\begin{proposition}\label{prop:tow_part_mu_dims}
  Let $\bmu = (k,1^n)$ be a (possibly not Eve) composition of $k + n$.
  Then
  \begin{equation}
    \Lambda_\bmu = \bigcup_{a \in \supp(k)}\{
    a-n, a-n + 2, \ldots, a+n
  \}\cap \N_0
  \end{equation}
  and
  \begin{equation}
    \dim \Delta_\bmu(m) =
    \sum_{a \in \supp(k)}
    \binom{n}{(n+a-m)/2} -
    \binom{n}{(n-a-m-2)/2}.
  \end{equation}
\end{proposition}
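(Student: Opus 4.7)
The plan is to formalize the informal bijection sketched in the preceding paragraphs, by combining the analysis of $\TL_{(k)}$ from \cref{sec:endomorphism} with the diagram count of \cref{lem:count_dissapearing_morphisms}, via a filtration argument on $\Delta_\bmu(m)$. First, I would take as a spanning set of $\Delta_\bmu(m)=e^\bmu\Delta_{k+n}(m)$ the classes $\{e^\bmu\cdot\mathbf t\}$, where $\mathbf t$ runs through monic $(k+n,m)$-diagrams (using \cref{prop:a_lambda_decomp} and the discussion in \cref{sec:cell_hecke}). For each such $\mathbf t$ I apply the canonical factorization $\mathbf t=(\mathbf t_0\otimes\id_n)\circ\mathbf t_1$ with $\mathbf t_0:\underline k\to\underline{a_{\mathbf t}}$ monic and $\mathbf t_1:\underline{a_{\mathbf t}+n}\to\underline m$ monic, not joining any of its first $a_{\mathbf t}$ sites. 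Since $e^\bmu=e^k\otimes\id_n$, this yields $e^\bmu\cdot\mathbf t=\bigl((e^k\cdot\mathbf t_0)\otimes\id_n\bigr)\circ\mathbf t_1$.

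Second, I would stratify the spanning set by $a_{\mathbf t}$ and define $F_a\subseteq\Delta_\bmu(m)$ to be the span of classes with $a_{\mathbf t}\le a$. Invoking \cref{sec:endomorphism}: if $a_{\mathbf t}\notin\supp(k)$ then $\Delta_{(k)}(a_{\mathbf t})=0$, so $e^k\cdot\mathbf t_0$ has through degree strictly less than $a_{\mathbf t}$; expanding this in diagrams and composing with $\mathbf t_1$ produces only diagrams $\mathbf u$ with $a_{\mathbf u}<a_{\mathbf t}$, so $e^\bmu\cdot\mathbf t\in F_{a_{\mathbf t}-2}$. If $a_{\mathbf t}=a\in\supp(k)$ then $\Delta_{(k)}(a)$ is one-dimensional on the class of $e^k\cdot{\rm d}_k^a$, so $e^k\cdot\mathbf t_0\equiv c_{\mathbf t_0}\,e^k\cdot{\rm d}_k^a$ modulo morphisms of through degree less than $a$; hence modulo $F_{a-2}$ every class with $a_{\mathbf t}=a$ is a scalar multiple of $v_{a,\mathbf t_1}:=e^\bmu\cdot\bigl(({\rm d}_k^a\otimes\id_n)\circ\mathbf t_1\bigr)$, and these are indexed by $\mathbf t_1$ in the set counted by \cref{lem:count_dissapearing_morphisms}. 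This gives the summand $\binom{n}{(n+a-m)/2}-\binom{n}{(n-a-m-2)/2}$, and $\Lambda_\bmu$ is exactly the set of $m$ for which at least one summand is nonzero, namely those with $|a-m|\le n$ and $m\equiv_2 a+n$ for some $a\in\supp(k)$.

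The main obstacle is linear independence of $\{v_{a,\mathbf t_1}\}$ within each graded piece $F_a/F_{a-2}$: since $e^k\cdot{\rm d}_k^a$ is genuinely a linear combination of diagrams rather than a single diagram, one must rule out non-obvious cancellations between distinct $\mathbf t_1$. My approach here is to track the monic (through-degree $m$) part of $v_{a,\mathbf t_1}$ when expanded in the Temperley-Lieb diagram basis of $\Delta_{k+n}(m)$. By \cref{lem:suitable_multiples_of_basis}, the diagram ${\rm d}_k^a$ appears in $e^k\cdot{\rm d}_k^a$ with coefficient exactly $1$, so the particular diagram $({\rm d}_k^a\otimes\id_n)\circ\mathbf t_1$ appears in $v_{a,\mathbf t_1}$ with coefficient $1$; since this diagram determines $\mathbf t_1$ uniquely (once $a$ and ${\rm d}_k^a$ are fixed) via the combinatorics of its bottom-$n$ strands, no other $v_{a,\mathbf t_1'}$ can contribute it. Distinct $a$'s are separated automatically by the filtration. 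Together with the count from \cref{lem:count_dissapearing_morphisms}, this yields the asserted bases and dimensions.
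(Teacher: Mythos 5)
Your proposal is correct and follows essentially the same route as the paper: the same factorisation $\mathbf t=(\mathbf t_0\otimes\id_n)\circ\mathbf t_1$, the same stratification by $a_{\mathbf t}$, the one-dimensionality of $\Delta_{(k)}(a)$ for $a\in\supp(k)$ from \cref{sec:endomorphism}, and \cref{lem:count_dissapearing_morphisms} for the count. Your explicit linear-independence check, via the unit coefficient of ${\rm d}_k^a$ in $e^{k}\cdot{\rm d}_k^a$ (\cref{lem:suitable_multiples_of_basis}) and the observation that the diagram $({\rm d}_k^a\otimes\id_n)\circ\mathbf t_1$ determines $(a,\mathbf t_1)$, makes fully rigorous the lower bound that the paper leaves implicit in its greedy-selection framing, but the argument is the same in substance.
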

Note that in the case that $k$ is Eve, $\supp (k) = \{k\}$ and we recover the results of~\cite{langlois_remillard_saint_aubin_2020}.

\subsection{Eve Compositions}
Let us suppose that $k$ is Eve.
Our next step is to determine the set $(\Lambda_\bmu)_0$.
For a morphism $x : \underline{n} \to \underline{n}$, let $x_\id$ be the coefficient of the identity diagram in $x$.

We begin with a lemma regarding evaluating fractions of quantum numbers.
\begin{lemma}\label{lem:unreasonable_cancellation}
    If $i > 0$ and $b \neq 0$ then, over $(\ell, p)$-characteristic,
    \begin{equation}\label{eq:unreasonable_cancellation}
      \frac{[a p^{(i)}]}{[b p^{(i)}]} = \frac{a}{b}.
  \end{equation}
\end{lemma}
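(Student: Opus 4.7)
The plan is to recognise this as a polynomial identity in $\delta$ and reduce it, via the classical multiplicative formula for quantum integers, to the trivial evaluation ``$[k]_Q = k$ at $Q = 1$''. Passing to a quadratic extension in which $\delta = q + q^{-1}$ and writing $[k]_x := (x^k - x^{-k})/(x - x^{-1})$, the telescoping factorisation
\[
  q^{mn} - q^{-mn} \;=\; (q^n - q^{-n})\sum_{j=0}^{m-1}q^{(m-1-2j)n}
\]
immediately yields the multiplication identity $[mn]_q = [m]_{q^n}\cdot[n]_q$. Applying this with $n = p^{(i)}$ to both numerator and denominator collapses the ratio to
\[
  \frac{[ap^{(i)}]}{[bp^{(i)}]} \;=\; \frac{[a]_Q}{[b]_Q}, \qquad Q := q^{p^{(i)}}.
\]

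Next I would show that $Q = 1$. The defining relation $[\ell] = 0$ forces $q^{2\ell} = 1$, so $q^\ell$ squares to one; the conventions of the pointed ring $(R,\delta)$, as inherited from \cite{spencer_2020, martin_spencer_2021}, then pin $q^\ell = 1$. Since $i \geq 1$ means $\ell$ divides $p^{(i)} = \ell p^{i-1}$, we get $Q = (q^\ell)^{p^{i-1}} = 1$. The polynomial expansion $[k]_Q = \sum_{j=0}^{k-1}Q^{k-1-2j}$ evaluates at $Q=1$ to $k$, so $[a]_Q = a$ and $[b]_Q = b$. Cross-multiplying gives the desired identity in the form $b\cdot[ap^{(i)}] = a\cdot[bp^{(i)}]$ in $R$ (which is what we really want, since $b$ need not be invertible there).

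The main obstacle is step two: $[\ell] = 0$ only directly forces $(q^\ell)^2 = 1$, and the wrong branch $q^\ell = -1$ combined with $p$ odd would introduce a spurious factor $(-1)^{a-b}$ in the ratio, contradicting the lemma. I would therefore need to justify $q^\ell = 1$ by appeal to the specific convention fixed in the paper's references, or else avoid $q$ altogether by working purely polynomially: dividing $[ap^{(i)}]$ by $[p^{(i)}]$ inside $\mathbb Z[\delta]$ and reducing the quotient modulo $(p, [\ell])$ to verify directly that it equals $a$. Cross-checking against the base case $i=1$, $a=2$, $b=1$ (i.e.\ $[2\ell]/[\ell]$) should confirm the sign convention and let the polynomial approach proceed by an induction on $i$ using $p^{(i+1)} = p\cdot p^{(i)}$.
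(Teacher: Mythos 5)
Your proof takes essentially the same route as the paper's. The paper factors $[a\ell]_\delta = [\ell]_\delta\,[a]_{\delta'}$ for a polynomial $\delta'\in\Z[\delta]$ (namely $\delta' = [\ell+1]-[\ell-1]$, which is $q^{\ell}+q^{-\ell}$ in disguise), cancels $[\ell]_\delta$, asserts that $\delta'$ reduces to $2$ so that $[k]_{\delta'}=k$, and finishes by induction on $i$; that is precisely your factorisation $[mn]_q=[m]_{q^n}[n]_q$ followed by the evaluation $[k]_Q=k$ at $Q=q^{\ell}=1$, merely phrased so as to avoid introducing $q$. Consequently, the obstacle you flag is not one the paper resolves and you overlooked: its claim that $\delta'$ is $2$ modulo $[\ell]$ is exactly your claim $q^{\ell}=1$, and $[\ell]=0$ alone only yields $\delta'=\pm 2$ (for instance $\ell=2$, $\delta=0$ gives $\delta'=\delta^{2}-2=-2$, so the sign genuinely depends on the branch/convention). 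You have therefore located the one thin point of the published argument rather than missed an idea it supplies; your proposed repairs --- pinning the convention for $q^{\ell}$, or performing the division $[ap^{(i)}]/[p^{(i)}]$ directly in $\Z[\delta]$ and checking the base case $[2\ell]/[\ell]$ --- are the right way to close it, and the downstream use in \cref{lem:tracing_jw} only needs the ratio up to a sign in any case. One caveat on your final remark: the cross-multiplied form $b\,[ap^{(i)}]=a\,[bp^{(i)}]$ is vacuous in $R$, where both quantum numbers already vanish; the fraction must be read as the generic ratio in $\Q(\delta)$ reduced to the pointed ring, which is how the paper (and your polynomial variant) in fact treats it.
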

\begin{proof}
  This result is well known, but we would like to highlight a proof that does not depend on the ``$q$-formulation'' of quantum numbers.
  
  If we write the left hand side out in $\Q(\delta)$, we find that
  \begin{equation}
    \frac{[a\ell]_{\delta}}{[b\ell]_{\delta}} = \frac{[\ell]_{\delta}[a]_{\delta'}}{[\ell]_{\delta}[b]_{\delta'}}
    = \frac{[a]_{\delta'}}{[b]_{\delta'}}
  \end{equation}
  Here $\delta'$ is a polynomial in $\delta$ which is 2 modulo $[\ell]$.  Recall that $[n]_2 = n$ so that in $\Q(\delta)_{\mathfrak p}/\mathfrak p$ (where $\mathfrak p$ is a minimal integral polynomial of $\delta$), $[a\ell]/[b\ell] = a / b$.
  Now, if $\delta' = 2$ then $\ell = p$, and so \cref{eq:unreasonable_cancellation} follows by induction on $i$.
\end{proof}

\begin{lemma}\label{lem:tracing_jw}
    Let $n = a p^{(r)}$ for $0 \le a \le \ell \vee p$.
    Hence $\JW_{n-1}$ exists over characteristic $(\ell, p)$, and
    $\tau^k(\JW_{n-1})_{\id}$ vanishes unless $k = b p^{(r)}$ for some $0\le b<a$.
\end{lemma}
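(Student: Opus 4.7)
My approach is to derive a closed rational expression for $\tau^k(\JW_{n-1})_{\id}$ over $\Q(\delta)$, where everything is semisimple and Jones--Wenzl idempotents exist for every index, then analyse how that expression specialises to $R$. The classical partial-trace identity $\tau(\JW_m) = \tfrac{[m+1]}{[m]}\,\JW_{m-1}$ (proved by induction from Wenzl's recursion: since $\JW_m$ annihilates every $e_i$, only the outermost-arc term survives when the trace is computed) iterates and telescopes to
\begin{equation*}
\tau^k(\JW_{n-1}) \;=\; \frac{[n]}{[n-k]}\,\JW_{n-1-k}\qquad\text{in }\Q(\delta).
\end{equation*}
Extracting the identity coefficient and cross-multiplying gives the polynomial identity $\tau^k(\JW_{n-1})_{\id}\cdot[n-k] = [n]$ in $\Z[\delta]$, which descends to $R$.

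\emph{Easy case, $\ell\nmid k$.} The hypothesis $n = ap^{(r)}$ with $r\ge 1$ forces $\ell\mid n$, and in $(\ell,p)$-characteristic $[m]=0$ iff $\ell\mid m$. Hence $[n] = 0$ while $[n-k]\ne 0$, and since $R$ is an integral domain the displayed polynomial identity forces $\tau^k(\JW_{n-1})_{\id} = 0$.

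\emph{Delicate case, $\ell\mid k$.} Euclidean division of $k/\ell$ by $p^{r-1}$ gives a unique decomposition $k = bp^{(r)} + j\ell$ with $0\le b$ and $0\le j < p^{r-1}$, where the bound $k<n$ forces $b<a$. Both $n$ and $n-k$ are now multiples of $\ell = p^{(1)}$, so \Cref{lem:unreasonable_cancellation} applied with $i=1$ identifies
\begin{equation*}
\frac{[n]}{[n-k]} \;=\; \frac{ap^{r-1}}{(a-b)p^{r-1}-j}\qquad\text{in }R.
\end{equation*}
If $j=0$, this is $a/(a-b)$ and $k = bp^{(r)}$ falls into the permitted set, so nothing needs to be proved. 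If $j>0$, set $s = \nu_p(j)$; from $j<p^{r-1}$ we have $s\le r-2$. Cancelling $p^s$ in $\Z$ first rewrites the ratio as $ap^{r-1-s}/((a-b)p^{r-1-s} - j/p^s)$, whose denominator is coprime to $p$ (hence a unit in $R$), while the numerator still carries the factor $p^{r-1-s}$ with $r-1-s\ge 1$. The ratio therefore vanishes in characteristic $p$, as required.

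\emph{Main obstacle.} The only subtle step is the $\ell\mid k$, $j>0$ subcase: both $[n]$ and $[n-k]$ vanish in $R$, so the polynomial identity from the first paragraph degenerates to the vacuous $0 = 0$. \Cref{lem:unreasonable_cancellation} is the essential device: it replaces the $0/0$ indeterminacy by an explicit ratio of integers, whose $p$-adic valuations then decide the specialisation cleanly.
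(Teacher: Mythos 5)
Your proof is correct and follows essentially the same route as the paper's: reduce to the characteristic-zero identity $\tau^k(\JW_{n-1})_{\id}=[n]/[n-k]$, use the fact that $n-1$ is Eve so this quantity descends to the base ring (the evaluation principle), and then resolve the resulting $0/0$ with \cref{lem:unreasonable_cancellation} by comparing $p$-adic valuations of numerator and denominator. Your explicit three-way case split (and the cross-multiplied identity in the integral domain for the $\ell\nmid k$ case) is just a more carefully organized version of the paper's parametrisation $n-k=bp^{(s)}$ with $s$ maximal.
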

\begin{proof}
  The proof is an exercise in the evaluation principle.
  We know that over characteristic zero,
  \begin{equation}\label{eq:trace_jw_zero}
    \tau^k(\JW_m) = \frac{[m+1]}{[m+1-k]} \JW_{m-k}.
  \end{equation}
  Hence $\tau^k(\JW_{n-1})_\id = [n]/[n-k]$.
  However, in the morphism $\tau^k(\JW_{n-1})$, the coefficient of the identity diagram is a linear combination of the coefficients of diagrams in $\JW_{n-1}$.
  This linear combination takes coefficients from $\{1, \delta, \delta^2,\ldots\}$.
  Thus we can consider this statement as stating that a combination of elements (possibly multiplied by a suitable power of $\delta$) gives $[n]/[n-k]$:
  \begin{equation}
    \sum_{i \in I} \delta^{f_i} c_i = \frac{[n]}{[n-k]}.
  \end{equation}
  Now, since $n$ is Eve, all the coefficients $c_i$ descend to an element of our base ring, as does $\delta$.
  We deduce that $[n]/[n-k]$ must too.
  Indeed, \cref{eq:unreasonable_cancellation} assures us that these fractions {\it exist} even if they are not all non-zero.

  Finally, let $s$ be the maximal natural such that $p^{(s)} \mid n - k$ so we can write $[n]/[n-k] = [ap^{(r)}]/[b p^{(s)}] = a p^{(r-s)} / b$.
  Note that $s$ is, by definition, at most $r$.
  Clearly for this coefficient to not vanish in our ring, we must have $s=r$ giving the result.
\end{proof}

\begin{theorem}\label{prop:eve_seam_mu_0}
  Let $\bmu = (k, 1^n)$ be an Eve composition so $k+1 = k_r p^{(r)}$.
  Then,
  \begin{equation}
    (\Lambda_\bmu)_0 =
    \bigcup_{\lfloor k_r - n/p^{(r)}\rfloor\le b\le k_r}bp^{(r)} -1 + (\Lambda_{\mathbf{n-k+bp^{(r)}-1}})_0.
  \end{equation}
\end{theorem}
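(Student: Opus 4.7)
The plan is to apply the criterion from \cref{eq:exact_restrict_delta} that $m \in (\Lambda_\bmu)_0$ if and only if the inherited bilinear form on $\Delta_\bmu(m)$ does not vanish identically. Since $k$ is Eve, the factor $e^\bmu = \JW_k \otimes \id_n$ involves only the classical Jones-Wenzl idempotent, so the analysis can proceed diagrammatically using the partial-trace identities for $\JW_k$ established in \cref{lem:tracing_jw}.

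First I would take the diagram basis of $\Delta_\bmu(m)$ from \cref{prop:tow_part_mu_dims}, parameterised by walks $\bro = (\rho_1 = k, \rho_2, \ldots, \rho_{n+1} = m)$ on $\N_0$ with $|\rho_{i+1} - \rho_i| = 1$ for $i \ge 1$. For each admissible $b$, I would define the $b$-th block $B_b \subseteq \Delta_\bmu(m)$ by a filtration condition that tracks the minimum height attained by $\rho_i$ for $i \ge 2$, designed so that $B_b$ is in canonical bijection with the diagram basis of $\Delta_{\mathbf{N_b}}(m - (bp^{(r)} - 1))$ for $N_b = n - k + bp^{(r)} - 1$. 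This bijection is realised by shifting all heights down by $bp^{(r)} - 1$ and truncating the initial ascent of length $k - (bp^{(r)}-1) = (k_r - b)p^{(r)}$. The bound $b \ge \lfloor k_r - n/p^{(r)}\rfloor$ in the theorem is the largest lower bound for which $N_b$ can be non-negative, up to a possibly vacuous integer rounding.

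The central obstacle is to verify that, after this reparameterisation, the Gram form of $\Delta_\bmu(m)$ is block-diagonal in the $B_b$ decomposition and that on each block it agrees up to a unit with the standard Temperley-Lieb form on $\Delta_{\mathbf{N_b}}(m - (bp^{(r)} - 1))$. The pairing $\langle \bro, \bro' \rangle$, computed by stacking the associated diagrams, yields a closed graph in which the top and bottom of the first $k$ strands are contracted via $\JW_k$ and some number of these strands are looped back through the idempotent. By \cref{lem:tracing_jw}, the identity coefficient of $\tau^j(\JW_k)$ vanishes unless $j$ is a multiple of $p^{(r)}$, and this forces the block indices of $\bro$ and $\bro'$ to coincide, killing all off-block pairings. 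The surviving on-block contributions simplify via \cref{lem:unreasonable_cancellation}, which ensures the quantum-number quotients arising from the partial traces and the theta normalisations of \cref{sec:gram} descend to invertible scalars in the pointed ring, identifying the on-block form with the standard one up to a unit.

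With the block-diagonal structure and on-block identification in hand, the Gram form of $\Delta_\bmu(m)$ is non-zero if and only if at least one block's form is non-zero, which is equivalent to $m - (bp^{(r)} - 1) \in (\Lambda_{\mathbf{N_b}})_0$ for some $b$ in the specified range. The stated union then follows.
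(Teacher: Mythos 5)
Your overall strategy (reduce non-degeneracy of the form on $\Delta_\bmu(m)$ to non-degeneracy of forms on the various $\Delta_{\mathbf{N_b}}$) points in the right direction, but the central claim your argument rests on --- that the diagram basis of $\Delta_\bmu(m)$ decomposes into blocks $B_b$ in bijection with the diagram bases of $\Delta_{\mathbf{N_b}}(m-(bp^{(r)}-1))$, with the Gram form block-diagonal --- cannot hold, for dimension reasons alone. Take $\ell=2$, $p=5$, $k=3$ (so $k+1=2p^{(1)}$, $k_r=2$, $p^{(r)}=2$), $n=2$, $m=3$. Then $\dim\Delta_{(3,1,1)}(3)=2$, with basis the walks $3\to4\to3$ and $3\to2\to3$, while the admissible blocks have sizes $\dim\Delta_{\mathbf{2}}(0)=1$ (for $b=2$) and $\dim\Delta_{\mathbf{0}}(2)=0$ (for $b=1$), totalling $1$. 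So the $B_b$ do not span $\Delta_\bmu(m)$, ``block-diagonal in the $B_b$ decomposition'' is not a decomposition of the form, and the equivalence ``the form is non-zero iff some block's form is non-zero'' does not follow: you would still have to show that every basis vector lying outside all blocks pairs trivially with everything. That is not automatic, and in fact the leftover vectors interact non-trivially with the blocks: writing $\mathbf{t}_1$ for the diagram capping sites $4,5$ (the walk $3\to4\to3$, which your filtration places in the $b=2$ block) and $\mathbf{t}_2$ for the diagram capping sites $3,4$ (the walk $3\to2\to3$, which lies in no block), one computes directly that $\iota\mathbf{t}_2\cdot e^\bmu\cdot\mathbf{t}_1=(\id_2\otimes\iota u\otimes\id_1)\cdot(\JW_3\otimes u)=\JW_3$ on the nose, where $u:\underline{2}\to\underline{0}$ is the cap; there are no closed loops and no partial traces, so this pairing equals $(\JW_3)_{\id}=1$.

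This example also exposes the second gap: the appeal to \cref{lem:tracing_jw} to kill ``off-block'' pairings. The number of strands of $\JW_k$ that are partially traced when $\iota\mathbf{u}\cdot e^\bmu\cdot\mathbf{t}$ is evaluated is a joint invariant of the pair $(\mathbf{u},\mathbf{t})$, determined by how the caps of $\mathbf{t}$ and the cups of $\iota\mathbf{u}$ interlock among the last $n$ strands; it is not the difference of two separately assigned block indices, and in the example above no strand of $\JW_3$ is traced at all even though the two diagrams sit in different ``blocks''. This is precisely the difficulty the paper's proof confronts head-on: rather than diagonalising the Gram matrix, it characterises directly which pairs $(\mathbf{u},\mathbf{t})$ can satisfy $(\iota\mathbf{u}\cdot e^\bmu\cdot\mathbf{t})_{\id}\neq 0$ via a case analysis on where the first non-defect source site of $\mathbf{t}$ connects (to a target site, to another source site, or back through the idempotent to $\mathbf{u}$), and only then invokes \cref{lem:tracing_jw} together with \cref{lem:unreasonable_cancellation}. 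Any repair of your argument must supply that control of the cross-pairings; it is the actual content of the theorem, and it is exactly the step your proposal asserts rather than proves.
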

Recall that $(\Lambda_{\mathbf{n}})_0 = \{ m \in \Z_n^2 \;:\; m \equiv_2 n\}$ unless $\delta=0$ and $n$ is even in which case $(\Lambda_{\mathbf{n}})_0 = \{ m \in \Z_n^2\setminus\{0\} \;:\; m \equiv_2 n\}$.
\begin{proof}
  We want to find all $m \in \Lambda_{\bmu} = \{k -n \le m \le k + n\;:\; m \equiv_2 k + n\}$ such that there are two diagrams $\mathbf{u}, \mathbf{t} \in \Delta(m)$ such that $(\iota\mathbf{u}\cdot e^\bmu\cdot \mathbf{t})_\id\neq 0$.
  If such a pair exists, then the bilinear form is not degenerate on $\Delta_\bmu(m)$ and $m \in (\Lambda_\bmu)_0$.
If no such pair exists, since all elements of $\Delta_\bmu(m)$ are linear combinations of $e^\bmu \mathbf{t}$ we deduce the bilinear form is identically zero and so $m\not\in (\Lambda_\bmu)_0$.

  Now we examine the morphism $\iota\mathbf{u}\cdot e^\bmu\cdot \mathbf{t}$.
  Suppose the first $0\le a\le m$ sites of $\mathbf{t}$ are ``defects'' - not connected to other sources sites, and without loss, suppose this quantity is at least that for $\mathbf{u}$.
  We will be evaluating the morphism up to isotopy without expanding $e^k$.  Thus it will remain in the form $\iota\mathbf{u}\cdot e^\bmu\cdot \mathbf{t}$ up to a factor of $\delta^r$ for some $r$.
  If $\delta = 0$ then this vanishes identically unless $r = 0$, so we discard it.
  Otherwise we may assume that the $a+1$-st source site of $\mathbf{t}$ is not connected to the target sites of $\iota\mathbf{u}\cdot e^\bmu\cdot \mathbf{t}$, even after evaluation of the morphism (but not expanding $e^k$).
  \begin{center}
    \begin{tikzpicture}
     \draw[thick,fill=purple] (-.15,1.3) rectangle (0.15,3.1);
     \foreach \i in {0,...,6}
     \draw[very thick] (-.15,\i *0.2) -- (.15,\i*.2);
     \draw[thick] (-.15, -.1) -- (-.85,.2) -- (-.85, 2.8) -- (-.15,3.1) -- cycle;
     \node at (-.5,1.5) {$\iota\mathbf{u}$};
     \draw[thick] (.15, -.1) -- (.85,.2) -- (.85, 2.8) -- (.15,3.1) -- cycle;
     \node at (.5,1.5) {$\mathbf{t}$};
     \foreach \i in {0,...,3}
     \draw[very thick] (.15,2.6-\i *0.2) -- (1,2.6-\i*.2);
     \draw[very thick] (.15,1.8) edge[out=0,in = 0] (0.15,1);
     \draw[very thick] (.15,1.6) edge[out=0,in = 0] (0.15,1.2);
     \draw[very thick] (.15,.8) edge[out=0,in = 180] (1,0.8);
     \draw[very thick] (.15,.6) edge[out=0,in = 180] (1,0.6);
     \draw[very thick] (.15,.4) edge[out=0,in = 180] (1,0.4);
     \draw[very thick] (.15,.2) edge[out=0,in = 0] (.15,0.0);

    \draw [decorate,decoration={brace,amplitude=5pt},xshift=1pt,yshift=0pt] (1,2.7) -- (1,1.9) node [black,midway,xshift=10pt] {\footnotesize $a$};

     \foreach \i in {0,...,6}
     \draw[very thick] (-.85,2.1-\i *0.2) -- (-1,2.1-\i*.2);
    \end{tikzpicture}
  \end{center}

  {\bf Case $a \ge k$:}
  In this case, $\mathbf{t}$ and $\mathbf{u}$ can be written $\id_k \otimes \mathbf{t}'$ and $\id_k \otimes \mathbf{u}'$ respectively, for some monic diagrams $\mathbf{t}', \mathbf{u}' : \underline{n} \to \underline{m-k}$.
  If so,
  \begin{equation}
    (\iota\mathbf{u}\cdot e^\bmu\cdot \mathbf{t})_\id
    =
    \Big((\id_k\otimes\iota\mathbf{u'})\cdot (e^k\otimes\id_{m-k})\cdot (\id_k \otimes\mathbf{t'})\Big)_\id
    = (e^k)_\id (\iota\mathbf{u'}\cdot\mathbf{t'})_\id
  \end{equation}
  Since $(e^k)_\id = 1$, the problem reduces to asking which $m-k$ are in $(\Lambda_\textbf{n})_0$.
  Recall that these are all naturals of the same parity as $n$ with the exception of $0$ if $\delta = 0$.

  Hence the set of possible $m$ with such a pair of diagrams is $k + (\Lambda_\mathbf{n})_0$.

  {\bf Case $a < k$:}
  We consider which site the $a+1$-st source site of $\mathbf{t}$ is connected to after simplifying all loops and isotopies without expanding $e^k$.
  Recall that it is not connected to a target site.

  Suppose it connects to a source site, as in the below diagram.
  Let the first $a' < a$ sites of $\textbf{u}$ be defects.
  This must be a strict inequality as $a$ and $a'$ have opposite parity.
  Then the $a'+1$-st site of $\textbf{u}$ is not a defect so it must connect to some other point on the Jones-Wenzl idempotent.  This means that either two of the upper sites or two of the lower sites are connected and the entire morphism vanishes identically.
  \begin{center}
    \begin{tikzpicture}
      \draw[thick,fill=purple] (-.15,1.3) rectangle (0.15,3.1);
      \foreach \i in {-1,...,6}
      \draw[very thick] (-.15,\i *0.2) -- (.15,\i*.2);
      \node at (.4,1.4) {\footnotesize ?};
      \node at (-.6,1.2) {\footnotesize ?};
      \node at (-.6,2.2) {\footnotesize ?};

      \foreach \i in {0,...,1}{
        \draw[very thick] (-.85,\i *0.2+0.1) -- (-1,\i*.2+0.1);
        \draw[very thick] (.85,\i *0.2+0.1) -- (1,\i*.2+0.1);
      }

      \draw[very thick] (-.15, 1.6) -- (-.3,1.6) edge[out=180, in=180] (-.3,1.8);
      \draw[very thick] (-.3,1.8) -- (-.15,1.8);
      \draw[thick] (-.15, -.3) -- (-.85,0) -- (-.85, 0.4) -- (-.15,0.7) -- cycle;
      \draw[thick] (.15, -.3) -- (.85,0) -- (.85, 0.4) -- (.15,0.7) -- cycle;
      \foreach \i in {-2,...,2}
        \draw[very thick] (.15,2.6-\i *0.2) -- (1,2.6-\i*.2);
      \foreach \i in {-2,...,0}
        \draw[very thick] (-.15,2.6-\i *0.2) -- (-1,2.6-\i*.2);
      \draw[very thick] (.15,2.0) --(.4,2) edge[out=0,in = 0] (0.4,.8);
      \draw[very thick] (0.4,.8) -- (-1,.8);

     \draw [decorate,decoration={brace,amplitude=5pt},xshift=-1pt,yshift=0pt] (-1,2.5) -- (-1,3.1) node [black,midway,xshift=-10pt] {\footnotesize $a'$};
     \draw [decorate,decoration={brace,amplitude=5pt},xshift=1pt,yshift=0pt] (1,3.1) -- (1,2.1) node [black,midway,xshift=10pt] {\footnotesize $a$};
     \draw [decorate,decoration={brace,amplitude=3pt},xshift=1pt,yshift=0pt] (1,0.4) -- (1,0) node [black,midway,xshift=15pt] {\footnotesize $m-a$};
    \end{tikzpicture}
  \end{center}

  Hence for $\iota\mathbf{u}\cdot e^\bmu\cdot\mathbf{t}$ to not vanish identically, the $a+1$-st site of $\mathbf{t}$ must connect to the another site of the idempotent.
  This is illustrated in the diagram below, lest two upper or two lower sites of the Jones-Wenzl idempotent be joined, killing the morphism completely.
  \begin{center}
    \begin{tikzpicture}
      \draw[thick,fill=purple] (-.15,1.3) rectangle (0.15,3.1);
      \foreach \i in {-1,...,6}
      \draw[very thick] (-.15,\i *0.2) -- (.15,\i*.2);

      \foreach \i in {0,...,1}{
        \draw[very thick] (-.85,\i *0.2+0.1) -- (-1,\i*.2+0.1);
        \draw[very thick] (.85,\i *0.2+0.1) -- (1,\i*.2+0.1);
      }
      \node at (.5,1.6) {\footnotesize ?};
      \node at (-.5,1.6) {\footnotesize ?};

      \draw[thick] (-.15, -.3) -- (-.85,0) -- (-.85, 0.4) -- (-.15,0.7) -- cycle;
      \draw[thick] (.15, -.3) -- (.85,0) -- (.85, 0.4) -- (.15,0.7) -- cycle;
      \foreach \i in {-2,...,1}
        \draw[very thick] (.15,2.6-\i *0.2) -- (1,2.6-\i*.2);
      \foreach \i in {-2,...,1}
        \draw[very thick] (-.15,2.6-\i *0.2) -- (-1,2.6-\i*.2);
      \draw[very thick] (.15,2.2) --(.5,2.2) edge[out=0,in = 0] (0.5,.8);
      \draw[very thick] (0.5,.8) -- (.15,.8);
      \draw[very thick] (-.15,2.2) --(-.5,2.2) edge[out=180,in = 180] (-0.5,.8);
      \draw[very thick] (-0.5,.8) -- (-.15,.8);

     \draw [decorate,decoration={brace,amplitude=5pt},xshift=-1pt,yshift=0pt] (-1,2.3) -- (-1,3.1) node [black,midway,xshift=-10pt] {\footnotesize $a$};
     \draw [decorate,decoration={brace,amplitude=5pt},xshift=1pt,yshift=0pt] (1,3.1) -- (1,2.3) node [black,midway,xshift=10pt] {\footnotesize $a$};
     \draw [decorate,decoration={brace,amplitude=3pt},xshift=1pt,yshift=0pt] (1,0.4) -- (1,0) node [black,midway,xshift=15pt] {\footnotesize $m-a$};
     \draw [decorate,decoration={brace,amplitude=3pt},xshift=-1pt,yshift=0pt] (-1,0) -- (-1,0.4) node [black,midway,xshift=-15pt] {\footnotesize $m-a$};
    \end{tikzpicture}
  \end{center}

  Further, by \cref{lem:tracing_jw}, this vanishes identically unless $a = b p^{(r)}-1$ for some $1\le b\le k_r$.
  But we also require that $n \ge k - a$ and so $b p^{(r)} \ge k - n+1$.

  Suppose, after isotopy, that there are $n-k'$ remaining central sites not connected to the Jones-Wenzl idempotent in any way.
  It is clear that $m \in (\Lambda_\bmu)_0$ if $m-a \in (\Lambda_\mathbf{n-k'})_0$.
But $k'$ is of the same parity as $k-a$ and since $(\Lambda_\mathbf{n-k'})_0\subseteq(\Lambda_\mathbf{n-k+a})_0$ we may assume without loss that $k' = k-a$.
  Recall $a = b p^{(r)}-1$ so $k-a = (k_r-b)p^{(r)} \le n$.
Hence $m-a \in (\Lambda_\mathbf{n-k+a})_0$ if the inner product is not zero.

  Thus the indices coved by this case are
  \begin{equation}
  \bigcup_{\lfloor k_r - n/p^{(r)}\rfloor\le b\le k_r}bp^{(r)} -1 + (\Lambda_\mathbf{n-k+bp^{(r)}-1})_0.
\end{equation}
Note that the consideration $a \ge k$ is actually a special case of this, where $b = k_r$.
\end{proof}
\begin{example}
  To elucidate on the above we present the \cref{fig:seam dots}.
\begin{figure}[htpb]
  \centering
  \includegraphics[width=\textwidth]{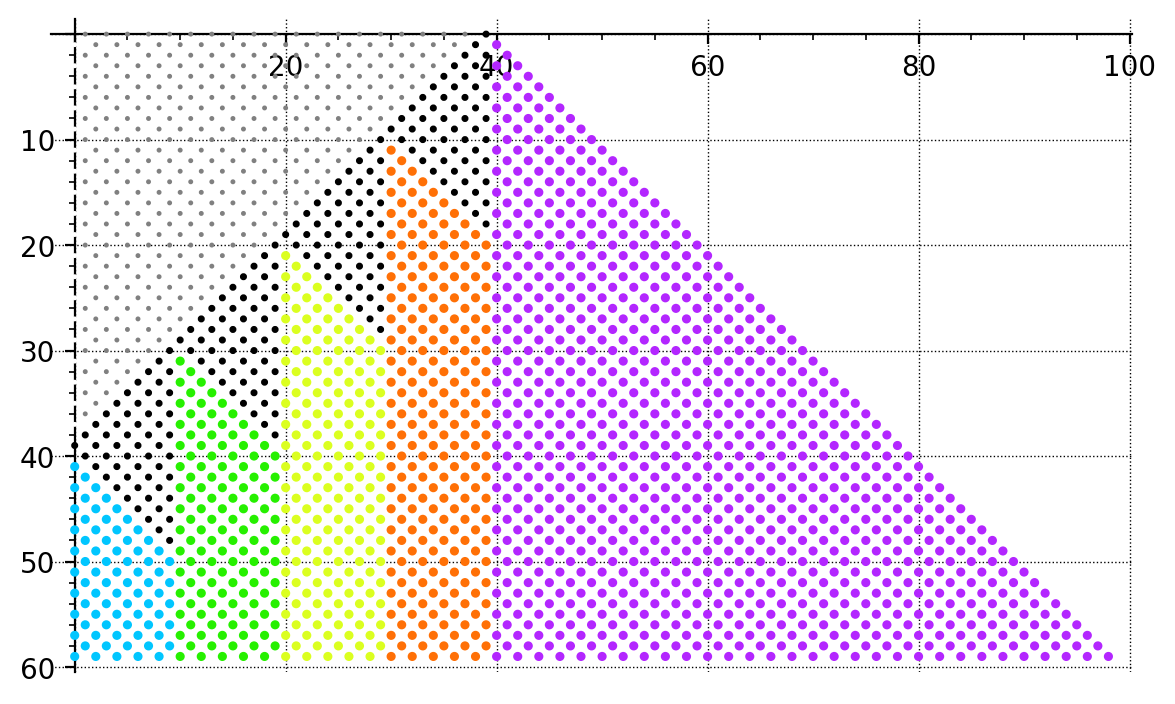}
  \caption{}%
  \label{fig:seam dots}
\end{figure}
Here we have picked $\ell = 2$ (so $\delta = 0$) and $p = 5$ with $k= 39 = [4,0,0]_{5,2}-1$ (which is Eve).
The $y$ axis indicates $n$, increasing downwards, and the $x$ axis $m$.
Note how $m \le n + k$ and that $n+k \equiv_2 m$, giving the offset ``chequerboard'' pattern.
The light grey circles indicate elements of $\Lambda _{\mathbf{k+n}}\setminus \Lambda_{\bmu}$ and the larger black and coloured circles elements of $\Lambda_{\bmu}$.
Circles shaded purple indicate values of $n$ and $m$ that are non-degenerate due to the first case in the proof of \cref{prop:eve_seam_mu_0}.  Those shaded in different colours come from the second case, with the different colours indicating different values of $b$.
The remaining circles, shaded black represent the elements of $\Lambda_{\bmu} \setminus \Lambda_{\bmu0}$.

We can read \cref{fig:seam dots} as follows.
Consider the partition $\bmu = (39,1^{11})$.
This is represented by the row labelled twelve (the first row being $(39, 1^0)$).
There are $25 = \lfloor(39 + 11 )/ 2\rfloor$ dots in this row, representing the 25 cell modules of $\TL_{\mathbf{50}}$.
Of those, the smallest thirteen are light grey and thus index cell modules that do not descend to $\TL_\bmu$.
The remaining twelve consist of five dots that are black (and so the cell modules represented by these are degenerate) and seven that are coloured.
We can deduce that the algebra $\TL_\bmu$ has seven simple modules in mixed characteristic $(2,5)$.
\end{example}

\subsection{Two Part Seam Algebras}\label{ssec:two_part_seams}
Now let us lift the restriction on $k$ but force $n = 1$.
This algebra has been studied in~\cite{sutton_tubbenhauer_wedrich_zhu_2021} where $e^\bmu$ is split into a sum of idempotents.

We can specialise \cref{prop:tow_part_mu_dims} at $n=1$:
\begin{proposition}
  Let $\bmu = (k,1)$ be a (possibly not Eve) composition of $k + 1$.
  Then
  \begin{equation}
    \Lambda_{\bmu} = \bigcup_{a \in \supp(k)}\{
    a-1, a+1
  \}
  \cap \N_0
  \end{equation}
  and
  \begin{equation}
    \dim \Delta_\bmu(m) =
    \begin{cases}
      2 & m \in (\supp k -1) \cap (\supp k + 1)\\
      1 & \text{ else }
    \end{cases}.
  \end{equation}
\end{proposition}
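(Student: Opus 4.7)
The plan is to obtain this as a direct specialization of Proposition~\ref{prop:tow_part_mu_dims} at $n=1$, together with a short calculation using $\binom{1}{j} \in \{0,1\}$.

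For the index set $\Lambda_\bmu$, substituting $n=1$ into $\{a-n, a-n+2,\ldots, a+n\}$ collapses this arithmetic progression to the two-element set $\{a-1, a+1\}$. Taking the union over $a \in \supp(k)$ and intersecting with $\N_0$ (to discard $a-1=-1$ when $0 \in \supp(k)$) gives the claimed description verbatim.

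For the dimension, set $n=1$ in $\binom{n}{(n+a-m)/2} - \binom{n}{(n-a-m-2)/2}$. The second binomial becomes $\binom{1}{(-a-m-1)/2}$, whose lower index is strictly negative for $a, m \ge 0$, so this term vanishes identically. The first binomial is $\binom{1}{(1+a-m)/2}$, which equals $1$ exactly when $(1+a-m)/2 \in \{0,1\}$, i.e. $m \in \{a-1, a+1\}$, and is zero otherwise. Hence each $a \in \supp(k)$ contributes exactly $1$ to $\dim \Delta_\bmu(m)$ when $m \in \{a-1, a+1\}$ and $0$ otherwise.

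Summing over $a \in \supp(k)$, the total is
\begin{equation*}
\dim\Delta_\bmu(m) = [\,m+1 \in \supp(k)\,] + [\,m-1 \in \supp(k)\,] = [\,m \in \supp(k)-1\,] + [\,m \in \supp(k)+1\,],
\end{equation*}
which is $2$ when both conditions hold (equivalently $m \in (\supp(k)-1)\cap(\supp(k)+1)$) and $1$ when exactly one does (i.e. $m \in \Lambda_\bmu$ but not in the intersection). There is no real obstacle: the argument is just an arithmetic unwinding of the preceding proposition, and the only point requiring care is the $\N_0$ intersection at the boundary $a=0$, which is handled uniformly by the convention that the binomial with negative lower index vanishes.
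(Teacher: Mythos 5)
Your proof is correct and is exactly the paper's approach: the paper simply states that the result follows by specialising Proposition~\ref{prop:tow_part_mu_dims} at $n=1$, and your binomial bookkeeping (the second term vanishing for negative lower index, the first term detecting $m\in\{a-1,a+1\}$) is the routine verification the paper leaves implicit.
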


To calculate the dimension of $\TL_\bmu$, it will first be useful to know when two elements of $\supp n$ are separated by exactly 2, as this tells us which cell modules are two-dimensional.

\begin{definition}\label{def:tail_length}
  If $n + 1 = \pldigs{n_r, \ldots, n_0}$, we say $n$ has \emph{tail length} $t$ if $n_0 = \ell -1$ and $n_i = p-1$ for $0<i<t$.  Equivalently, $p^{(t_n)} \mid n+1$.
  The maximum tail length of $n$ is denoted $t_n$.
\end{definition}
Note that any given $n$ can have multiple tail lengths and that every $n$ has a tail of length 0 and so $t_n \ge 0$.

We let $\mother[k]{n}$ be the $k$-th (matrilineal) ancestor of $n$.
\begin{lemma}
  Let $n+1 = \pldigs{n_r, \ldots, n_0}$.
  If $m, m'$ are elements of $\supp(n)$ such that $m' = m + 2$ then $n$ has a tail of length $t$ with $n_{t} = 1$, and $m + 1 \in \supp(\mother[t+1]{n})$.
\end{lemma}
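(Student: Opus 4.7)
The plan is to translate the hypothesis $m' - m = 2$ into an equation on the signed digit expansions and then reduce modulo $\ell$ and $p$ iteratively to pin down the tail structure of $n$.

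By definition of support, write $m+1 = \sum_{i=0}^{r} \epsilon_i n_i p^{(i)}$ and $m'+1 = \sum_{i=0}^{r} \epsilon'_i n_i p^{(i)}$ with $\epsilon_i,\epsilon'_i \in \{\pm 1\}$ for $i<r$ and $\epsilon_r=\epsilon'_r=+1$. Setting $\eta_i := (\epsilon'_i - \epsilon_i)/2 \in \{-1,0,1\}$, the hypothesis becomes
\[
\sum_{i=0}^{r-1} \eta_i\, n_i\, p^{(i)} \;=\; 1.
\]
Reducing modulo $\ell$ and using $\ell \mid p^{(i)}$ for $i\ge 1$, we get $\eta_0 n_0 \equiv 1 \pmod{\ell}$. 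Since $0\le n_0 < \ell$, either $n_0=1$ with $\eta_0=+1$, or $n_0=\ell-1$ with $\eta_0=-1$.

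In the first case, subtracting off $\eta_0 n_0 = 1$ leaves $\sum_{i\ge 1}\eta_i n_i p^{(i)} = 0$; reducing mod $p^{(2)}$, then $p^{(3)}$, etc., forces $\eta_i n_i = 0$ for all $i\ge 1$, and we are done with $t=0$. In the second case, $\eta_0 n_0 = 1-\ell$, so after dividing by $\ell$ we obtain $\sum_{i\ge 1} \eta_i n_i p^{i-1}=1$, which has exactly the same shape but with $\ell$ replaced by $p$. The same dichotomy at position $1$ forces $n_1\in\{1,p-1\}$; iterating, we obtain a unique $t\ge 0$ with $n_0=\ell-1$, $n_i = p-1$ for $0<i<t$, $n_t=1$, and signs $\eta_i = -1$ for $i<t$, $\eta_t=+1$, $\eta_i=0$ for $i>t$. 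This is exactly the claim that $n$ has a tail of length $t$ with $n_t=1$.

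To finish, translate the $\eta_i$ back to the $\epsilon_i$: $\epsilon_i = +1$ for $i<t$, $\epsilon_t = -1$, while $\epsilon_i$ for $i>t$ are unconstrained (they contribute $\eta_i=0$ regardless of choice). A direct computation of the bottom $t+1$ digits of $m+1$ gives the telescoping identity
\[
(\ell-1) \;+\; \ell(p-1)\sum_{j=0}^{t-2} p^{j} \;-\; p^{(t)} \;=\; -1,
\]
so $m+1 = -1 + \sum_{i>t} \epsilon_i n_i p^{(i)}$ with $\epsilon_r=+1$ and the remaining $\epsilon_i \in \{\pm 1\}$ free. Since $\mother[t+1]{n}+1 = \pldigs{n_r, n_{r-1}, \ldots, n_{t+1}, 0, \ldots, 0}$, this exhibits $m+1$ as a generic element of $\supp(\mother[t+1]{n})$, completing the proof.

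The only subtlety is bookkeeping of signs and verifying that potential zero digits $n_i$ with $i>t$ are compatible (they are, since $\eta_i n_i = 0$ absorbs either sign choice), plus confirming that the induction step in the second case behaves correctly when $t$ reaches the maximum tail length. All of this is routine once the mod-$\ell$/mod-$p$ reduction is set up.
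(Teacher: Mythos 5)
Your proof is correct and takes essentially the same route as the paper's: the paper encodes the sign flips as index sets $S_1,S_2$ rather than sign vectors, derives the same equation $\sum_{i\in S_2\setminus S_1} n_i p^{(i)} - \sum_{i\in S_1\setminus S_2} n_i p^{(i)} = 1$, runs the identical mod-$\ell$/mod-$p$ induction to locate the tail, and obtains $m+1\in\supp(\mother[t+1]{n})$ by the same telescoping of the bottom digits. The only cosmetic point is that your displayed telescoping identity is the $t\ge 1$ case (where $\epsilon_0=+1$ and $n_0=\ell-1$); for $t=0$ the bottom contribution is simply $-n_0=-1$, which your case-1 analysis already covers.
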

\begin{proof}
    Suppose that $m' = n[S_1]$ and $m = n[S_2]$ (notation from \cref{sec:notation}).  Then
    \begin{equation*}\label{eq:twoooo}
    2 = \sum_i n_i p^{(i)} - 2\sum_{i \in S_1} n_i p^{(i)} -
    \sum_i n_i p^{(i)} + 2\sum_{i \in S_2} n_i p^{(i)}
    = 2 \sum_{i\in S_2} n_i p^{(i)} - 2 \sum_{i\in S_1} n_i p^{(i)}.
    \end{equation*}
    \begin{equation}\label{eq:diff_two_one}
      \Rightarrow \sum_{i\in S_2\setminus S_1} n_i p^{(i)} -
                  \sum_{i\in S_1\setminus S_2} n_i p^{(i)} = 1.
    \end{equation}
    Note that the sets $S_1\setminus S_2$ and $S_2\setminus S_1$ are distinct.
    Further by considering \cref{eq:diff_two_one} modulo $\ell$ we see that one of the two contains $0$.

    If $0 \in S_2\setminus S_1$ then clearly $n_0 = 1$ and $S_1\setminus S_2 = \emptyset$ and $S_2 \setminus S_1 = \{0\}$.
    Here, $t = 0$ is a tail of $n$.

    On the other hand if $0\in S_1\setminus S_2$ then $n_0 = \ell-1$.
    By considering \cref{eq:diff_two_one} divided by $\ell$ and applying induction, we see that there is a tail $r>t>0$ and $\{0,1,\ldots, t-1\}\subset S_1\setminus S_2$ but $ S_2\setminus S_1 = \{t\}$ and $n_{t}=1$.
    
    Either way, $S_2 \setminus S_1 = \{t\}$ and $S_1\setminus S_2 = \{0,1,\ldots, t-1\}$.
    If $S_0 = S_1 \cap S_2$ then 
    \begin{equation}
        n[S_1] - 1 = n[S_2] + 1 = \mother[t+1]{n}[S_0]
    \end{equation}
    which lies in $\supp(\mother[t+1]{n})$ as desired.
\end{proof}
\begin{example}
  Let $n=513\,398$ so that $n+1 = [4,0,2,3,1,4,4,7]_{5,8}$.
  Then $n$ has maximal tail length $t_n = 3$ and, for example, $513\,000 =[4,0,2,3,1,-4,-4,-7]_{5,8}$ and $512\,998 =[4,0,2,3,-1,4,4,7]_{5,8}$ differ by 2.
  
  Now, $\mother[4]{n} = 513999$ so that $\mother[4]{n}+1 = [4,0,2,3,0,0,0]_{5,8}$.
  It is clear that the support of $\mother[4]{n}$ has 4 elements so we deduce that there are 4 pairs of elements of $\supp n$ differing by 2.
\end{example}

If $n_{t} \neq 1$ for any tail length $t$ of $n$, we term $n$ \emph{interior}.
We see that two elements of $\supp n$ differ by two iff $n$ is not interior.

We deduce:
\begin{proposition}
  Let $k + 1 = \pldigs{n_r,\ldots, n_{0}}$ and $T_n = \{0 \le t \le t_n \;:\; n_t = 1\}$ be the set of all tails that start with the digit 1.
  Then if $\bmu = (k,1)$,
  \begin{equation}\label{eq:dim_of_ind_one}
    \dim \TL_\bmu =
    \begin{cases}
      2^{\generation{n}+1}& n \text{ interior}\\
      2^{\generation{n}+1} + \sum_{t \in T_n}2^{\generation{n}-t}& \text{else}\\
    \end{cases}.
  \end{equation}
\end{proposition}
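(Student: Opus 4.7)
My plan starts from the standard cellular identity
\begin{equation*}
\dim \TL_\bmu \;=\; \sum_{m \in \Lambda_\bmu} (\dim \Delta_\bmu(m))^2,
\end{equation*}
combined with the preceding proposition, which asserts $\dim \Delta_\bmu(m) \in \{1,2\}$, with the value $2$ occurring exactly when $m \in (\supp k - 1) \cap (\supp k + 1)$. Setting $I := |(\supp k - 1)\cap(\supp k + 1)|$, inclusion--exclusion gives $|\Lambda_\bmu| = 2\cdot 2^{\generation{n}} - I$, and the cellular identity yields
\begin{equation*}
\dim \TL_\bmu \;=\; (|\Lambda_\bmu| - I)\cdot 1 + I \cdot 4 \;=\; 2^{\generation{n}+1} + 2I.
\end{equation*}
So the proposition reduces to evaluating $I$, i.e.\ counting pairs $m < m'$ in $\supp k$ with $m' - m = 2$.

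The preceding lemma already gives the forward implication: any such pair forces the existence of a tail $t \in T_n$ and determines subsets $S_2\setminus S_1 = \{t\}$, $S_1\setminus S_2 = \{0,\ldots,t-1\}$, with the pair specified by the residual $S_0 = S_1\cap S_2$ contained in the non-leading nonzero digit positions strictly above $t$. I would next verify the converse: given $t \in T_n$ and any such $S_0$, setting $S_1 := S_0 \cup \{0,\ldots,t-1\}$ and $S_2 := S_0 \cup \{t\}$ yields $k[S_2] = k[S_1] - 2$. This is immediate from the carrying identity
\begin{equation*}
2\,n_t p^{(t)} - 2\sum_{i=0}^{t-1} n_i p^{(i)} \;=\; 2 p^{(t)} - 2(p^{(t)} - 1) \;=\; 2,
\end{equation*}
using $n_t = 1$ and the maximal-digit structure of the tail.

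To count admissible $S_0$ for a fixed $t \in T_n$: the expansion of $k+1$ has $\generation{n}+1$ nonzero digits, one of which is the leading digit $n_r$. A tail of length $t$ with $t\in T_n$ forces nonzero digits at positions $0,1,\ldots,t$, which accounts for $t+1$ of the non-leading nonzero positions. Hence the non-leading nonzero positions strictly above $t$ number $\generation{n} - t - 1$, so there are $2^{\generation{n}-t-1}$ choices of $S_0$. Distinct pairs $(t, S_0)$ give distinct pairs in $\supp k$ (since $t$ is recovered as the unique element of $S_2\setminus S_1$), so
\begin{equation*}
I \;=\; \sum_{t \in T_n} 2^{\generation{n}-t-1}, \qquad 2I \;=\; \sum_{t \in T_n} 2^{\generation{n}-t},
\end{equation*}
which substituted into the earlier display yields the ``else'' case. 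The ``interior'' case is the specialisation $T_n = \emptyset$, giving $I=0$ and $\dim \TL_\bmu = 2^{\generation{n}+1}$.

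The main obstacle, modest as it is, will be the bookkeeping: ensuring the enumeration of $S_0$ is bijective with the pairs in $\supp k$ differing by $2$, checking that tail indices contribute independently, and handling the boundary case where $\supp k - 1$ might meet $\N_0$ non-trivially (the minimal element of $\supp k$ is $n_r p^{(r)} - \sum_{i<r} n_i p^{(i)} - 1$, which is nonnegative in all the cases under consideration). None of these is substantial, so the argument is essentially a counting exercise driven entirely by the previous lemma.
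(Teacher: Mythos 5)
Your proof takes essentially the same route as the paper: the sum-of-squares identity for cellular algebras combined with counting pairs of elements of $\supp k$ differing by $2$ via the preceding tail lemma, yielding $\sum_{t\in T_n} 2^{\generation{n}-t-1}$ two-dimensional cell modules and hence the stated total. The paper's own proof is a two-line version of exactly this argument; your verification of the converse of the lemma and the enumeration of the residual sets $S_0$ supplies detail the paper leaves implicit, and both arguments share the same glossed-over boundary subtlety when $0 \in \supp k$ (so that $-1$ is removed from $\Lambda_\bmu$ and the count shifts by one), which your parenthetical dismisses a little too quickly.
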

\begin{proof}
  The cell modules of $\TL_\bmu$ are one-dimensional, unless they are indexed by a number lying exactly between two elements of $\supp n$ separated by 2, in which case they are two-dimensional.
  Hence the number of two-dimensional cell modules is $\sum_{t \in T_n}2^{\generation{m}-t-1}$.

  Recall that the dimension of a cellular algebra is the sum of the squares of the dimensions of its cell modules.  The result follows.
\end{proof}

\begin{remark}
  The form of \cref{eq:dim_of_ind_one} is typical of what we will see going forward.
  Note that there is actually only one case, as $T_n = \emptyset$ if $n$ is interior.
  Further, in the majority of cases, $T_n$ is either zero or a singleton.
  Indeed, it is only if $\ell = 2$ or $p = 2$ that multiple tail lengths can have $n_t = 1$.
\end{remark}

Let us be more explicit.  Recall that $e^{(k)}{\rm d}_k^m$ was a basis vector for the linear module $\Delta_{(k)}(m)$ for each $m \in \supp k$.
To find an element of a cell module for $e^\bmu$ we have two options for each such $m$:
\begin{equation}
  d_k^{m_+} = 
  \vcenter{\hbox{
  \begin{tikzpicture}
    \draw[thick,fill=purple] (0,0.1) rectangle (.3,1.8);
    \draw[thick,fill=black] (0,1.7) rectangle (0.3,1.8);
    \draw[thick] (.3,.1) -- (0.95,.3) -- (0.95,1.6) -- (.3,1.8);
    \node at (.65,.9) {${\rm d}_k^m$};
    \foreach \i in {1,...,7} {
      \draw[very thick] (0,\i/4) -- (-.2,\i/4);
    }
    \foreach \i in {2,...,6} {
      \draw[very thick] (0.95,\i/4) -- (1.2,\i/4);
    }
    \draw[very thick] (-.2,0) -- (1.2,0);
  \end{tikzpicture}
}}
  \quad\quad
  \quad\quad
  d_k^{m_-} = 
  \vcenter{\hbox{
  \begin{tikzpicture}
    \draw[thick,fill=purple] (0,0.1) rectangle (.3,1.8);
    \draw[thick,fill=black] (0,1.7) rectangle (0.3,1.8);
    \draw[thick] (.3,.1) -- (0.95,.3) -- (0.95,1.6) -- (.3,1.8);
    \node at (.65,.9) {${\rm d}_k^m$};
    \foreach \i in {1,...,7} {
      \draw[very thick] (0,\i/4) -- (-.2,\i/4);
    }
    \foreach \i in {3,...,6} {
      \draw[very thick] (0.95,\i/4) -- (1.4,\i/4);
    }
    \draw[very thick] (-.2,0) -- (0.95,0) arc (-90:90:0.25);
  \end{tikzpicture}
}}
\end{equation}
Here, $m_+ = m+1$ and $m_- = m-1$ are the targets of the morphisms.
We will use $u_{k}^{m_\pm}$ to denote their vertical involutions.
As written, these are elements of $\Hom_{\TL_\bmu}(k+1,m_\pm)$.  That is, they are not necessarily monic but we are really considering their {\it images} in $\Delta_\bmu(m_\pm)$.
The two-dimensional cell modules are those for which some $m_+$ is the same as some $m'_-$ for $m,m'\in \supp n$.
All other cell modules are one-dimensional.

\begin{corollary}
  If $k$ is interior, $\TL_{(k,1)}$ is commutative.
\end{corollary}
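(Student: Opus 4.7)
The plan is to invoke \cref{lem:linear_comm}: it suffices to show that every cell module of $\TL_{(k,1)}$ is one-dimensional, since then the cellular basis is fixed by the involution $\iota$ and the algebra is forced to be commutative.

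First, I would recall from the preceding discussion that $\dim\Delta_{(k,1)}(m) \in \{1,2\}$, with $\dim \Delta_{(k,1)}(m) = 2$ precisely when $m \in (\supp k - 1) \cap (\supp k + 1)$, i.e.\ when there exist $m_1, m_2 \in \supp k$ with $m_2 = m_1 + 2$ and $m = m_1 + 1$. Thus all cell modules of $\TL_{(k,1)}$ are one-dimensional if and only if no two elements of $\supp k$ differ by exactly $2$.

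Next, I would use the lemma preceding this corollary, which characterises when two elements of $\supp k$ can differ by $2$: this happens exactly when $k$ has some tail length $t \in T_k$ (so that the digit $n_t$ of $k+1$ equals $1$ and all less significant digits are maximal). This is precisely the condition that $k$ is \emph{not} interior. Hence, under the hypothesis that $k$ is interior, $T_k = \emptyset$ and no two elements of $\supp k$ differ by $2$.

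Combining these two observations, every cell module of $\TL_{(k,1)}$ has dimension one, so by \cref{lem:linear_comm} the algebra $\TL_{(k,1)}$ is commutative. There is no real obstacle here: all ingredients have already been established in the preceding lemmas, and the result is essentially a bookkeeping corollary translating the combinatorial notion of ``interior'' into the algebraic statement of commutativity.
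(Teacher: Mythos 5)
Your proof is correct and follows essentially the same route as the paper: both arguments observe that interiority of $k$ forces all cell modules of $\TL_{(k,1)}$ to be one-dimensional (via the lemma characterising when two elements of $\supp k$ differ by $2$) and then conclude commutativity from \cref{lem:linear_comm}. The paper additionally remarks that $\TL_{(k,1)}$ later decomposes as $\TL_{(k+1)}\oplus\TL_{(k-1)}$, but that is supplementary and not needed for the commutativity claim.
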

\begin{proof}
  If $k$ is interior, then all the $m_+$ and $m_-$ are distinct and hence all the cell modules are one-dimensional.
  Hence by \cref{lem:linear_comm}, $\TL_\bmu$ is commutative.
  We will later show it is the direct sum of the two algebras $\TL_{(k+1)}\oplus \TL_{(k-1)}$, each of which are commutative as we know from \cref{sec:endomorphism}.
\end{proof}

Recall the goal is to enumerate $(\Lambda_\bmu)_0$.
To that end, let us evaluate the morphism $u_k^{m'_\pm}d_k^{m_\pm}$.
Firstly, it is clear that
\begin{equation}
  u_k^{m_+}d_k^{m_+} =
  \left({\rm u}_k^{m}e^{(k)}\otimes\id\right)\cdot\left(e^{(k)}{\rm d}_k^{m}\otimes\id\right) =
  {\rm u}_k^me^{(k)}{\rm d}_k^m\otimes\id
\end{equation}
and so $(u_k^{m_+}b_k^{m_+})_\id = \delta_{m,k}$ is inherited from the theory of $\TL_{(k)}$.

Next we examine $u_k^{m_-}d_k^{m_-}$:
\begin{equation}\label{eq:trace_a_clamp}
  \vcenter{\hbox{
  \begin{tikzpicture}
    \draw[thick,fill=purple] (0,0.1) rectangle (.3,1.8);
    \draw[thick,fill=black] (0,1.7) rectangle (0.3,1.8);
    \draw[thick] (.3,.1) -- (0.95,.3) -- (0.95,1.6) -- (.3,1.8);
    \draw[thick] (0,.1) -- (-0.65,.3) -- (-0.65,1.6) -- (0,1.8);
    \node at (.65,.9) {${\rm d}_k^m$};
    \node at (-.35,.9) {${\rm u}_k^m$};
    \foreach \i in {3,...,6} {
      \draw[very thick] (0.95,\i/4) -- (1.4,\i/4);
      \draw[very thick] (-0.65,\i/4) -- (-1.1,\i/4);
    }
    \draw[very thick] (-.65,0) -- (0.95,0) arc (-90:90:0.25);
    \draw[very thick] (-0.65,0.5) arc (90:270:0.25);
  \end{tikzpicture}
}}
\end{equation}
If we focus on the submorphism ${\rm u}_k^m e^{(k)}{\rm d}_k^m$ we can write
\begin{equation}\label{eq:clamping_an_idemp}
  {\rm u}_k^m e^{(k)}{\rm d}_k^m
  = \sum_{m' \in \supp k}\lambda_k^{m'} {\rm u}_k^m\overline{\rm d}_k^{m'}\overline{\rm u}_k^{m'}{\rm d}_k^{m}
  = \sum_{\substack{m' \in \supp k\\ m' \le m}}\lambda_k^{m'} {\rm u}_k^m\overline{\rm d}_k^{m'}\overline{\rm u}_k^{m'}{\rm d}_k^{m}.
\end{equation}
Clearly the value of the identity coefficient in \cref{eq:trace_a_clamp} is unchanged by summands factoring through $\underline{m-4}$ in \cref{eq:clamping_an_idemp}.
Indeed, composition by morphisms (on either side) cannot raise the through-degree, and tensoring with a strand raises it by at most one.

Now suppose that $m-2\not\in\supp k$.
Then almost all the summands of \cref{eq:clamping_an_idemp} vanish up to morphisms factoring through $\underline{m-2}$ and we are left with
\begin{equation}
  (u_k^{m_-}d_k^{m_-})_\id = 
  (\lambda_k^{m}{\rm u}_k^{m_-}\overline{\rm d}_k^{m_-}\overline{\rm u}_k^{m_-}{\rm d}_k^{m_-})_\id = 
  \frac{[m+1]}{[m]}(\lambda_k^m)^{-1}
\end{equation}
If $\ell \nmid m$ then this only doesn't vanish if $m = k$ and $\ell \nmid m+1$.
So let us
assume $\ell\mid m$, and write $k+1 = \pldigs{n_r, \ldots, n_0}$ with $m = k[S]$.
Then we see $m + 1 = \pldigs{n_r, \widetilde{n}_{r-1}, \ldots, \widetilde{n}_1,1}$.
Hence, if $0\not \in S$, we see that $n_0 = 1$ and so $m - k[S\cup\{0\}] = 2$, a contradiction, and so $0\in S$.
Let us assume that $\{0,1,\ldots, s\} \in S$ and hence $t_k > 0$.

Now, $\left(\lambda_k^{k[S]}\right)^{-1}$ has factor
$$
\frac{[\pldigs{n_r,\ldots, n_{s+1}, 0,\ldots, 0}]}
{[\pldigs{n_r,\ldots, n_{s+1}, -n_s,\ldots, -n_0}]}
$$
and $[m+1] = [\pldigs{n_r,\ldots,n_{s+1}, -n_s,\ldots, -n_0}]$ so 
$\frac{[m+1]}{[m]}\left(\lambda_k^{k[S]}\right)^{-1}$ has factor
$$
\frac{[\pldigs{n_r,\ldots, n_{s+1}, 0,\ldots, 0}]}
{[\pldigs{n_r,\ldots, n_{s+1}, -n_s,\ldots, -n_0-1}]}.
$$
Which descends to zero unless $n_0 = \ell -1$ and $n_1 = \ldots = n_s = p-1$, i.e. $s < t_k$.  If this occurs then the factor is
$$
\frac{[\pldigs{n_r,\ldots, n_{s+1}, 0,\ldots, 0}]}
{[\pldigs{n_r,\ldots, n_{s+1}-1, 0,\ldots, 0}]}
=
\frac{[n_r,\ldots, n_{s+1}]_p}
{[n_r,\ldots, n_{s+1}-1]_p}
$$
Now, since $s+1\not\in S$, $n_{s+1} \neq 0$ and if $n_{s+1} = 1$ then $m-2\in \supp k$.
Hence this factor does not vanish under specialisation.
It is clear that any other factors must vanish, hence we see that it must be the only factor.
That is to say $0\le s < t_{n}$, and each such $s$ describes a single $m = k[\{0,\ldots, s\}]$ for which the inner product does not vanish.

We now study the case where $m-2\in \supp{k}$.

\begin{lemma}\label{lem:id_of_trace_of_clamp}
  Suppose that $m, m-2 \in \supp k$.
  Then
  \begin{equation}
  \left(\tau\left({\rm u}_k^me^{(k)}{\rm d}_k^m\right)\right)_\id =
    \left(\lambda_k^{m-1}\right)^{-1}\left([p^{(t_k+1)} + 1] - [p^{(t_k+1)} - 1]\right)
  \end{equation}
  where $t_k$ is such that $m-1 \in \supp \mother{k}^{t_k+1}$.
\end{lemma}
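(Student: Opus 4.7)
The plan is to expand the $(\ell,p)$-Jones--Wenzl idempotent over $\mathbb{Q}(\delta)$ via its primitive-idempotent decomposition
\[
  e^{(k)} \;=\; \sum_{m'' \in \supp k} \lambda_k^{m''}\, \overline{\rm d}_k^{m''}\, \overline{\rm u}_k^{m''},
\]
substitute into $\tau({\rm u}_k^m e^{(k)} {\rm d}_k^m)$, and use through-degree bookkeeping to determine which summands contribute to the identity coefficient. Each summand $\tau({\rm u}_k^m \overline{\rm d}_k^{m''} \overline{\rm u}_k^{m''} {\rm d}_k^m)$ factors through $\underline{m''}$ and so has through degree at most $m''$; the partial trace $\tau$ can raise through degree by at most one; and we need through degree $m-1$ for an identity contribution, so $m'' \ge m-2$. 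Combined with ${\rm u}_k^m \overline{\rm d}_k^{m''} = 0$ for $m'' > m$ (the $\JW_{m''}$-cap of $\overline{\rm d}_k^{m''}$ kills any morphism of through degree $<m''$) and the parity constraint $m''\equiv_2 m$, only $m'' \in \{m, m-2\}$ survive.

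For the $m'' = m$ summand, combining \cref{lem:mutual_orthogonal} with \cref{lem:suitable_multiples_of_basis} (which identifies the monic part of $\overline{\rm d}_k^m$ with $e^{(k)}{\rm d}_k^m$) yields $\overline{\rm u}_k^m {\rm d}_k^m = (\lambda_k^m)^{-1}\JW_m$, since the non-monic remainder of $e^{(k)}{\rm d}_k^m$ is absorbed by the $\JW_m$-cap of $\overline{\rm u}_k^m$; the $\iota$-dual identity ${\rm u}_k^m\overline{\rm d}_k^m = (\lambda_k^m)^{-1}\JW_m$ follows. The $m''=m$ summand therefore collapses to $(\lambda_k^m)^{-1}\tau(\JW_m)$, whose identity coefficient is $(\lambda_k^m)^{-1}\tfrac{[m+1]}{[m]}$ by the classical partial-trace formula used in the proof of \cref{lem:tracing_jw}.

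The $m''=m-2$ summand is the main obstacle. Here orthogonality forces $\overline{\rm u}_k^{m-2}\overline{\rm d}_k^m = 0$, so the non-zero contribution is driven by the lower-through-degree remainder of $e^{(k)} {\rm d}_k^m - \overline{\rm d}_k^m$. Using the characterisation of pairs in $\supp k$ separated by two from the lemma immediately preceding (with $S_{m-2} = \tilde S \sqcup\{0,1,\ldots,t_k\}$), the diagram ${\rm d}_k^{m-2}$ is obtained from ${\rm d}_k^m$ by composing with an ancestor-centred tail cap of size $2p^{(t_k)}-1$, and $\overline{\rm d}_k^{m-2}$ from $\overline{\rm d}_k^m$ by inserting the ladder of $\JW_{b p^{(t_k)}-1}$-idempotents along that tail. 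Substituting, the morphism $\tau({\rm u}_k^m\overline{\rm d}_k^{m-2}\overline{\rm u}_k^{m-2}{\rm d}_k^m)$ becomes a theta-type network whose identity coefficient is extracted via \cref{lem:tracing_jw}: only traces along strands of length $b p^{(t_k)}$ survive, with coefficients that are $[p^{(t_k+1)}]/[\,\cdot\,]$-ratios regularised by \cref{lem:unreasonable_cancellation}, yielding a contribution of the form $(\lambda_k^{m-2})^{-1}$ times an explicit quantum-number ratio. Summing with the $m''=m$ piece and applying the product formula for $\lambda_k^{m''}$ (built from the digits of $k$), a common factor $(\lambda_k^{m-1})^{-1}$—defined via the ancestor $\mother[t_k+1]{k}$ on whose support $m-1$ lies—can be pulled out, and the remaining two quantum-number ratios collapse to the single difference $[p^{(t_k+1)}+1] - [p^{(t_k+1)}-1]$. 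The hardest piece is precisely this last diagrammatic evaluation: tracking which strands survive the tail $\JW$-traces, keeping signs consistent, and invoking the right regularisation so that the two quantum-number ratios combine as stated.
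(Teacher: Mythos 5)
Your overall strategy is the one the paper uses: expand $e^{(k)}=\sum_{m''\in\supp k}\lambda_k^{m''}\overline{\rm d}_k^{m''}\overline{\rm u}_k^{m''}$, kill the summands with $m''>m$ via the $\JW_{m''}$ absorbed into $\overline{\rm d}_k^{m''}$, discard $m''<m-2$ because the partial trace raises through degree by at most one, and evaluate the two survivors. Your treatment of the $m''=m$ term (collapse to $(\lambda_k^m)^{-1}\tau(\JW_m)$ with identity coefficient $(\lambda_k^m)^{-1}[m+1]/[m]$) agrees with the paper exactly.

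The unfinished part is the $m''=m-2$ term, which you flag but do not close, and your sketch of it points at the wrong tool. The paper does not extract this coefficient via \cref{lem:tracing_jw} or a theta network; instead it establishes the exact factorisation \cref{eq:lobsided_clamp}, namely that $\overline{\rm u}_k^{m-2}\,{\rm d}_k^{m}$ equals $\overline{\rm u}_{k'}^{m-1}\,{\rm d}_{k'}^{m-1}$ (with $k'=\mother[t_k+1]{k}$) with one strand bent around, so that after closing up, the whole $m''=m-2$ summand becomes two copies of $\overline{\rm u}_{k'}^{m-1}\overline{\rm d}_{k'}^{m-1}=(\lambda_{k'}^{m-1})^{-1}\JW_{m-1}$ joined by a single strand; its identity coefficient is exactly $\lambda_k^{m-2}\left(\lambda_{k'}^{m-1}\right)^{-2}$, with no residual quantum-number ratio (your guess of ``$(\lambda_k^{m-2})^{-1}$ times an explicit ratio'' is a different, unverified normalisation). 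The second missing ingredient is the precise form of what you call ``the product formula'': factoring $\overline{\rm d}_k^{m}$ and $\overline{\rm d}_k^{m-2}$ through $\overline{\rm d}_{k'}^{m-1}$ and one large Jones--Wenzl gives $\lambda_k^{m}=\lambda_{k'}^{m-1}\tfrac{[m+1]}{[m+p^{(t_k+1)}]}$ and $\lambda_k^{m-2}=\lambda_{k'}^{m-1}\tfrac{[m-p^{(t_k+1)}]}{[m]}$, and it is exactly these two relations that turn $\tfrac{[m+1]}{[m]}(\lambda_k^m)^{-1}+\lambda_k^{m-2}(\lambda_{k'}^{m-1})^{-2}$ into $(\lambda_{k'}^{m-1})^{-1}\tfrac{[m-p^{(t_k+1)}]+[m+p^{(t_k+1)}]}{[m]}=(\lambda_{k'}^{m-1})^{-1}\left([p^{(t_k+1)}+1]-[p^{(t_k+1)}-1]\right)$. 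Without the exact value of the $m-2$ contribution and these two $\lambda$-identities, the two terms do not visibly combine into the stated difference, so this is where your proposal still needs real work.
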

\begin{proof}
Suppose that $S_1 = S_0 \cup\{t_k\}$ and $S_2 = S_0 \cup\{t_k-1,\ldots, 0\}$ are such that $m = k[S_2] = k[S_1] + 2$.
Then, if $k' = \mother{k}^{t_k+1}$, $m-1 = k'[S_0]$.
We calculate
\begin{equation}\label{eq:lobsided_clamp}
  \vcenter{\hbox{
  \begin{tikzpicture}
    \draw[thick] (0,0) -- (0.95,.2) -- (0.95,1.6) -- (0,1.8) -- cycle;
    \draw[thick] (0,0) -- (-0.95,.2) -- (-0.95,1.6) -- (0,1.8);
    \fill[thick] (-0.95,1.5) -- (-0.95,1.6) -- (0,1.8) -- (0,1.7) -- cycle;
    \node at (.5,.9) {${\rm d}_k^{k[S_2]}$};
    \node at (-.45,.9) {$\overline{\rm u}_k^{k[S_1]}$};
    \foreach \i in {2,...,7} {
      \draw[very thick] (-0.95,\i/5) -- (-1.3,\i/5);
    }
    \foreach \i in {1,...,7} {
      \draw[very thick] (0.95,\i/5+0.125) -- (1.3,\i/5+0.125);
    }
  \end{tikzpicture}
  }}
  =
  \vcenter{\hbox{
  \begin{tikzpicture}
    \draw[thick] (0,0.6) -- (0.95,.8) -- (0.95,1.6) -- (0,1.8) -- cycle;
    \draw[thick] (0,0.6) -- (-0.95,.8) -- (-0.95,1.6) -- (0,1.8);
    \fill[thick] (-0.95,1.5) -- (-0.95,1.6) -- (0,1.8) -- (0,1.7) -- cycle;
    \node at (.5,1.2) {${\rm d}_{k'}^{k'[S_0]}$};
    \node at (-.45,1.2) {$\overline{\rm u}_{k'}^{k'[S_0]}$};
    \draw[line width=3pt] (0.95, 1.2) -- (2.1, 1.2);
    \node at (1.5, 1.5) {\footnotesize $k'[S_0]$};
    \draw[line width=3pt] (-0.95, 1.3) -- (-2.4, 1.3);
    \node at (-1.7, 1.6) {\footnotesize $k'[S_0]-p^{(t_k)}$};

    \draw[line width=2pt] (-0.95, 1.1) arc (90:180:.4) arc (180:270:.4) -- (0,.3);
    \draw[thick] (0,.32) -- (2.1,.32);
    \draw[line width=1.5pt] (0,.29) arc (90:0:.3) arc (0:-90:.3) -- (-2.4,-.31);
    \node at (0.7,-.34) {\footnotesize $p^{(t_k)}-1$};
  \end{tikzpicture}
  }}
  =
  \vcenter{\hbox{
  \begin{tikzpicture}
    \draw[thick] (0,0.6) -- (0.95,.8) -- (0.95,1.6) -- (0,1.8) -- cycle;
    \draw[thick] (0,0.6) -- (-0.95,.8) -- (-0.95,1.6) -- (0,1.8);
    \fill[thick] (-0.95,1.5) -- (-0.95,1.6) -- (0,1.8) -- (0,1.7) -- cycle;
    \node at (.5,1.2) {${\rm d}_{k'}^{k'[S_0]}$};
    \node at (-.45,1.2) {$\overline{\rm u}_{k'}^{k'[S_0]}$};
    \draw[line width=3pt] (0.95, 1.2) -- (1.6, 1.2);
    \draw[line width=3pt] (-0.95, 1.3) -- (-1.6, 1.3);
    \draw[line width=1pt] (-0.95, 1.1) arc (90:180:.4) arc (180:270:.4) -- (0,.3)-- (1.6,.32);
  \end{tikzpicture}
  }}
\end{equation}
and hence
\begin{align*}
  \vcenter{\hbox{
  \begin{tikzpicture}
    \draw[thick,fill=purple] (0,0.1) rectangle (.3,1.8);
    \draw[thick,fill=black] (0,1.7) rectangle (0.3,1.8);
    \draw[thick] (.3,.1) -- (1.15,.3) -- (1.15,1.6) -- (.3,1.8);
    \draw[thick] (0,.1) -- (-0.85,.3) -- (-0.85,1.6) -- (0,1.8);
    \node at (.75,.9) {\small ${\rm d}_k^{k[S_2]}$};
    \node at (-.4,.9) {\small${\rm u}_k^{k[S_2]}$};
    \foreach \i in {2,...,7} {
      \draw[very thick] (1.15,\i/5+0.07) -- (1.5,\i/5+0.07);
      \draw[very thick] (-0.85,\i/5+0.07) -- (-1.2,\i/5+0.07);
    }
  \end{tikzpicture}
  }}&=\lambda_{k}^{k[S_2]}\vcenter{\hbox{
  \begin{tikzpicture}
    \draw[thick] (0.95,0) -- (0,.2) -- (0,1.6) -- (0.95,1.8) -- cycle;
    \draw[thick] (-0.95,0) -- (0,.2) -- (0,1.6) -- (-0.95,1.8) -- cycle;
    \fill[thick] (0,1.5) -- (0,1.6) -- (0.95,1.8) -- (0.95,1.7) -- cycle;
    \fill[thick] (0,1.5) -- (0,1.6) -- (-0.95,1.8) -- (-0.95,1.7) -- cycle;
    \node at (.5,.9) {\small$\overline{\rm u}_k^{k[S_2]}$};
    \node at (-.45,.9) {\small$\overline{\rm d}_k^{k[S_2]}$};
    \node at (-1.35,.9) {\small${\rm u}_k^{k[S_2]}$};
    \node at (1.35,.9) {\small${\rm d}_k^{k[S_2]}$};
    \draw[thick] (-1.9,0.2) -- (-0.95,0) -- (-0.95,1.8) -- (-1.9,1.6) -- cycle;
    \draw[thick] (1.9,0.2) -- (0.95,0) -- (0.95,1.8) -- (1.9,1.6) -- cycle;
    \foreach \i in {2,...,7} {
      \draw[very thick] (-1.9,\i/5) -- (-2.15,\i/5);
      \draw[very thick] (1.9,\i/5) -- (2.15,\i/5);
    }
  \end{tikzpicture}
  }}+\lambda_{k}^{k[S_1]}\vcenter{\hbox{
  \begin{tikzpicture}
    \draw[thick] (0.95,0) -- (0,.2) -- (0,1.6) -- (0.95,1.8) -- cycle;
    \draw[thick] (-0.95,0) -- (0,.2) -- (0,1.6) -- (-0.95,1.8) -- cycle;
    \fill[thick] (0,1.5) -- (0,1.6) -- (0.95,1.8) -- (0.95,1.7) -- cycle;
    \fill[thick] (0,1.5) -- (0,1.6) -- (-0.95,1.8) -- (-0.95,1.7) -- cycle;
    \node at (.5,.9) {\small$\overline{\rm u}_k^{k[S_1]}$};
    \node at (-.45,.9) {\small$\overline{\rm d}_k^{k[S_1]}$};
    \node at (-1.35,.9) {\small${\rm u}_k^{k[S_2]}$};
    \node at (1.35,.9) {\small${\rm d}_k^{k[S_2]}$};
    \draw[thick] (-1.9,0.2) -- (-0.95,0) -- (-0.95,1.8) -- (-1.9,1.6) -- cycle;
    \draw[thick] (1.9,0.2) -- (0.95,0) -- (0.95,1.8) -- (1.9,1.6) -- cycle;
    \foreach \i in {2,...,7} {
      \draw[very thick] (-1.9,\i/5) -- (-2.15,\i/5);
      \draw[very thick] (1.9,\i/5) -- (2.15,\i/5);
    }
  \end{tikzpicture}
  }}\\
  &=
  \left(\lambda_k^{m}\right)^{-1}
  \vcenter{\hbox{
  \begin{tikzpicture}
    \draw[thick] (-.5, 0) rectangle (.5,1.4);
    \node at (0,0.7) {$\JW_{m}$};
    \foreach \i in {1,...,6} {
      \draw[very thick] (-.5,\i/5) -- (-0.7,\i/5);
      \draw[very thick] (.5,\i/5) -- (0.7,\i/5);
    }
  \end{tikzpicture}
  }}
  + \lambda_k^{m-2}
  \vcenter{\hbox{
  \begin{tikzpicture}
    \begin{scope}[shift={(1.5,0)}]
      \draw[thick] (0,0.6) -- (0.95,.8) -- (0.95,1.6) -- (0,1.8) -- cycle;
      \draw[thick] (0,0.6) -- (-0.95,.8) -- (-0.95,1.6) -- (0,1.8);
      \fill[thick] (-0.95,1.5) -- (-0.95,1.6) -- (0,1.8) -- (0,1.7) -- cycle;
      \node at (.5,1.2) {${\rm d}_{k'}^{k[S_0]}$};
      \node at (-.45,1.2) {$\overline{\rm u}_{k'}^{k[S_0]}$};
      \draw[line width=3pt] (0.95, 1.2) -- (1.4, 1.2);
      \draw[line width=3pt] (-0.95, 1.3) -- (-1.5, 1.3);
      \draw[line width=1pt] (-0.95, 1.1) arc (90:180:.4) arc (180:270:.4) -- (0,.3)-- (1.4,.32);
    \end{scope}
    \begin{scope}[shift={(-1.5,0)}]
      \draw[thick] (0,0.6) -- (-0.95,.8) -- (-0.95,1.6) -- (0,1.8) -- cycle;
      \draw[thick] (0,0.6) -- (0.95,.8) -- (0.95,1.6) -- (0,1.8);
      \fill[thick] (0.95,1.5) -- (0.95,1.6) -- (0,1.8) -- (0,1.7) -- cycle;
      \node at (-.5,1.2) {${\rm d}_{k'}^{k[S_0]}$};
      \node at (.45,1.2) {$\overline{\rm u}_{k'}^{k[S_0]}$};
      \draw[line width=3pt] (-0.95, 1.2) -- (-1.4, 1.2);
      \draw[line width=3pt] (0.95, 1.3) -- (1.5, 1.3);
      \draw[line width=1pt] (-1.4,.32) -- (0.95,.3) arc (-90:90:.4);
    \end{scope}
  \end{tikzpicture}
  }}\\
  &=
  \left(\lambda_k^{m}\right)^{-1}
  \vcenter{\hbox{
  \begin{tikzpicture}
    \draw[thick] (-.5, 0) rectangle (.5,1.4);
    \node at (0,0.7) {$\JW_{m}$};
    \foreach \i in {1,...,6} {
      \draw[very thick] (-.5,\i/5) -- (-0.7,\i/5);
      \draw[very thick] (.5,\i/5) -- (0.7,\i/5);
    }
  \end{tikzpicture}
  }}
  + \lambda_k^{m-2}\left(\lambda_{k'}^{m-1}\right)^{-2}
  \vcenter{\hbox{
  \begin{tikzpicture}
    \begin{scope}[shift={(1.2,0)}]
      \draw[thick] (-.6, 0) rectangle (.6,1.4);
      \node at (0,0.7) {$\JW_{m-1}$};
      \foreach \i in {1,...,6} {
        \draw[very thick] (.6,\i/5) -- (0.8,\i/5);
      }
      \draw[very thick] (-.6,.2) arc (90:270:.2) -- (0.8,-.2);
    \end{scope}
    \begin{scope}[shift={(-0.8,0)}]
      \draw[thick] (-.6, 0) rectangle (.6,1.4);
      \node at (0,0.7) {$\JW_{m-1}$};
      \foreach \i in {1,...,6} {
        \draw[very thick] (-.6,\i/5) -- (-0.8,\i/5);
      }
      \foreach \i in {2,...,6} {
        \draw[very thick] (.6,\i/5) -- (1.4,\i/5);
      }
      \draw[very thick] (-0.8,-.2) -- (.6,-.2) arc (270:360:.2) arc (0:90:.2);
    \end{scope}
  \end{tikzpicture}
  }}.
\end{align*}
Thus we see that
\begin{equation}\label{eq:clampything}
  \left(
  \vcenter{\hbox{
  \begin{tikzpicture}
    \draw[thick,fill=purple] (0,0.1) rectangle (.3,1.8);
    \draw[thick,fill=black] (0,1.7) rectangle (0.3,1.8);
    \draw[thick] (.3,.1) -- (1.25,.3) -- (1.25,1.6) -- (.3,1.8);
    \draw[thick] (0,.1) -- (-0.95,.3) -- (-0.95,1.6) -- (0,1.8);
    \node at (.8,.9) {\small ${\rm d}_k^{k[S_2]}$};
    \node at (-.5,.9) {\small${\rm u}_k^{k[S_2]}$};
    \foreach \i in {3,...,7} {
      \draw[very thick] (1.25,\i/5) -- (1.7,\i/5);
      \draw[very thick] (-0.95,\i/5) -- (-1.4,\i/5);
    }
    \draw[very thick] (-.95, .4) arc (90:270:.2)  -- (1.25,0) arc (-90:90:.2);
  \end{tikzpicture}
  }}
  \right)_{\rm id}
  = \frac{[m+1]}{[m]}\left(\lambda_k^m\right)^{-1} + \lambda_k^{m-2}\left(\lambda_{k'}^{m-1}\right)^{-2}
\end{equation}
Now, recall that
\begin{equation}
  \vcenter{\hbox{
  \begin{tikzpicture}
    \draw[thick] (0,.2) -- (0.95,.4) -- (0.95,1.6) -- (0,1.8) -- cycle;
    \fill (0.95,1.6)--(0,1.8) -- (0,1.7) -- (0.95,1.5)--cycle;
    \node at (.5,.9) {\small $\overline{\rm d}_k^{k[S_2]}$};
  \end{tikzpicture}
  }}
  =
  \vcenter{\hbox{
  \begin{tikzpicture}
    \draw[thick] (0,0.6) -- (0.95,.8) -- (0.95,1.6) -- (0,1.8) -- cycle;
    \fill[thick] (0.95,1.5) -- (0.95,1.6) -- (0,1.8) -- (0,1.7) -- cycle;
    \node at (.5,1.2) {$\overline{\rm d}_{k'}^{k'[S_0]}$};
    \draw[line width=3pt] (0.95, 1.2) -- (1.2, 1.2);
    \draw (1.2,0) rectangle (3.4, 1.6);
    \node at (2.3, 0.8) {$\JW_{m+p^{(t_k)}-1}$};

    \draw[line width=3pt] (0, 0.3) -- (1.2, 0.3);
    \node at (0.6, -0.03) {\footnotesize $p^{(t_k)}$};

    \draw[line width=2pt] (0, -.4) -- (3.4, -.4) arc(-90:90:.4);
    \node at (2.2, -0.8) {\footnotesize $p^{(t_k)}-1$};

    \draw[line width=3pt] (3.4, 1.1) -- (4.0, 1.1);
  \end{tikzpicture}
  }}
\end{equation}
and so $\lambda_k^m = \lambda_{k'}^{m-1}\frac{[m+1]}{[m+p^{(t_k+1)}]}$.
Similarly, 
\begin{equation}
  \vcenter{\hbox{
  \begin{tikzpicture}
    \draw[thick] (0,.2) -- (0.95,.4) -- (0.95,1.6) -- (0,1.8) -- cycle;
    \fill (0.95,1.6)--(0,1.8) -- (0,1.7) -- (0.95,1.5)--cycle;
    \node at (.5,.9) {\small $\overline{\rm d}_k^{k[S_1]}$};
  \end{tikzpicture}
  }}
  =
  \vcenter{\hbox{
  \begin{tikzpicture}
    \draw[thick] (0,0.6) -- (0.95,.8) -- (0.95,1.6) -- (0,1.8) -- cycle;
    \fill[thick] (0.95,1.5) -- (0.95,1.6) -- (0,1.8) -- (0,1.7) -- cycle;
    \node at (.5,1.2) {$\overline{\rm d}_{k'}^{k'[S_0]}$};
    \draw[line width=3pt] (0.95, 1.2) -- (1.7, 1.2);
    \draw (1.7,-.8) rectangle (3.0, 1.6);
    \node at (2.35, 0.4) {$\JW_{m-2}$};

    \draw[line width=3pt] (0, 0.3) -- (0.95, 0.3) arc (-90:90:0.33);
    \node at (0.6, -0.03) {\footnotesize $p^{(t_k)}$};

    \draw[line width=2pt] (0, -.4) -- (1.7, -.4);
    \node at (0.6, -0.8) {\footnotesize $p^{(t_k)}-1$};

    \draw[line width=3pt] (3.0, 0.4) -- (3.3, 0.4);
  \end{tikzpicture}
  }}
\end{equation}
giving $\lambda_{k}^{m-2} = \lambda_{k'}^{m-1}\frac{[m-p^{(t_k+1)}]}{[m]}$ so we can simplify \cref{eq:clampything} to
\begin{equation}
\left(\lambda_{k'}^{m-1}\right)^{-1}\left(\frac{[m-p^{(t_k+1)}] + [m + p^{(t_k+1)}]}{[m]}\right) = 
\left(\lambda_{k'}^{m-1}\right)^{-1}\left([p^{(t_k+1)}+1] - [p^{(t_k+1)}-1]\right)
\end{equation}
as desired.
\end{proof}

As an application, consider the cell module $\Delta_\bmu(m-1)$ of $\TL_{\bmu}$ spanned by $\{d_k^{(m-2)_+}, d_k^{m_-}\}$.
Then we can use \cref{lem:id_of_trace_of_clamp} to see that $\langle d_k^{m_-}, d_k^{m_-}\rangle$ vanishes unless $k' = m-1$ or $\ell = 2$.
We already know that $\langle d_k^{(m-2)_+}, d_k^{(m-2)_+}\rangle$ is given by $(\lambda_{k}^{m-2})^{-1}$ which always vanishes.
Finally we can use \cref{eq:lobsided_clamp} to show that 
\begin{equation}
  {\rm u}_k^{m-2} e^{(k)}{\rm d}_k^m
  = \sum_{m' \in \supp k}\lambda_k^{m'} {\rm u}_k^m\overline{\rm d}_k^{m'}\overline{\rm u}_k^{m'}{\rm d}_k^{m}
  = \sum_{\substack{m' \in \supp k\\ m' \le m-2}}\lambda_k^{m'} {\rm u}_k^m\overline{\rm d}_k^{m'}\overline{\rm u}_k^{m'}{\rm d}_k^{m}.
\end{equation}
which taken mod $<m-2$ gives $\lambda_k^{m-2}{\rm u}_k^m\overline{\rm d}_k^{m-2}\overline{\rm u}_k^{m-2}{\rm d}_k^{m}$.
It is then clear that after adding the rest of the diagram, we get $\langle d_k^{(m-2)_+}, d_k^{m_-}\rangle = \lambda_{k'}^{m-1} = (\lambda_{k'}^{m-1})^{-1}$.

Thus two dimensional modules are completely degenerate unless $m-1 = k'$ in which case they are simple.
We have deduced:

\begin{theorem}\label{prop:main_two_part}
  Suppose that $\bmu = (k,1)\vdash n = \pldigs{n_r, \ldots, n_0}$.
  Let $T_n = \{0\le t \le t_k \;:\; n_t = 1\}$ and $S = \{0\le t < t_k \;:\; n_t >1\}$.

  Let
  \begin{align*}
    \Lambda_{1\rm{a}} &= 
      \begin{cases}
        \left\{k+1, k-1\right\} & \ell \nmid k \text{ and } \ell \nmid k+1\\
        \left\{k+1\right\} & \text{else}
      \end{cases}\\
    \Lambda_{1\rm{b}} &=
      \left\{\pldigs{n_r, \ldots, n_{t+1}-1, 0,\ldots, 0}-1 \;:\; t \in S\right\}\\
    \Lambda_2 &=
      \left\{\pldigs{n_r, \ldots, n_{t+1},0,\ldots,0}-1 \;:\; t \in T\right\}
  \end{align*}
  Then $\Lambda_0 = \Lambda_{1\rm a}\cup \Lambda_{1\rm b} \cup \Lambda_2$ and $\Lambda_{1\rm a}$ and $\Lambda_{1 \rm b}$ index the irreducible simple modules of dimension 1 and $\Lambda_2$ that of dimension 2.
\end{theorem}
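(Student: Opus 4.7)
The plan is to apply the enumeration of cell modules from \cref{prop:tow_part_mu_dims} and then, module by module, analyse when the Gram form is non-degenerate using the explicit identity-coefficient computations carried out earlier in the section.

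First I would partition the cell modules of $\TL_\bmu$ into three classes: one-dimensional modules with basis $d_k^{m_+}$; one-dimensional modules with basis $d_k^{m_-}$ (when $m - 2 \notin \supp k$); and two-dimensional modules with basis $\{d_k^{(m-2)_+}, d_k^{m_-}\}$ (when both $m$ and $m-2$ lie in $\supp k$). The third class arises exactly for the tail configurations $t \in T_n$.

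For the first class, the inner product factors through $\TL_{(k)}$ and, by the corollary in \cref{sec:endomorphism}, equals $\delta_{m,k}$, contributing $k+1$ to $\Lambda_{1\mathrm{a}}$. For the second class, the inner product equals $[m+1]/[m]\cdot (\lambda_k^m)^{-1}$ in characteristic zero, and \cref{lem:unreasonable_cancellation} governs which factors of $[p^{(i)}]$ cancel upon specialisation; splitting into $\ell \nmid m$ (forcing $m = k$ and $\ell \nmid k+1$, contributing $k-1$ to $\Lambda_{1\mathrm{a}}$) and $\ell \mid m$ (forcing $m = k[\{0,\ldots,s\}]$ with $s < t_k$ and $n_{s+1} > 1$, contributing $\Lambda_{1\mathrm{b}}$) completes the enumeration. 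For the third class, the Gram matrix has both diagonal entries zero (each by a special case of the above, since $m-2 < k$) and off-diagonal equal to $(\lambda_{k'}^{m-1})^{-1}$ by \cref{lem:id_of_trace_of_clamp} together with \cref{eq:lobsided_clamp}; the $\TL_{(k')}$-analysis then shows this scalar descends to a non-zero element of the base ring precisely when $m-1 = k' = \mother[t+1]{k}$, yielding $\Lambda_2$.

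The main obstacle is the quantum arithmetic in the second class: tracking precisely which digit positions of $m = k[S]$ contribute vanishing factors to $[m+1]/[m]\cdot (\lambda_k^m)^{-1}$, and matching the surviving configurations to the $\Lambda_{1\mathrm{b}}$ indexing so that the surviving elements are exactly $\pldigs{n_r,\ldots,n_{t+1}-1,0,\ldots,0}-1$ for $t\in S$, with no spurious indices. Once this bookkeeping is done, the three classes directly assemble into the stated partition of $\Lambda_0$.
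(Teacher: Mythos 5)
Your proposal follows essentially the same route as the paper: the same trichotomy of cell modules into $m_+$-type, $m_-$-type and two-dimensional ones, the same identity-coefficient computations $\delta_{m,k}$ and $\tfrac{[m+1]}{[m]}(\lambda_k^m)^{-1}$, and the same digit bookkeeping in the $\ell \mid m$ case. One small correction: in the two-dimensional case the entry $\langle d_k^{m_-}, d_k^{m_-}\rangle$ is not ``a special case of the above'' (that formula was derived assuming $m-2\notin\supp k$) and it is in fact non-zero when $k'=m-1$ or $\ell=2$; but since the other diagonal entry $(\lambda_k^{m-2})^{-1}$ does vanish, the Gram determinant is still $-\langle d_k^{(m-2)_+}, d_k^{m_-}\rangle^2$ and your conclusion is unaffected.
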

This should be compared to its counterpart, \cite[Proposition 4.7]{sutton_tubbenhauer_wedrich_zhu_2021}.

\begin{example}
  We present the case of $\ell = 5$ and $p = 3$.  The below diagram differs from previous ones in that it is not a Bratteli diagram.  Each row represents the cell modules of $\TL_{(k,1)}$.  Grey dots are cell modules of $\TL_{k+1}$ that are not cell modules of $\TL_{(k,1)}$.  Orange dots are one dimensional cell modules and red dots two dimensional.  Those dots in a black outline are non-degenerate.
  \begin{center}
    \includegraphics[width=0.7\textwidth]{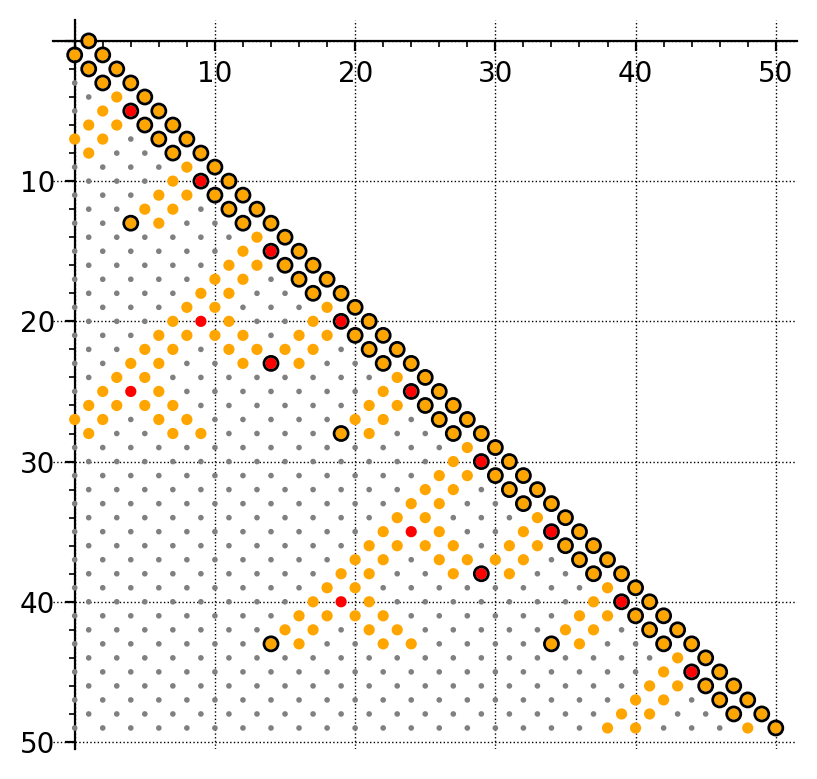}
  \end{center}
\end{example}
\begin{example}
    On the other hand, if $\ell = p = 2$ we obtain the following diagram.  \begin{center}
    \includegraphics[width=0.7\textwidth]{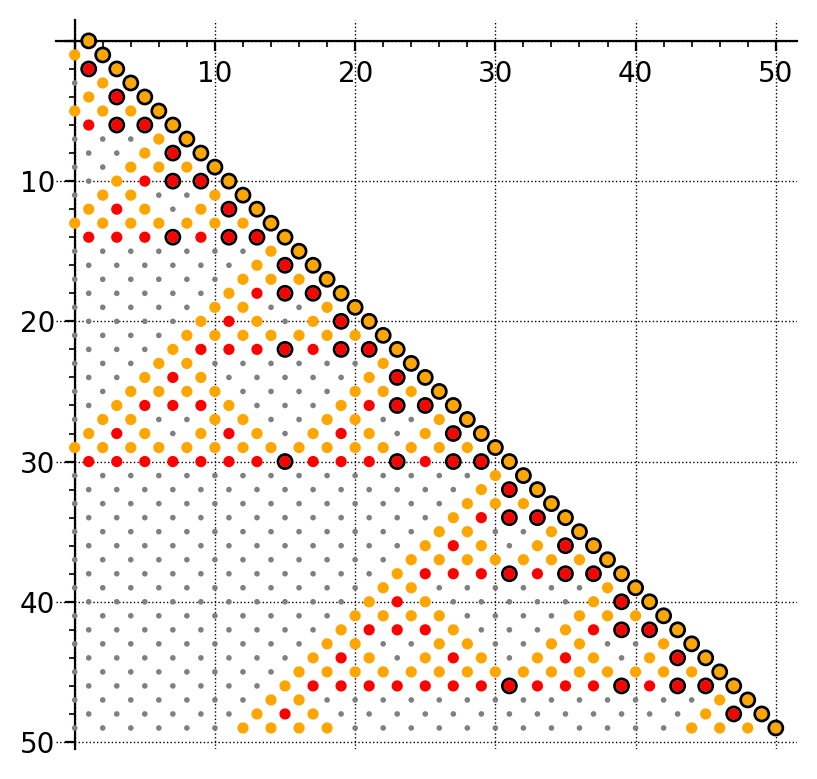}
  \end{center}
  Note that all irreducible modules are two dimensional (apart from the trivial).
  If $k$ is odd then $\TL_{\bmu}$ has a single irreducible module and all cell modules are one dimensional.
  Indeed, when $\ell = 2$, then in \cref{prop:main_two_part}, $\Lambda_{1\rm a}$ is always exactly $\{k + 1\}$.
  Further $T$ is non-empty iff $k$ is even.
  If $p = 2$ also, then $S = \emptyset$, showing the above claim.
\end{example}

\subsection{Two Part Seam Algebras as Inductions}
Recall that $P_{\mathbf{n}}(m)$ has a $\Delta_{\mathbf{n}}$-filtration.
Moreover, by applying the truncation functor, we see that the it has the same factors as the projective cover of $L_{\mathbf{m}}(m)$.
That is to say $\Delta_{\mathbf{m}}(r)$ appears in a filtration of $P_{\mathbf{m}}(m)$ iff $\Delta_{\mathbf{n}}(r)$ appears in a filtration of $P_{\mathbf{n}}(m)$.
This holds as $r \le m$ for all $\Delta_\mathbf{n}(r)$ appearing in $P_\mathbf{n}(m)$.

Since adding a strand gives rise inducing from $n$ to $n+1$, we see that
\begin{equation}
  \TL_{\bmu}=
  e^\bmu \cdot \TL_\mathbf{k+1} \cdot e^\bmu
  \simeq \End_{\TL_\mathbf{k+1}}(\TL_\mathbf{k+1}\cdot e^\bmu)
  \simeq \End_{\TL_\mathbf{k+1}}(P_\mathbf{k}(k)\ind)
\end{equation}
From the above observation and the known composition factors of $P_\mathbf{k}(k)$, we see that if $k$ has no tail and $n_0 \neq 1$, 
$P_\mathbf{k}(k)\ind \simeq P_\mathbf{k+1}(k+1) \oplus P_\mathbf{k+1}(k-1)$.
Indeed, we know it must be the direct sum of projective indecomposables and has a $\Delta_{\mathbf{k+1}}$ filtration.  Further, inducing $\Delta_{\mathbf k}(m)$ gives a module with a $\Delta$-filtration with factors $\Delta_\mathbf{k+1}(m+1)$ and $\Delta_\mathbf{k+1}(m-1)$.
In this case, these lie in different blocks and so
\begin{align}
  \TL_{\bmu}\simeq
  \End_{\TL_\mathbf{k+1}}(P_\mathbf{k}(k)\ind)
  &\simeq
  \End_{\TL_\mathbf{k+1}}(P_\mathbf{k+1}(k-1))
  \oplus
  \End_{\TL_\mathbf{k+1}}(P_\mathbf{k+1}(k+1))\\
  &\simeq
  \End_{\TL_\mathbf{k-1}}(P_\mathbf{k-1}(k-1))
  \oplus
  \TL_{(k+1)}\nonumber\\
  &\simeq
  \TL_{(k-1)}
  \oplus
  \TL_{(k+1)}.\nonumber
\end{align}
This reconfirms that $\TL_{\bmu}$ is commutative and lays bare the representation theory in terms of algebras we understand from \cref{sec:endomorphism}.

The second equivalence deserves some explanation.
While the second summand is plain, the first asserts that $\End_{\TL_\mathbf{k+1}}(P_\mathbf{k+1}(k-1)) \simeq \End_{\TL_\mathbf{k-1}}(P_\mathbf{k-1}(k-1))$.
This holds because the modules $P_{\mathbf{k+1}}(k-1)$ and $P_\mathbf{k-1}(k-1)$ each have standard filtrations by modules indexed by the same cell indices.
Each of these modules have simples indexed by the same indices and the only additional factors are those of the trivial module which is distinct from the head of $P_\mathbf{k+1}(k-1)$.

A similar character calculation shows that if $k \equiv_\ell - 1$ then $P_\mathbf{k}(k)\ind \equiv P_\mathbf{k+1}(k+1)$.
Indeed, certainly $P_\mathbf{k+1}(k+1)$ is a summand of $P_\mathbf{k}(k)\ind$ and by looking at the values of $\supp({\ell})$ it must have the same standard factors.  Thus
\begin{align}
  \TL_{\bmu}\simeq
  \End_{\TL_{k+1}}(P_k(k)\ind)
  &\simeq
  \End_{\TL_{k+1}}(P_{k+1}(k+1))\\
  &\simeq
  \TL_{(k+1)}.\nonumber
\end{align}

Indeed, character arguments as above are used in~\cite{sutton_tubbenhauer_wedrich_zhu_2021} to deduce similar results to \cref{prop:main_two_part}.

\section{Two Part Eve Compositions}\label{sec:two-part}

Let $\bmu = (\mu_1,\mu_2) \vdash n$ be Eve and without loss of generality suppose $\mu_1 \ge \mu_2$.  This is so because the (Eve) Jones-Wenzl projectors have up-down symmetry.
\begin{center}
  \begin{tikzpicture}
    \foreach \x in {0, 1.5} {
      \begin{scope}[shift={(\x,0)}]
        \foreach \i in {0,...,14} {
          \draw[very thick] (-.2,.2+ \i/5) -- (.5,.2 + \i/5);
        }
        \draw[thick,fill=purple] (0,0.1) rectangle (0.3,1.85);
        \draw[thick,fill=purple] (0,1.95) rectangle (0.3,3.1);
      \end{scope}
    }
    \draw[thick] (.5,0.1) rectangle (1.3, 3.1);
    \draw [decorate,decoration={brace,amplitude=5pt},xshift=-1pt,yshift=0pt] (-0.3,0.1) -- (-0.3,1.9) node [black,midway,xshift=-10pt] {\footnotesize $\mu_1$};
    \draw [decorate,decoration={brace,amplitude=5pt},xshift=-1pt,yshift=0pt] (-0.3,1.9) -- (-0.3,3.0) node [black,midway,xshift=-10pt] {\footnotesize $\mu_2$};
  \end{tikzpicture}
\end{center}
\begin{remark}
  When we relax the condition that $\bmu$ is Eve we might find that we no longer have a $(\mu_1, \mu_2) \longleftrightarrow (\mu_2, \mu_1)$ symmetry.
  Indeed, the $\JW_{\mu_i}$ are no longer symmetric so the two cases are not related by the vertical symmetry.
  This is discussed in \cref{sec:further}.
\end{remark}

\begin{theorem}
  Let $\bmu = (\mu_1,\mu_2) \vdash n$ be an Eve composition.
  Write $\mu_1 = a_1p^{(i_1)}-1 \ge \mu_2 = a_2p^{(i_2)}-1$.
  \begin{enumerate}[(i)]
    \item The algebra $\TL_\bmu$ has $k$-dimension equal to $\min\{\mu_1,\mu_2\}$.
    \item The set $\Lambda_\bmu = E_{\mu_1,\mu_2} \subseteq \Lambda_\mathbf n$ and each cell module is one dimensional.
    \item Further, $(\Lambda_\bmu)_0 = \{m \in E_{\mu_1,\mu_2}\;:\; (n+m)/2 + 1 \triangleright (n-m)/2\}$.
        We can also write this as 
    \begin{equation}
      (\Lambda_\bmu)_0 = \left\{n - 2\sum_{i=0}^{i_2}k_i p^{(i)} \;:\;
  k_{i_2} \in X\text{ and }
  k_{i} \le \left\lfloor\frac{\ell \wedge p -1}{2}\right\rfloor
  \text{ for } 0 \le i < i_2
      \right\},
    \end{equation}
    where if $i_1 > i_2$ the set $X = \{0,\ldots,a_2 - 1\}$ and if
    $i_1 = i_2$ with $(a_1 + a_2-1) p^{(i_1)} = c p^{(i_1)} + d p^{(i_1+1)}$,
    \begin{equation}
      X =
        \{0,\ldots, \lfloor c/2\rfloor\} \cup \{c+1,\ldots,\min\{a_2, \lfloor (p\wedge \ell +c)/2\rfloor\}\}.
    \end{equation}
  \end{enumerate}
\end{theorem}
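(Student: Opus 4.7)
The plan is to prove the three parts in sequence, with (ii) giving the combinatorial backbone, (i) as a formal consequence, and (iii) the main content, which reduces to a non-vanishing analysis of a quantum Gaussian binomial in mixed characteristic.

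\textbf{Parts (ii) and (i).} By \cref{prop:eve_mu_generous_lavish}, a basis of $\Delta_\bmu(m)$ consists of monic $(n,m)$-diagrams that do not link two sites in the same bucket. With only two buckets, such a diagram is entirely determined by the number $b = (n-m)/2$ of arcs running between the buckets, which must satisfy $0 \le b \le \min\{\mu_1,\mu_2\}$, i.e.\ $m \in E_{\mu_1,\mu_2}$. Thus $\Lambda_\bmu = E_{\mu_1,\mu_2}$ with every cell module one-dimensional, which is also what the recursion of \cref{lem:eve_composition_valids} delivers after setting $s_{(\mu_1)} = \mu_1$ and iterating once. Part (i) then follows from the cellular identity $\dim A = \sum_\lambda (\dim \Delta(\lambda))^2$.

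\textbf{Part (iii), reduction to a binomial.} Let $d_m$ be the unique basis vector of $\Delta_\bmu(m)$. A direct diagrammatic calculation (exactly as sketched in the example following \cref{prop:count_Cmun}) identifies
\[
\langle d_m,d_m\rangle \;=\; \frac{\Theta(\mu_1,m,\mu_2)}{[m+1]}.
\]
Substituting $r=\mu_1$, $s=m$, $t=\mu_2$ into the identity of \cref{eq:useful_gauss} and simplifying using the relations $i+j=\mu_1$, $j+k=\mu_2$, $i+k=m$, $i+j+k=(n+m)/2$, and $j=(n-m)/2$, this rearranges into the closed form
\[
\langle d_m,d_m\rangle \;=\; \pm\,\frac{\gaussianquant{(n+m)/2+1}{(n-m)/2}}{\gaussianquant{\mu_1}{(n-m)/2}\gaussianquant{\mu_2}{(n-m)/2}}.
\]

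\textbf{Part (iii), non-vanishing analysis.} This expression lives a priori in $\Q(\delta)$, so the first task is to show that for Eve $\mu_1,\mu_2$ the two denominator Gaussian binomials can be evaluated in $(k,\delta)$ as non-zero scalars (via the standard cancellation of $(\ell,p)$-vanishing factors when the numerator is Eve). Non-vanishing of the bracket is then controlled entirely by the numerator, which by the Lucas-type theorem for $(\ell,p)$-Gaussians is non-zero precisely when $(n+m)/2 + 1 \triangleright (n-m)/2$. This proves the first description of $(\Lambda_\bmu)_0$.

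\textbf{Unpacking into digits.} Writing $b = (n-m)/2 = \sum_{i=0}^{i_2} k_i p^{(i)}$, the dominance condition must be translated by computing the $(\ell,p)$-digits of $(n+m)/2 + 1 = a_1 p^{(i_1)} + a_2 p^{(i_2)} - 1 - b$ and checking them position-by-position against those of $b$. When $i_1 > i_2$, the leading digit $a_1$ sits above the range of $b$ and propagates cleanly, giving the unconstrained digit bounds $k_i \le \lfloor (\ell\wedge p -1)/2\rfloor$ at lower positions and $k_{i_2}\in\{0,\ldots,a_2-1\}$ at the top. When $i_1 = i_2$, the two leading contributions combine into $(a_1+a_2-1)p^{(i_1)}$, which carries into position $i_1+1$, producing the constants $c$ and $d$ and forcing the split description of $X$ as the union of a ``below-$c$'' range and an ``above-$c$'' range (bounded by $\min\{a_2, \lfloor (p\wedge\ell + c)/2\rfloor\}$). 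A careful case split on whether $k_{i_2}\le c$ or $k_{i_2}>c$, together with parity of $c$, yields the stated set.

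\textbf{Main obstacle.} The conceptual content (steps above through the Lucas-type theorem) is routine given the machinery already assembled. The main technical nuisance is the $i_1=i_2$ bookkeeping: the borrow/carry structure of $a_1+a_2-1$ across position $i_1$ produces the asymmetric description of $X$, and all the unusual floor bounds in the statement emerge from tracking these carries precisely. Keeping track of how both parities (of $n,m$) and the $\ell$-vs-$p$ split interact at the boundary position $i_2$ is where most of the actual work lies.
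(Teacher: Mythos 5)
Your proposal is correct and follows essentially the same route as the paper: the Gram form on each one-dimensional cell module is the normalised theta network $\Theta(\mu_1,m,\mu_2)/[m+1]$, which \cref{eq:useful_gauss} rewrites as $\gaussianquant{(n+m)/2+1}{(n-m)/2}$ divided by $\gaussianquant{\mu_1}{(n-m)/2}\gaussianquant{\mu_2}{(n-m)/2}$; Eve-ness makes the denominator invertible, and the Lucas-type criterion for $(\ell,p)$-Gaussians plus the carry/borrow digit analysis yields $(\Lambda_\bmu)_0$ exactly as in the paper. The only differences are cosmetic: the paper derives (i)--(ii) from the polynomial-algebra isomorphism of \cref{eg:rings_tl_mu}(iii) rather than from the bucket/diagram basis, and in your substitution into \cref{eq:useful_gauss} the roles should be $r=m$ with $\{s,t\}=\{\mu_1,\mu_2\}$ so that the denominator binomials land on $\mu_1,\mu_2$ (the symmetry of $\Theta$ means your stated closed form is correct regardless).
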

Interestingly we see that the composition $\bmu$ controls which cell modules exist, but not whether or not they are degenerate --- that is controlled entirely by $n$.
Again this should be compared to~\cite[Proposition 4.11]{sutton_tubbenhauer_wedrich_zhu_2021}.

\begin{proof}

Recall from \cref{eg:rings_tl_mu} part (iii) that over characteristic zero (or $(\ell, p)$ where $\ell > \mu_1, \mu_2$)
\begin{equation}\label{eq:two_part_iso}
  \TL_\bmu \simeq k[x] \Big/ \left(\prod_{r = 1}^{\mu_2} \left(x - \frac{[r][n-r+1]}{[\mu_1][\mu_2]}\right)\right).
\end{equation}
Such an algebra has $\mu_2$ cell modules indexed by each $1\le r \le \mu_2$.
The involution is the identity and each cell module is one dimensional.

It is clear that the cell modules of $\TL_\bmu$ are indexed by
\begin{equation}
  \Lambda_\bmu = \{n, n-2, \ldots, n-2\mu_2\} = E_{\mu_1,\mu_2}.
\end{equation}
By comparing \cref{eq:two_part_recurse} we see that the generator $U_1$ acts on the module indexed by $n-2k$ as the scalar $[k][n-k+1]/[\mu_1][\mu_2]$.
Thus in fact the isomorphism in \cref{eq:two_part_iso} is an isomorphism preserving the given cell data.

We have thus deduced that there are $\mu_2$ cell modules, each of which is one dimensional.
Since the dimension of a cellular algebra is given by the sum of the squares of the dimensions of its cell modules, we have shown both (i) and (ii).

From \cref{eq:useful_gauss}, we can see that the determinant of the Gram matrix for the cell module $\Delta_\bmu(n-2k)$ is
\begin{align}\label{eq:two_part_gram}
  \frac{\Theta(\mu_1,\mu_2,n-2k)}{[n-2k+1]} &=
  \frac{[n-k+1]}{[n-2k+1]}\gaussianquant{n-k}{k}\Big/\gaussianquant{\mu_1}{k}\gaussianquant{\mu_2}{k}\\
                                            &=\gaussianquant{n-k+1}{k}\Big/\gaussianquant{\mu_1}{k}\gaussianquant{\mu_2}{k}\nonumber.
\end{align}
The determinant is well defined over mixed characteristic since it is a polynomial in the elements of the Gram matrix, which are themselves defined over mixed characteristic.
Each cell module is one-dimensional and so iff this determinant vanishes, the entire form is degenerate and $n-2k \not\in (\Lambda_{\bmu})_0$.

However, an equivalent condition for $\mu_i$ to be Eve is that all quantum binomials $\gaussianquant{\mu_i}{k}$ are non-zero in the ring~\cite[Section 10.3]{spencer_2020}.
Hence the denominator in \cref{eq:two_part_gram} is non-zero and so the determinant vanishes precisely when $\gaussianquant{n-k+1}{k} = 0$.

Recall the notation that $n \triangleright r$ if $n_i \ge r_i$ for all $i$, where $n = \sum_i n_i p^{(i)}$ and $r = \sum_i r_ip^{(i)}$ are the $(\ell,p)$-adic expansions.
Then the quantum binomial $\gaussianquant{n}{r}$ is non-zero iff $n \triangleright r$.
Hence the determinant vanishes exactly when $n +1 - k \triangleright k$.

If $\mu_1 = a_1 p^{(i_1)}$ and $\mu_2 = a_2 p^{(i_2)}$ for $i_1\neq i_2$, then $n+1 = [a_1, 0,\ldots,0, a_2-1,p-1,\ldots,\ell-1]_{\ell,p}$.
Since $k \le \mu_2$, when subtracting $k$ from $n+1$, there are no carries.
Hence the set of $k$ for which the cell module indexed by $n-2k$ is non-degenerate is
\begin{equation}
\left\{\sum_{i=0}^{i_2} k_i p^{(i)}\;:\;
  k_{i_2} \le \left\lfloor\frac{a_2-1}{2}\right\rfloor\text{ and }
  k_{i} \le \left\lfloor\frac{\ell \wedge p -1}{2}\right\rfloor
\right\}
\end{equation}
On the other hand let $i_1 = i_2$ and $(a_1 + a_2-1) p^{(i_1)} = c p^{(i_1)} + d p^{(i_1+1)}$ for $d \in \{0,1\}$ and $c < a_2$.
We can thus write $n+1 = \pldigs{d,c,p-1,\ldots, \ell-1}$.
Then again there are no carries when $k$ is subtracted from $n+1$ except possibly at the $i_1$-th place.
Write $k = \sum_{i = 0}^{i_1}k_i p^{(i)}$ and assume that $n+1-k \triangleright k$.
Similarly to the first case, $k_i \le \lfloor(\ell\wedge p -1)/2\rfloor$ for $i < i_1$.
Then if $k_{i_1} \le c$ we must have that $k_{i_1} \le \lfloor c/2\rfloor$.
Otherwise $c < k_{i_1} \le a_2$ and $k_{i_1} \le \lfloor (p\wedge \ell +c)/2\rfloor$.
\end{proof}

\section{Tensor with Small Idempotents}\label{sec:small_tensor}
Here we consider the composition $\bmu = (n, k)$ where $1 \le k \le \ell$.
This corresponds to tensoring the $n$-th Jones-Wenzl idempotent by the $k$-th where $k$ lies on, below, or just above the first multiple of $\ell$.
We will focus on the case where $\ell \mid n+1$.

\subsection{Eve Idempotent}
Through this sub-section, suppose $1\le k \le \ell-1$ in the algebra $\TL_\bmu$ for $\bmu = (n, k)$.
As such, $k$ is Eve.

\begin{proposition}
  If $\bmu = (n, k)$ with $1 \le k \le \ell -1$ and $\ell \mid n + 1$, then $\TL_\bmu$  is commutative and
  \begin{equation}
    \Lambda_\bmu = \left(\supp n + \{k, k-2, \ldots, -k\}\right) \cap \N_0.
  \end{equation}
  Further,
  \begin{equation}
    (\Lambda_\bmu)_0 =  \{n-k-2i\mid 0\leq i\leq \lfloor k/2\rfloor\}.
  \end{equation}
  In particular, $\TL_\bmu$ has $\lfloor \frac{k}{2}\rfloor+1$ irreducible modules.
\end{proposition}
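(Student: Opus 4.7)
The plan is to first pin down the cell-module structure of $\TL_\bmu$ and then compute the Gram forms via a case split on which element of $\supp(n)$ the cell module ``sees''. For any monic diagram $D \in \Delta_{n+k}(m)$, I would factor $D = D_r \circ (D_t \otimes \id_k)$ where $D_t : \underline n \to \underline{n-2a}$ absorbs the $a$ caps in the top-$n$ region and $D_r : \underline{(n-2a) + k} \to \underline m$ handles the interface and the right-hand side. The idempotent $\JW_k$ annihilates $D$ whenever $D$ has any cap lying entirely in the bottom-$k$ positions, while $e^{(n)}$ annihilates $D_t$ unless its through-degree $m' := n-2a$ lies in $\supp(n)$ (see \cref{sec:lp_hecke}). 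The remaining caps of $D_r$ then all straddle the interface; non-crossing forces them into a unique nested configuration determined by $c$ with $m' = m + k - 2c$. Since $\ell\mid n+1$, consecutive elements of $\supp(n)$ are separated by at least $2\ell > 2k$, so there is at most one admissible $m' \in \supp(n)$ for each $m$. Choosing $D_t = {\rm d}_n^{m'}$ and applying \cref{lem:suitable_multiples_of_basis} provides a non-zero representative, so $\dim \Delta_\bmu(m) = 1$ for every $m \in \Lambda_\bmu$. The description of $\Lambda_\bmu$ then follows by taking the union over admissible $(m', c)$, and \cref{lem:linear_comm} yields commutativity.

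To identify $(\Lambda_\bmu)_0$, I would compute the Gram form on the canonical generator $v = e^\bmu D_{m',c}$ by evaluating
\[
\iota D \circ e^\bmu \circ D = D_r \circ \bigl(({\rm d}_n^{m'}\circ e^{(n)}\circ {\rm u}_n^{m'}) \otimes \JW_k\bigr) \circ \iota D_r
\]
modulo through-degree $<m$. The observation at the end of \cref{sec:endomorphism} states that ${\rm u}_n^{m'} e^{(n)} {\rm d}_n^{m'}$ has through-degree strictly less than $m'$ whenever $m' < n$; tensoring with $\JW_k$ and sandwiching between $D_r$ and $\iota D_r$ preserves this deficit, forcing $\langle v, v\rangle = 0$ for every $m' \in \supp(n) \setminus \{n\}$. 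The only remaining candidates for $(\Lambda_\bmu)_0$ are therefore those $m$ corresponding to $m' = n$, namely those with $c \in \{0, 1, \ldots, k\}$.

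For $m' = n$ we have $D_t = \id_n$, so $D$ is just $D_r$ and the Gram value reduces to the trivalent theta network built from $e^{(n)}$, $\JW_k$ and the through-strand of width $m$. Because the leading coefficient of $e^{(n)}$ is that of $\id_n$, modulo lower through-degree this computation collapses to the classical quantity $\Theta(n,k,m)/[m+1]$ from \cref{eq:useful_gauss}, evaluated over our mixed characteristic ring. To decide when this is non-zero I would appeal to the two-part Eve analysis of \cref{sec:two-part}: writing $\Theta(n,k,m)/[m+1]$ as a quotient of quantum binomials in base $(\ell,p)$ and using $\ell\mid n+1$, the dominance criterion there reduces to the digit condition $k-c \ge c$, i.e.\ $c \le \lfloor k/2\rfloor$. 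This delivers exactly $\lfloor k/2\rfloor + 1$ non-degenerate cell modules; each being one-dimensional, the same number is the count of irreducible $\TL_\bmu$-modules.

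The most delicate step is this final reduction in the $m' = n$ sector: even though $e^{(n)} = \id_n + (\text{lower cap terms})$, one has to check that every lower term becomes trapped in through-degree $<m$ after conjugation with $D_r$, so that only the $\id_n$ component contributes to $\langle v, v\rangle \bmod <m$. The support-gap estimate used in the dimension count, paired with \cref{lem:suitable_multiples_of_basis}, supplies the necessary control, but turning it into a clean reduction to the classical theta formula---and then performing the digit-wise verification in mixed characteristic---is where the bulk of the technical work will sit.
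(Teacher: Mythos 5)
Your outline follows the paper's proof essentially step for step: the same factorisation $D=D_r\circ(D_t\otimes\id_k)$ giving $\Lambda_\bmu$, one-dimensionality of the cell modules from the $2\ell$-gap in $\supp n$ together with \cref{lem:linear_comm} for commutativity, and the same reduction of the Gram form to a clamp ${\rm u}_n^{m'}e^{(n)}{\rm d}_n^{m'}$ whose $m'<n$ contributions die and whose $m'=n$ contribution collapses to $\Theta(n,k,m)/[m+1]$, evaluated by the quantum-binomial dominance criterion $k-c\ge c$.

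There is, however, one step whose justification as written does not go through. You argue that since ${\rm u}_n^{m'}e^{(n)}{\rm d}_n^{m'}$ has through-degree $<m'$ for $m'<n$, ``tensoring with $\JW_k$ and sandwiching between $D_r$ and $\iota D_r$ preserves this deficit.'' A deficit of $2$ at the level of $\underline{m'}$ does \emph{not} survive: a morphism factoring through $\underline{m'-2}$, tensored with $\JW_k$ and conjugated by $D_r$, only has through-degree bounded by $m'-2+k=m+2c-2$, which is $\ge m$ whenever $c\ge 1$. The inference is rescued only by the stronger fact that the sub-leading part of the clamp factors through objects indexed by $\supp n$, hence through some $\underline{m''}$ with $m''\le m'-2\ell$, and $2\ell>2k\ge 2c$ then traps it strictly below $m$. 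This is exactly how the paper argues: it expands $e^{(n)}=\sum_{m''}\lambda_n^{m''}\overline{\rm d}_n^{m''}\overline{\rm u}_n^{m''}$ and discards every $m''<m_1$ term because $m''\le m_1-2\ell<m_1+k-2t$. The same repair is needed in your $m'=n$ sector: ``the leading coefficient of $e^{(n)}$ is $\id_n$'' is not by itself the reason only $\Theta(n,k,m)/[m+1]$ survives, since individual diagrams of $e^{(n)}$ of through-degree $n-2j$ with $1\le j\le c$ are not a priori trapped below $m$; you must group them via the orthogonal decomposition first. You correctly identify this as the delicate point and name the support-gap estimate, so the fix is within reach, but the argument needs the decomposition of $e^{(n)}$ into the pieces $\lambda_n^{m''}\overline{\rm L}_n^{m''}$ made explicit (after which the $m'<n$ case also follows more cleanly from the vanishing of $(\lambda_n^{m'})^{-1}$ noted at the end of \cref{sec:endomorphism}).
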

\begin{proof}
  By performing a similar factorisation to that in \cref{sec:seam} we see that a basis diagram of any cell module is given by the composition of two diagrams $\mathbf{t} = (\mathbf{t_0 \otimes \id)\cdot \mathbf{t}}_1$ where $\mathbf{t}_0 : \underline{n} \to \underline{m_1}$ can be taken to be ${\rm d}_k^{m_1}$ and $\mathbf{t}_1 : \underline{m_1 + k} \to \underline{m}$ links no two of the last $k$ sites.
  It is clear then that $\Lambda_\bmu$ has the desired form.
  Now, since $\ell \mid n+1$, all elements of $\supp n$ are at least $2\ell$ apart and hence all the cell modules are one-dimensional.
  Hence $\TL_\bmu$ is commutative.

Next we compute the Gram form on these cell modules.
Since the modules are one dimensional, this is a simple coefficient.
Observe that the chosen basis for each module is spanned by a morphism of the form
\begin{equation}
  \vcenter{\hbox{
  \begin{tikzpicture}
    \draw[thick, fill=purple] (-.3,0.1) rectangle (.0,1.8);
    \fill (-.3,1.65) rectangle (0,1.8);
    \draw[thick, fill=purple] (-.3,0.1) rectangle (.0,-.6);
    \draw[thick] (0,.1) -- (0.6,.3) -- (0.6,1.6) -- (0,1.8) --cycle;
    \node at (.32,.9) {${\rm d}_n^{m_1}$};
    \draw[line width=2pt] (0,-.2) -- (0.6,-.2) arc (-90:90:0.35);
    \draw[line width=2pt] (0.6,1) -- (1.4, 1);
    \draw[line width=2pt] (0,-.4) -- (1.4, -.4);
    \draw[line width=2pt] (0,.95) -- (0,.95);
    \node at (1.1, 0.1) {\tiny$t$};
    \node at (1.0, 1.2) {\tiny$m_1-t$};
    \node at (0.7, -.57) {\tiny$k-t$};
  \end{tikzpicture}
}}
\end{equation}
and so the Gram constant on $\Delta_\bmu(m_1 + k -2t)$ can be calculated as the coefficient of the identity diagram in 
\begin{equation}
  \vcenter{\hbox{
  \begin{tikzpicture}
    \draw[thick] (-0.15,.1) -- (-0.75,.3) -- (-0.75,1.6) -- (-0.15,1.8) --cycle;
    \draw[thick] (0.15,.1) -- (0.75,.3) -- (0.75,1.6) -- (0.15,1.8) --cycle;
    \node at (.47,.9) {${\rm d}_n^{m_1}$};
    \node at (-.45,.9) {${\rm u}_n^{m_1}$};
    \draw[line width=2pt] (0.15,-.2) -- (0.75,-.2) arc (-90:90:0.35);
    \draw[line width=2pt] (-0.15,-.2) -- (-0.75,-.2);
    \draw[line width=2pt] (-.75, .5) arc (90:270:0.35);
    \draw[line width=2pt] (0.75,1) -- (1.55, 1);
    \draw[line width=2pt] (0.15,-.6) -- (1.55, -.6);
    \draw[line width=2pt] (-0.75,1) -- (-1.55, 1);
    \draw[line width=2pt] (-0.15,-.6) -- (-1.55, -.6);
    \node at (1.25, 0.1) {\tiny$t$};
    \node at (1.15, 1.2) {\tiny$m_1-t$};
    \node at (0.85, -.77) {\tiny$k-t$};
    \node at (-1.25, 0.1) {\tiny$t$};
    \node at (-1.15, 1.2) {\tiny$m_1-t$};
    \node at (-0.85, -.77) {\tiny$k-t$};
    \draw[thick, fill=purple] (-.15,.1) rectangle (.15,1.8);
    \fill (-.15,1.65) rectangle (.15,1.8);
    \draw[thick, fill=purple] (-.15,0) rectangle (.15,-.8);
  \end{tikzpicture}
  }} = \sum_{\substack{m' \in \supp n\\m' \le m_1}} \lambda_n^{m'}\vcenter{\hbox{
  \begin{tikzpicture}
    \draw[thick] (0,.3) -- (-0.6,.1) -- (-0.6,1.8) -- (0,1.6) --cycle;
    \draw[thick] (0,.3) -- (0.6,.1) -- (0.6,1.8) -- (0,1.6) --cycle;
    \fill (0,1.45) -- (-0.6,1.65) -- (-0.6,1.8) -- (0,1.6) --cycle;
    \fill (0,1.45) -- (0.6,1.65) -- (0.6,1.8) -- (0,1.6) --cycle;
    \node at (.32,.9) {$\overline{\rm u}_n^{m'}$};
    \node at (-.32,.9) {$\overline{\rm d}_n^{m'}$};
    \draw[thick] (-1.2,.3) -- (-0.6,.1) -- (-0.6,1.8) -- (-1.2,1.6) --cycle;
    \draw[thick] (1.2,.3) -- (0.6,.1) -- (0.6,1.8) -- (1.2,1.6) --cycle;
    \node at (.92,.9) {${\rm d}_n^{m_1}$};
    \node at (-.92,.9) {${\rm u}_n^{m_1}$};
    \draw[line width=2pt] (0.15,-.2) -- (1.2,-.2) arc (-90:90:0.35);
    \draw[line width=2pt] (-0.15,-.2) -- (-1.2,-.2);
    \draw[line width=2pt] (-1.2, .5) arc (90:270:0.35);
    \draw[line width=2pt] (1.2,1) -- (2.0, 1);
    \draw[line width=2pt] (0.15,-.6) -- (2.0, -.6);
    \draw[line width=2pt] (-1.2,1) -- (-2.0, 1);
    \draw[line width=2pt] (-0.15,-.6) -- (-2.0, -.6);
    \node at (1.7, 0.1) {\tiny$t$};
    \node at (1.6, 1.2) {\tiny$m_1-t$};
    \node at (1.3, -.77) {\tiny$k-t$};
    \node at (-1.7, 0.1) {\tiny$t$};
    \node at (-1.6, 1.2) {\tiny$m_1-t$};
    \node at (-1.3, -.77) {\tiny$k-t$};
    \draw[thick, fill=purple] (-.15,0) rectangle (.15,-.8);
  \end{tikzpicture}
}}
\end{equation}
Now, if $m' < m_1$, then $m' \le m_1 - 2\ell <m_1 + k - 2t$.  Hence the only term that contributes is
\begin{equation}\label{eq:inner_product_easy_small_diag}
  \lambda_n^{m_1}\vcenter{\hbox{
  \begin{tikzpicture}
    \draw[thick] (0,.3) -- (-0.6,.1) -- (-0.6,1.8) -- (0,1.6) --cycle;
    \draw[thick] (0,.3) -- (0.6,.1) -- (0.6,1.8) -- (0,1.6) --cycle;
    \fill (0,1.45) -- (-0.6,1.65) -- (-0.6,1.8) -- (0,1.6) --cycle;
    \fill (0,1.45) -- (0.6,1.65) -- (0.6,1.8) -- (0,1.6) --cycle;
    \node at (.32,.9) {$\overline{\rm u}_n^{m_1}$};
    \node at (-.32,.9) {$\overline{\rm d}_n^{m_1}$};
    \draw[thick] (-1.2,.3) -- (-0.6,.1) -- (-0.6,1.8) -- (-1.2,1.6) --cycle;
    \draw[thick] (1.2,.3) -- (0.6,.1) -- (0.6,1.8) -- (1.2,1.6) --cycle;
    \node at (.92,.9) {${\rm d}_n^{m_1}$};
    \node at (-.92,.9) {${\rm u}_n^{m_1}$};
    \draw[line width=2pt] (0.15,-.2) -- (1.2,-.2) arc (-90:90:0.35);
    \draw[line width=2pt] (-0.15,-.2) -- (-1.2,-.2);
    \draw[line width=2pt] (-1.2, .5) arc (90:270:0.35);
    \draw[line width=2pt] (1.2,1) -- (2.0, 1);
    \draw[line width=2pt] (0.15,-.6) -- (2.0, -.6);
    \draw[line width=2pt] (-1.2,1) -- (-2.0, 1);
    \draw[line width=2pt] (-0.15,-.6) -- (-2.0, -.6);
    \node at (1.7, 0.1) {\tiny$t$};
    \node at (1.6, 1.2) {\tiny$m_1-t$};
    \node at (1.3, -.77) {\tiny$k-t$};
    \node at (-1.7, 0.1) {\tiny$t$};
    \node at (-1.6, 1.2) {\tiny$m_1-t$};
    \node at (-1.3, -.77) {\tiny$k-t$};
    \draw[thick, fill=purple] (-.15,0) rectangle (.15,-.8);
  \end{tikzpicture}
}} = \left(\lambda_n^{m_1}\right)^{-1}\vcenter{\hbox{
  \begin{tikzpicture}
    \draw[line width=2pt] (0.15,-.2) arc (-90:90:0.35);
    \draw[line width=2pt] (-0.15, .5) arc (90:270:0.35);
    \draw[line width=2pt] (0.15,1) -- (0.95, 1);
    \draw[line width=2pt] (0.15,-.6) -- (0.95, -.6);
    \draw[line width=2pt] (-0.95,1) -- (-0.15, 1);
    \draw[line width=2pt] (-0.15,-.6) -- (-0.95, -.6);
    \node at (0.65, 0.1) {\tiny$t$};
    \node at (0.65, 1.2) {\tiny$m_1-t$};
    \node at (0.55, -.77) {\tiny$k-t$};
    \node at (-0.65, 0.1) {\tiny$t$};
    \node at (-0.65, 1.2) {\tiny$m_1-t$};
    \node at (-0.55, -.77) {\tiny$k-t$};
    \draw[thick, fill=purple] (-.15,0.1) rectangle (.15,1.6);
    \draw[thick, fill=purple] (-.15,0) rectangle (.15,-.8);
  \end{tikzpicture}
}}
\end{equation}
For any morphism $ f : \underline{m} \to \underline{m}$, the coefficient of the identity diagram is given by the full trace, $\tau^m(f \cdot \JW_m)/[m+1]$.
Hence, the identity coefficient of the morphism in \cref{eq:inner_product_easy_small} is given by
\begin{equation}\label{eq:inner_product_easy_small}
  \left(\lambda_n^{m_1}\right)^{-1} \frac{\Theta(m_1, k, m_1 + k - 2t)}{[m_1+k-2t+1]} = 
  \left(\lambda_n^{m_1}\right)^{-1} (-1)^{m_1+k-t}
  \frac{\gaussianquant{m_1+k-t+1}{t}}{\gaussianquant{m_1}{t}\gaussianquant{k}{t}}.
\end{equation}
Now, since $t \le k < \ell$, the denominator of \cref{eq:inner_product_easy_small} does not vanish.
Thus in order to check when it is zero, we require $(\lambda_n^{m_1})^{-1}$ to not be zero (i.e. $m_1 = n$) and $m_1 + k -t + 1 \triangleright t$.
Since $\ell \mid m_1 + 1$, this is equivalent to $k -t \ge t$.
Thus $t \in \{0,1, \ldots, \lfloor \frac{k}2\rfloor\}$ as desired.
\end{proof}

\begin{example}
  A critical example is when $k = \ell - 1 $.
  Here we present the data for $p = 2$ and $\ell = 6$.
  Note we are restricting to cases where $6 \mid n + 1$.
  All cell modules for $\TL_{(n,5)}$ are one dimensional (orange).  Grey dots are the cell indices of cell modules of $\TL_\mathbf{n+5}$ and circled dots indicate elements of $(\Lambda_{(n,5)})_0$.
  \begin{center}
    \includegraphics[width=0.8\textwidth]{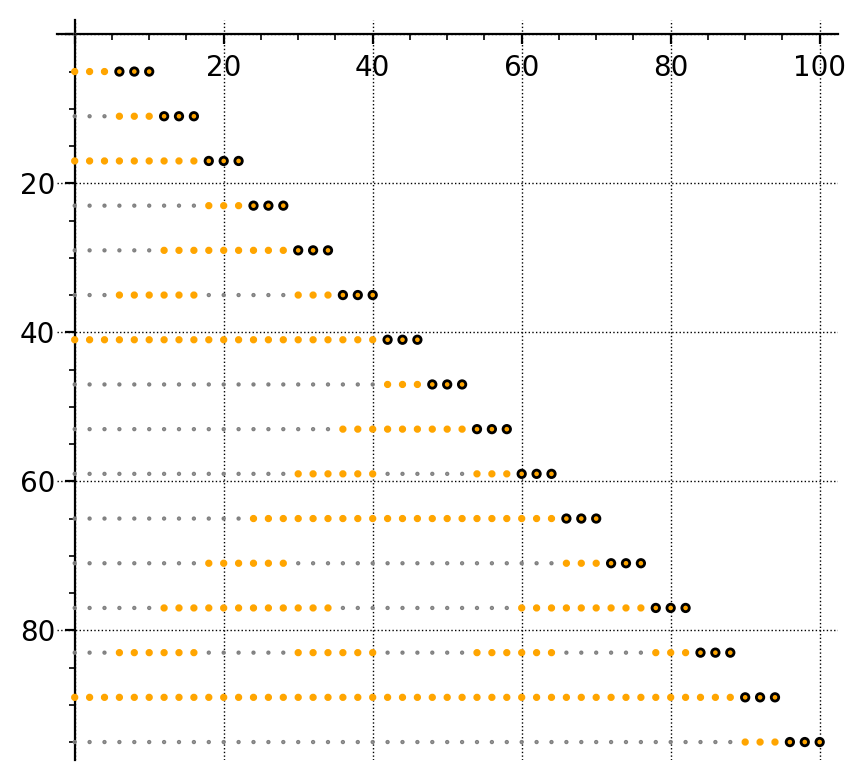}
  \end{center}
\end{example}

\subsection{First non-Eve Idempotent}
We now investigate the algebra $\TL_{(n,\ell)}$ where $\ell \mid n + 1$.
We are motivated by the self-similarity of the representation theory of $\TL_n$ under the $\ell$-dilation $n \mapsto (n+1)/\ell$.

\begin{proposition}
  Suppose that $\bmu = (n, \ell)$ with $\ell \mid n+1$.
  Write $n+1 = \pldigs{n_r, \ldots, n_1,0}$ and let $t_n$ be the largest such that $n_1 = n_2 = \cdots = n_{t_k} = p-1$.
  Set $T = \{1 \le t \le t_k\;:\; n_t = 1\}$ and $S = \{0 \le t < t_k \;:\; n_t > 1\}$.  Unless $p=2$, $T$ has at most one element.
  Then $\TL_\bmu$ has
  \begin{equation}
    \Lambda_\bmu = \left( \supp n + \left\{ \ell, \ell-2, \ldots, -\ell \right\}\right) \cap \N_0.
  \end{equation}
  Write
  \begin{align*}
    \Lambda_{2\rm x} &= \left(\supp n + \left(\{\ell-2, \ell-4, \ldots, -\ell+2\right\}\right)\cap N_0\\
      \Lambda_{2\rm y} &= \bigcup_{t \in T}\supp \mother[t]{n}\\
      \Lambda_1 &= \Big(\supp n + \{\ell, -\ell\}\Big) \setminus \Lambda_{2\rm y}.
  \end{align*}
  Then $\Lambda_1$ indexes cell modules of dimension 1, and $\Lambda_{2\rm x}$ and $\Lambda_{2 \rm y}$ index those of dimension 2.
  If
  \begin{align}
    \Lambda_{1\rm a} &= \{n + \ell\}\nonumber\\
    \Lambda_{1\rm b} &= \left\{\pldigs{n_r, \ldots, n_{t+1} - 1,0,\ldots, 0} - 1 \;:\; t \in S\right\}\nonumber\\
    \Lambda_{2\rm a} &= \left\{n+\ell-2t \;:\; 1\le t \le \lfloor \ell / 2\rfloor\right\}\nonumber\\
    \Lambda_{2\rm b} &= \left\{\pldigs{n_r, \ldots, n_{t+1},0,\ldots, 0} - 1 \;:\; t \in T\right\}
  \end{align}
  then $\Lambda_0 = \Lambda_{1\rm a} \cup\Lambda_{1\rm b} \cup\Lambda_{2\rm a} \cup\Lambda_{2\rm b}$ and $\Lambda_{1\rm a} \cup\Lambda_{1\rm b}$ index the irreducible modules of dimension 1 and $\Lambda_{2\rm a} \cup\Lambda_{2\rm b}$ those of dimension 2.
\end{proposition}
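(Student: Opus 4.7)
The argument follows the pattern of \cref{prop:main_two_part}, replacing the single added strand by a full $\ell$-strand block and invoking the endomorphism analysis of \cref{sec:endomorphism} for $e^{(\ell)}$ itself.

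First I would pin down $\Lambda_\bmu$ and a spanning set for each cell module. As in \cref{ssec:two_part_seams}, any monic diagram $\mathbf t\in M(m)$ factors uniquely as $\mathbf t=(\mathbf t_0\otimes\id_\ell)\cdot\mathbf t_1$ with $\mathbf t_0:\underline n\to\underline{m_1}$ monic, $\mathbf t_1:\underline{m_1+\ell}\to\underline m$ monic, and $\mathbf t_1$ linking no two of its top $m_1$ sites. The top idempotent $e^{(n)}$ kills $\mathbf t_0$ unless $m_1\in\supp n$, in which case $e^{(n)}\mathbf t_0\equiv e^{(n)}{\rm d}_n^{m_1}$ modulo through degree $<m_1$ (by \cref{lem:suitable_multiples_of_basis}). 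This yields $\Lambda_\bmu\subseteq(\supp n+\{-\ell,-\ell+2,\ldots,\ell\})\cap\N_0$, and explicit ladder diagrams witness the reverse containment.

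Next I would show that $\dim\Delta_\bmu(m)\in\{1,2\}$ and identify the two-dimensional cases. Working over $\Q(\delta)$ and using the mutual orthogonality of \cref{lem:mutual_orthogonal} on both factors, each pair $(a,r)\in\supp n\times\supp\ell$ contributes an at-most-one-dimensional subspace of $\Delta_\bmu(m)$, non-zero iff $m\in E_{a,r}$ (the walk formula \cref{prop:count_Cmun} specialised to the two-part composition $(a,r)$ gives exactly a single walk to any such $m$). Hence in characteristic zero
\[\dim\Delta_\bmu(m)=|\{(a,r)\in\supp n\times\supp\ell:m\in E_{a,r}\}|,\]
and descent to $(\ell,p)$-characteristic is ensured by \cref{lem:suitable_multiples_of_basis}. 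Case analysis on $(a,r)$ shows this count is at most two, and equals two exactly when $m$ lies in $\Lambda_{2\mathrm x}$ (two $r$-values from $\supp\ell=\{\ell,\ell-2\}$ share a single $a\in\supp n$, equivalently $|m-a|\le\ell-2$) or $\Lambda_{2\mathrm y}$ (two $a$-values in $\supp n$ differ by $2\ell$, which---computing as in \cref{ssec:two_part_seams}---forces the digit condition encoded by $T\neq\emptyset$). All remaining $m\in\Lambda_\bmu$ lie in $\Lambda_1$ and carry one-dimensional cell modules.

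Finally I would compute the Gram form on each cell module. On a one-dimensional module the entry is, by the clamp rewrite of \cref{eq:what_is_down_baby_dont_hurt} performed inside both $e^{(n)}$ and $e^{(\ell)}$, a product of the normalising scalars $\lambda_n^a,\lambda_\ell^r$ and a theta evaluation $\Theta(a,r,m)/[m+1]$; reading off the $(\ell,p)$-adic valuation of this product using \cref{lem:unreasonable_cancellation} picks out exactly the non-vanishing classes $\Lambda_{1\mathrm a}$ (from the case $a=n$, $r=\ell$) and $\Lambda_{1\mathrm b}$ (from the ``tail-descent'' cases indexed by $S$). On a two-dimensional module one sets up the $2\times 2$ Gram matrix in the two-vector basis coming from the contributing pairs $(a,r)$: the diagonal entries are computed as above, and the off-diagonal entry by a clamp calculation directly analogous to \cref{lem:id_of_trace_of_clamp}, yielding (after the quantum-binomial collapse) a determinant proportional to $[p^{(t+1)}+1]-[p^{(t+1)}-1]$ for the relevant tail index $t$. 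Non-vanishing of this expression cuts $\Lambda_{2\mathrm a}\cup\Lambda_{2\mathrm b}$ out of $\Lambda_{2\mathrm x}\cup\Lambda_{2\mathrm y}$.

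The principal obstacle is the off-diagonal clamp computation: the $\ell$-strand generalisation of \cref{lem:id_of_trace_of_clamp} requires performing the rewrite of \cref{eq:lobsided_clamp,eq:clampything} through a clamp of width $\ell$ rather than $1$. This forces a ladder decomposition of both $\overline{\rm d}_n^a$ (centred on the ancestor witnessing the tail) and $\overline{\rm d}_\ell^r$, and careful bookkeeping of the resulting $\lambda$-products until they collapse via \cref{lem:unreasonable_cancellation} into the advertised $[p^{(t+1)}\pm 1]$ difference.
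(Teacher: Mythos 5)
Your overall architecture (factor each monic diagram through the two blocks, index a spanning set of $\Delta_\bmu(m)$ by pairs $(a,r)\in\supp n\times\supp\ell$ with $m\in E_{a,r}$, then compute a $1\times 1$ or $2\times 2$ Gram matrix) matches the paper's, and your treatment of $\Lambda_\bmu$, the dimension count, the one-dimensional Gram entries, and the $\Lambda_{2\rm y}$/tail analysis (imported from the two-part seam section) is sound.

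The gap is in the two-dimensional modules of type $\Lambda_{2\rm x}$, which is where almost all of the new content of this proposition lives. You propose to compute every off-diagonal Gram entry ``by a clamp calculation directly analogous to \cref{lem:id_of_trace_of_clamp}'' and to read off a determinant proportional to $[p^{(t+1)}+1]-[p^{(t+1)}-1]$. That lemma governs the case of two distinct $a,a'\in\supp n$ with $a'-a=2\ell$ and $m$ their midpoint --- i.e.\ exactly $\Lambda_{2\rm y}$. For $m=m_1+\ell-2t\in\Lambda_{2\rm x}$ the two basis vectors share the same $a=m_1$ and differ only in the $\supp\ell$-component ($r=\ell$ versus $r=\ell-2$, realised as the straight-through diagram $x_1$ versus the diagram $x_2$ with an internal cap in the $\ell$-block). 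The cross term $\langle x_1,x_2\rangle$ is not a clamp/partial-trace of the kind in \cref{lem:id_of_trace_of_clamp}; it requires expanding $\JW_{\ell-1}$ via the single-clasp recursion \cref{eq:single_clasp_jones_wenzl} (only the $i=t$ term survives), which yields
\begin{equation*}
\left(\lambda_{n}^{m_1}\right)^{-1}(-1)^t \frac{[t]}{[\ell-1]} \gaussianquant{m_1+\ell-t}{t-1}\Big/\gaussianquant{m_1}{t-1}\gaussianquant{\ell-2}{t-1}.
\end{equation*}
Since $\langle x_2,x_2\rangle=\tfrac{[\ell]}{[\ell-1]}G^{m-2}_{(n,\ell-2)}$ vanishes identically, the determinant is $-\langle x_1,x_2\rangle^2$, and its non-vanishing is governed by $m_1=n$ together with the quantum-binomial condition $m_1+\ell-t\triangleright t-1$, i.e.\ $1\le t\le\lfloor\ell/2\rfloor$. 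That is the criterion that carves $\Lambda_{2\rm a}$ out of $\Lambda_{2\rm x}$; it has nothing to do with tails of $n$, and your uniform ``tail-index'' formula would give the wrong answer here. Without the single-clasp expansion (or an equivalent device) you cannot establish the $\Lambda_{2\rm a}$ part of the statement.
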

\begin{proof}
Note that $e^{(\ell)} = \JW_{\ell - 1} \otimes \id$ and
$\TL_{(\ell)}$ has two cell modules, indexed by $\{\ell, \ell-2\}$.
Hence we see that
\begin{equation}
  \Lambda_\bmu = \left( \supp n + \{ \ell, \ell-2, \ldots, -\ell \}\right) \cap \N_0
\end{equation}
Suppose that $t \in \{1,\ldots, \ell-1\}$.
Then if $m_1 \in \supp n$, the module $\Delta_\bmu(m_1 + \ell - 2t)$ is spanned by
\begin{align}
  x_1^{m_1+\ell-2t} &=
  \vcenter{\hbox{
  \begin{tikzpicture}
    \draw[thick, fill=purple] (-.3,0.1) rectangle (.0,1.8);
    \fill (-.3,1.65) rectangle (0,1.8);
    \draw[thick, fill=purple] (-.3,0.1) rectangle (.0,-.6);
    \fill (-.3,-0.05) rectangle (0,0.1);
    \draw[thick] (0,.1) -- (0.6,.3) -- (0.6,1.6) -- (0,1.8) --cycle;
    \node at (.32,.9) {${\rm d}_n^{m_1}$};
    \draw[line width=2pt] (0,-.2) -- (0.6,-.2) arc (-90:90:0.35);
    \draw[line width=2pt] (0.6,1) -- (1.4, 1);
    \draw[line width=2pt] (0,-.4) -- (1.4, -.4);
    \draw[line width=2pt] (0,.95) -- (0,.95);
    \node at (1.1, 0.1) {\tiny$t$};
    \node at (1.0, 1.2) {\tiny$m_1-t$};
    \node at (0.7, -.57) {\tiny$\ell-t$};
  \end{tikzpicture}
}} =
\vcenter{\hbox{
  \begin{tikzpicture}
    \draw[thick, fill=purple] (-.3,0.1) rectangle (.0,1.8);
    \fill (-.3,1.65) rectangle (0,1.8);
    \draw[thick, fill=purple] (-.3,0.1) rectangle (.0,-.45);
    \draw[thick] (0,.1) -- (0.6,.3) -- (0.6,1.6) -- (0,1.8) --cycle;
    \node at (.32,.9) {${\rm d}_n^{m_1}$};
    \draw[line width=2pt] (0,-.05) -- (0.6,-.05) arc (-90:90:0.3);
    \draw[line width=2pt] (0.6,1) -- (1.4, 1);
    \draw[line width=2pt] (0,-.2) -- (1.4, -.2);
    \draw[line width=2pt] (0,.95) -- (0,.95);
    \draw[very thick] (-.3,-.55) -- (1.4,-.55);
    \node at (1.15, 0.2) {\tiny$t$};
    \node at (1.1, 1.2) {\tiny$m_1-t$};
    \node at (0.9, -.37) {\tiny$\ell-t-1$};
  \end{tikzpicture}
}}
\\
x_2^{m_1+\ell-2t} &=
\vcenter{\hbox{
  \begin{tikzpicture}
    \draw[thick, fill=purple] (-.3,0.1) rectangle (.0,1.8);
    \fill (-.3,1.65) rectangle (0,1.8);
    \draw[thick, fill=purple] (-.3,0.1) rectangle (.0,-.6);
    \fill (-.3,-0.05) rectangle (0,0.1);
    \draw[thick] (0,.1) -- (0.6,.3) -- (0.6,1.6) -- (0,1.8) --cycle;
    \node at (.32,.9) {${\rm d}_n^{m_1}$};
    \draw[line width=2pt] (0,-.05) -- (0.6,-.05) arc (-90:90:0.3);
    \draw[line width=2pt] (0.6,1) -- (1.4, 1);
    \draw[line width=2pt] (0,-.2) -- (1.4, -.2);
    \draw[line width=2pt] (0,.95) -- (0,.95);
    \draw[very thick] (0,-.5) -- (0.1,-.5) arc(-90:90:0.06) -- (0,-.38);
    \node at (1.25, 0.2) {\tiny$t-1$};
    \node at (1.2, 1.2) {\tiny$m_1-t + 1$};
    \node at (0.9, -.37) {\tiny$\ell-t-1$};
  \end{tikzpicture}
}} =
\vcenter{\hbox{
  \begin{tikzpicture}
    \draw[thick, fill=purple] (-.3,0.1) rectangle (.0,1.8);
    \fill (-.3,1.65) rectangle (0,1.8);
    \draw[thick, fill=purple] (-.3,0.1) rectangle (.0,-.45);
    \draw[thick] (0,.1) -- (0.6,.3) -- (0.6,1.6) -- (0,1.8) --cycle;
    \node at (.32,.9) {${\rm d}_n^{m_1}$};
    \draw[line width=2pt] (0,-.05) -- (0.6,-.05) arc (-90:90:0.3);
    \draw[line width=2pt] (0.6,1) -- (1.4, 1);
    \draw[line width=2pt] (0,-.2) -- (1.4, -.2);
    \draw[line width=2pt] (0,.95) -- (0,.95);
    \draw[very thick] (-.3,-.55) -- (0.1,-.55) arc(-90:90:0.08) -- (0,-.39);
    \node at (1.25, 0.2) {\tiny$t-1$};
    \node at (1.2, 1.2) {\tiny$m_1-t + 1$};
    \node at (0.9, -.37) {\tiny$\ell-t-1$};
  \end{tikzpicture}
}}
\end{align}
It is clear that $\langle x_1^m, x_1^m \rangle = G_{(n,\ell-1)}^{m-1}$ which vanishes unless $t \in \{ 1, \ldots, \lfloor \frac{\ell}2\rfloor \}$.
On the other hand, $\langle x_2^m, x_2^m \rangle = [\ell]/[\ell-1] G_{(n, \ell-2)}^{m - 2}$ which always vanishes.
To compute cross terms, we must use the following well known form of the Jones-Wenzl idempotents (see~\cite[Theorem 4.5]{frenkel_khovanov_1997}).
\begin{equation}\label{eq:single_clasp_jones_wenzl}
  \vcenter{\hbox{
    \begin{tikzpicture}
      \draw[very thick, fill = purple] (-.45,0) rectangle (0.45,2);
      \node at (0,1) {$\JW_n$};
      \draw[line width=2pt] (-.8,1) -- (-.45,1);
      \draw[line width=2pt] (.8,1) -- (.45,1);
    \end{tikzpicture}
  }} = 
  \vcenter{\hbox{
    \begin{tikzpicture}
      \draw[very thick, fill = purple] (-.65,0.2) rectangle (0.65,2);
      \node at (0,1.1) {$\JW_{n-1}$};
      \draw[line width=2pt] (-1,1.1) -- (-.65,1.1);
      \draw[line width=2pt] (1,1.1) -- (.65,1.1);
      \draw[very thick] (-1,0) -- (1,0);
    \end{tikzpicture}
  }} + \sum_{i=1}^{n-1} \frac{(-1)^i [i]}{[n]}
  \vcenter{\hbox{
    \begin{tikzpicture}
      \draw[very thick, fill = purple] (-.65,0.2) rectangle (0.65,2);
      \node at (0,1.1) {$\JW_{n-1}$};
      \draw[line width=2pt] (1,1.1) -- (.65,1.1);
      \draw[very thick] (1,0) -- (-0.65,0);
      \draw[very thick] (-.65,0.4) arc (90:270:0.2);
      \draw[very thick] (-1.6,1.1) arc (-90:90:0.2);
      \draw[line width=2pt] (-.65, 1.7) edge[out=180,in=0] (-1.4,1.9);
      \draw[line width=2pt] (-1.4,1.9) -- (-1.6,1.9);
      \draw[line width=2pt] (-.65, 0.9) edge[out=180,in=0] (-1.4,0.7);
      \draw[line width=2pt] (-1.4,0.7) -- (-1.6,0.7);
      \node at (-1.1, 2.1) {\tiny$i-1$};
      \node at (-1.2, 0.5) {\tiny$n-1-i$};
    \end{tikzpicture}
  }}
\end{equation}
This can be verified by noting that the morphism in \cref{eq:single_clasp_jones_wenzl} is killed by the right action of all cups and thus must be $\JW_{n}$ by uniqueness under that property~\cite[Lemma 1.1]{martin_spencer_2021}.

Then the inner product $\langle x_1^{m}, x_2^{m}\rangle$ is the identity coefficient in
\begin{equation}\label{eq:twisty_inner_product}
  \vcenter{\hbox{
    \begin{tikzpicture}
      \draw[thick, fill=purple] (-.3,0.1) rectangle (.0,1.8);
      \fill (-.3,1.65) rectangle (0,1.8);
      \draw[thick, fill=purple] (-.3,0.1) rectangle (.0,-.45);
      \draw[thick] (0,.1) -- (0.6,.3) -- (0.6,1.6) -- (0,1.8) --cycle;
      \draw[thick] (-.3,.1) -- (-.9,.3) -- (-.9,1.6) -- (-.3,1.8) --cycle;
      \node at (.32,.9) {${\rm d}_n^{m_1}$};
      \node at (-.62,.9) {${\rm u}_n^{m_1}$};
      \draw[line width=2pt] (0,-.05) -- (0.6,-.05) arc (-90:90:0.3);
      \draw[line width=2pt] (-.3,-.05) -- (-.9,-.05);
      \draw[line width=2pt] (-.9, 0.55) arc (90:270:0.3);
      \draw[line width=2pt] (0.6,1) -- (1.4, 1);
      \draw[line width=2pt] (-0.9,1) -- (-1.7, 1);
      \draw[line width=2pt] (0,-.2) -- (1.4, -.2);
      \draw[line width=2pt] (-.3,-.2) -- (-1.7, -.2);
      \draw[line width=2pt] (0,.95) -- (0,.95);
      \draw[very thick] (-1.7,-.55) -- (0.1,-.55) arc(-90:90:0.08) -- (0,-.39);
      \node at (1.25, 0.2) {\tiny$t-1$};
      \node at (-1.50, 0.2) {\tiny$t$};
      \node at (1.2, 1.2) {\tiny$m_1-t + 1$};
      \node at (-1.5, 1.2) {\tiny$m_1-t$};
      \node at (0.9, -.37) {\tiny$\ell-t-1$};
      \node at (-1.2, -.37) {\tiny$\ell-t-1$};
    \end{tikzpicture}
  }} =
  \left(\lambda_{n}^{m_1}\right)^{-1}
  \vcenter{\hbox{
    \begin{tikzpicture}
      \draw[thick, fill=purple] (-.15,0.2) rectangle (.15,1.8);
      \draw[thick, fill=purple] (-.15,0.1) rectangle (.15,-.45);
      \draw[line width=2pt] (.15,-.05) arc (-90:90:0.3);
      \draw[line width=2pt] (-.15, 0.55) arc (90:270:0.3);
      \draw[line width=2pt] (0.15,1) -- (1.4, 1);
      \draw[line width=2pt] (-0.15,1) -- (-1.4, 1);
      \draw[line width=2pt] (0.15,-.2) -- (1.4, -.2);
      \draw[line width=2pt] (-.15,-.2) -- (-1.4, -.2);
      \draw[very thick] (-1.4,-.55) -- (0.25,-.55) arc(-90:90:0.08) -- (0.15,-.39);
      \node at (0.6, 0.6) {\tiny$t-1$};
      \node at (-.45, 0.6) {\tiny$t$};
      \node at (0.8, 1.2) {\tiny$m_1-t + 1$};
      \node at (-.8, 1.2) {\tiny$m_1-t$};
      \node at (1.0, -0) {\tiny$\ell-t-1$};
      \node at (-1.05, -0) {\tiny$\ell-t-1$};
    \end{tikzpicture}
  }}
\end{equation}
This identify follows by the same argument in \cref{eq:inner_product_easy_small_diag}.

Applying \cref{eq:single_clasp_jones_wenzl} to the lower Jones-Wenzl projector, and noting that the only remaining term after expanding is the one for which $i = t$, gives the coefficient of the identity as
\begin{equation}
  \left(\lambda_{n}^{m_1}\right)^{-1}\frac{(-1)^t[t]\Theta(m_1, \ell-2, m_1+\ell-2t)}{[\ell-1][m_1+\ell-2t+1]}
  = \left(\lambda_{n}^{m_1}\right)^{-1}(-1)^t \frac{[t]}{[\ell-1]} \gaussianquant{m_1+\ell-t}{t-1}\Big/\gaussianquant{m_1}{t-1}\gaussianquant{\ell-2}{t-1}.
\end{equation}
Again, this vanishes unless $t \in \{0, \lfloor \frac \ell 2\rfloor\}$ and $m_1 = n$.

The final case to consider is when $t = 0$ or $t= \ell$.
Here one needs to know if two elements of $\supp n$ differ by $2\ell$.  However, the analysis is identical to that in \cref{ssec:two_part_seams} and so is not repeated here.
\end{proof}

\begin{remark}
  In the same way that the theta network encodes the inner product on the two part Eve cell modules:
  \begin{equation}
    \left(
    \vcenter{\hbox{
      \begin{tikzpicture}
        \draw[thick, fill=purple] (-.25,0.1) rectangle (.25,1.2);
        \draw[thick, fill=purple] (-.25,-0.1) rectangle (.25,-1.2);
        \draw[line width=2pt] (.25,-.35) arc (-90:90:0.35);
        \draw[line width=2pt] (-.25, 0.35) arc (90:270:0.35);
        \draw[line width=2pt] (0.25,0.8) -- (1.4, 0.8);
        \draw[line width=2pt] (-0.25,0.8) -- (-1.4, 0.8);
        \draw[line width=2pt] (0.25,-0.8) -- (1.4, -0.8);
        \draw[line width=2pt] (-0.25,-0.8) -- (-1.4, -0.8);
        \node at (0.8, 0) {\tiny$i$};
        \node at (-.8, 0) {\tiny$i$};
        \node at (0.8, 1.0) {\tiny$r-i$};
        \node at (-.8, 1.0) {\tiny$r-i$};
        \node at (0.8, -1.0) {\tiny$s-i$};
        \node at (-0.8, -1.0) {\tiny$s-i$};
      \end{tikzpicture}
    }}
    \right)_\id
    = \frac{1}{[r + s - 2i + 1]}
    \vcenter{\hbox{
      \begin{tikzpicture}[scale=0.7]
        \draw[thick, fill=purple] (-.25,0.1) rectangle (.25,1.2);
        \draw[thick, fill=purple] (-.25,-0.1) rectangle (.25,-1.2);
        \draw[thick, fill=purple] (-.25,-1.4) rectangle (.25,-2.5);
        \draw[line width=2pt] (.25,-.35) arc (-90:90:0.35);
        \draw[line width=2pt] (-.25, 0.35) arc (90:270:0.35);

        \draw[line width=2pt] (.25,-1.65) arc (-90:90:0.35);
        \draw[line width=2pt] (-.25, -0.95) arc (90:270:0.35);

        \draw[line width=2pt] (.25,0.95-3.1) arc (-90:90:1.55);
        \draw[line width=2pt] (-.25, 0.95) arc (90:270:1.55);

        \node at (0.8, 0) {\tiny$i$};
        \node at (-.8, 0) {\tiny$i$};
        \node at (1.3, -.65) {\tiny$r-i$};
        \node at (-1.3, -.65) {\tiny$r-i$};
        \node at (1.1, -1.3) {\tiny$s-i$};
        \node at (-1.1, -1.3) {\tiny$s-i$};
      \end{tikzpicture}
    }} = \frac{\Theta(r, s, r + s - 2i)}{[r+s-2i+1]}
  \end{equation}
  \cref{eq:twisty_inner_product} can be considered as a sort of ``singly twisted'' theta network:
  \begin{equation}
    \left(
    \vcenter{\hbox{
      \begin{tikzpicture}
        \draw[thick, fill=purple] (-.25,0.1) rectangle (.25,1.2);
        \draw[thick, fill=purple] (-.25,-0.1) rectangle (.25,-1.2);
        \draw[line width=2pt] (.25,-.35) arc (-90:90:0.35);
        \draw[line width=2pt] (-.25, 0.35) arc (90:270:0.35);
        \draw[line width=2pt] (0.25,0.8) -- (1.4, 0.8);
        \draw[line width=2pt] (-0.25,0.8) -- (-1.4, 0.8);
        \draw[line width=2pt] (0.25,-0.8) -- (1.4, -0.8);
        \draw[line width=2pt] (-0.25,-0.8) -- (-1.4, -0.8);
        \node at (0.9, 0) {\tiny$i-1$};
        \node at (-.8, 0) {\tiny$i$};
        \node at (0.9, 1.0) {\tiny$r-i+1$};
        \node at (-.8, 1.0) {\tiny$r-i$};
        \node at (0.8, -0.65) {\tiny$s-i$};
        \node at (-0.8, -0.65) {\tiny$s-i$};
        \draw[very thick] (-1.4,-1.3) -- (0.25,-1.3) arc (-90:90:0.1);
      \end{tikzpicture}
    }}
    \right)_\id
    = \frac{1}{[r + s - 2i + 2]}
    \vcenter{\hbox{
      \begin{tikzpicture}[scale=0.7]
        \draw[thick, fill=purple] (-.25,0.1) rectangle (.25,1.2);
        \draw[thick, fill=purple] (-.25,-0.1) rectangle (.25,-1.2);
        \draw[thick, fill=purple] (-.25,-1.8) rectangle (.25,-2.9);
        \draw[line width=2pt] (.25,-.35) arc (-90:90:0.35);
        \draw[line width=2pt] (-.25, 0.35) arc (90:270:0.35);

        \draw[line width=2pt] (.25,-2.35) arc (-90:90:0.85);
        \draw[line width=2pt] (-.25, -0.65) arc (90:270:0.85);

        \draw[line width=2pt] (.25,0.45-3.1) arc (-90:90:1.8);
        \draw[line width=2pt] (-.25, 0.95) arc (90:270:1.8);

        \node at (1.0, 0) {\tiny$i-1$};
        \node at (-.8, 0) {\tiny$i$};
        \node at (1.5, 1.0) {\tiny$r-i+1$};
        \node at (-1.5, 1.0) {\tiny$r-i$};
        \node at (1.3, -0.8) {\tiny$s-i$};
        \node at (-1.3, -0.8) {\tiny$s-i$};

        \draw[very thick] (.25, -1.0) arc (90:-90:0.25) -- (-.25, -1.5) arc (90:270:0.25);
      \end{tikzpicture}
    }}
  \end{equation}
\end{remark}

\section{Cell Data for Arbitrary Compositions}\label{sec:cell_arbitrary}
Suppose now that $\bmu = (\mu_1, \ldots, \mu_r) \vdash n$ is an arbitrary composition.
We will now generalise the ideas in the previous sections to determine cell data for $\TL_\bmu$.

\begin{definition}
  For a given $\bmu$, let the set of \emph{associated compositions}, $\ass\bmu$ be 
  \begin{equation*}
    \ass\bmu = \{(a_1, \ldots,a_r) : a_i \in \supp\mu_i\},
  \end{equation*}
  so that $|\ass \bmu| = 2^{\sum_i \generation{\mu_i}}$.
\end{definition}

The idea for constructing a cell basis can be summed up by the following diagram.
Here, to fit on the page we have written the morphism from ``bottom to top'' instead of ``left to right''.
\begin{center}
  \begin{tikzpicture}[scale=0.8]
    \draw[very thick, fill=purple] (0,0.5) rectangle (2.4,1);
    \fill (0,0.5) rectangle (0.1,1);
    \draw[line width=2pt] (1.2, 0) -- (1.2,0.5);
    \draw[very thick, fill=purple] (2.6,0.5) rectangle (7.4,1);
    \fill (2.6,0.5) rectangle (2.7,1);
    \draw[line width=2pt] (5, 0) -- (5,0.5);
    \draw[very thick, fill=purple] (7.6,0.5) rectangle (8.4,1);
    \fill (7.6,0.5) rectangle (7.7,1);
    \draw[line width=2pt] (8, 0) -- (8,0.5);
    \draw[very thick, fill=purple] (8.6,0.5) rectangle (10.4,1);
    \fill (8.6,0.5) rectangle (8.7,1);
    \draw[line width=2pt] (9.5, 0) -- (9.5,0.5);
    \draw[very thick, fill=purple] (10.6,0.5) rectangle (13.4,1);
    \fill (10.6,0.5) rectangle (10.7,1);
    \draw[line width=2pt] (12, 0) -- (12,0.5);

    \draw (0,1) -- (2.4,1) -- (2.1,1.6) -- (0.3,1.6) -- cycle;
    \draw (2.6,1) -- (7.4,1) -- (7.1,1.6) -- (2.9,1.6) -- cycle;
    \draw (7.6,1) -- (8.4,1) -- (8.1,1.6) -- (7.9,1.6) -- cycle;
    \draw (8.6,1) -- (10.4,1) -- (10.1,1.6) -- (8.9,1.6) -- cycle;
    \draw (10.6,1) -- (13.4,1) -- (13.1,1.6) -- (10.9,1.6) -- cycle;

    \draw [decorate,decoration={brace,amplitude=5pt},xshift=-1pt,yshift=0pt] (-0.6,0.4) -- (-0.6,1.0) node [black,midway,xshift=-10pt] {\footnotesize $e^\bmu$};

    \draw [decorate,decoration={brace,amplitude=5pt},xshift=-1pt,yshift=0pt] (-0.6,1) -- (-0.6,2.4) node [black,midway,xshift=-17pt] {\footnotesize $\Delta_\bmu(m)$};

    \node at (1.2,-.3) {\tiny$\mu_1$};
    \node at (5,-.3) {\tiny$\mu_2$};
    \node at (8,-.3) {\tiny$\mu_3$};
    \node at (9.5,-.3) {\tiny$\mu_4$};
    \node at (12,-.3) {\tiny$\mu_5$};

    \begin{scope}[shift={(0,1)}]
      \draw[very thick] (0.4,0)--(0.4,.6);
      \draw[very thick] (0.5,0)--(0.5,.6);
      \draw[very thick] (1.4,0) arc (0:180:.4);
      \draw[very thick] (1.3,0) arc (0:180:.3);
      \draw[very thick] (1.2,0) arc (0:180:.2);
      \draw[very thick] (1.1,0) arc (0:180:.1);
      \draw[very thick] (1.5,0)--(1.5,.6);
      \draw[very thick] (1.6,0)--(1.6,.6);
      \draw[very thick] (1.7,0)--(1.7,.6);
      \draw[very thick] (1.8,0)--(1.8,.6);
      \draw[very thick] (1.9,0)--(1.9,.6);
      \draw[very thick] (2.0,0)--(2.0,.6);
    \end{scope}

    \begin{scope}[shift={(2.6,1)}]
      \foreach \i in {0,...,10} {
        \draw[very thick] (0.4 + \i/10,0)--(0.4 + \i/10,.6);
      }
      \draw[very thick] (1.9,0) arc (0:180:.2);
      \draw[very thick] (1.8,0) arc (0:180:.1);
      \foreach \i in {16,...,40} {
        \draw[very thick] (0.4 + \i/10,0)--(0.4 + \i/10,.6);
      }
    \end{scope}

    \begin{scope}[shift={(7.6,1)}]
      \foreach \i in {0,...,0} {
        \draw[very thick] (0.4 + \i/10,0)--(0.4 + \i/10,.6);
      }
    \end{scope}

    \begin{scope}[shift={(8.6,1)}]
      \foreach \i in {0,...,5} {
        \draw[very thick] (0.4 + \i/10,0)--(0.4 + \i/10,.6);
      }
      \draw[very thick] (1.6,0) arc (0:180:.3);
      \draw[very thick] (1.5,0) arc (0:180:.2);
      \draw[very thick] (1.4,0) arc (0:180:.1);
    \end{scope}

    \begin{scope}[shift={(10.6,1)}]
      \foreach \i in {0,...,5} {
        \draw[very thick] (0.8+\i/10,0) arc (0:180:\i/10);
      }
      \foreach \i in {0,...,3} {
        \draw[very thick] (1.7+\i/10,0) arc (0:180:\i/10);
      }
      \foreach \i in {0,...,2} {
        \draw[very thick] (2.3+\i/10,0) arc (0:180:\i/10);
      }
    \end{scope}

    \begin{scope}[shift={(0,1.6)}]
      \draw (0.3,0) -- (13.1,0) -- (12.8,0.6) -- (0.6,0.6) -- cycle;
      \draw (0.4,-0.05) rectangle (2.0,0.05);
      \draw (3.0,-0.05) rectangle (7.0,0.05);
      \draw (7.95,-0.05) rectangle (8.05,0.05);
      \draw (9.,-0.05) rectangle (9.5,0.05);
    \end{scope}
  \end{tikzpicture}
\end{center}

Notice we factor the element of the cell module into an idempotent, some elements of cell modules for $\TL_{(n)}$ and one further diagram.

Recall the definition of $E_\mathbf{a}$ from \cref{sec:valenced_cell_data}.
\begin{proposition}
  If $\bmu \vdash n$, then $\TL_\bmu$ is cellular with cell modules described by
  \begin{equation}\label{eq:subsetsum}
    \Lambda_\bmu  = \bigcup_{\mathbf{a}\in \ass\bmu} E_\mathbf{a}
  \end{equation}
  and
  \begin{equation}\label{eq:dim_general_cell}
    \dim\Delta_\bmu(m) = \sum_{\mathbf{a}\in\ass\bmu} C^\mathbf{a}_m.
  \end{equation}
\end{proposition}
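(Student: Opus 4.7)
Cellularity of $\TL_\bmu = e^\bmu \TL_n e^\bmu$ is immediate from the Hecke-algebra construction of \cref{sec:cell_hecke}; it remains only to identify the cell-module indices and dimensions. I would establish \cref{eq:dim_general_cell} by matching upper and lower bounds, and then read off \cref{eq:subsetsum} from the nonvanishing of the right-hand side.

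For the upper bound, given a monic diagram $t : \underline n \to \underline m$, factor it bucket-by-bucket as $t = (\bigotimes_i s_i)\cdot t'$, where $s_i : \underline{\mu_i}\to\underline{a_i^{(t)}}$ captures the internal cap structure of $t$ on bucket $B_i$ and $t'$ is a monic diagram having no cap internal to any reduced bucket of size $a_i^{(t)}$. By the analysis of \cref{sec:endomorphism}, the morphism $e^{\mu_i} s_i$ vanishes in $\Delta_{(\mu_i)}(a_i^{(t)})$ unless $a_i^{(t)}\in \supp\mu_i$, and when non-vanishing it equals a scalar multiple of $e^{\mu_i}\cdot {\rm d}_{\mu_i}^{a_i^{(t)}}$ modulo terms of strictly smaller through-degree within the bucket. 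Sub-additivity of through-degree under composition in $\TLcat$ propagates this across all $r$ buckets: modulo $\Delta_{<m}$,
\begin{equation*}
e^\bmu\cdot t \;\equiv\; \alpha_t\; e^\bmu\cdot\bigl(\bigotimes\nolimits_i {\rm d}_{\mu_i}^{a_i^{(t)}}\bigr)\cdot t'\qquad\text{when }\mathbf a^{(t)} = (a_i^{(t)})_i \in \ass\bmu,
\end{equation*}
and $e^\bmu\cdot t = 0$ otherwise. The resulting spanning set of $\Delta_\bmu(m)$ is indexed by pairs $(\mathbf a,t')$ with $\mathbf a\in\ass\bmu$ and $t'$ a walk over the (now Eve) composition $\mathbf a$ terminating at height $m$; by \cref{prop:count_Cmun} this count is $\sum_{\mathbf a\in\ass\bmu} C^\mathbf a_m$, giving $\dim \Delta_\bmu(m)\le \sum_{\mathbf a\in\ass\bmu} C^\mathbf a_m$.

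For the matching lower bound I would pass to $\Q(\delta)$, where \cref{lem:mutual_orthogonal} furnishes a decomposition $e^\bmu = \sum_{\mathbf a\in\ass\bmu} f_\mathbf a$ into mutually orthogonal idempotents $f_\mathbf a = \bigotimes_i \lambda_{\mu_i}^{a_i}\,\overline L_{\mu_i}^{a_i}$. Each $f_\mathbf a$ factors through $\underline{|\mathbf a|}$ via the Eve idempotent $e^\mathbf a = \bigotimes_i \JW_{a_i}$, and a direct trace-cyclicity argument combined with $\overline{\rm u}_{\mu_i}^{a_i}\overline{\rm d}_{\mu_i}^{a_i} = (\lambda_{\mu_i}^{a_i})^{-1}\JW_{a_i}$ identifies the rank of $f_\mathbf a$ on $\Delta_\mathbf n(m)$ with that of $e^\mathbf a$ on $\Delta_{|\mathbf a|}(m)$, i.e.\ with $\dim \Delta_\mathbf a(m) = C^\mathbf a_m$ (using \cref{sec:valenced_cell_data} for the Eve value). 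Orthogonality gives $\dim_{\Q(\delta)} \Delta_\bmu(m) = \sum_\mathbf a C^\mathbf a_m$. Since $e^\bmu$ is a single idempotent defined over the integral-domain pointed ring $(k,\delta)$, its image in the free module $\Delta_\mathbf n(m)$ is a projective direct summand whose rank is detected by the trace of $e^\bmu$ --- an integer invariant constant across specialisations. Hence the same count $\sum C^\mathbf a_m$ persists in mixed characteristic, matches the upper bound, and establishes \cref{eq:dim_general_cell}. Then \cref{eq:subsetsum} drops out: $C^\mathbf a_m\neq 0$ precisely when $m\in E_\mathbf a$.

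The main obstacles are the careful multi-bucket bookkeeping of the \emph{``modulo strictly lower through-degree''} step in the upper bound (which must be iterated cleanly across $r$ non-commuting bucket factorisations), and the descent of the rank from $\Q(\delta)$ to mixed characteristic despite the individual summands $f_\mathbf a$ generally failing to exist in the specialisation. The first is handled by the through-degree sub-additivity of $\TLcat$; the second rests on the fact that the sum $e^\bmu$ \emph{does} descend, and that the rank of an idempotent on a free module over a domain is computed by its trace and is thus an invariant of the idempotent itself rather than of any particular decomposition.
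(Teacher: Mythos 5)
Your upper bound is essentially the paper's own argument: the paper factorises each monic diagram bucket-by-bucket as $\mathbf t = (\mathbf t_0^1\otimes\cdots\otimes\mathbf t_0^r)\cdot\mathbf t_1$, orders diagrams lexicographically by the associated tuple $(a_i)$, and uses the one-dimensionality of $\Delta_{(\mu_i)}(a_i)$ from \cref{sec:endomorphism} to discard any $\mathbf t$ whose bucket factor $e^{\mu_i}\mathbf t_0^i$ drops through-degree or has $a_i\notin\supp\mu_i$; what survives is exactly your spanning set of size $\sum_{\mathbf a\in\ass\bmu}C^{\mathbf a}_m$, counted via \cref{prop:count_Cmun}. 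Where you genuinely diverge is the lower bound: the paper stops at counting the surviving spanning set and leaves linear independence implicit, whereas you supply it by passing to $\Q(\delta)$, splitting $e^\bmu=\sum_{\mathbf a}f_{\mathbf a}$ into mutually orthogonal idempotents via \cref{lem:mutual_orthogonal}, identifying the rank of each $f_{\mathbf a}$ on $\Delta_{\mathbf n}(m)$ with $C^{\mathbf a}_m$ through the Eve analysis of \cref{sec:valenced_cell_data}, and descending. That is a real addition and the strategy is sound.

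One repair is needed in the descent step. The rank of an idempotent acting on a free module over a field of characteristic $p$ is \emph{not} detected by its trace: the trace of the reduction of $e^\bmu$ equals the rank only modulo $p$, so ``the trace is an integer invariant constant across specialisations'' does not by itself force the rank to be preserved --- even combined with the semicontinuity bound $\rk_k\le\rk_{\Q(\delta)}$ it only pins the rank down modulo $p$. The correct mechanism is that $e^\bmu$ is a single idempotent matrix over an integral domain $A$ (the localisation of $\Z[\delta]$ over which the $(\ell,p)$-Jones--Wenzl idempotents are defined), with fraction field $\Q(\delta)$ and residue field the mixed-characteristic field; its image in the free $A$-module $\Delta_{\mathbf n}(m)$ is finitely generated projective, hence of locally constant rank on the connected spectrum of $A$, so the generic rank $\sum_{\mathbf a}C^{\mathbf a}_m$ equals the rank at the closed point. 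With ``trace'' replaced by this local-constancy argument your proof closes, and in fact furnishes the linear-independence half that the paper's proof does not spell out.
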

\begin{proof}
  If effect, we generalise the factorisation of \cref{sec:seam}.
  Let $\mathbf{t}$ be a diagram from $\underline{n} \to \underline{m}$ such that $e^\bmu\cdot\mathbf{t}$ is not zero in $\Delta(m)$.
  Let $B_i = \{\mu_1 + \cdots + \mu_{i-1}+1,\ldots,\mu_1 + \cdots + \mu_i\}$ be the $i$-th ``bucket'' of source sites on $\mathbf{t}$.
  Suppose that, in $\mathbf{t}$, $a_i$ of the source sites in bucket $B_i$ are not connected to other source sites in bucket $B_i$ and let $a = \sum_{i} a_i$.
  Then $\mathbf{t}$ uniquely factorises into
  \begin{equation}
    \mathbf{t} = (\mathbf{t}_0^1 \otimes\cdots\mathbf{t}_0^r) \cdot \mathbf{t}_1
  \end{equation}
  where $\mathbf{t}_0^i$ is a monic diagram $\underline{\mu_i}\to\underline{a_i}$ and $\mathbf{t}_1$ is a monic diagram $\underline{a}\to\underline{m}$ such that no two  source sites in the buckets described by the $a_i$ are connected.
  Let this describe the associated tuple to a diagram, $\ass \mathbf{t} = (a_1,\ldots, a_r)$.

  Order the $\mathbf{t}$ lexicographically by their associated tuples.
  Now suppose that $e^\bmu\cdot \mathbf{t}$ is not in the linear span of $e^\bmu\cdot\mathbf{t}'$ for smaller $\mathbf{t}'$.
  If $e^{\mu_i}\cdot \mathbf{t}_0^i$ has through degree less than $i$ then $e^\bmu\cdot \mathbf{t}$ lies in the linear span of $e^\bmu\cdot\mathbf{t}'$ where the $\mathbf{t}' \prec \mathbf{t}$.
  Indeed, by expanding at bucket $i$ we can express $e^{\mu_i}\cdot\mathbf{t}_0^i$ as a linear combination of $e^{\mu_i}\cdot \mathbf{t}'^i_0$ where $\mathbf{t}'^i_0$ have through degree in $\supp \mu_i$ that are less than $a_i$.
  Replacing $\mathbf{t}_0^i$ by such $\mathbf{t}'^i_0$ results in diagrams $\mathbf{t}'$ such that $a_i$ is smaller and no $a_j$ increase for $j \neq i$.
  Such a tuple is lexicographically smaller than $(a_i)$, showing the claim.

  Thus for this to be a new, linearly independent vector, $\mathbf{t}_0^i$ is a monic diagram from $\underline{\mu_i} \to \underline{a_i}$ and $a_i\in\supp{\mu_i}$.

  Since there is a unique morphism from $\underline{\mu_i} \to \underline{a_i}$ for each $a_i \in \supp{\mu_i}$, we can calculate the total number of morphisms by summing the appropriate $C^\mathbf{a}_m$.
\end{proof}
\begin{remark}
  Note that $C_m^\mathbf{a} = 0$ when $m \not\in E_\mathbf{a}$ so we could have written the sum in \cref{eq:dim_general_cell} over all tuples.
  However, the formulation of summing over associated tuples makes the factorisation clearer.
\end{remark}

\begin{remark}
  Unfortunately, it is not likely that much more will be able to be said about general compositions without more powerful machinery.
  The recursive nature of the calculations of $C^\mathbf{a}_m$ and the combinatorial nature of $\ass\bmu$ means that the analysis given for select compositions in this paper is unlikely to extend easily to general compositions (though the ideas may well be developed into something more useful).

  This factorisation, for example, is particularly useful in calculating Gram matrices for certain compositions that are either Eve or almost Eve (first generation idempotents with large cell indices).
  However, it does not yield much in the general case.
  For example, it becomes important to know the value of ${\rm u}_n^{m'} e^{(n)}{\rm d}_n^{m'}$ {\it exactly} and not just modulo lower morphisms, if one wants to calculate inner products on a cell module of a composition including $n$.
\end{remark}

\section{Further work}\label{sec:further}
\subsection{Other Idempotents}
In general, the idempotent $e^{(n)}$ is not the only idempotent describing the projective cover of the trivial module.
Though in any given decomposition of $\TL_n$, there is exactly one factor isomorphic to $P_n(n)$, there are multiple decompositions possible.

Indeed, as a first example, consider the idempotent $\bar e^{(n)}$ which is simply the vertical flip of the morphism $e^{(n)}$.
It is clear that $\TL_n \cdot \bar e^{(n)} \simeq \TL_n \cdot e^{(n)}$ by the isomorphism sending $x \cdot e^{(n)} \mapsto \bar x \cdot e^{(n)}$.
Equivalently, there is a trivial isomorphism of cell data.

This stems from the fact that the ``ladder construction'' in \cref{def:ladder} is lopsided.
A more general construction would allow for the new strands in \label{eq:ladder_step} to be placed either above or below the previous morphism.
It only takes a minor alteration of the argument in \cite{martin_spencer_2021} to show that the resulting morphisms each describe $P_n(n)$.

The root cause of this is the tower of algebras
\begin{equation}\label{eq:tl_tower}
  k = \TL_0 \subset \TL_1 \subset \TL_2\subset \cdots\subset \TL_n \subset \TL_{n+1}\subset \cdots.
\end{equation}
Traditionally, $\TL_n\subset \TL_{n+1}$ by ``adding a strand below'', but one might as well add a strand above.
By using these different embeddings throughout, one determines different, equivalent idempotents.

Since these are equivalent idempotents, the results should be the same.
For example, \cref{prop:main_two_part} should describe $\TL_{(1,k)}$ as well as $\TL_{(k,1)}$.
However, evaluating the traces on the ``wrong side'' of $e^{(k)}$ is difficult, and it is likely to only get worse with non-lopsided idempotents.

Finding a method of calculation which works through all possible \cref{eq:tl_tower} may provide some insight into dealing with non-lopsided ladder constructions in general.

\begin{question}
  Are there other towers of the form \cref{eq:tl_tower} that do not consist of adding strands above and below in some order?
\end{question}

This question can be seen as a dual to the construction of Goodman and Wenzl in~\cite{goodman_wenzl_1993} where the authors construct idempotents describing projective covers of all the simple modules in characteristic $(\ell, 0)$ by considering paths that end at $(n,m)$.
In some sense, our observation implies that the projective covers should be indexed by pairs of paths.

\subsection{Other algebras}
The Temperley-Lieb algebras are only one family out of many families of cellular algebras.
Indeed, as endomorphisms of tilting modules for quantum groups as described in~\cite{andersen_stroppel_tubbenhauer_2018}, Temperley-Lieb algebras fall into the greater study of tilting modules for algebraic groups.

This has recently been expanded to more general ``standard categories with duals''~\cite{bellamy_thiel_2021} which allows one to assign cellular structures to endomorphism algebras over categories that are not necessarily highest-weight categories.

If the endomorphism algebras sit in a tower similar to \cref{eq:tl_tower}, compatible with a monoidal category structure, it may be possible to construct similar endomorphisms to $e^\bmu$.
If so, the factorisation in \cref{sec:cell_arbitrary} may be effective at studying this problem.
In particular, $(\TL_n)_{n\in\N_0}$ is in Schur-Weyl duality to $U_q(\mathfrak{sl}_2)$.
The objects in duality to $U_q(\mathfrak{sl}_n)$ for higher $n$ are known as {\it webs} or {\it spiders} and also admit cellular, diagrammatic behaviour.
It is hoped that they will admit a similar analysis as is indicated by the success in describing ``clasps'' (higher order Jones-Wenzl elements) in~\cite{elias_2015}.

Additionally, this question has application to Soergel bimodule theory~\cite{elias_williamson_2016}.
Here we again are interested in the endomorphism spaces of objects, this time Soergel bimodules.
Through the diagrammatic definition of the category similar techniques may allow one to study the breakdown of these objects (in particular the Bott-Samelson objects) into their indecomposable summands -- a key question in the area.

\bibliographystyle{alpha}
\bibliography{all_cite}

\end{document}